\documentclass[11pt,oneside,english,letterpaper]{amsart}
\usepackage[utf8]{inputenc}
\usepackage{babel}
\usepackage{enumitem}
\usepackage{amstext}
\usepackage{amsthm}
\usepackage{amssymb}
\usepackage{setspace}
\usepackage{csquotes}
\usepackage[pdfusetitle,
 bookmarks=true,bookmarksnumbered=false,bookmarksopen=true,bookmarksopenlevel=1,
 breaklinks=false,pdfborder={0 0 1},backref=false,colorlinks=false]
 {hyperref}
\makeatletter
\numberwithin{equation}{section}
\numberwithin{figure}{section}
\theoremstyle{plain}
\newtheorem{thm}{\protect\theoremname}[section]
\theoremstyle{definition}
\newtheorem{defn}[thm]{\protect\definitionname}
\theoremstyle{plain}
\newtheorem{lem}[thm]{\protect\lemmaname}
\theoremstyle{remark}
\newtheorem{rem}[thm]{\protect\remarkname}
\theoremstyle{definition}
\newtheorem{example}[thm]{\protect\examplename}
\theoremstyle{plain}
\newtheorem{prop}[thm]{\protect\propositionname}
\theoremstyle{plain}
\newtheorem{cor}[thm]{\protect\corollaryname}

\usepackage{slashed}
\usepackage{xcolor}
\usepackage[all]{xy}

\def\pdv{\partial}
\newcommand{\gar}{\ifmmode{\mathrm{G\mathring{a}rding}}\else\text{Gårding}\fi}
\def\<{\langle}
\def\>{\rangle}

\def\R{\mathbb R}
\def\C{\mathbb C}

\def\N{\mathbb N}
\def\T{\mathbb T}

\def\a{\mathbf{a}}
\def\b{\mathbf b}
\def\c{\mathbf c}

\def\rr{\mathbf r}
\def\x{\mathbf{x}}
\def\y{\mathbf{y}}
\def\u{\mathbf{u}}
\def\v{\mathbf{v}}
\def\w{\mathbf{w}}
\def\z{\mathbf{z}}
\def\e{\mathbf{e}}
\def\L{\mathrm L} %lorenztian
\def\P{\mathrm P}
\def\M{\mathrm M}

\def\T{\mathrm T}

\def\S{\mathrm S} %stable polynomials

\def\G{{\mathrm G}^\mathfrak{a}}
\def\Gh{{\mathrm G}^\mathfrak{h}}
\def\Gah{{\mathrm G}^\mathfrak{ah}}
\def\GS{\mathrm G}
\def\I{\mathrm I}
\def\Ia{{\mathrm I}^\mathfrak{a}}

\def\U{\mathrm U}
\def\C{\mathcal C}

\def\A{\mathcal{A}}
\def\PS{\Xi}
\newcommand{\MRS}{\mathcal{M}}
\newcommand{\rhat}{\widehat{\rr}}

\renewcommand{\rm}{r_{-1}}
\newcommand{\rminus}{\rm}

\newcommand{\upoly}{\mathfrak p}
\newcommand{\uquot}{\mathfrak q}

\newcommand{\pdvminus}{\partial_{-}}

\newcommand{\Ad}{A_d}

\newcommand{\gapmap}{A}

\def\proj{\Pi^\downarrow} % projection/restriction

\def\pol{\Pi^\uparrow} % polarization

\def\ppol{\hat{\Pi}} %partial polarization

\def\Sym{\mathrm {sym}}

\def\HH{\mathrm H} %homogenization

\def\Vol{\mathrm{Vol}}%volume

\def\branden{Br\"and\'en}
\def\vtl{\vartriangleleft}
\def\bvtl{\blacktriangleleft}
\newcommand{\ip}{\mathrel{\prec\!\prec}} %ideal position

\providecommand{\corollaryname}{Corollary}
\providecommand{\definitionname}{Definition}
\providecommand{\examplename}{Example}

\providecommand{\lemmaname}{Lemma}
\providecommand{\propositionname}{Proposition}
\providecommand{\remarkname}{Remark}
\providecommand{\theoremname}{Theorem}

\AddToHook{package/after/biblatex}{
\AtEveryBibitem{\clearfield{note}}}

\makeatother
\usepackage[style=numeric,sorting=nyt,doi=false,url=false,maxbibnames=10]{biblatex}
\providecommand{\corollaryname}{Corollary}
\providecommand{\definitionname}{Definition}
\providecommand{\examplename}{Example}
\providecommand{\lemmaname}{Lemma}
\providecommand{\propositionname}{Proposition}
\providecommand{\remarkname}{Remark}
\providecommand{\theoremname}{Theorem}

\begin{document}
\title{Ideal Gårding polynomials}
\author{Hao Fang}
\email{hao-fang@uiowa.edu}
\address{Department of Mathematics, University of Iowa, Iowa City, IA 52246,
USA}
\thanks{H. F.'s work is partially supported by a Simons Foundation mathematics
collaboration grant and an NSF-RTG grant. B. M. is partially supported
by NSFC grant 12471052.}
\author{Biao Ma}
\email{bma@math.ecnu.edu.cn}
\address{School of Mathematical Sciences, East China Normal University, 500
Dongchuan road, Shanghai, China.}
\maketitle
\begin{abstract} We introduce ideal G\aa{}rding polynomials, a convexity-enhanced subclass of G\aa{}rding polynomials whose G\aa{}rding components are recursively convex under partial differentiation. This class strictly contains real stable polynomials and, after translation and homogenization, lies in the Lorentzian class. 

Our main result is that ideal G\aa{}rding polynomials still admit a robust structure theory despite this additional convexity: they are preserved under polarization, satisfy natural closure properties, and support a linear preserver theory. A key contribution of this paper is a universal model for univariate G\aa{}rding polynomials, described by monotone root sequences and equivalently by volume polynomials of Pitman--Stanley polytopes. We establish quotient concavity, and Newton--Maclaurin type inequalities, which leads to the polarization theorem, and suggests further connections with convex geometry and Lorentzian polynomials.
\end{abstract}
\tableofcontents
\section{Introduction}
In this paper, we introduce \emph{ideal G{\aa}rding polynomials}, an intermediate class between stable and G{\aa}rding polynomials that restores convexity within the G{\aa}rding framework.

In earlier work~\cite{Fang-MaGard}, we introduced G{\aa}rding polynomials from the positive affine geometry of their positivity regions. This class properly extends the class of real stable polynomials and shares several of its structural features.

In particular, G{\aa}rding polynomials encode ellipticity and satisfy the Rayleigh property, with applications in combinatorics and probability. In the multi-affine case, they are characterized by the existence of a unique component passing the positive ray test; in general, they are defined by invariance under strictly positive affine maps.

However, this construction has a fundamental limitation: although the G{\aa}rding polynomials encode ellipticity through positivity regions, they do not guaranty convexity. Indeed, the   G\aa rding component, which is the analog of the G{\aa}rding cone for a hyperbolic polynomial, does not need to be convex.

This contrasts with the classical hyperbolic polynomial theory, where G{\aa}rding~\cite{Gar59} showed that a hyperbolic polynomial is associated with a positivity region, called the G\aa{}rding cone, which is a convex cone. Similar convexity phenomena also extend to real stable polynomials; see \cite{BB09I,BBL09,Pem11}. More recently, the theory of Lorentzian polynomials introduced by \branden-Huh and Anari-Liu-OveisGharan-Vinzant ~\cite{BrandenHuh20,ALOVi,ALOVii} provides a unified framework
capturing strong log-concavity phenomena in the positive hyper-octant.  These convexity properties are closely related to concavity inequalities and play a central role in fully nonlinear PDE, optimization, combinatorics and probability \cite{caffarelli1985dirichlet,BB08Duke,MR2661242,BB09Annals,BB09I,B07Adv,BBL09,Wagner11,KPV15,Guler97,MR3055586,MSS15i,MSS15ii,MR2411443,BrandenHuh20,ALOVi,ALOVii,ALOViii}.
  
To address this issue, we introduce a more restrictive class.

\begin{defn}[Ideal \gar{} Polynomials]\label{def:ideal}
Denote $\GS_n^d$ the set of  $n$-variate \gar{} polynomials of degree $d$. We define the ideal G{\aa}rding polynomials recursively as follows. Set
\[
\I_n^0:=\{f\equiv c:\ c\ge 0\},\qquad
\I_n^1:=\GS_n^1.
\]
For $d\ge 2$, let
\[
\I_n^d
:=
\bigl\{
f : \partial_i f\in \I_n^{d-1}\ \forall i\in[n],\ 
\emptyset\neq \C_f \subset \bigcap_{i=1}^n \C_{\partial_i f},\ 
\C_f \text{ is convex}
\bigr\}.
\] Here $\C_f$ denote the \gar{} component of $f$. See Section \ref{subsec:garcompoents} for details.
\end{defn}

Thus, ideal G{\aa}rding polynomials are G{\aa}rding polynomials whose distinguished components are convex and stable under differentiation.

\begin{rem}
For multi-affine polynomials, $f$ is ideal G{\aa}rding if and only if $f$ is G{\aa}rding and its G{\aa}rding component $\C_f$ is nonempty and convex.

Indeed, in the multi-affine case, $f$ is G{\aa}rding if and only if $\C_f\neq\emptyset$. Moreover, whenever $\partial_i f \not\equiv 0$, the convexity of $\C_{\partial_i f}$ can be deduced from the convexity of $\C_f$ by \cite{Fang-MaGard} Lemma 5.2. Hence, the recursive condition in  Definition~\ref{def:ideal} reduces to the convexity of $\C_f$.
\end{rem}

Our main result is the following structure theorem.
%\ref{enu:mainthmideal} $\Leftrightarrow$ \ref{enu:maththmpullback} $\Leftrightarrow$ \ref{enu:maththmpolar}
\begin{thm}\label{thm:mainstructure}
Let $f$ be a degree $d$ G{\aa}rding polynomial. The following are equivalent:
\begin{enumerate}[label=(\arabic*)]
\item $f$ is ideal G{\aa}rding; \label{enu:mainthmideal}
\item $f$ is the pullback of a multi-affine ideal G{\aa}rding polynomial by a strictly positive affine map;\label{enu:maththmpullback}
\item a $\kappa$-polarization of $f$ is a multi-affine ideal G{\aa}rding polynomial;\label{enu:maththmpolar}
\item \label{enu:maththmIdealtoconcave}for every nontrivial $\partial^{\alpha}f$, $(\partial^{\alpha}f)^{1/\deg(\partial^{\alpha}f)}$ is concave on $\C_{\partial^{\alpha}f}$;
\item \label{enu:maththmIdeallogconcave}for every nontrivial $\partial^{\alpha}f$, $\log \partial^{\alpha}f$ is concave on $\C_{\partial^{\alpha}f}$;
\item for every nontrivial $\partial^{\alpha}f$, $\x_0\in \C_{\pdv^\alpha f}$, the localized homogenization
\[
\HH_{\x_0} \pdv^\alpha f := \pdv^\alpha f\bigl(\x_0+\x/y\bigr)y^{d-|\alpha|}
\]
is Lorentzian.\label{enu:maththmIdealtLorentzian}
\end{enumerate}
\end{thm}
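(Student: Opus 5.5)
The plan is to prove the equivalences in a cycle, with one branch for the concavity and Lorentzian statements:
\[
(1)\Rightarrow(3)\Rightarrow(2)\Rightarrow(1),\qquad (1)\Rightarrow(4)\Rightarrow(5)\Rightarrow(1),\qquad (4)\Leftrightarrow(6).
\]

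\emph{The cheap implications.} $(3)\Rightarrow(2)$ holds because $f$ is recovered from its $\kappa$-polarization by the projection $\proj$. That projection is a diagonal pullback, hence strictly positive affine, so (2) follows.

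$(2)\Rightarrow(1)$ uses two facts.
\begin{itemize}
\item By \cite{Fang-MaGard}, pullbacks of \gar{} polynomials by strictly positive affine maps are \gar{}, and the \gar{} component pulls back to the \gar{} component. Convexity is preserved under affine preimages.
\item Derivatives commute with pullbacks up to positive combinations. If $f=g\circ A$ with $A$ having nonnegative linear part, then $\partial_i f=\sum_j a_{ji}(\partial_j g)\circ A$.
\end{itemize}
The multi-affine ideal property gives $\C_g\subset\bigcap_j\C_{\partial_j g}$, all these components convex. A positive combination of polynomials that are positive on a common convex set, and whose components contain $\C_g$, is again \gar{} with component containing the pullback of $\C_g$. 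Inducting on the degree then gives (1).

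$(4)\Rightarrow(5)$ is immediate, since $\log$ of a concave positive function is concave.

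$(5)\Rightarrow(1)$ uses that $\C_{\partial^\alpha f}$ is the domain of a concave function. I need $\C_{\partial^{\alpha}f}$ to be a superlevel-type region, so that convexity comes from the theory of \gar{} components, and the inclusion $\C_f\subset\C_{\partial_i f}$ must be checked separately. For the inclusion I would use the positive ray test. Along a positive ray from a point of $\C_f$, log-concavity gives $\partial_i\log f>0$ (the function increases to $+\infty$ at infinity and is concave), so $\partial_i f>0$ on $\C_f$. Connectedness then places $\C_f$ inside $\C_{\partial_i f}$.

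\emph{The Lorentzian equivalence.} For $(4)\Leftrightarrow(6)$, I would invoke the \branden--Huh characterization: a homogeneous polynomial with nonnegative coefficients is Lorentzian iff all its derivatives are. Equivalently, its degree-2 derivatives have at most one positive eigenvalue, i.e.\ $g^{1/\deg g}$ is concave on the positive orthant. The localized homogenization at $\x_0\in\C$ has nonnegative coefficients because every $\partial^\beta\partial^\alpha f(\x_0)\ge0$ (by the inclusions $\C\subset\C_{\partial_i}$ iterated). Its derivatives are the homogenizations of the $\partial^\beta\partial^\alpha f$. The Hessian condition at $(\x,1)$ matches concavity of $(\partial^\alpha f)^{1/k}$ at $\x_0+\x$, which transfers via the degree-$k$ homogeneity identity.

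\emph{The main obstacle: $(1)\Rightarrow(3)$ and $(1)\Rightarrow(4)$.} For (4), I would induct on the degree. Given $\x\in\C_f$ and a direction $\v$, restrict to the univariate polynomial $t\mapsto f(\x+t\v)$. This reduces the claim to a univariate statement: concavity of $p^{1/d}$ in the relevant component follows once the derivatives' components nest and are convex. Here the universal univariate model (monotone root sequences / Pitman--Stanley volume polynomials) gives quotient concavity, i.e.\ Newton--Maclaurin inequalities such as $d\,p\,p''\le(d-1)p'^2$.

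For (3), I would construct the $\kappa$-polarization and verify its component is convex. This is the crux and is where the Newton--Maclaurin inequalities enter. I would first treat the univariate case via the universal model, realizing $p$ as the diagonal of a volume-type multi-affine polynomial whose component is convex. I would then extend variable by variable, checking the positive ray test with the concavity established in (4).
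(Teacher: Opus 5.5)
Your $(3)\Rightarrow(2)$ and $(4)\Rightarrow(5)$ are fine. The implications you call the crux, $(1)\Rightarrow(4)$ and $(1)\Rightarrow(3)$, are not proved, and the mechanism you sketch cannot prove them.

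\textbf{$(1)\Rightarrow(4)$.} The restriction $t\mapsto f(\x+t\v)$ is a univariate \gar{} polynomial only for $\v\in\pm\overline{\Gamma_n^+}$. Concavity of $f^{1/d}$ on $\C_f$ must be checked in every direction. Moreover, the inequality $d\,pp''\le(d-1)p'^2$ along nonnegative directions holds for \emph{every} \gar{} polynomial; this is the paper's directional Newton--Maclaurin theorem. Yet (4) for all derivatives forces ideality. So no argument built only on that inequality can yield (4): convexity of the components must enter in a genuinely multivariate way, and your plan never says how.

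\textbf{$(1)\Rightarrow(3)$.} The real target is as follows. For a partial polarization $F=f+\frac{z-y}{m}\partial_yf$, the component $\C_F$ is the epigraph of $y-m\,f/\partial_yf$ over $\C_{\partial_yf}$. So you need concavity of $f/\partial_yf$ on all of $\C_{\partial_yf}$, including where $f\le 0$. There, concavity of $f^{1/d}$ on $\C_f$ says nothing, so using (4) to get (3) is both backwards and too weak.

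The missing idea is the paper's fibre quotient concavity:
\begin{itemize}
\item Write $f/\partial_yf=\uquot_d(y,r_0(\x),\ldots,r_{d-1}(\x))$. The fibre root functions $r_i$ are convex because each $\C_{\partial_y^if}$ is a convex epigraph.
\item Show $\uquot_d$ is concave on the \emph{extended} chamber $\U_d^{(1)}=\{r_{-1}>r_1,\ r_0\ge\cdots\ge r_{d-1}\}$. On $r_{-1}>r_0$ this comes from superadditivity of the Pitman--Stanley quotient, proved via the segment-slicing and Brascamp--Lieb lemmas. On $r_1<r_{-1}\le r_0$ it comes from convexity of $u^{l+1}\partial_1^lP_{d-1}/P_{d-1}$.
\item Show $\uquot_d$ is nonincreasing in each $r_i$, and compose.
\end{itemize}
The paper obtains (4) only \emph{after} polarization. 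It reduces to the multi-affine case $f=yg+h$ with $h/g$ concave on $\C_g$, and then applies the bootstrap lemma.

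\textbf{$(2)\Rightarrow(1)$.} Here $\partial_i(\mu^*g)=\mu^*(D_{A\e_i}g)$. Your induction therefore needs $D_\w g$, for $\w\ge 0$, to be a multi-affine \emph{ideal} \gar{} polynomial. Positivity of $\sum_j w_j\partial_jg$ on $\C_g$ gives neither the \gar{} property nor convexity of $\C_{D_\w g}$, which is generally strictly larger than $\C_g$. This closure under directional derivatives is what the paper extracts from its symbol (linear-preserver) theorem, which rests on sums preserving ideal position.

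\textbf{$(4)\Rightarrow(6)$.} Only the $\x$-derivatives of $\HH_{\x_0}\partial^\alpha f$ are homogenizations of derivatives of $f$. With $g=\partial^\alpha f$ and $k=d-|\alpha|$, the $y$-derivatives are homogenized polar derivatives $k\,g(\z)+(\x_0-\z)\cdot\nabla g(\z)$, which (4) does not cover. If you use the Hessian test on $(d-2)$-nd derivatives, you also need $M$-convex support. For example, $x^3+y^3$ has nonnegative coefficients and both first derivatives $3x^2,3y^2$ have Hessians with one positive eigenvalue, yet it is not Lorentzian. The paper gets $M$-convexity from the Rayleigh property. It then inducts on degree via $\partial_{x_i}\HH f=\HH(\partial_if)$, so only the quadratic $\partial_y^{d-2}\HH f$ remains, and its constant Hessian is controlled by concavity of $f^{1/d}$ at the base point.
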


Theorem~\ref{thm:mainstructure} extends Theorem 1.1 of~\cite{Fang-MaGard}, a structural theorem for G\aa rding polynomials. It  provides equivalent geometric, algebraic, and analytic characterizations of ideal G{\aa}rding polynomials. In particular, condition~(6) shows that the localized homogenizations are Lorentzian.

We also identify natural examples.
 
\begin{thm}\label{thm:basicfamilies}
Let $f$ be a G{\aa}rding polynomial. Then $f$ is ideal G{\aa}rding if
\begin{enumerate}[label=(\arabic*)]
\item $f$ is real stable with positive leading coefficients;
\item $f$ is a polarization of a univariate G{\aa}rding polynomial;\label{part2}
\item $f$ is the eigen-polynomial of a nonnegative matrix.
\end{enumerate}
\end{thm}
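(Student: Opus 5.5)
Our plan is to reduce each of the three families to criterion~\ref{enu:maththmpullback} or~\ref{enu:maththmpolar} of Theorem~\ref{thm:mainstructure}. This lets us work only with multi-affine polynomials. By the Remark following Definition~\ref{def:ideal}, for a multi-affine Gårding polynomial, being ideal amounts to nonemptiness and convexity of the single component $\C_f$. So in every case the task is to exhibit a multi-affine model and show that its Gårding component is convex.

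\emph{(1) Real stable polynomials.} The polarization of a real stable polynomial is again real stable, by the Borcea--Brändén polarization theorem. Hence by~\ref{enu:maththmpolar} it suffices to treat a multi-affine real stable $g$ with positive leading coefficient. We will identify $\C_g$ with the connected component of $\{g>0\}$ that contains $\x+t\mathbf 1$ for $t\gg0$. Translating by a point $\x_0$ of this component and homogenizing with a new variable $y$ gives a hyperbolic polynomial in $(\x,y)$ with respect to $(\mathbf 1,0)$, up to the usual sign conventions. Its hyperbolicity cone is convex by Gårding's theorem, and slicing this cone at $y=1$ recovers $\C_g-\x_0$. Therefore $\C_g$ is convex. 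The stability of the derivatives $\partial_i g$ gives the inclusions $\C_g\subset\C_{\partial_i g}$ through the same cone picture, since the cone of a derivative contains the original cone by Renegar's theorem.

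\emph{(2) Polarizations of univariate Gårding polynomials.} This is where we expect the real work, and we plan to use the universal univariate model. A univariate Gårding polynomial $p$ of degree $d$ should be encoded by a monotone root sequence, equivalently as the volume polynomial of a Pitman--Stanley polytope $\Vol(\sum_i x_i\Delta_i)$ evaluated on a diagonal after a positive affine change of variables. Volume polynomials of Minkowski sums of convex bodies are Lorentzian by the Alexandrov--Fenchel theory. We would therefore verify criterion~\ref{enu:maththmIdealtLorentzian}, or the concavity criterion~\ref{enu:maththmIdealtoconcave}, for the symmetric polarization $\pol p$, using that $\pol p$ is an average of volume-type expressions in this model.

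The main obstacle is matching the symmetric $\kappa$-polarization with the Pitman--Stanley volume structure. Our first attempt will be to prove quotient concavity of $p^{(k)}/p^{(k+1)}$ on $\C_p$ and then induct on $d$. By Newton--Maclaurin-type inequalities for the elementary-symmetric-like coefficients, this should give concavity of $(\pol p)^{1/d}$ on the component.

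\emph{(3) Eigen-polynomials of nonnegative matrices.} For a nonnegative matrix $A$, we expect the eigen-polynomial $\det(\mathrm{diag}(\x)-A)$ to be multi-affine. Its positive component is $\{\x:\ \rho(\mathrm{diag}(\x)^{-1}A)<1\}$, where $\rho$ is the spectral radius, equivalently the set where $\mathrm{diag}(\x)-A$ is a nonsingular M-matrix. By Perron--Frobenius, this set consists of those $\x$ admitting a positive vector $v$ with $\mathrm{diag}(\x)v>Av$. We will show it is convex using the log-convexity of the spectral radius of $A\,\mathrm{diag}(e^{\mathbf s})$ (Kingman), together with monotonicity in $\x$. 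Principal minors are again eigen-polynomials of nonnegative matrices, and the M-matrix property passes to principal submatrices. This gives the recursive containment $\C_f\subset\C_{\partial_i f}$, so the Remark applies.
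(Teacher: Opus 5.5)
Your arguments for (1) and (3) are sound and take different routes from the paper.

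In (1) you slice the convex hyperbolicity cone of the homogenization at $y=1$. The paper instead shows $f\neq0$ on the tube $\C_f+i\R^n$ and applies Bochner's tube theorem (Proposition~\ref{prop:stable-polynomials-are-ideal}).

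In (3) you use Kingman's log-convexity of $\mathbf u\mapsto\rho(\mathrm{diag}(e^{-\mathbf u})A)$. This makes the component closed under coordinatewise weighted geometric means. Since it is also upward closed, it is convex by $(1-\lambda)\x+\lambda\y\ge\x^{1-\lambda}\y^{\lambda}$; you should spell out this AM--GM step. The paper instead proves that $Z+A$ is invertible for $\Re z_i>0$ (Lemma~\ref{lem:M-matrixCM}) and again uses Bochner. Both of your routes avoid the tube theorem.

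One caveat on (1): do not pass through criterion~\ref{enu:maththmpolar} of Theorem~\ref{thm:mainstructure}. In this paper, closure of $\I$ under $\kappa$-projection comes from the linear preserver theorem. The symbol used there, $\prod_{i,s}(\mu_i(\x)+u_{is})$, is certified ideal precisely by $\S\subset\I$, i.e.\ by part (1) itself, so the reduction is circular. It is also unnecessary: your cone argument never uses multi-affinity and applies verbatim to every nonzero $\partial^\alpha f$, which is again real stable.

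The genuine gap is in (2). For the multi-affine $F=\pol p$, the entire content is convexity of $\C_F$, and none of your tools reaches it.
\begin{itemize}
\item The Pitman--Stanley volume model sees $p$ only as a function of one variable, through the first gap $a_1=x-r_0$. What it can deliver is that $\HH_{x_0\mathbf 1}F$ is Lorentzian at \emph{diagonal} base points $x_0>r_0$. This does follow from ultra log-concavity of the Taylor coefficients of $p$ at $x_0$ together with Br\"and\'en--Huh's polarization theorem. It gives log-concavity of $F$ only on $\{\x:\min_i x_i>r_0\}$.
\item Criterion~\ref{enu:maththmIdealtLorentzian} requires every $\x_0\in\C_{\partial^\alpha F}$, and the component is typically much larger. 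For $p=x^d$ with $d<n$ it is the whole G{\aa}rding cone of $\sigma_d$, not $\Gamma_n^+$. At off-diagonal $\x_0$, $F(\x_0+\cdot)$ is not symmetric, so it is not a polarization of any univariate polynomial.
\item Nothing in your sketch substantiates that $\pol p$ is ``an average of volume-type expressions''.
\item One-variable concavity of $p^{(k)}/p^{(k+1)}$, together with Newton--Maclaurin inequalities, gives no control of a multivariate component.
\item ``Concavity of $(\pol p)^{1/d}$ on the component'' presupposes the convexity you need to prove.
\end{itemize}

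The missing idea is the paper's fibrewise mechanism. Factor $\pol_\kappa$ into partial polarizations $\ppol_m^y f=f+\frac{z-y}{m}\partial_y f$. Then $\C_{\ppol_m^y f}$ is the epigraph in $z$ of $y-m\,f/\partial_y f$ over $\C_{\partial_y f}$.

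Convexity therefore requires that $f/\partial_y f$ be jointly concave on all of $\C_{\partial_y f}$, including where $f\le0$, for \emph{every} ideal $f$ (Theorem~\ref{prop:fibre-quotient-concavity}). This is needed for every ideal $f$ because the intermediate polynomials are no longer polarizations of univariate ones.

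The paper gets this by writing $f/\partial_y f=\uquot_d(y,r_0(\x),\dots,r_{d-1}(\x))$, where the fibre root functions $r_i$ are convex. It then proves that $\uquot_d$ is concave on $\U_d^{(1)}$ and nonincreasing in each $r_i$ (Theorems~\ref{thm:universalquotientconcavity} and~\ref{thm:universalquotientmonotone}).

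The underlying concavity of $\Psi_d=P_d/\partial_1P_d$ (Theorem~\ref{thm:superadditive}) does not follow from Alexandrov--Fenchel. That theory gives only Brunn--Minkowski-type concavity of $P_d^{1/d}$, and the analogous quotient inequality fails for general convex bodies in dimension $\ge3$. The paper's proof instead uses the Pitman--Stanley recursion, the segment-slicing lemma, and a one-dimensional Brascamp--Lieb-type inequality.
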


\ref{part2} of Theorem~\ref{thm:basicfamilies} is a special case of the $\kappa$-polarization result in Theorem~\ref{thm:mainstructure}; a special case of \ref{part2}, where the number of variables equals the degree, was first established by Lin~\cite{Lin23JFA,LIN2024}. Our approach is different from that of Lin.

We also establish a symbol based theory of linear preservers for ideal G{\aa}rding polynomials. Such theories are rare and typically reflect deep structural properties of the underlying polynomial class. In addition to the linear preserver theory for real stable polynomials developed by Borcea and Brändén \cite{BB09I} and the corresponding theory for Lorentzian polynomials \cite{BrandenHuh20}, our previous work \cite{Fang-MaGard} showed that Gårding polynomials also admit a similar theory. From this point of view, ideal Gårding polynomials form a natural intermediate class between stable and Gårding polynomials.

One main contribution of our paper is a geometric model for univariate G{\aa}rding polynomials via Pitman--Stanley polytopes \cite{PS}, providing a volume interpretation. This yields quotient concavity and Newton--Maclaurin type inequalities and leads to the polarization theorem.

From the PDE perspective, ideal G{\aa}rding polynomials provide
a flexible source of fully nonlinear elliptic operators of Hessian type. They are are well suited for the
Caffarelli--Nirenberg--Spruck theory \cite{caffarelli1985dirichlet} where both ellipticity and convexity/concavity are  essential.  These examples
include the Monge--Amp\`ere equation, $\sigma_k$-Hessian equations, special Lagrangian equation, J-equation, inverse $\sigma_k$ equations and many more.  The related works are extensive; see, for instance 
\cite{CGY1chang2002equation,HarveyLawson82,yuan06,chang2010liouville,hou2010second,chou-wang2001,CHY2chang2011prescribing,GSgursky2018formal,MR1719551,viaclovskytrans2000,shankar2021rigidity,collins2017convergence,chen2021j,Fang-Lai-Ma,MR1284912,MR2487853,szekelyhidi2018fully,Song2020NakaiMoishezonCF,datar2021numerical,MR4278951,Lin23JFA,lin2023solvabilitygeneralinversesigmak,FANG2024109867}. 

In a recent preprint~\cite{FangMaWuLiouville}, together with J. Wu, we establish a Liouville-type theorem for degenerate elliptic Hessian equations, for which ideal G{\aa}rding polynomials provide a natural source of examples. Further connections with K\"ahler geometry as in ~\cite{FANG2024109867} will be explored in future work.

The paper is organized as follows. In Section~2, we recall basic properties of G{\aa}rding polynomials. In Section~3, we establish elementary properties of the ideal G{\aa}rding polynomials. Sections~4–6 develop the universal quotient theory and its geometric interpretation. In Section~7, we prove the polarization theorem. The remaining sections develop the linear preserver theory, closure properties, and applications to concavity and Lorentzian polynomials.

\section{Preliminaries}

We fix some notation and briefly recall results on \(\gar{}\) polynomials from our previous work~\cite{Fang-MaGard}. We also review the polynomial transformations that will be used in later sections.

\subsection{Notation}

Let \([n]=\{1,2,\ldots,n\}\), and let \(\R[\x]\) denote the polynomial ring in the variables \(\x=(x_1,\ldots,x_n)\). Set
\(
\mathbb N:=\{0,1,2,\ldots\}.
\)
For a multi-index \(\alpha=(\alpha_1,\ldots,\alpha_n)\in\mathbb N^n\), write
\[
|\alpha|:=\sum_{i=1}^n \alpha_i,\qquad
\partial^\alpha:=\partial_1^{\alpha_1}\cdots \partial_n^{\alpha_n},\qquad
\x^\alpha:=x_1^{\alpha_1}\cdots x_n^{\alpha_n},
\]
where 
\(
\partial_i:=\frac{\partial}{\partial x_i}.
\)      

For \(k\in\mathbb N\), let
\[
\sigma_k(\x):=
\sum_{1\le i_1<\cdots<i_k\le n}\prod_{j=1}^k x_{i_j}
\]
be the \(k\)-th elementary symmetric polynomial. We use the conventions
\[
\sigma_0(\x)=1,\qquad \sigma_k(\x)=0\ \text{for }k>n.
\]
For a degree $d$ polynomial $f$, its homogenization at a point $\x_0$ is defined as
\[
\mathrm{H}_{\x_0}f(\mathbf{x},y):=f\left(\x_0+\frac{\mathbf{x}}{y}\right)y^{d}.\]
If $\x_0=\mathbf{0}$ then we simply denote $\HH f(\x,y)=\HH_\mathbf{0}f(\x,y)$.

For \(\kappa=(\kappa_1,\ldots,\kappa_n)\in\mathbb N^n\), define
\[
\R_\kappa[\x]
:=
\{\,f\in\R[\x]: \deg_{x_i}f\le \kappa_i \text{ for all } i\in[n]\,\}.
\]   

A polynomial \(f(\x)\) is \emph{multi-affine} if \(\deg_{x_i}f\le 1\) for every \(i\in[n]\). We use the superscript \(\mathfrak a\) for multi-affine polynomials, the superscript \(\mathfrak h\) for homogeneous polynomials, and the subscript \(+\) for polynomials with nonnegative coefficients. For example,
\[
\R^{\mathfrak a}_{+}[\x]
:=
\{\,f\in\R[\x]: \deg_{x_i}f\le 1\ \text{for all }i,\ \text{and }f\text{ has nonnegative coefficients}\,\}.
\]

\subsection{Positive affine maps and \gar{} components} \label{subsec:garcompoents}
Fix the standard coordinate frame on \(\R^n\). For \(\x\in\R^n\), write \(\x>0\) (resp.\ \(\x\ge 0\)) if \(x_i>0\) (resp.\ \(x_i\ge 0\)) for all \(i\). The set
\[
\Gamma_n^+:=\{\x\in\R^n:\x>0\}
\]
is the positive hyper-octant. A \emph{positive affine hyper-octant} is a translate \(\x_0+\Gamma_n^+\) with \(\x_0\in\R^n\).

An affine map \(\mu:\R^n\to\R^m\) has the form
\[
\mu(\x)=A\x+\y_0,
\]
where \(A=(a_{ij})\in M_{m\times n}(\R)\) and \(\y_0\in\R^m\). We call \(\mu\)
\begin{itemize}
\item \emph{positive} (or \emph{weakly positive}) if \(a_{ij}\ge 0\) for all \(i,j\);
\item \emph{strictly positive} if \(\mu\) is positive and each row sum of \(A\) is strictly positive, i.e.
\[
\sum_{j=1}^n a_{ij}>0\qquad\text{for every }i\in[m].
\]
\end{itemize}
Let \(\A_+\) and \(\A_{++}\) denote the sets of positive and strictly positive affine maps, respectively.

Let \(f(\x)\in\R[\x]\). Define the \(\gar{}\) component \(\C_f\subset\R^n\) to be the distinguished connected component of
\(
\{\x\in\R^n:f(\x)>0\}
\)
that passes the \emph{positive ray test}, namely
\begin{equation}\label{eq:PRT}
\C+\Gamma_n^+\subset \C.
\end{equation}

In~\cite{Fang-MaGard}, the class of G{\aa}rding polynomials was introduced and characterized in several equivalent ways. We use the following definition.

\begin{defn}[\gar{} polynomials]
\label{def:garding}\label{def:general garding}
Let
\[
\GS_n^0:=\{f\equiv c:c\ge 0\},
\qquad
\GS_n^1:=\{f(\x):\deg f=1,\,\C_f\neq\emptyset\}.
\]
For \(d\ge 2\), define
\[
\GS_n^d
:=
\left\{
f(\x):
\partial_i f\in \GS_n^{d-1}\ \text{for all }i\in[n],\
\C_f\neq\emptyset,\
\C_f\subset \bigcap_{i=1}^n \C_{\partial_i f}
\right\}.
\]
We write
\[
\GS:=\bigcup_{n,d}\GS_n^d,\qquad
\GS_n:=\bigcup_d \GS_n^d,\qquad
\GS^d:=\bigcup_n \GS_n^d.
\]
We also denote by \(\G\) the set of multi-affine \(\gar{}\) polynomials, by \(\Gh\) the set of homogeneous \(\gar{}\) polynomials, and by \(\Gah:=\G\cap\Gh\).
\end{defn}

\subsection{Polynomial transformations}

The following operations will be used repeatedly.

\begin{defn}[Polynomial transformations]
\label{def:maps}
Let \(f\in\R[\x]\).
\begin{enumerate}[label=(\arabic*)]
\item \label{enu:pullbck}\emph{Pullback by positive affine maps.}
If \(\mu:\R^n\to\R^m\) is a positive affine map, define \(\mu^*:\R[\y]\to\R[\x]\) by
\[
(\mu^*f)(\x):=f(\mu(\x)).
\]
Thus both \(\A_+\) and \(\A_{++}\) act on polynomials by pullback.

\item \label{enu:restr}\emph{Restriction (specialization).}
For \(i\in[n]\) and \(a\in\R\), define
\[
f|_{x_i=a}:=f(x_1,\ldots,x_{i-1},a,x_{i+1},\ldots,x_n).
\]

\item \label{enu:polytransprojection}\emph{Projection (diagonal specialization).}
The diagonal restriction of \(f\) is the univariate polynomial
\[
\proj(f):=f(t,\ldots,t).
\]

\item \emph{Polarization.}
If \(d\le n\), let
\(
\pol:\R_d[x]\to \R^{\mathfrak a}[\x]
\)
be the unique linear map satisfying
\[
\pol(x^k)=\frac{\sigma_k(\x)}{\binom{n}{k}}.
\]

\item \emph{\(\kappa\)-projection.}
For \(\kappa\in\N^n\), let \(\x_\kappa:=(x_{ij})_{i\in[n],\,j\in[\kappa_i]}\). If \(f\in\R_\kappa[\x_\kappa]\), define
\[
\proj_\kappa f:=f|_{x_{ij}=x_i},
\qquad i\in[n],\ j\in[\kappa_i].
\]

\item \label{enu:polytranspolar}\emph{\(\kappa\)-polarization.}
For \(\kappa\in\N^n\) and \(f\in\R_\kappa[\x]\), let
\[
\pol_\kappa:\R_\kappa[\x]\to \R^{\mathfrak a}[\x_\kappa]
\]
be the linear map defined on monomials by
\[
\pol_\kappa(\x^\alpha)
:=
\prod_{i=1}^n
\frac{\sigma_{\alpha_i}(x_{i1},\ldots,x_{i\kappa_i})}{\tbinom{\kappa_i}{\alpha_i}}.
\]

\item \label{enu:polytransPartial}\emph{Partial and directional differentiation.}
For \(i\in[n]\), set
\[
\partial_i f(\x):=\frac{\partial f}{\partial x_i}(\x).
\]
More generally, for \(\a\in\R^n\) with \(\a\ge 0\), define
\[
D_\a f(\x):=\sum_{i=1}^n a_i\,\partial_i f(\x).
\]
\end{enumerate}
\end{defn}

We will use the following preservation theorem from~\cite{Fang-MaGard}.

\begin{thm}
\label{thm:maps-preserve-garding}
Let \(T\) be a transformation in Definition~\ref{def:maps}. If \(f(\x)\in \GS_+[\x]\), then \(Tf\) is also a \(\gar{}\) polynomial. Moreover,  for every \(f\in\GS[\x]\), if \(T\) is one of the transformations in \ref{enu:polytransprojection}--\ref{enu:polytransPartial} or $T=\mu^*$ for some strictly positive affine map $\mu\in\A_{++}$, then \(Tf\in \GS\). 
\end{thm}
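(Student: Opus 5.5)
The plan is to argue by induction on the degree $d$, following the recursive Definition~\ref{def:garding}, and to treat each transformation $T$ separately. For each $T$ there are three things to check. First, $\C_{Tf}$ is nonempty and passes the positive ray test~\eqref{eq:PRT}. Second, $\C_{Tf}$ is contained in every $\C_{\partial_j Tf}$. Third, each $\partial_j Tf$ lies in $\GS^{d-1}$. The base cases $d=0,1$ are direct: a positive affine map sends an affine function with a positive-ray component to another such function, provided the coefficient vector is not killed. For general $f$ this is where strict positivity enters.

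\textbf{Step 1: pullback by $\mu(\x)=A\x+\y_0\in\A_{++}$.} Strict positivity gives $A\Gamma_n^+\subset\Gamma_m^+$ and $A\mathbf 1>0$. Since $\C_f$ is open and passes the positive ray test, it contains some octant $\mathbf p+\Gamma_m^+$. Hence $\mu(t\mathbf 1)=\y_0+tA\mathbf 1\in\C_f$ for $t\gg1$, so $\mu^{-1}(\C_f)\neq\emptyset$.

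Next, $\mu^{-1}(\C_f)+\Gamma_n^+\subset\mu^{-1}(\C_f)$. This set is also connected: any two of its points can be pushed along $\mathbf 1$, staying inside, until both land in the convex set $\mu^{-1}(\mathbf p+\Gamma_m^+)$. A positivity component passing the ray test is unique, since two such components meet far out along $\mathbf 1$. Therefore $\C_{\mu^*f}$ is the component of $\{\mu^*f>0\}$ containing $\mu^{-1}(\C_f)$.

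For the derivatives, the chain rule gives
\[
\partial_j\mu^*f=\sum_i a_{ij}\,\mu^*(\partial_i f).
\]
So the Gårding property of $\partial_j\mu^* f$ reduces to two facts. One is the induction hypothesis for pullbacks. The other is that a nonnegative, nonzero combination of Gårding polynomials whose components all contain a common positive-ray set is again Gårding with component containing that set. The latter I would prove simultaneously in the induction, since it is precisely the statement for $D_\a$ in \ref{enu:polytransPartial}.

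\textbf{Step 2: the remaining operations.} Projection $\proj$ and $\kappa$-projection are pullbacks by the strictly positive linear maps $t\mapsto(t,\dots,t)$ and $x_{ij}\mapsto x_i$, so they follow from Step~1. Partial derivatives follow directly from the definition together with the induction. For $D_\a$, positivity of every $\partial_i f$ on $\C_f$ gives $D_\a f>0$ on $\C_f$. Since $\C_f$ is connected and passes the ray test, $\C_{D_\a f}\supset\C_f$, and the recursion proceeds because $\partial_j D_\a f=D_\a\partial_j f$.

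For the polarizations $\pol_\kappa$, I would use two facts. First, $\proj_\kappa\pol_\kappa=\mathrm{id}$. Second, the derivative commutes with polarization: $\partial_{x_{ij}}\pol_\kappa f=\kappa_i^{-1}\pol_{\kappa-e_i}\partial_i f$, computed on the symmetric functions. With these, induction reduces the problem to showing that $\C_{\pol_\kappa f}$ is nonempty and contained in the components of the derivatives. The natural candidate for $\C_{\pol_\kappa f}$ is the region where all variables lie in $\C_f$-compatible positions; in the univariate case this is the octant above the largest root.

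\textbf{Step 3: general pullbacks and restrictions for $f\in\GS_+$.} When $f$ has nonnegative coefficients, $\C_f\supset\Gamma^+$, and the zero rows allowed in a weakly positive map cause no harm. The image of the positive octant stays in $\overline{\Gamma^+}$, where $f$ and all its derivatives are $\ge0$, and a perturbation argument upgrades this to strict positivity on a positive-ray set.

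The main obstacle I expect is the polarization case \ref{enu:polytranspolar}, specifically proving $\C_{\pol_\kappa f}\neq\emptyset$ with the required nesting when $f$ has no sign condition on its coefficients. My first attempt would be to reduce to the univariate polarization $\pol$ one variable block at a time. The base case is then a Grace–Walsh–Szegő–type root-location statement for $\sigma_k$-expansions, checked on the octant beyond the largest real root of each relevant derivative.
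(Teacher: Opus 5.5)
There is a genuine gap, and it sits in the step you treat as easy. Your handling of $\mu^{-1}(\C_f)$ itself (nonempty, connected, stable under $\Gamma_n^+$) is essentially fine. The problem is the directional-derivative step, on which your pullback step, and hence both projections, ultimately rests. From $D_\a f>0$ on $\C_f$ you only learn that $\C_f$ lies in some component $W$ of $\{D_\a f>0\}$. To conclude $D_\a f\in\GS$ you must show that $W$ passes the positive ray test and that $W\subset\C_{\partial_j D_\a f}$ for every $j$. The identity $\partial_j D_\a f=D_\a\partial_j f$ plus induction yields only $\C_{\partial_j f}\subset\C_{D_\a\partial_j f}$, hence $\C_f\subset\C_{\partial_j D_\a f}$. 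But $W$ is in general larger than the region you control: already for $f=x_1x_2$ and $\a=(1,1)$ one has $W=\{x_1+x_2>0\}\supsetneq\Gamma_2^+=\bigcap_i\C_{\partial_i f}$. On $W\setminus\bigcap_i\C_{\partial_i f}$ nothing in your argument controls the sign of $\sum_i a_i\partial_i\partial_j f$. This nesting is the actual content of the $D_\a$ statement; it would also give the ray test for $W$, by monotonicity along positive directions. Moreover, the auxiliary fact you invoke in Step~1 is false in the stated generality. Both $x^2$ and $(x-1)^2$ are \gar{} with components containing $(1,\infty)$, yet $x^2+(x-1)^2=2x^2-2x+1$ is not, since its positivity set $\R$ is not contained in $\C_{4x-2}=(1/2,\infty)$. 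What the chain rule really gives is $\partial_j\mu^*f=\mu^*(D_{A\e_j}f)$, so everything funnels into the unproved claim for $D_\a$.

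The polarization case, which you rightly flag as the main obstacle, is left as a plan. The classical Grace--Walsh--Szeg\H{o} theorem cannot be its base case: a polynomial with MRS need not be real-rooted (e.g.\ $(x-10)(x^2+1)$), so its polarization is not real stable, and a genuinely new root-location statement is required. Step~3's perturbation argument would likewise need the closedness of $\GS_+$ (\cite[Lemma~5.18]{Fang-MaGard}) and a nonnegative translation part. For negative translations the claim fails, e.g.\ $x_1x_2|_{x_2=-1}=-x_1\notin\GS$. The paper does not reprove this theorem but cites \cite[Sections~8 and~10]{Fang-MaGard}. The parallel argument it carries out for $\I$ in Theorem~\ref{thm:positive-affine-pullback-and-directional-derivatives} runs in the opposite order from yours. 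Polarization is established first, and pullbacks and $D_\v$ are then deduced from the multi-affine symbol theorem (with real stable symbols) plus translation. In that route, the nontrivial input replacing your $D_\a$ step is the convex-combination property of position, Proposition~\ref{lem:subordinatetoboth}, on which the symbol theorem rests.
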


For the proof, see~\cite[Section~8]{Fang-MaGard} for the transformations \ref{enu:pullbck}--\ref{enu:polytranspolar}, and~\cite[Section~10]{Fang-MaGard} for \ref{enu:polytransPartial}.

The corresponding preservation results for ideal G{\aa}rding polynomials
will be proved later in Section~\ref{thm:positive-affine-pullback-and-directional-derivatives}.

\begin{cor}
\label{cor:basicrestriction}
Suppose \(\x\in\R^n\) and \(\y\in\R^m\), and let
\[
\pi:\R^{n+m}\to\R^n,\qquad (\x,\y)\mapsto \x.
\]
Let \(f(\x,\y)\in \GS_{\kappa\oplus\gamma}[\x,\y]\). Suppose that for some \(\alpha\in\N^m\),
\[
\partial_\y^\alpha f(\x,\y)
:=
\Bigl(\prod_{i=1}^m \partial_{y_i}^{\alpha_i}\Bigr)f(\x,\y)\not\equiv 0.
\]
Then, for every \(\x_0\in \pi(\C_{\partial_\y^\alpha f})\), the specialization
\(
\phi_{\x_0}(\y):=f(\x_0,\y)
\)
belongs to \(\GS[\y]\).
\end{cor}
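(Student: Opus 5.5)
Let $\psi(\y):=\partial_\y^\alpha f(\x_0,\y)$, so that $\partial^\alpha_{\y}\phi_{\x_0}=\psi$. The plan is to realize $\phi_{\x_0}$ as a limit of pullbacks of $f$ by strictly positive affine maps, use Theorem~\ref{thm:maps-preserve-garding} for each approximant, and then identify the \gar{} component of the limit using the hypothesis $\x_0\in\pi(\C_{\partial_\y^\alpha f})$.

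\textbf{Approximation.} For $\epsilon>0$ define the affine map $\mu_\epsilon:\R^m\to\R^{n+m}$ by
\[
\mu_\epsilon(\y):=\Bigl(\x_0+\epsilon\textstyle\sum_j y_j\,\mathbf{1},\ \y\Bigr).
\]
Every row of its linear part is nonnegative with positive sum, so $\mu_\epsilon\in\A_{++}$. By Theorem~\ref{thm:maps-preserve-garding}, each $\phi_\epsilon:=\mu_\epsilon^*f$ lies in $\GS[\y]$. As $\epsilon\to0$, the coefficients of $\phi_\epsilon$ converge to those of $\phi_{\x_0}$.

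\textbf{Closedness.} The class $\GS$ is not closed under limits in general; it loses points when the component degenerates. The hypothesis is what prevents this. Choose $\y_0$ with $(\x_0,\y_0)\in\C_{\partial^\alpha_\y f}$. Since $\C_f\subset\C_{\partial_\y^\alpha f}$, the positive ray test applied to the chain of derivatives lets us move along $\y_0+t\mathbf 1$. For large $t$, every intermediate derivative $\partial^\beta_\y f$ with $\beta\le\alpha$ is positive at $(\x_0,\y_0+t\mathbf1)$. Indeed, the components are nested and each contains such a ray point.

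So $\phi_{\x_0}$ and all its relevant derivatives are positive on a nonempty open set stable under adding $\Gamma_m^+$. Hence each candidate component $\C_{\partial^\beta\phi_{\x_0}}$ is nonempty. The nesting conditions $\C_{\phi}\subset\C_{\partial_i\phi}$ then pass to the limit by the following argument. The components of $\phi_\epsilon$ contain a fixed point $\y_0+t\mathbf 1$ for all small $\epsilon$. A point in $\C_{\phi_{\x_0}}$ is connected to $\y_0+t\mathbf1$ by a compact path on which $\phi_{\x_0}>\delta>0$. For $\epsilon$ small the same path lies in $\C_{\phi_\epsilon}$, hence in $\C_{\partial_i\phi_\epsilon}$, so $\partial_i\phi_{\x_0}\ge0$ along it. Strict positivity then follows from the recursive structure, by induction on degree: we use real-analyticity and the fact that $\partial_i\phi_{\x_0}$ is itself \gar{} by the inductive hypothesis applied with $\alpha+e_i$ where nonzero. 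Where $\partial_i\phi_{\x_0}\equiv0$ there is nothing to check.

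\textbf{Main obstacle.} The delicate step is this upgrade from $\ge0$ to $>0$ on $\C_{\phi_{\x_0}}$, together with handling derivatives that vanish identically after specialization. I would run the induction downward from the top-order derivative $\psi$, which is positive at the base point. At each stage I would use the following fact for a nonzero \gar{} polynomial $g$: if $g\ge0$ on a connected open set meeting $\C_g$, then $g>0$ there. This holds because the zero set of $g$ bounds $\C_g$, and an interior zero with $g\ge0$ nearby would contradict the positive-ray monotonicity $\partial_j g>0$ on $\C_g$.
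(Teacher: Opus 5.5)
Your proposal argues by approximation and closedness, while the paper polarizes in $\y$, reduces to the multi-affine case, and cites Lemma~5.15 of \cite{Fang-MaGard}. As written, your argument has two genuine gaps.

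\textbf{First gap: the fibre over $\x_0$ meeting $\C_f$.} Everything rests on the claim that, for large $t$, the point $(\x_0,\y_0+t\mathbf 1)$ lies in $\C_{\partial^\beta_\y f}$ for every $\beta\le\alpha$, in particular in $\C_f$. This is exactly where the hypothesis $\x_0\in\pi(\C_{\partial^\alpha_\y f})$ must be used, and you do not prove it.
\begin{enumerate}
\item The inclusion $\C_f\subset\C_{\partial^\alpha_\y f}$ points the wrong way.
\item The positive ray test only keeps you inside the larger set $\C_{\partial^\alpha_\y f}$, so ``each contains such a ray point'' restates the conclusion.
\item Positivity of $\partial^\beta_\y f$ would not suffice anyway, since you must land in the distinguished component.
\item The direction $\mathbf 1$ gives no monotonicity, because only $\partial_{y_i}\partial^\beta_\y f$ is known to be positive.
\end{enumerate}
The step can be proved as follows. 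Take $\beta=\alpha-\e_i$ and $g=\partial^\beta_\y f$. Join $(\x_0,\y_0)$ to some $q\in\C_g$ by a path $\gamma$ inside the connected set $\C_{\partial_{y_i}g}\supset\C_g$. Since $\C_{\partial_{y_i}g}\subset\C_{\partial^2_{y_i}g}$ (or $\partial^2_{y_i}g\equiv0$), $\partial_{y_i}g$ is nondecreasing along $\e_i$-rays there. Hence $g(\gamma(s)+T\e_i)\ge\min g\circ\gamma+T\min\partial_{y_i}g\circ\gamma>0$ for large $T$. So $s\mapsto\gamma(s)+T\e_i$ joins $(\x_0,\y_0+T\e_i)$ to $q+T\e_i\in\C_g$ inside $\{g>0\}$. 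Induction on $|\alpha|$ then gives a point of $\C_f$ over $\x_0$.

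\textbf{Second gap: the upgrade from $\ge0$ to $>0$.} This step relies on a false claim. The polynomial $g=y^2$ is G{\aa}rding with $\C_g=(0,\infty)$ and $g\ge0$ on $\R$, yet $g(0)=0$. The information you retain in the limit is genuinely too weak. For $\phi=y^3/3+1$, $\phi'=y^2$ is G{\aa}rding, $\phi'\ge0$ on $\C_\phi=(-3^{1/3},\infty)$, and $\C_\phi\cap\C_{\phi'}\neq\emptyset$, but $\C_\phi\not\subset\C_{\phi'}$.

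\textbf{A direct route.} Once the first step is established, the approximation is unnecessary. Consider the slice $S_\beta:=\{\y:(\x_0,\y)\in\C_{\partial^\beta_\y f}\}$.
\begin{enumerate}
\item It is nonempty: it contains the point found above, because $\C_f\subset\C_{\partial^\beta_\y f}$.
\item It is open and stable under $+\Gamma_m^+$, hence connected.
\item Any connected subset of $\{\partial^\beta_\y\phi_{\x_0}>0\}$ meeting $S_\beta$ lifts to a connected subset of $\{\partial^\beta_\y f>0\}$ meeting the component $\C_{\partial^\beta_\y f}$, so it lies in $S_\beta$.
\end{enumerate}
Therefore $S_\beta=\C_{\partial^\beta_\y\phi_{\x_0}}$, and the nesting conditions for $\phi_{\x_0}$ are just slices of those for $f$. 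This gives a self-contained, elementary proof that avoids both the limit argument and the multi-affine lemma.
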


\begin{proof}
By \cite[Section~8]{Fang-MaGard}, \(\kappa\)-polarization and \(\kappa\)-projection preserve \(\gar{}\) polynomials. After polarizing in the \(\y\)-variables we may assume that \(f\) is multi-affine in \(\y\), that is, \(\deg_{y_i}f\le 1\) for all \(i\in[m]\). The claim then follows from~\cite[Lemma~5.15]{Fang-MaGard}.
\end{proof}

\subsection{Rayleigh property}

A polynomial \(f(\x)\) with nonnegative coefficients is called \emph{Rayleigh} if for every \(\x\ge 0\), every \(\alpha\in\N^n\), and all \(i,j\in[n]\),
\begin{equation}\label{eq:-18}
\partial^\alpha f(\x)\,\partial^{\alpha+\e_i+\e_j}f(\x)
-
\partial^{\alpha+\e_i}f(\x)\,\partial^{\alpha+\e_j}f(\x)
\le 0.
\end{equation}

\begin{thm}
\label{thm:GardtoRayleigh}
If \(f(\x)\in \GS_+[\x]\), then \(f\) is Rayleigh.
\end{thm}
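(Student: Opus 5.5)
We reduce to the multi-affine case and then argue there by induction on the number of variables. The tools are the preservation results (Theorem~\ref{thm:maps-preserve-garding}) and the restriction statement (Corollary~\ref{cor:basicrestriction}).

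\emph{Reduction.} Let \(f\in\GS_+[\x]\) with \(\deg_{x_i}f\le\kappa_i\). Its \(\kappa\)-polarization \(F=\pol_\kappa f\) again lies in \(\GS_+\), since polarization preserves \gar{} polynomials and nonnegative coefficients. Every derivative \(\partial^\alpha f\) is a positive multiple of \(\proj_\kappa\) applied to a derivative of \(F\) in suitably chosen distinct polarized variables. The second derivatives \(\partial_i\partial_j f\) with \(i=j\) correspond to \(\partial_{x_{i1}}\partial_{x_{i2}}F\), with the appropriate symmetric-average constants. Rayleigh inequalities at a point \(\x\ge 0\) therefore transfer from \(F\) at the diagonal point \(\x_\kappa\ge0\) to \(f\). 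The constants \(\binom{\kappa_i}{\alpha_i}\) need to be tracked. If it is awkward to get the exact identity for \(i=j\), we average over the choice of polarized variables and use the symmetry of \(F\) within each block. So it suffices to treat multi-affine \(f\in\GS_+\). Moreover, \(\partial^\alpha f\) of a multi-affine \gar{} polynomial with nonnegative coefficients is again such a polynomial by Theorem~\ref{thm:maps-preserve-garding}. Hence we may take \(\alpha=0\) and only need to show
\[
f\,\partial_i\partial_j f\le \partial_i f\,\partial_j f\quad\text{on }\R^n_{\ge0},\ i\ne j,
\]
since for \(i=j\) the left side vanishes in the multi-affine case.

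\emph{Two-variable core.} Fix \(i\ne j\) and a point \(\x\ge0\). Specialize all other variables to their values, keeping \(x_i,x_j\) free. Because the coefficients are nonnegative, the resulting polynomial \(g(x_i,x_j)=a+bx_i+cx_j+ex_ix_j\) has \(a,b,c,e\ge0\) and is the pullback of \(f\) by a positive affine map. It is therefore \gar{} by Theorem~\ref{thm:maps-preserve-garding}. The desired inequality at this point is exactly \(ae\le bc\), i.e., the inequality \(g\,\partial_{12}g\le\partial_1 g\,\partial_2 g\), which for bilinear \(g\) is the constant \(ae-bc\le 0\).

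Suppose instead \(ae>bc\), which forces \(e>0\). Write \(g=e(x_i+c/e)(x_j+b/e)+(a-bc/e)\). Then \(g>0\) everywhere on the region where \((x_i+c/e)(x_j+b/e)>-(ae-bc)/e^2\). This region is a connected neighbourhood of the hyperbola's two branches together with the "middle", so the positive ray test cannot single out a component. The cleanest formulation uses the definition directly: \(\C_g\) must lie inside \(\C_{\partial_i g}=\{x_j>-b/e\}\) and \(\C_{\partial_j g}=\{x_i>-c/e\}\). But the point \((-c/e,-b/e)\) has \(g=a-bc/e>0\) and lies in the same connected component of \(\{g>0\}\) as the positive quadrant, so \(\C_g\) contains it. This contradicts \(\C_g\subset\C_{\partial_i g}\cap\C_{\partial_j g}\). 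Hence \(ae\le bc\).

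\emph{Main obstacle.} The delicate step is the reduction bookkeeping: checking that the polarization constants make the diagonal Rayleigh inequality for \(\partial_i^2\) follow exactly, rather than only up to a factor. If that fails, we instead handle \(i=j\) by restricting to the single variable \(x_i\). There we use the univariate fact that a \gar{} polynomial with nonnegative coefficients has only real roots located compatibly with \(\C\), so that \(\log\) of it is concave to the right of its largest root, which gives \(pp''\le p'^2\). This univariate fact would itself follow from the two-variable argument applied to the polarization.
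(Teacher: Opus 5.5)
Your argument is correct, and its skeleton is the paper's. You reduce to \(\alpha=0\), replace \(f\) by \(F=\pol_\kappa f\in\GS_+\), and push the multi-affine inequality for \(F\) back to \(f\) by \(\kappa\)-projection.

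Where you genuinely differ is the multi-affine core. The paper simply imports it from \cite[Lemma~5.12]{Fang-MaGard}. You prove it directly, in three steps:
\begin{enumerate}
\item Specialize all variables except \(x_i,x_j\) at nonnegative values, giving the bilinear \(g=a+bx_i+cx_j+ex_ix_j\), whose Rayleigh expression is the constant \(ae-bc\).
\item If \(ae>bc\), then \(\{g>0\}=\{(x_i+c/e)(x_j+b/e)>-(ae-bc)/e^2\}\) is star-shaped about \((-c/e,-b/e)\). Hence any nonempty \(\C_g\) contains that point.
\item That point is not in \(\C_{\partial_i g}=\{x_j>-b/e\}\), contradicting \(g\in\GS\).
\end{enumerate}
In fact, when \(ae>bc\) the set \(\{g>0\}\) fails the positive ray test, so \(\C_g=\emptyset\); either way \(g\notin\GS\).

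Your route makes the proof self-contained. It also shows that, after specialization, the Rayleigh inequality is literally the degree-two inclusion \(\C_g\subset\C_{\partial_i g}\cap\C_{\partial_j g}\). The price is that you also need the weakly positive pullback case (specialization at nonnegative values) of Theorem~\ref{thm:maps-preserve-garding}, which the paper's proof never invokes.

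The step you flagged as the main obstacle is routine. From \(\kappa_i\,\partial_{x_{i1}}\pol_\kappa f=\pol_{\kappa-\e_i}(\partial_i f)\), projecting the inequality for \(F\) in the distinct variables \(x_{i1},x_{i2}\) gives exactly \(f\,\partial_i^2 f\le(1-\kappa_i^{-1})(\partial_i f)^2\). Since \(1-\kappa_i^{-1}\le 1\), this is stronger than needed, and no averaging is required.

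You should discard the fallback, because its premise is false. Univariate \gar{} polynomials with nonnegative coefficients need not be real-rooted. For example, \(x^3+3x^2+3x=x(x^2+3x+3)\) has root sequence \((0,-1,-1)\). Its conclusion \(pp''\le p'^2\) is true, since it is the case \(n=1\) of the statement, but not for the reason you give.
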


\begin{proof}
It suffices to consider \(\alpha=\mathbf 0\), since each \(\partial^\alpha f\) is again a \(\gar{}\) polynomial.

If \(f\) is multi-affine, the claim is~\cite[Lemma~5.12]{Fang-MaGard}. Otherwise, choose \(\kappa\in\N^n\) such that \(f\in \GS_{\kappa,+}[\x]\), with \(\kappa_i\ge 2\) whenever needed, and let
\[
\x_\kappa=(x_{ij})_{i\in[n],\,j\in[\kappa_i]},
\qquad
F(\x_\kappa):=\pol_\kappa f(\x_\kappa).
\]
Then for \(\x_\kappa\ge 0\),
\[
F\,\partial_{x_{i1}}\partial_{x_{j2}}F
-
\partial_{x_{i1}}F\,\partial_{x_{j2}}F
\le 0.
\]
Using the polarization identity
\[
\kappa_i\,\partial_{x_{i1}}\pol_\kappa(f)
=
\pol_{\kappa-\e_i}(\partial_i f)
\]
and then applying \(\kappa\)-projection, we obtain
\[
f(\x)\,\partial_i\partial_j f(\x)-c\,\partial_i f(\x)\,\partial_j f(\x)\le 0,
\]
where \(c=1-\frac{1}{\kappa_i}\) if \(i=j\), and \(c=1\) if \(i\ne j\). Hence \(f\) is Rayleigh.
\end{proof}

\subsection{An auxiliary lemma}
We record a simple lemma that will be used repeatedly.

\begin{lem}
\label{lem:bootstrapconcavity}
Let \(U\subset\R^n\) be open and convex, and let \(g,h:U\to(0,\infty)\). If for some \(d>0\), the functions \(g^{1/d}\) and \(h\) are concave on \(U\), then \((gh)^{1/(d+1)}\) is also concave on \(U\).
\end{lem}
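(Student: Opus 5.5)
We write $g^{1/d}=:u$, which is positive and concave on $U$, and note that $h$ is positive and concave on $U$ by hypothesis. Then
\[
(gh)^{1/(d+1)}=u^{d/(d+1)}\,h^{1/(d+1)},
\]
so the claim reduces to a statement about weighted geometric means of positive concave functions. The plan is to show that the weighted geometric mean $G(a,b)=a^{\theta}b^{1-\theta}$, with $\theta=d/(d+1)\in(0,1)$, is concave and nondecreasing in each argument on $(0,\infty)^2$. Composing such a $G$ with the concave pair $(u,h)$ then gives a concave function.

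\textbf{Properties of $G$.} First, $G$ is positively $1$-homogeneous. Second, $G$ is concave, by either of two routine checks:
\begin{itemize}
\item the Hessian of $\log G$ is a nonpositive combination of $-1/a^2$ and $-1/b^2$, and a $1$-homogeneous function that is log-concave is concave;
\item more directly, weighted AM--GM gives $G(a,b)=\min_{s>0}\bigl(\theta s^{-(1-\theta)}a+(1-\theta)s^{\theta}b\bigr)$ after normalization, and an infimum of linear functions is concave.
\end{itemize}
Third, $G$ is increasing in each variable, since $\theta,1-\theta>0$.

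\textbf{The composition.} Take $\x,\y\in U$ and $t\in[0,1]$. Concavity of $u$ and $h$ gives
\[
u(t\x+(1-t)\y)\ge tu(\x)+(1-t)u(\y),\qquad h(t\x+(1-t)\y)\ge th(\x)+(1-t)h(\y).
\]
Monotonicity of $G$ lets us replace its arguments by these smaller convex combinations, and concavity of $G$ then yields
\[
G\bigl(u(t\x+(1-t)\y),h(t\x+(1-t)\y)\bigr)\ge tG(u(\x),h(\x))+(1-t)G(u(\y),h(\y)).
\]
This is exactly concavity of $(gh)^{1/(d+1)}$ on $U$. Positivity of $g$ and $h$ ensures that all the powers are well defined.

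The only real content is the concavity of $G$. I expect no obstacle here: the AM--GM infimum representation settles it cleanly without any Hessian computation, and it is the route I would use.
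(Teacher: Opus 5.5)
Your proof is correct and follows essentially the same route as the paper. The paper also rewrites $(gh)^{1/(d+1)}=g^{d/(d+1)}h^{1/(d+1)}$ and uses monotonicity of the powers (implicitly) to substitute the convex-combination lower bounds from concavity of $g^{1/d}$ and $h$. It then invokes H\"older's inequality for the final step, which is exactly the concavity of the weighted geometric mean $G(a,b)=a^{\theta}b^{1-\theta}$ that you establish via the AM--GM infimum representation.
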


\begin{proof}
Set \(f:=gh\). For \(t\in[0,1]\) and \(\a,\b\in U\),
\[
g(t\a+(1-t)\b)^{1/d}\ge t\,g(\a)^{1/d}+(1-t)\,g(\b)^{1/d},
\]
and
\[
h(t\a+(1-t)\b)\ge t\,h(\a)+(1-t)\,h(\b).
\]
 
Set \(z:=t\a+(1-t)\b\). Then
\[
\begin{aligned}
f(z)^{\frac1{d+1}}
&= g(z)^{\frac{d}{d+1}}\,h(z)^{\frac1{d+1}} \\
&\ge
\bigl(t\,g(\a)^{\frac1{d}}+(1-t)\,g(\b)^{\frac1{d}}\bigr)^{\frac{d}{d+1}}
\bigl(t\,h(\a)+(1-t)\,h(\b)\bigr)^{\frac1{d+1}} \\
&\ge
t\,(g(\a)h(\a))^{\frac1{d+1}}
+(1-t)\,(g(\b)h(\b))^{\frac1{d+1}}.
\end{aligned}
\]
where the last line is Hölder's inequality. Thus, \((gh)^{1/(d+1)}\) is concave on \(U\).
\end{proof}
\section{Elementary Properties}

We record some immediate consequences of the definition of
ideal G{\aa}rding polynomials. In particular, we establish basic stability
properties under translation and topological limits.
 \begin{lem} 
\label{lem:translation-invariance}
Let $f\in \I_n$ and let $\a\in\R^n$. Define the translation
\[
\T_\a(\x):=\x+\a,
\qquad
\T_\a^* f(\x):=f(\T_\a(\x))=f(\x+\a).
\]
Then
\(
\T_\a^* f \in \I_n.
\)
Moreover, if $\a\in \C_f$, then
\(
\T_\a^* f \in \I_{+,n}.
\)
\end{lem}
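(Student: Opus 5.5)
We argue by induction on the degree $d$, using the recursive Definition~\ref{def:ideal}. The basic observation is that translation commutes with differentiation, $\partial_i(\T_\a^* f)=\T_\a^*(\partial_i f)$, and that translation moves components rigidly. For any polynomial $g$, the map $\x\mapsto \x+\a$ is a homeomorphism. It carries $\{\T_\a^* g>0\}$ onto $\{g>0\}$, and hence carries connected components to connected components. It also commutes with adding $\Gamma_n^+$: a component $\C$ of $\{g>0\}$ satisfies $\C+\Gamma_n^+\subset\C$ if and only if $(\C-\a)+\Gamma_n^+\subset \C-\a$. Since the \gar{} component is the distinguished component passing the positive ray test \eqref{eq:PRT}, this gives
\[
\C_{\T_\a^* g}=\C_g-\a .
\]
This identity holds for every $g$ for which $\C_g$ is defined, including both $g=f$ and $g=\partial_i f$.

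For the induction itself, the cases $d=0$ and $d=1$ are immediate. Constants are unchanged by translation. For a degree one polynomial $f$ with $\C_f\neq\emptyset$, the translate still has degree one, and its component $\C_f-\a$ is nonempty. Now let $d\ge 2$ and $f\in\I_n^d$. By the inductive hypothesis, $\partial_i(\T_\a^*f)=\T_\a^*(\partial_i f)\in \I_n^{d-1}$ for every $i$. The three conditions on the component also transfer. The set $\C_{\T_\a^*f}=\C_f-\a$ is nonempty and convex, since it is a translate of a nonempty convex set. The inclusion holds because
\[
\C_f-\a\subset\bigcap_i\bigl(\C_{\partial_i f}-\a\bigr)=\bigcap_i \C_{\partial_i \T_\a^* f}.
\]
Hence $\T_\a^*f\in\I_n^d$.

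For the second claim, suppose $\a\in\C_f$. Then $\mathbf 0\in\C_{\T_\a^*f}$, so it remains to show that the coefficients of $g:=\T_\a^*f$ are nonnegative. The coefficient of $\x^\beta$ in $g$ is $\partial^\beta g(\mathbf 0)/\beta!$, so it suffices to prove $\partial^\beta g(\mathbf 0)\ge 0$ for all $\beta$. Iterating the nested inclusion $\C_g\subset\C_{\partial_i g}$ from the definition gives $\mathbf 0\in\C_g\subset \C_{\partial^\beta g}$ whenever $\partial^\beta g\not\equiv 0$. Then $\partial^\beta g(\mathbf 0)>0$, because each \gar{} component lies inside the positivity set of its polynomial. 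The nested inclusion is only required for derivatives of degree at least two, so the last step needs a separate check when $\partial^\beta g$ is a constant or has degree one. A nonzero constant component in $\GS^0$ is $c>0$, whose component is all of $\R^n$, so it is positive. A degree one derivative still has a component containing $\mathbf 0$ by the inclusion at the previous level, so it is positive at $\mathbf 0$ as well. Therefore all coefficients are nonnegative and $\T_\a^*f\in\I_{+,n}$.

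The main point of care is the identity $\C_{\T_\a^*g}=\C_g-\a$. This requires the distinguished component to be characterized intrinsically, by being a connected component of the positivity set that passes the positive ray test, so that it is translation-equivariant. Uniqueness of such a component for \gar{} polynomials is taken from \cite{Fang-MaGard}. The only other bookkeeping is the low-degree constants in the coefficient argument, which is handled above.
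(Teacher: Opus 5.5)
Your argument is correct and is the routine verification the paper leaves implicit; the lemma is stated there without proof. Translation commutes with each $\partial_i$, satisfies $\C_{\T_\a^* g}=\C_g-\a$, and $\mathbf 0\in\C_{\T_\a^*f}$ then forces $\partial^\beta(\T_\a^*f)(\mathbf 0)\ge 0$ for every $\beta$. One step is stated too quickly: the constant derivatives of members of $\I^1=\GS^1$ are not placed in $\GS^0$ by the recursion. Their nonnegativity still follows directly, since the component of a linear $h=\mathbf c\cdot\x+c_0$ is the whole half-space $\{h>0\}$, and this is stable under adding $\Gamma_n^+$ only if $\mathbf c\ge 0$.
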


\begin{lem} \label{lem:restrictionI_+} Let $f(\x,\y)\in \I_+[\x,\y]$. Then $f(\x,\mathbf{0})\in \I_+[\x]$. 
\end{lem}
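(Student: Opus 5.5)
The plan is to argue by induction on the degree $d$, following the recursive structure of Definition~\ref{def:ideal}. Write $g(\x):=f(\x,\mathbf 0)$. Since $f$ has nonnegative coefficients, so does $g$. The cases $d=0,1$ are immediate: a nonnegative constant restricts to itself. A degree one polynomial with nonnegative coefficients restricts either to a nonzero linear form with nonnegative coefficients, which lies in $\GS^1$, or to a nonnegative constant. Every derivative satisfies $\partial_{x_i} g=(\partial_{x_i}f)(\x,\mathbf 0)$, and $\partial_{x_i}f\in\I_+^{d-1}$, so by the inductive hypothesis $\partial_{x_i}g\in\I_+$. It therefore remains to check the conditions on $\C_g$: it is nonempty and convex, and $\C_g\subset\C_{\partial_{x_i} g}$.

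The key step is to identify $\C_g$ with a slice of $\C_f$. I would use the set
\[
S:=\{\x:(\x,\mathbf 0)\in\overline{\C_f}\}^{\circ}\cap\{g>0\},
\]
or more concretely the limit description
\[
\C_g=\{\x:\ (\x,\epsilon\mathbf 1)\in \C_f\ \text{for all small }\epsilon>0 \text{ and } g(\x)>0\}.
\]
Since $f\in\GS_+$, Theorem~\ref{thm:maps-preserve-garding} shows that the restriction $g$ is G\aa{}rding. Nonnegativity of coefficients gives $\Gamma^+\subset\C_f$, and hence $\Gamma^+\subset\C_g$, so $\C_g\ne\emptyset$.

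\textbf{Convexity.} To show that $\C_g$ is convex, I would approximate. For $\epsilon>0$, the slice $\{\x:(\x,\epsilon\mathbf 1)\in\C_f\}$ is convex, being a slice of a convex set. By the positive ray test these slices increase as $\epsilon$ increases. I would show that $\C_g$ equals the interior of $\bigcap_{\epsilon>0}$ of these slices, intersected with $\{g>0\}$, and then pick out the correct component. Convexity of this limit follows because an intersection of convex sets is convex.

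The delicate point is the inclusion $\C_g\subset\{\x:(\x,\epsilon\mathbf 1)\in\C_f\}$. I expect to derive it from the fact that along the ray $t\mapsto(\x,t\mathbf 1)$, $t\ge 0$, the polynomial $f$ stays positive once $g(\x)>0$ and $\x\in\C_g$. This monotonicity should come from the inclusion $\C_f\subset\C_{\partial_{y_j}f}$, applied inductively and combined with connectedness, or alternatively from the Rayleigh/log-concavity behaviour supplied by Theorem~\ref{thm:GardtoRayleigh}.

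\textbf{Main obstacle.} Convexity of $\C_g$ together with the connected-component bookkeeping is the step I expect to be hardest. My first attempt would be to reduce to a single $y$-variable at a time, which is enough by iterating. I would then use that the univariate polynomial $t\mapsto f(\x,t)$ is G\aa{}rding for $\x$ near $\C_g$, by Corollary~\ref{cor:basicrestriction}. If $g(\x)>0$, then $t=0$ lies to the right of all its real roots, so $(\x,t)\in\C_f$ for all $t\ge0$. This gives $\C_g=\{\x:(\x,0)\in\C_f\}$, a slice of the convex set $\C_f$, and hence convex. The inclusion $\C_g\subset\C_{\partial_{x_i}g}$ then follows from $\C_f\subset\C_{\partial_{x_i}f}$ by taking slices at $\y=\mathbf 0$.
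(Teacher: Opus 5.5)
Your overall route is the paper's: $g:=f(\x,\mathbf 0)$ is G\aa{}rding by Theorem~\ref{thm:maps-preserve-garding}, and convexity of $\C_g$ comes from identifying it with the slice $S_0:=\{\x:(\x,\mathbf 0)\in\C_f\}$ of the convex set $\C_f$. (Your induction on degree takes care of the sets $\C_{\partial^\alpha g}$.) The paper asserts this slice identity without proof. The justification you give for it, which is the step you correctly flag as the crux, does not work.

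\begin{itemize}
\item The claim that $g(\x)>0$ puts $t=0$ to the right of all real roots of $t\mapsto f(\x,t)$ is false. Take $f=(x+y)^2$, which is real stable and hence lies in $\I_+[x,y]$. Then $g(-1)=1>0$, yet $f(-1,t)=(t-1)^2$ vanishes at $t=1$.
\item If you restrict to $\x\in\C_g$, the claim is essentially the statement $(\x,0)\in\C_f$. That is the inclusion $\C_g\subset S_0$ you are trying to prove, so the argument is circular.
\item Corollary~\ref{cor:basicrestriction} requires $\x\in\pi(\C_{\partial_{\y}^{\alpha} f})$, which you have not verified for $\x\in\C_g$.
\item Positivity of $f$ along the ray $(\x,t)$, $t\ge0$, only places the ray in $\{f>0\}$, not in the component $\C_f$, unless you already know $\x\in\pi(\C_f)$.
\end{itemize}
Your $\epsilon$-approximation sketch does not close either: once you intersect with $\{g>0\}$ and pick out a component, convexity is no longer automatic.

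What is missing is a simple open--closed argument; no root analysis is needed. Assume $g\not\equiv0$.
\begin{itemize}
\item Since $f$ has nonnegative coefficients, $\Gamma^+_{n+m}\subset\C_f$. For $\x\in\Gamma_n^+$ one has $f(\x,t\mathbf 1)\ge g(\x)>0$ for $0\le t\le 1$, so $(\x,\mathbf 0)$ is joined to $\C_f$ inside $\{f>0\}$. Thus $\Gamma_n^+\subset S_0$, and also $\Gamma_n^+\subset\C_g$.
\item The set $S_0\cap\C_g$ is open. It is also relatively closed in $\C_g$: if $\x\in\C_g$ is a limit of points of $S_0$, then $f(\x,\mathbf 0)=g(\x)>0$. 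So a small ball around $(\x,\mathbf 0)$ lies in $\{f>0\}$ and meets $\C_f$, hence is contained in $\C_f$.
\item Since $\C_g$ is connected, $\C_g\subset S_0$.
\item Conversely, $S_0$ is convex; this is where $f\in\I$ is used. Hence $S_0$ is connected, lies in $\{g>0\}$, and meets $\C_g$, so $S_0\subset\C_g$.
\end{itemize}
Therefore $\C_g=S_0$ is convex. Applying the same argument to $\partial_{\x}^{\alpha} f\in\I_+$ gives $\C_{\partial^\alpha g}=\{\x:(\x,\mathbf 0)\in\C_{\partial_{\x}^{\alpha} f}\}$, which is exactly the identity the paper's proof relies on.
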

\begin{proof}
    Let $g(\x):=f(\x,\mathbf{0})$. Since the restriction operation preserves $\GS_+$ by Theorem \ref{thm:maps-preserve-garding},  $f(\x,\mathbf{0})\in\GS_+[\x]$. Then \[\C_{\pdv^\alpha g}=\{\x:\,(\x,\mathbf{0})\in \C_{\pdv^\alpha f}\}.\] Since $f$ is ideal, $\C_{\pdv^\alpha f}$ is convex and $\C_{\pdv^\alpha g}$ as a slice of a convex set is also convex. Hence $g(\x)\in \I_+[\x]$.
\end{proof}

\begin{lem} 
\label{lem:external-products}
Let \(f\in\I[\x]\) and \(g\in\I[\y]\), where the variables \(\x\) and
\(\y\) are disjoint. Then
\[
f(\x)g(\y)\in\I[\x,\y].
\]
The same statement holds for \(\I_+\).
\end{lem}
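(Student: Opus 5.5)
We argue by induction on the total degree $d=\deg f+\deg g$, working straight from the recursive Definition~\ref{def:ideal}. We also need two facts about G\aa rding polynomials: the product $fg$ is G\aa rding, and
\[
\C_{fg}=\C_f\times\C_g .
\]
Both should follow from \cite{Fang-MaGard}. If they are not already there, we show the component identity directly. The set $\C_f\times\C_g$ is connected, and $fg>0$ on it. It passes the positive ray test, since $\Gamma^+_{n+m}=\Gamma_n^+\times\Gamma_m^+$. It is a full component of $\{fg>0\}$: at a boundary point of $\C_f\times\C_g$ one of the factors vanishes, so the point is not in $\{fg>0\}$. By uniqueness of the component passing the positive ray test, this identifies $\C_{fg}$.

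Convexity of $\C_{fg}$ is then immediate, because a product of convex sets is convex.

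For the recursive conditions, take a variable $x_i$. We have $\partial_{x_i}(fg)=(\partial_i f)\,g$, and $\partial_i f\in\I[\x]$ because $f$ is ideal. This product has smaller total degree, so the induction hypothesis gives $(\partial_i f)g\in\I$. If $\partial_i f\equiv 0$, the derivative is $0$, which is the constant in $\I^0$; we adopt the same convention the definition uses for vanishing derivatives. The $y_j$-derivatives are handled symmetrically.

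For the containment condition, the product formula gives
\[
\C_{\partial_{x_i}(fg)}=\C_{\partial_i f}\times\C_g\supset \C_f\times\C_g=\C_{fg},
\]
using $\C_f\subset\C_{\partial_i f}$. The $y_j$-directions work the same way.

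The base cases have total degree $0$ or $1$. When one factor is a nonnegative constant, the product is simply a scaling of the other factor, so it stays in $\I$ (this needs $c>0$; the zero polynomial is the trivial case). In degree one, $f\cdot c$ is linear with nonempty component, so it lies in $\GS^1=\I^1$. Two factors of degree $1$ each give a product of total degree $2$, which is covered by the inductive step. The inductive step needs $\partial_i f\in\I^{\deg f-1}$ even when $\deg f=1$, that is, a nonnegative constant; this holds because a linear G\aa rding polynomial has nonnegative gradient, by the positive ray test.

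For $\I_+$, we additionally need $fg$ to have nonnegative coefficients. This is clear, since the coefficients of $fg$ are products of coefficients of $f$ and of $g$.

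The main obstacle is establishing that $fg$ is G\aa rding and that $\C_{fg}=\C_f\times\C_g$. In particular, one must rule out that the boundary of $\C_f\times\C_g$ contains points where $fg>0$ through a sign flip of both factors, which would enlarge the component. This cannot happen: a boundary point lies in $\overline{\C_f}\times\overline{\C_g}$, where both factors are $\ge 0$, and at least one of them is $0$. Hence $fg=0$ there, and the identity holds.
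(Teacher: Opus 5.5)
Your proof is correct and follows the paper's route: the identity $\C_{fg}=\C_f\times\C_g$ gives convexity, and the factorization $\partial_{x_i}(fg)=(\partial_i f)\,g$ (together with its $y_j$-analogue) lets the recursive conditions be inherited from $f$ and $g$. The only difference is that you spell out the strong induction on total degree and verify the component identity directly, both of which the paper treats as immediate; your induction also already establishes that $fg$ is G\aa rding, so you do not need that as a separate input.
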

\begin{proof}
By definition, for disjoint variables,
$\C_{fg}=\C_f\times\C_g$, which is convex. Moreover,
$\partial_\x^\alpha\partial_\y^\beta(fg)
=(\partial_\x^\alpha f)(\partial_\y^\beta g)$,
so the recursive conditions are inherited from those of $f$ and $g$.
Hence $f(\x)g(\y)\in\I[\x,\y]$. The $\I_+$ case is immediate.
\end{proof}
\begin{thm}
\label{thm:I-closure-properties}
The following hold.

\begin{enumerate}
\item[\textup{(a)}]
The class \(\I_+[\x]\) is closed in \(\R[\x]\) with respect to the compact--open topology.

\item[\textup{(b)}]
Suppose \(f\in\I_+\) and
\[
f=f_d+f_{d-1}+\cdots+f_m,
\]
where each \(f_k\) is homogeneous of degree \(k\), \(f_d\neq0\), and
\(f_m\neq0\). Then
\[
f_d\in\I_+^{\mathfrak h},
\qquad
f_m\in\I_+^{\mathfrak h}.
\]
\end{enumerate}
\end{thm}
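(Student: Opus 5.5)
For part (a), I will take a sequence \(f_k\in\I_+[\x]\) converging to \(f\) in the compact--open topology. Since all \(f_k\) lie in a finite-dimensional space of polynomials of bounded degree, this is coefficientwise convergence. Assume \(f\not\equiv0\).

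The first step is to get \(f\in\GS_+[\x]\). Closedness of \(\GS_+\) was established in \cite{Fang-MaGard}, and I will invoke it; nonnegativity of the coefficients passes to the limit trivially.

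The main point is convexity of \(\C_{\partial^\alpha f}\) for every nontrivial \(\partial^\alpha f\). Each \(\partial^\alpha f_k\to\partial^\alpha f\), so it suffices to treat \(\alpha=0\), with \(g_k\to g\), \(g_k\in\I_+\), \(g\in\GS_+\). I want to identify \(\C_g\) as a lim-inf of the convex sets \(\C_{g_k}\).

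Take \(\a,\b\in\C_g\). The set \(\C_g\) is open and connected, so there is a compact path \(\gamma\subset\C_g\) from \(\a\) to \(\b\) on which \(g\ge\epsilon>0\). For large \(k\) we then have \(g_k>0\) on \(\gamma\). To conclude \(\gamma\subset\C_{g_k}\), I use that \(\C_{g_k}\supset \x_0+\Gamma_n^+\) for some \(\x_0\); for nonnegative coefficients one expects \(\C_{g_k}\supset\Gamma_n^+\). So I join \(\a\) to \(\a+s\mathbf 1\) along a ray staying in \(\C_g\), for \(s\) large, and then \(\a+s\mathbf 1\) lies in \(\C_{g_k}\).

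This last point needs uniform control, and I expect it to be the main obstacle. Concretely, I must show that \(\a+s\mathbf1\in\C_{g_k}\) for all large \(k\) with \(s\) fixed. I would use that \(\Gamma_n^+\subset\C_{g_k}\) for \(g_k\in\GS_+\), which holds since \(g_k>0\) on \(\Gamma_n^+\) and this set is connected and positive-ray invariant. Then \(\a+s\mathbf1\in\Gamma_n^+\) once \(s>\max|a_i|\), and the segment from \(\a\) to \(\a+s\mathbf1\) lies in \(\C_g\) by the positive ray test. So \(\gamma\cup[\a,\a+s\mathbf1]\subset\C_{g_k}\), and likewise for \(\b\). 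By convexity of \(\C_{g_k}\), the segment \([\a,\b]\subset\C_{g_k}\), hence \(g_k\ge0\) on it and so \(g\ge0\) on \([\a,\b]\).

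To upgrade this to strict positivity, I use Lemma~\ref{lem:bootstrapconcavity}-style concavity. Concavity of \(g_k^{1/\deg}\) on \(\C_{g_k}\) from Theorem~\ref{thm:mainstructure}(4) passes to the limit on the segment. This gives \(g(t\a+(1-t)\b)^{1/d}\ge t\,g(\a)^{1/d}+(1-t)g(\b)^{1/d}>0\). The segment therefore lies in \(\{g>0\}\), connects to \(\a\), and hence lies in \(\C_g\). The degree may drop in the limit, but since \(g_k^{1/d_k}\) with \(d_k=d\) eventually constant along a subsequence, this is harmless.

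For part (b), I use the scaling \(f_\lambda(\x):=\lambda^{-d}f(\lambda\x)\), which tends to \(f_d\) as \(\lambda\to\infty\). I will also use \(\lambda^{-m}f(\lambda\x)\), which tends to \(f_m\) as \(\lambda\to0^+\).

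Each of these is the pullback by the strictly positive linear map \(\x\mapsto\lambda\x\), times a positive constant. So each lies in \(\I_+\), provided pullback by \(\A_{++}\) preserves \(\I\). For a scalar dilation this is direct: \(\C_{\partial^\alpha f_\lambda}=\lambda^{-1}\C_{\partial^\alpha f}\) remains convex, and the inclusions are preserved.

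Part (a) then gives \(f_d,f_m\in\I_+\), and homogeneity is clear, so \(f_d,f_m\in\I_+^{\mathfrak h}\).
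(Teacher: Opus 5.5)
Your part (b) is the paper's argument. In (a) you also get $f\in\GS_+$ from the closedness theorem of \cite{Fang-MaGard}, as the paper does. The difference is in how you prove convexity of $\mathcal C_{\partial^\alpha f}$.

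The paper quotes the local convergence $\mathcal C_{\partial^\alpha f_r}\to\mathcal C_{\partial^\alpha f}$ from \cite{Fang-MaGard}, together with the fact that a local Hausdorff limit of convex sets is convex. You use only the easy half of that convergence: each point of $\mathcal C_g$ lies in $\mathcal C_{g_k}$ for large $k$, via the ray into $\Gamma_n^+\subset\mathcal C_{g_k}$. Convexity of $\mathcal C_{g_k}$ then puts $[\mathbf a,\mathbf b]$ inside $\mathcal C_{g_k}$, and concavity of $g_k^{1/d}$ keeps the limit strictly positive on the segment.

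This makes the passage from ``limit of components'' to ``component of the limit'' explicit. The cost is that it imports Proposition~\ref{prop:ideal to concavity}, and with it the polarization theorem and the linear-preserver machinery of the later sections. That is not circular, since none of that chain uses Theorem~\ref{thm:I-closure-properties}. It is, however, far heavier than necessary. Since $\mathcal C_{g_k}\subset\mathcal C_{\partial_i g_k}$, each $g_k$ is increasing in the direction $\mathbf 1$ on $\mathcal C_{g_k}$. Running your segment argument for $\mathbf a-\epsilon\mathbf 1,\mathbf b-\epsilon\mathbf 1\in\mathcal C_g$ then gives $g(z)\ge g(z-s\mathbf 1)\ge 0$ for $s\in[0,\epsilon]$. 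If $g(z)=0$, the polynomial $t\mapsto g(z+t\mathbf 1)$ would vanish on an interval, contradicting $g>0$ on $\Gamma_n^+$.

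One small point: compact--open convergence in $\mathbb R[\mathbf x]$ does not by itself bound the degrees. For example, $(1+x/k^2)^k\to 1$, and every term lies in $\I_+$. So rather than extracting a constant-degree subsequence, use the log-concave form $g_k(t\mathbf a+(1-t)\mathbf b)\ge g_k(\mathbf a)^t g_k(\mathbf b)^{1-t}$. It passes to the limit without any degree bookkeeping.
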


\begin{proof}
We prove \textup{(a)} directly from the recursive definition of \(\I\).

Let \(f_r\in\I_+[\x]\) converge to \(f\in\R[\x]\) in the compact--open
topology. Then \(f\) has nonnegative coefficients. For every multi-index
\(\alpha\), we have \(\partial^\alpha f_r\to\partial^\alpha f\) in the
compact--open topology. Since \(f_r\in\I_+\subset\GS_+\), the closedness
theorem for G{\aa}rding polynomials, \cite[Lemma~5.18]{Fang-MaGard},
implies that every nonzero derivative \(\partial^\alpha f\) is
G{\aa}rding.

It remains to check convexity of $\C_{\pdv^\alpha f}$. For each
\(\alpha\), the components \(\C_{\partial^\alpha f_r}\) are convex. By the
same convergence result for G{\aa}rding components, these components converge
locally to \(\C_{\partial^\alpha f}\). A local Hausdorff limit of convex
sets is convex. Hence \(\C_{\partial^\alpha f}\) is convex for every
nonzero \(\partial^\alpha f\). The inclusions
\(\C_{\partial^\alpha f_r}\subseteq\bigcap_i\C_{\partial_i\partial^\alpha f_r}\)
also pass to the limit. Therefore every nonzero derivative of \(f\)
satisfies the recursive ideal G{\aa}rding condition, and hence
\(f\in\I_+[\x]\). This proves \textup{(a)}.

For \textup{(b)}, first note that positive diagonal rescaling preserves
\(\I_+\) directly from the definition. Thus, for \(t>0\),
\[
t^{-d}f(t\x)\in\I_+.
\]
As \(t\to\infty\), \(t^{-d}f(t\x)\to f_d\) in the compact--open topology.
By \textup{(a)}, \(f_d\in\I_+\). Since \(f_d\) is homogeneous,
\(f_d\in\I_+^{\mathfrak h}\). Similarly, \(t^{-m}f(t\x)\in\I_+\) for
\(t>0\), and as \(t\to0^+\), \(t^{-m}f(t\x)\to f_m\). Again by
\textup{(a)}, \(f_m\in\I_+^{\mathfrak h}\).
\end{proof}

\section{Stable and Matrix Examples}
We present two basic families of examples of ideal
\(\gar{}\) polynomials: positive real stable polynomials and
natural matrix-theoretic characteristic polynomials.

\subsection{Positive real stable polynomials}
Recall that in~\cite{Fang-MaGard} we denoted by \(\S[\x]\) the class of
real stable polynomials with positive leading coefficients and proved in
Theorem~4.16 that
\[
\S[\x]\subset \GS[\x].
\]
We strengthen this result by showing that the real stable
polynomials in $\S[\x]$ are ideal G{\aa}rding polynomials. The proof follows the original idea of \gar{}~\cite{Gar59}.  

We use the following
form of Bochner's tube theorem~\cite[V, Theorem~9]{BochnerMartin1948}.

\begin{thm}[Bochner's tube theorem]
\label{thm:Bochner}
Let $U\subset\R^n$ be a connected open set, and let
$\operatorname{conv}(U)$ denote its convex hull.
If $g(\mathbf z)$ is holomorphic on the tube domain
\(
U+i\R^n,
\)
then $g$ admits an analytic extension to
\(
\operatorname{conv}(U)+i\R^n.\)

\end{thm}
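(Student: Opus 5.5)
Since this is Bochner's classical tube theorem, cited from \cite{BochnerMartin1948}, the plan follows the standard route through the largest tube of extension and a local convexification lemma. The translation invariance $z\mapsto z+i\mathbf y$ of tube domains is what makes the continuation single-valued and lets everything be controlled by the real base.

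The first step is to set up the object. Let $V\subset\operatorname{conv}(U)$ be the union of all connected open sets $W\supset U$ with $W\subset\operatorname{conv}(U)$ such that $g$ extends holomorphically to $W+i\R^n$.
\begin{itemize}
\item To see that these extensions glue, note that two extensions agree on the tube over the intersection of their bases. This holds on the component of that intersection containing $U$, by the identity theorem applied in the fibres $\x+i\R^n$, which are connected.
\item To avoid monodromy problems, I would work with extensions along polygonal paths starting in $U$.
\item I would show the result is independent of the path by a homotopy argument inside the base. A homotopy in $\R^n$ lifts to the tube, and the tube over a simply connected piece carries a unique continuation.
\end{itemize}
It then suffices to prove that $V$ is convex, since $U\subset V\subset\operatorname{conv}(U)$.

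The second step reduces convexity to a two-dimensional lemma. An open connected set $V$ is convex if, whenever two sides $[\a,\b]$ and $[\b,\c]$ of a triangle lie in $V$, the whole closed triangle does. Indeed, iterating this along a polygonal path inside $V$ shows that every chord between two points of $V$ lies in $V$. Restricting $g$ to the complex affine plane through $\a,\b,\c$ spanned by the real directions $\b-\a$ and $\c-\b$ reduces the problem to $n=2$. This restriction is again holomorphic on a tube over a neighborhood of the two sides.

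The third step, which I expect to be the main obstacle, is the triangle lemma in $\C^2$. I would prove it with a Hartogs-figure (continuity principle) argument adapted to tubes. Consider the analytic discs
\[
\zeta\mapsto \b+s(\a-\b)+\zeta\,(\c-\b)+i\mathbf y,\qquad |\zeta|\le 1,\ \mathbf y\in\R^2,
\]
or better, the family obtained by mapping a strip holomorphically onto a tube over a segment joining points of the two sides. The construction should arrange that:
\begin{itemize}
\item each disc for small parameters lies in the tube over a neighborhood of the two sides;
\item the boundaries of all discs stay inside that tube;
\item the union of the discs sweeps out the tube over a neighborhood of the triangle.
\end{itemize}
The Cauchy integral over the boundaries then defines the continuation to the swept region. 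It agrees with $g$ near the sides, and hence, by connectedness of the parameter set, it agrees with $g$ throughout. The delicate point is making the discs' boundaries avoid the unknown region. For this I would first try a quadratic exponential map $\zeta\mapsto e^{\epsilon\zeta^2}$-type parametrization. The required smallness is local, so one convexifies small triangles first and then covers the large triangle by finitely many thin ones. This gives convexity of $V$, and hence the extension of $g$ to $\operatorname{conv}(U)+i\R^n$.
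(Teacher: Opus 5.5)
The paper gives no proof of this statement; it quotes it from \cite{BochnerMartin1948}. Your outline therefore has to stand on its own, and as written it does not yet: the step that carries all the content is missing, and the bookkeeping for single-valuedness is not sound.

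\textbf{The triangle lemma is not established.} The discs you write down cannot work. For $\zeta\mapsto b+s(a-b)+\zeta(c-b)+i\mathbf y$, the real parts of the boundary circle form the segment $\{b+s(a-b)+\tau(c-b):-1\le\tau\le 1\}$. This is also the real part of the whole disc, so the boundary runs through the interior of the triangle and out of it. The failure is unavoidable for complex-affine discs. The real part of their boundary circle is an ellipse whose filled interior is the real part of the whole disc, so such discs only gain a distance comparable to the width of the neighbourhood of $[a,b]\cup[b,c]$, near $b$.

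Your fallbacks do not close this. ``First try an $e^{\epsilon\zeta^2}$-type parametrization'' is not a construction. ``Convexify small triangles, then cover the big one by thin ones'' fails as stated, because a thin sub-triangle typically has at most one side in the region already reached, and neighbourhoods may shrink at each step. Making that route work needs a local-to-global convexity theorem (Tietze--Nakajima, for the maximal extension domain), which you do not invoke.

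A family that works at full scale is as follows. After a real affine change, put $b=0$, $a=e_1$, $c=e_2$. Let $R=\{0<\Re\zeta<1,\ |\Im\zeta|<M\}$. Let $u$ be harmonic on $R$ with boundary value $1$ on the left side (cut off near its ends) and $0$ on the other sides, and set $\psi=u+i\tilde u$. The discs $\zeta\mapsto(\zeta,\lambda\psi(\zeta))$ then have three properties. Their boundary real parts lie in $[0,1]e_1\cup[0,1]e_2$ up to $O(|\Im\lambda|)$. They lie in the known tube for $|\lambda|$ small. Their real parts sweep the triangle as $\lambda$ runs through $[0,1]$, since $u\approx 1-\Re\zeta$ near the real axis for large $M$. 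Since $u>0$ on $R$, the function
\[
G(z_1,z_2)=\frac{1}{2\pi i}\oint_{\partial R}\frac{g\bigl(\tau,\,z_2\psi(\tau)/\psi(z_1)\bigr)}{\tau-z_1}\,d\tau
\]
is holomorphic where defined and equals $g$ for small $z_2/\psi(z_1)$. Also, restricting to a single complex $2$-plane only extends $g$ on a two-dimensional slice. You must move the plane with transverse complex parameters and note that the Cauchy integral is holomorphic in them.

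\textbf{Your first step does not work as set up.} Extensions over two bases $W_1,W_2$ agree only over the component of $W_1\cap W_2$ containing $U$, so the union $V$ need not carry any single-valued extension. Hence ``$V$ convex'' does not give the theorem. Nor can you verify the triangle property for $V$: knowing $[a,b]\cup[b,c]\subset V$ pointwise does not produce one holomorphic function on a tube over a neighbourhood of $[a,b]\cup[b,c]$. The homotopy/monodromy fix is circular, since monodromy presupposes continuation along every path of the homotopy, which is exactly what is unknown.

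Let the triangle lemma do this job instead:
\begin{enumerate}
\item Fix $x_0\in U$ and continue $g$ along polygonal paths from $x_0$.
\item Apply the lemma to $p_0p_1p_2$ (the continuation along the two sides is single-valued over a union of two convex neighbourhoods with convex intersection) and induct. This shows that continuation along $(p_0,\dots,p_k)$ implies continuation along $[p_0,p_k]$ with the same terminal germ.
\item Hence the reachable set $\Omega^*$ is open and star-shaped about $x_0$. It carries a single-valued $G$ on $\Omega^*+i\R^n$ with $G=g$ on $U+i\R^n$.
\item One more application of the lemma, to the triangle $x_0xy$, gives $[x,y]\subset\Omega^*$. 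So $\Omega^*$ is convex and $\Omega^*\supset\operatorname{conv}(U)$.
\end{enumerate}
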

\begin{lem}
\label{lem:stable-tube-zero-free}
Let $f\in\S[\x]$, and let $\C_f$ be its positive G{\aa}rding component.
Then
\[
f(\z)\neq 0
\qquad
\text{for all } \z\in \C_f+i\R^n .
\]
\end{lem}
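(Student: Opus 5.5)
The plan is to use real stability together with the positive ray test. For a point $\x\in\C_f$ and an arbitrary vector $\y\in\R^n$, the goal is to show $f(\x+i\y)\neq 0$.

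\emph{Step 1.} Fix $\x_0\in\C_f$. Since $\C_f+\Gamma_n^+\subset\C_f$, every point $\x_0+\e$ with $\e\in\Gamma_n^+$ also lies in $\C_f$.

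\emph{Step 2.} Real stability means $f(\z)\ne 0$ whenever $\operatorname{Im}\z\in\Gamma_n^+$. Consider the one-variable restriction
\[
t\mapsto f(\x_0+t\e+\y)
\]
for real $\y$ and $\e>0$. This polynomial in $t$ is real-rooted, because it is the restriction of a stable polynomial along a positive direction.

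\emph{Step 3.} Rather than follow this route directly, I would argue via Bochner (Theorem~\ref{thm:Bochner}), following Gårding's original idea. Set $U$ to be the set of $\x\in\C_f$ such that $f$ has no zero on the whole tube $\x+i\R^n$; the aim is to show $U=\C_f$ by a connectedness argument.

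\emph{Step 4 (far out in the cone).} For $\x=s\mathbf 1$ with $s$ large, $f(s\mathbf 1+i\y)=s^d\,f_d(\mathbf 1+i\y/s)+\text{lower order terms}$. Here I would instead use the standard fact that for stable $f$ with positive leading part, the univariate polynomial $g(\tau)=f(\x+\tau\v)$ with $\v>0$ is real-rooted. Write $\z=\x+i\y$, and for $\v\in\Gamma_n^+$ consider
\[
h(\tau):=f(\x+i\y+\tau\v).
\]
If $\y$ is replaced by $\y+\lambda\v$, then $\tau$ effectively becomes complex. The clean version is the following. Choose $\v\in\Gamma_n^+$ and consider $p(w):=f(\x+w\v+i\y)$ as $\y$ varies; this is not obviously zero-free.

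\emph{Step 5 (continuity/deformation argument).} Fix $\y$ and consider $\phi(s,\lambda):=f(\x+s\mathbf 1+i\lambda\y)$ for $s\ge 0$ and $\lambda\in[0,1]$. At $\lambda=0$, $\phi(s,0)=f(\x+s\mathbf 1)>0$ because $\x+s\mathbf 1\in\C_f$. As a polynomial in the complex variable $w=s+i\mu$, $f(\x+w\mathbf 1+i\lambda\y)$ has all its zeros $w$ with $\operatorname{Im} w\le C|\lambda\y|$, and more precisely it can be compared with $\operatorname{Re}$-shifts. The key claim is that for each $\lambda$ the zeros of $w\mapsto f(\x+i\lambda\y+w\mathbf 1)$ have real part strictly less than $0$. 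At $\lambda=0$ this holds: the zeros are real, since the polynomial is real-rooted in the direction $\mathbf 1$, and they are less than $0$, since $\x\in\C_f$ with the positive ray test gives positivity on $[0,\infty)$ and, by Rayleigh/hyperbolicity, the relevant zeros lie to the left. To continue in $\lambda$, a zero crossing $\operatorname{Re} w=0$ at some $\lambda$ would give $f(\x+i(\lambda\y+\mu\mathbf 1))=0$ for a real $\mu$.

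\emph{Step 6.} The main obstacle is ruling out such a crossing. I would first try to reduce it via Bochner: the function $1/f$ is holomorphic on the tube $\Gamma_n^+ + i\R^n$ rotated appropriately (stability is precisely zero-freeness on $\R^n+i\Gamma_n^+$). Multiplying by $-i$ turns this into zero-freeness of $\tilde f(\z):=f(i\z)$ on the tube $\Gamma_n^+ + i\R^n$. I would then propagate from $\Gamma_n^+$ (after a translation by a point of $\C_f$ far out) to all of $\C_f$ by Hurwitz's theorem along paths in the connected set $\C_f$, using that $f$ is real and positive on $\C_f$ to exclude zeros appearing at the first point where the tube is reached, and invoking the tube theorem to keep the zero-free set convex.
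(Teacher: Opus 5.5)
Your proposal does not yet contain a proof: each route you start is either abandoned or circular. In Step~5, the condition you need for all $\lambda\in[0,1]$ is that $w\mapsto f(\x+i\lambda\y+w\mathbf 1)$ has no zero with $\operatorname{Re}w=0$. As you observe yourself, this says $f(\x+i(\lambda\y+\mu\mathbf 1))\neq0$ for real $\mu$, which is the lemma itself at the point $\x$. The deformation only restates the problem, and nothing in the proposal rules out the crossing.

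Step~6 does not close the gap either. The rotation $\z\mapsto i\z$ shows that $f(i\z)$ is zero-free on $\Gamma_n^++i\R^n$, but for non-homogeneous $f$ this says nothing about $f(\z)$. In fact $f$ need not be zero-free on $\Gamma_n^++i\R^n$: take $f(x)=x-1$, which vanishes at $1$. Translating by $\a\in\C_f$ turns the required base case into zero-freeness of $f$ on $(\a+\Gamma_n^+)+i\R^n$. That is again what is to be proved, for a polynomial that is still not homogeneous. So the Hurwitz/connectedness scheme has neither a base tube nor a mechanism keeping zeros out. Also, $i\R^n$ is non-compact, so even openness of your set $U$ would need control at infinity. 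Bochner's theorem cannot supply this: it only enlarges an \emph{already} zero-free tube to its convex hull. In the paper it is used after this lemma, to deduce convexity of $\C_f$.

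The missing idea is homogenization, which is exactly what makes your rotation trick legitimate. Because $f$ is stable with positive leading part, $\HH f(\x,y)=y^d f(\x/y)$ is hyperbolic with respect to $(\mathbf 1,0)$. For $\a\in\C_f$, the point $e:=(\a,1)$ lies in the corresponding hyperbolicity cone. To see this, note the path $(\a+t\mathbf 1,1)$ stays in $\{\HH f>0\}$ by the positive ray test. After rescaling, it ends near $(\mathbf 1,0)$, where $\HH f(\mathbf 1,0)=f_d(\mathbf 1)>0$. By G{\aa}rding's theorem, $\HH f$ is then hyperbolic with respect to $e$; this is what the paper imports from \cite[Lemmas~4.3, 4.6]{Fang-MaGard}. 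Homogeneity of degree $d$ now gives, with $w:=(\y,0)$,
\[
f(\a+i\y)=\HH f(e+iw)=\HH f\bigl(i(w-ie)\bigr)=i^{d}\,\HH f(w-ie)\neq0,
\]
since $t\mapsto\HH f(w-te)$ has only real roots.

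Without this step, or an equivalent use of G{\aa}rding's hyperbolicity-cone theory, your argument cannot get from stability to the lemma. Stability only controls points whose imaginary part lies in $\pm\Gamma_n^+$. The lemma requires zero-freeness for arbitrary imaginary parts over real parts in $\C_f$.
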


\begin{proof}
Let $\mathbf a\in \C_f$. Let $\HH f(\x,y)$ be the homogenization of $f$.  Then by~\cite[Lemma 4.3, Lemma 4.6]{Fang-MaGard}, $(\mathbf a,1)$ belongs to the corresponding G\aa rding cone of $\HH f$, and   $\HH f$ is hyperbolic with
respect to $(\mathbf a,1)$.  Therefore,
\[f(\mathbf a+i\mathbf y)=
\HH f\bigl((\mathbf a,1)+i(\mathbf y,0)\bigr)\neq 0
\qquad
\text{for all } \mathbf y\in\R^n .
\]

Thus $f$ has no zeros in $\C_f+i\R^n$.
\end{proof}

\begin{prop}
\label{prop:stable-polynomials-are-ideal}
 
\(
\S[\x]\subset\I[\x].
\)
\end{prop}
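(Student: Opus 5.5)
We argue by induction on $d=\deg f$, following Gårding's original argument via Bochner's tube theorem. By \cite[Theorem~4.16]{Fang-MaGard} we already have $\S[\x]\subset\GS[\x]$. So the recursive inclusions $\C_f\subset\bigcap_i\C_{\partial_i f}$ and the non-emptiness of $\C_f$ are available, and it remains to prove convexity of the components. Two standard facts about stable polynomials handle the recursion. First, each nonzero $\partial_i f$ is again real stable, with positive leading coefficients (Gauss--Lucas / Borcea--Brändén). Second, the top-degree part of $\partial_i f$ is obtained from that of $f$ by differentiation, so positivity of the leading coefficients is inherited. Hence $\partial_i f\in\S[\x]$, and by induction $\partial_i f\in\I_n^{d-1}$. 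The cases $d\le1$ are trivial, since $\C_f$ is then a half-space or all of $\R^n$. Thus the whole proposition reduces to showing that $\C_f$ is convex for every $f\in\S[\x]$.

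For convexity, set $U:=\C_f$, which is open and connected. By Lemma~\ref{lem:stable-tube-zero-free}, $f$ has no zeros on the tube $U+i\R^n$, so $g:=1/f$ is holomorphic there. Bochner's tube theorem (Theorem~\ref{thm:Bochner}) extends $g$ holomorphically to $\operatorname{conv}(U)+i\R^n$. On the original tube we have $f\cdot g\equiv1$, and the identity theorem on the connected domain $\operatorname{conv}(U)+i\R^n$ propagates this. Therefore $f$ has no zeros on $\operatorname{conv}(U)+i\R^n$; in particular $f$ does not vanish on the real set $\operatorname{conv}(U)$. Since $\operatorname{conv}(U)$ is connected and contains $U$, on which $f>0$, we get $f>0$ on $\operatorname{conv}(U)$. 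Then $\operatorname{conv}(U)$ is a connected subset of $\{f>0\}$ meeting the component $U$, so $\operatorname{conv}(U)\subset U$. Hence $U=\C_f$ is convex.

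The main obstacle is making sure that the inductive hypothesis really applies to $\partial_i f$. We need that $\partial_i f\not\equiv0$ lies in $\S[\x]$ with positive leading coefficients, and that its Gårding component is the one referenced in Definition~\ref{def:ideal}. This should follow from the cited Theorem~4.16 together with the stability of derivatives. The only subtle point is the degree-drop case, where $\partial_i f$ is a nonnegative constant; this is allowed by $\I^0$. The Bochner step itself is routine once Lemma~\ref{lem:stable-tube-zero-free} is in hand.
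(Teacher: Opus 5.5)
Your proposal is correct and follows essentially the same route as the paper: Lemma~\ref{lem:stable-tube-zero-free} shows $1/f$ is holomorphic on $\C_f+i\R^n$, Bochner's tube theorem extends it to $\operatorname{conv}(\C_f)+i\R^n$, so $f$ does not vanish on $\operatorname{conv}(\C_f)$ and hence $\operatorname{conv}(\C_f)=\C_f$. The same argument is then repeated for the derivatives. Your explicit induction via $\partial_i f\in\S[\x]$ and your identity-theorem step only spell out what the paper compresses into ``applying the same argument to $\partial^\alpha f$.''
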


\begin{proof}
We first show that $\C_f$ is convex.  By
Lemma~\ref{lem:stable-tube-zero-free}, $f$ has no zeros in the tube
domain
\(
\Omega:=\C_f+i\R^n .
\)
Hence $1/f$ is holomorphic on $\Omega$. By Theorem~\ref{thm:Bochner}, $1/f$ admits a holomorphic extension to
\(\operatorname{conv}(\C_f)+i\R^n.\)
On $\Omega$, this extension agrees with $1/f$. Consequently,
\(f(\z)\neq 0\) in $\operatorname{conv}(\C_f)$. Because $\C_f$ is the connected component of $\{f>0\}\subset \R^n$, we have
\(\operatorname{conv}(\C_f)=\C_f,\)
and therefore $\C_f$ is convex. Applying the same argument to  $\partial^\alpha f$ shows that every
nontrivial derivative G{\aa}rding component of $f$ is convex.  Hence
\(
f\in\I[\x].
\)
\end{proof}

\subsection{Non-negative matrices and $M$-matrices}

A real matrix is called non-negative if all the entries are non-negative
real numbers. Non-negative matrices are fundamental subjects in the matrix theory
with numerous applications. See \cite{BP79Nonnegative}. We show that multi-variate eigen-polynomials for non-negative matrices
are in fact ideal \gar{}. 

\begin{thm}\label{thm:nonnegmatrix}For
any non-negative matrix $N$,
\[ p_N(\x):=\det(\mathrm{diag}(\x)-N)\in\I[\x].
\] 
\end{thm}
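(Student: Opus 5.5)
We prove Theorem~\ref{thm:nonnegmatrix} by a direct description of the \gar{} components of $p_N$ and its derivatives. The derivatives are again of the same shape: expanding along the diagonal gives $\partial_i p_N(\x)=\det(\mathrm{diag}(\x_{\hat i})-N_{\hat i})$, the eigen-polynomial of the principal submatrix $N_{\hat i}$ (obtained by deleting row and column $i$), viewed as a polynomial in all $n$ variables that is independent of $x_i$. Iterating, every nonzero $\partial^\alpha p_N$ is $p_{N_S}$ for a principal submatrix $N_S$, $S\subset[n]$, and $\partial^\alpha p_N\equiv 0$ as soon as some $\alpha_i\ge 2$. Since principal submatrices of nonnegative matrices are nonnegative, it suffices to prove, for every nonnegative $N$: (i) $p_N$ is \gar{}, with an explicit component $\C_{p_N}$; (ii) $\C_{p_N}$ is convex; (iii) $\C_{p_N}\subset\C_{p_{N_{\hat i}}}$ for each $i$. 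Induction on $n$ then gives all the recursive conditions of Definition~\ref{def:ideal}.

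The candidate component is
\[
\Omega_N:=\{\x\in\R^n:\ \mathrm{diag}(\x)-N \text{ is a nonsingular M-matrix}\}
=\{\x:\ \rho(\mathrm{diag}(\x)^{-1}N)<1,\ \x>0\},
\]
or equivalently the set of $\x$ for which all leading (equivalently all principal) minors of $\mathrm{diag}(\x)-N$ are positive. We use the standard Perron--Frobenius/M-matrix facts from \cite{BP79Nonnegative}.

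Key steps, in order. First, $\Omega_N$ is open and passes the positive ray test. If $A=\mathrm{diag}(\x)-N$ is a nonsingular M-matrix and $D\ge0$ is diagonal, then $A+D$ is again one, because M-matrices are closed under adding nonnegative diagonals. Second, $\Omega_N$ is a connected component of $\{p_N>0\}$. On $\Omega_N$ we have $p_N>0$. At a boundary point, the limit of $\mathrm{diag}(\x)-N$ is a singular M-matrix, so $p_N=0$ there, unless some proper principal minor vanishes first. That cannot happen, since principal minors of a nonsingular M-matrix dominate in the sense that the determinant reaches zero before any proper principal minor does (monotonicity of the Perron root under passing to principal submatrices). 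Hence $\Omega_N$ is open and closed in $\{p_N>0\}$, and being convex (next step) it is connected, so $\Omega_N=\C_{p_N}$. Third, convexity: the characterization $\Omega_N=\{\x:\exists\,\v>0,\ (\mathrm{diag}(\x)-N)\v>0\}$ of nonsingular M-matrices is not obviously convex because $\v$ depends on $\x$. I would instead use the dual/log description $\Omega_N=\{\x>0:\rho(\mathrm{diag}(\x)^{-1}N)<1\}$. Substituting $x_i=e^{s_i}$, the map $s\mapsto\log\rho(\mathrm{diag}(e^{-s})N)$ is convex by Kingman's theorem, so the set is convex in $s$, which is not yet enough. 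A cleaner route: $\x\in\Omega_N$ iff there is $\v>0$ with $x_iv_i>(N\v)_i$ for all $i$. Write $\v=e^{\u}$. Then $\x\in\Omega_N$ iff $\min_{\u}\max_i\bigl[(Ne^{\u})_i e^{-u_i}-x_i\bigr]<0$, i.e. $\Omega_N=\{\x:\ \exists\,\u,\ x_i>\sum_j N_{ij}e^{u_j-u_i}\ \forall i\}$. This is the projection onto $\x$ of the set $\{(\x,\u): x_i-\sum_jN_{ij}e^{u_j-u_i}>0\}$, which is convex since each constraint function is concave in $(\x,\u)$. Projections of convex sets are convex, so $\Omega_N$ is convex. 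Fourth, (iii): if $A$ is a nonsingular M-matrix, every principal submatrix is too, so $\Omega_N\subset\Omega_{N_{\hat i}}\times\R$, which is the required inclusion of components.

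The main obstacle I expect is the second step, identifying $\Omega_N$ exactly as the component selected by the positive ray test. The delicate part is excluding boundary points where $p_N=0$ fails. I would handle this via the Perron-root characterization $p_N(\x)=0$ iff $1$ is an eigenvalue of $\mathrm{diag}(\x)^{-1}N$, together with the continuity and strict monotonicity of $\rho(\mathrm{diag}(\x)^{-1}N)$ in $\x>0$. Reducible $N$ needs care, and I would treat it by perturbing $N\mapsto N+\epsilon J$ and using closedness (Theorem~\ref{thm:I-closure-properties}(a), after translating into $\I_+$ via Lemma~\ref{lem:translation-invariance}). Since the \gar{} property itself may also come from \cite{Fang-MaGard}, this step could alternatively be shortened by citing that $p_N\in\GS$ and using the convex description only for the convexity claims.
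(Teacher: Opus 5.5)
Your proposal is correct, but it reaches convexity by a genuinely different route from the paper. The paper imports the G\aa{}rding property and the description $\C_{p_N}=\{\x:\mathrm{diag}(\x)-N\text{ is a nonsingular M-matrix}\}$ from \cite[Theorem~16.1]{Fang-MaGard}. It then argues complex-analytically, exactly as for stable polynomials in Proposition~\ref{prop:stable-polynomials-are-ideal}: Lemma~\ref{lem:M-matrixCM} shows that $p_N$ has no zeros on the tube $\C_{p_N}+\sqrt{-1}\,\R^n$, and Bochner's tube theorem (Theorem~\ref{thm:Bochner}) applied to $1/p_N$ forces $\mathrm{conv}(\C_{p_N})=\C_{p_N}$. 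Derivatives are then handled, as in your plan, through principal submatrices.

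You instead use semipositivity of nonsingular M-matrices and the substitution $\v=e^{\u}$. This writes $\C_{p_N}$ as the linear projection of the convex set $\{(\x,\u):x_i>\sum_j N_{ij}e^{u_j-u_i}\ \forall i\}$. You also re-derive the identification of the component rather than citing it.

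The paper's route is uniform with the stable case. It also gives the stronger, stability-type fact that $p_N$ is zero-free on the whole tube over $\C_{p_N}$. Your route is elementary and real-variable, and needs no irreducibility. It also avoids Lemma~\ref{lem:M-matrixCM}, whose written proof needs repair on two counts:
\begin{enumerate}
\item it uses $\|B\v\|\le\rho(B)$, which is false for non-normal $B$ (e.g.\ nilpotent $B\neq0$);
\item it is stated for $\Re z_i>0$ but applied with purely imaginary $Z$.
\end{enumerate}
A fix is to scale columns by a semipositivity vector. This makes $\mathrm{diag}(\x)-N$ strictly diagonally dominant, a property unaffected by adding $\sqrt{-1}\,\mathrm{diag}(\y)$.

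One simplification for your second step. The set of M-matrices is closed, and an M-matrix is nonsingular iff its determinant is nonzero. Hence $\Omega_N$ is relatively closed in $\{p_N>0\}$ directly, and the remark about proper principal minors ``vanishing first'' is not needed. This argument also covers reducible $N$ and boundary points with some $x_i=0$, where $\rho(\mathrm{diag}(\x)^{-1}N)$ is undefined. So the perturbation $N\mapsto N+\epsilon J$ and the appeal to closedness of $\I_+$ are unnecessary.
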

This theorem is a refinement of  Theorem 16.1 of \cite{Fang-MaGard}, which shows that $ p_N(\x)$ is \gar{}.

We introduce the notion of the $M$-matrix.
\begin{defn}[$M$-matrix]
An $n \times n$ matrix $A$ is called an $M$-matrix if
\[
A=sI_n-B
\]
for some entrywise nonnegative matrix $B=(b_{ij})\in \mathbb{R}^{n\times n}$ and some real number $s\ge \rho(B)$.
Here $\rho(B)$ denotes the spectral radius of $B$.
See \cite[Chapter 6]{BP79Nonnegative} for 50 equivalent characterizations of $M$-matrices.
\end{defn}

Theorem 16.1 in \cite{Fang-MaGard} characterizes the \gar{} component of $p_N$ as 
\[
\C_{p_{N}}=\{\x|\ \mathrm{diag}(\x)-N\text{ is a non-singular \ensuremath{M}-matrix}\}.
\]

To prove Theorem \ref{thm:nonnegmatrix}, we need the following lemma.
\begin{lem}
\label{lem:M-matrixCM}Let $A$ be an $M$-matrix and $Z=\mathrm{diag}(z_{1},\cdots,z_{n})$.
Then, $Z+A$ is invertible if $\Re z_{i}>0$ for all $i\in[n]$. 
\end{lem}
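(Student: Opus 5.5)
The plan is to prove that $Z+A$ has trivial kernel by a diagonal-dominance argument. The tool is the standard characterization of $M$-matrices through a positive scaling vector, combined with a Gershgorin-type estimate on the rescaled matrix.

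First I reduce to the nonsingular case. Write $A=sI_n-B$ with $B\ge 0$ and $s\ge\rho(B)$. Set $\epsilon:=\min_i \Re z_i>0$ and $s':=s-\epsilon/2$, so that
\[
Z+A=Z'+(s+\epsilon/2)I_n-B, \qquad Z'=Z-\epsilon I_n .
\]
The matrix $A':=(s+\epsilon/2)I_n-B$ has $s+\epsilon/2>\rho(B)$, so it is a nonsingular $M$-matrix. The entries of $Z'$ satisfy $\Re z_i'=\Re z_i-\epsilon\ge0$. It therefore suffices to prove the following claim: if $A$ is a nonsingular $M$-matrix and $\Re z_i\ge 0$ for all $i$, then $Z+A$ is invertible.

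For the claim I use the scaling vector. For a nonsingular $M$-matrix $A=sI_n-B$ with $s>\rho(B)$, the inverse is given by the Neumann series
\[
A^{-1}=s^{-1}\sum_{k\ge 0}(B/s)^k ,
\]
which is entrywise nonnegative. Put $u:=A^{-1}\mathbf 1$. Since $A^{-1}$ is nonnegative and invertible, no row of $A^{-1}$ vanishes, so $u>0$. Moreover $Au=\mathbf 1>0$. Let $D:=\mathrm{diag}(u)$. Then $AD$ is strictly row diagonally dominant with positive diagonal:
\[
a_{ii}u_i-\sum_{j\ne i}|a_{ij}|u_j=(Au)_i>0,
\]
where I used that the off-diagonal entries are $-b_{ij}\le 0$. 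This also forces $a_{ii}>0$.

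Now consider $(Z+A)D$, which has the same off-diagonal entries as $AD$. Its diagonal entries are $(z_i+a_{ii})u_i$, and they satisfy
\[
|z_i+a_{ii}|u_i\ \ge\ \Re(z_i+a_{ii})u_i\ \ge\ a_{ii}u_i\ >\ \sum_{j\ne i}|a_{ij}|u_j .
\]
So $(Z+A)D$ is strictly diagonally dominant. By the Lévy–Desplanques (Gershgorin) theorem it is invertible, and since $D$ is invertible, so is $Z+A$.

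The only point needing care is the singular case $s=\rho(B)$. There no positive $u$ with $Au>0$ need exist, and this is exactly why I first shift by $\epsilon$, using the strict positivity of $\Re z_i$. The remaining ingredients (Neumann series, positivity of $u$, Gershgorin) are routine.
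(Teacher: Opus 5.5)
Your proof is correct, apart from an arithmetic slip in the reduction. With $Z'=Z-\epsilon I_n$ and $A'=(s+\tfrac{\epsilon}{2})I_n-B$ you get $Z'+A'=Z+A-\tfrac{\epsilon}{2}I_n$, not $Z+A$. Take $Z':=Z-\tfrac{\epsilon}{2}I_n$ instead, so that $\Re z_i'\ge\epsilon/2\ge 0$, and drop the unused $s'$. Everything after that goes through unchanged.

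Your route is genuinely different from the paper's. The paper uses the Euclidean norm: for a unit vector $v$ it writes $\|(Z+sI_n-B)v\|\ge\|(Z+sI_n)v\|-\|Bv\|>s-\rho(B)\ge 0$. This is a one-line argument, but the step $\|Bv\|\le\rho(B)$ holds only when the operator norm of $B$ equals its spectral radius (e.g.\ for normal $B$). Take $B=\begin{pmatrix}0&1\\0&0\end{pmatrix}$, $s=0$, $Z=\delta I_2$ and $v=\mathbf{e}_2$. Then $\|Zv\|-\|Bv\|=\delta-1<0$ for small $\delta$, even though the lemma's conclusion still holds.

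Your argument amounts to the same triangle-inequality estimate, carried out in the weighted sup-norm $\|x\|_u:=\max_i |x_i|/u_i$ instead of the Euclidean norm. In this norm a diagonal matrix is bounded below by its smallest entry, so $\|(Z+sI_n)x\|_u\ge (s+\epsilon)\|x\|_u$. For $B\ge 0$ the induced norm is $\|B\|_u=\max_i (Bu)_i/u_i=s+\tfrac{\epsilon}{2}-\min_i u_i^{-1}$, which is strictly less than $s+\tfrac{\epsilon}{2}$. The scaling vector $u=A'^{-1}\mathbf 1$ is what makes comparing $B$ with $s$ legitimate for non-normal $B$.

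The $\epsilon$-shift is also genuinely needed. For a reducible singular $M$-matrix such as $\begin{pmatrix}0&-1\\0&0\end{pmatrix}$, no $u>0$ with $Au\ge 0$ exists. So your proof costs a little more machinery (Neumann series, Lévy--Desplanques), but unlike the paper's Euclidean estimate it is valid for every $M$-matrix.
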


\begin{proof}
Given $A$, we decompose $A=sI_{n}-B$ where $s\geq\rho(B)$. If $\Re z_{i}>0$
for all $i\in[n]$, then for any vector $\mathbf{v}\in\mathbb{C}^{n}$
with unit length, $\|(Z+sI_n)\v\|> s$. Then
\[
\|(Z+sI_{n}-B)\mathbf{v}\|\geq\|(Z+sI_{n})\mathbf{v}\|-\|B\mathbf{v}\|>s-\rho(B)\geq0.
\]
Hence, $Z+A$ is invertible. 
\end{proof}

\begin{proof}
[Proof of Theorem \ref{thm:nonnegmatrix}] By Lemma \ref{lem:M-matrixCM},
$\forall\x\in\C_{p_{N}},\forall\y\in\mathbb{R}^{n}$, $p_{N}(\x+\sqrt{-1}\y)\not=0.$
As a consequence, $\phi(\mathbf{z})=\frac{1}{p_{N}(\z)}$
is a holomorphic function defined on $\C_{p_{N}}+\sqrt{-1}\mathbb{R}^{n}$.
Then, we apply Bochner's tube theorem to conclude $\text{conv}(\C_{p_{N}})=\C_{p_{N}}$.
By induction on derivatives, we see that $p_{N}$ is indeed an ideal
\gar{} polynomial.
\end{proof}
Assuming Theorem~\ref{thm:mainstructure} for the moment, we obtain the following result.
\begin{cor}
\label{cor:InversionofBH}Let $A$ be an $M$-matrix. Let
\[
q_{A}(x_0,\cdots,x_n):=\det(\mathrm{diag}(x_1,\cdots,x_n)+x_{0}A).
\]
Then $q_{A}$ is a Lorentzian polynomial. 
\end{cor}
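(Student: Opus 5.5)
The plan is to realize $q_A$ as a localized homogenization of the eigen-polynomial of a nonnegative matrix, and then read off the Lorentzian property from Theorem~\ref{thm:nonnegmatrix} combined with condition~\ref{enu:maththmIdealtLorentzian} of Theorem~\ref{thm:mainstructure}.

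\textbf{Setup.} Write $A=sI_n-B$ with $B\ge 0$ entrywise and $s\ge\rho(B)$, and let $N:=B$. By Theorem~\ref{thm:nonnegmatrix}, $p_N(\x)=\det(\mathrm{diag}(\x)-B)\in\I[\x]$, and $p_N$ has degree $n$. Put $\x_0:=(s,\dots,s)$. Then
\[
p_N\bigl(\x_0+\x/y\bigr)\,y^{n}
=\det\bigl(y\,\mathrm{diag}(\x_0)+\mathrm{diag}(\x)-yB\bigr)
=\det\bigl(\mathrm{diag}(\x)+yA\bigr)
=q_A(y,\x),
\]
so $q_A=\HH_{\x_0}p_N$ after renaming $y=x_0$.

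\textbf{Case $s>\rho(B)$.} Here $\mathrm{diag}(\x_0)-B=A$ is a nonsingular $M$-matrix, so $\x_0\in\C_{p_N}$ by the characterization of $\C_{p_N}$ quoted above from \cite{Fang-MaGard}. Applying condition~\ref{enu:maththmIdealtLorentzian} of Theorem~\ref{thm:mainstructure} with $\alpha=0$ shows that $q_A=\HH_{\x_0}p_N$ is Lorentzian.

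\textbf{Case $s=\rho(B)$ (singular $A$).} I expect this to be the main obstacle, because $\x_0$ then lies only on the boundary of $\C_{p_N}$ and condition~\ref{enu:maththmIdealtLorentzian} does not apply directly. I would approximate: set $A_\epsilon:=A+\epsilon I_n=(s+\epsilon)I_n-B$ with $\epsilon>0$. Each $A_\epsilon$ is a nonsingular $M$-matrix, so $q_{A_\epsilon}$ is Lorentzian by the previous case. Moreover $q_{A_\epsilon}\to q_A$ coefficientwise as $\epsilon\to0^+$. Lorentzian polynomials of fixed degree form a closed set (Brändén--Huh \cite{BrandenHuh20}), so $q_A$ is Lorentzian.

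\textbf{Remaining checks.} One must confirm that the homogenization degree equals $\deg p_N=n$, which holds because $p_N$ has leading term $x_1\cdots x_n$. One must also confirm that $q_A$ is nonzero, which holds because it contains the monomial $x_1\cdots x_n$ with coefficient $1$.
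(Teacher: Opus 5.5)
Your proposal is correct and is essentially the argument the paper intends: you write $A=sI_n-B$, identify $q_A$ with the localized homogenization $\HH_{s\mathbf 1}p_B$, and combine Theorem~\ref{thm:nonnegmatrix} with condition~\ref{enu:maththmIdealtLorentzian} of Theorem~\ref{thm:mainstructure}. Your approximation by $A+\epsilon I_n$, together with closedness of the Lorentzian class, correctly handles the singular case $s=\rho(B)$ (where $s\mathbf 1\notin\C_{p_B}$); the paper leaves that case implicit.
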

\begin{rem}
Brändén-Huh \cite{BrandenHuh20} showed that that the inversion of
$q_{A}$ is Lorentzian. This implies that $q_A$ is a dually Lorentzian polynomial introduced by Ross-S\"uss-Wannerer \cite{RossSuessWannerer2025}. Therefore, $q_{A}(\x)$ is Lorentzian and dually Lorentzian.  
\end{rem}

These examples show that the class of ideal G{\aa}rding polynomials is
simultaneously broad enough to include fundamental stable and matrix
examples, and rigid enough to encode convexity of the distinguished
positivity component.
\section{Univariate Polynomials and Pitman-Stanley Polytopes}
We analyze univariate \(\gar{}\) polynomials through their
monotone root sequences and relate them to Pitman--Stanley polytopes.
This provides a geometric model for the universal univariate
G{\aa}rding polynomial and yields the first concavity statement needed later.
 
\subsection{The universal MRS component}
First, we recall the definition.
For a nonzero univariate polynomial \(g(x)\) with at least one real root, we
write \(r(g)\) for its largest real root.

\begin{defn}\label{def:MRS}
Let \(f(x)\) be a univariate polynomial of degree \(d\). If each derivative
\(f^{(i)}\), \(0\le i\le d-1\), has a real root, we define the
\emph{root sequence} of \(f\) by
\[
R(f):=\bigl(r(f),\,r(f'),\,\ldots,\,r(f^{(d-1)})\bigr)\in \R^d.
\]
We say that \(f\) has a \emph{monotone root sequence} \emph{(MRS)} if
\[
r(f)\ge r(f')\ge \cdots \ge r(f^{(d-1)}).
\]
\end{defn}
In \cite{Lin23JFA}, Lin called a univariate polynomial \(f(x)\)
\emph{right noetherian} if it satisfies MRS. Clearly, by Definition~\ref{def:garding}, a univariate polynomial $f(x)$ is \gar{}  if and only if $f$ admits a monotone root sequence. See \cite[Section~9.2]{Fang-MaGard}.

We recall a universal construction of univariate \gar{} polynomials from \cite{Fang-MaGard}: Given a decreasing sequence of real numbers
\[\mathbf{r}:=(r_0,r_1,\cdots,r_{d-1}),\;\  r_0\geq r_1 \geq r_2 \geq \cdots \geq r_{d-1}.\]
Let
\(f_{d}(x,\mathbf{r}):=1, \) and for $k=d-1,d-2,\cdots,0$, \[f_k(x,\mathbf{r}):=\int_{r_k}^xf_{k+1}(t,\mathbf{r})dt.\] Then, \begin{equation}
\label{eq:f(x,r)}f_{\mathbf r}(x):=f_0(x,\mathbf{r})=\int_{r_0}^x\int_{r_1}^{t_1}\cdots\int_{r_{d-1}}^{t_{d-1}}dt_d\cdots dt_{1}\end{equation}
is a \gar{} polynomial in $x$ with $R(f)=\mathbf{r}.$  Furthermore, up to a positive constant multiple, a univariate \gar{} polynomial with the root sequence $\mathbf r$ is uniquely determined by \eqref{eq:f(x,r)}.

We introduce some universal objects associated to univariate
\gar{} polynomials.  

For \(d\geq1\), define the moduli chamber of monic degree \(d\)
univariate \gar{} polynomials with MRS as
\begin{equation}\label{MSD_d}
\MRS_d
:=
\left\{
\rr=(r_0,\ldots,r_{d-1})\in\R^d:
r_0\geq r_1\geq\cdots\geq r_{d-1}
\right\}.
\end{equation}

Let $\rhat=(r_{-1},r_0,\ldots,r_{d-1})\in\R^{d+1}$. Inspired by \eqref{eq:f(x,r)}, we define the universal degree \(d\) univariate \gar{} polynomial by $\upoly_0:=1$ and
\begin{equation}\label{upoly-defn}
\upoly_d(\rhat)
:=
\int_{r_0}^{\rminus}
\int_{r_1}^{t_1}
\cdots
\int_{r_{d-1}}^{t_{d-1}}
dt_d\cdots dt_1.
\end{equation}
 
For fixed \(\rr\in\MRS_d\), the restriction $\upoly_d(x,\rr)=f_\rr(x)$ 
is the normalized univariate \gar{} polynomial with root sequence $\rr$ and leading coefficient \(1/d!\). 
See \eqref{eq:f(x,r)}.

Define the universal G{\aa}rding component
\[
\U_d
:=
\left\{
\rhat=(\rminus,r_0,\ldots,r_{d-1})\in\R^{d+1}:
\rminus>r_0\geq r_1\geq\cdots\geq r_{d-1}
\right\}.
\]
We define the  projection
\[
\pi_d:\U_d\longrightarrow\MRS_d,
\qquad
\pi_d(\rminus,r_0,\ldots,r_{d-1})
=
(r_0,\ldots,r_{d-1})
\]
that forgets the fibre coordinate \(\rminus\).  The fibre over
\(\rr=(r_0,\ldots,r_{d-1})\) is the positive ray
\(
\pi_d^{-1}(\rr)=(r_0,\infty),
\)
which is the G{\aa}rding component of the corresponding univariate
\gar{} polynomial $f_{\mathbf r}(x)$ in \eqref{eq:f(x,r)}.

\subsection{Pitman--Stanley polytopes}
Recall the Pitman--Stanley polytope. For
\(\a=(a_1,\ldots,a_d)\in\overline{\Gamma_d^+}\), define
\begin{equation}
\label{new104}
\PS_d(\a)
:=
\left\{
\y\in\overline{\Gamma_d^+}
:\ 
\sum_{i=1}^{k} y_i
\le
\sum_{i=1}^{k} a_i,
\quad
k=1,\ldots,d
\right\}.
\end{equation}
When \(\a\in\Gamma_d^+\), the set \(\PS_d(\a)\) is a \(d\)-dimensional convex
polytope introduced in~\cite{PS}. Equivalently, it admits the Minkowski-sum
representation
\begin{equation}
\label{minkovskisum}
\PS_d(\a)=\sum_{i=1}^{d}a_i\Delta_i,
\qquad
\Delta_i=\mathrm{conv}\bigl(\mathbf{0},\mathbf{e}_i,\mathbf{e}_{i+1},\ldots,\mathbf{e}_d\bigr).   
\end{equation}
Denote by \(\mathrm{Vol}_d\) the \(d\)-dimensional volume function.

We also define polynomials \(P_k\) recursively by
\[
P_0(a_0)=1,
\]
and, for \(k\geq1\),
\begin{equation}
    \label{new103}
P_k(a_1,\ldots,a_k)
=
\int_0^{a_1}
P_{k-1}(a_2+t,a_3,\ldots,a_k)\,dt.
\end{equation}

Thus \(P_k\) is defined on $\overline{\Gamma^+_k}$ and is smooth on
\(\Gamma_k^+\).
\subsection{From the MRS Model to the Pitman--Stanley Model}
We   relate the universal objects of univariate G\aa rding polynomials to the Pitman--Stanley polytopes and their volume functions.
Define the gap map
\begin{align}
\gapmap_d:\U_d &\to \R_{>0}\times\overline{\Gamma^+_{d-1}},\\
\rhat&\mapsto(\rminus-r_0,\ r_0-r_1,\ \ldots,\ r_{d-2}-r_{d-1}). \end{align}
Denote $\a:=A_d(\rhat)$. In
\(
\U_d^\circ
:=
\{\rminus>r_0>r_1>\cdots>r_{d-1}\},
\)
we have \(\gapmap_d(\U_d^\circ)=\Gamma_d^+\).  Together with the
translation coordinate \(s=r_{d-1}\), the map
\(
\rhat\longmapsto(\a,s)
\)
gives an affine trivialization
\[
\U_d^\circ\simeq\Gamma_d^+\times\R_s.
\]

\begin{thm}
\label{thm:PS-realization}We use notation as above.
For every \(\rhat\in\U_d\), 

\[
\upoly_d(\rhat)
=
P_d(\gapmap_d(\rhat))
=
\mathrm{Vol}_d\bigl(\PS_d(\gapmap_d(\rhat))\bigr).
\]
\end{thm}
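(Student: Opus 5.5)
We prove the two equalities separately: first $\upoly_d=P_d\circ\gapmap_d$ by induction on $d$, then $P_d(\a)=\Vol_d(\PS_d(\a))$ by a slicing argument that yields the same recursion \eqref{new103}.

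\emph{First equality.} Write $\rhat=(\rminus,r_0,\ldots,r_{d-1})$ and split off the outermost integral in \eqref{upoly-defn}:
\[
\upoly_d(\rminus,r_0,\ldots,r_{d-1})=\int_{r_0}^{\rminus}\upoly_{d-1}(t_1,r_1,\ldots,r_{d-1})\,dt_1 .
\]
Here $(t_1,r_1,\ldots,r_{d-1})\in\U_{d-1}$ whenever $t_1>r_0\ge r_1$; the endpoint $t_1=r_0=r_1$ has measure zero, and continuity handles it. By the induction hypothesis, with base case $\upoly_0=P_0=1$,
\[
\upoly_{d-1}(t_1,r_1,\ldots)=P_{d-1}(t_1-r_1,\ r_1-r_2,\ldots,r_{d-2}-r_{d-1}).
\]
Substitute $t_1=r_0+t$ with $t\in[0,\rminus-r_0]$. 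Then $t_1-r_1=(r_0-r_1)+t=a_2+t$, and we get
\[
\upoly_d(\rhat)=\int_0^{a_1}P_{d-1}(a_2+t,a_3,\ldots,a_d)\,dt=P_d(\a),\qquad \a=\gapmap_d(\rhat),
\]
which is exactly \eqref{new103}.

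\emph{Second equality.} Set $V_d(\a):=\Vol_d(\PS_d(\a))$. We show that $V_d$ obeys the same recursion; this is the step needing care.

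We slice $\PS_d(\a)$ in the first coordinate. Fix $y_1=s\in[0,a_1]$. The remaining constraints are $y_i\ge0$ and $\sum_{i=2}^k y_i\le (a_1-s)+\sum_{i=2}^k a_i$ for $k\ge2$. This is precisely $\PS_{d-1}(a_2+a_1-s,a_3,\ldots,a_d)$, since the partial-sum constraints are cumulative. By Fubini,
\[
V_d(\a)=\int_0^{a_1}V_{d-1}(a_2+a_1-s,a_3,\ldots,a_d)\,ds .
\]
Changing variables $t=a_1-s$ gives $\int_0^{a_1}V_{d-1}(a_2+t,a_3,\ldots,a_d)\,dt$. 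For the base case, $V_1(a_1)=\Vol_1[0,a_1]=a_1=P_1(a_1)$, or one may start from $V_0=1$. Induction then gives $V_d=P_d$ on $\Gamma_d^+$. On the boundary $\overline{\Gamma_d^+}$ the identity extends by continuity: $P_d$ is a polynomial, and the volume of a polytope cut out by linear inequalities with continuously varying right-hand sides is continuous. Alternatively, the slicing argument works verbatim there, with lower-dimensional sets having volume $0$.

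The only subtle point is checking that the slice really is a Pitman--Stanley polytope with the first gap absorbed into the second. This holds because every constraint with $k\ge2$ involves $y_1$ only through the combination $\sum_{i\le k}a_i-y_1$, and the $k=1$ constraint is just $s\le a_1$. Everything else is routine Fubini and induction. The Minkowski-sum description \eqref{minkovskisum} is not needed.
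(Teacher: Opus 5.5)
Your proposal is correct and follows the paper's proof essentially verbatim. The first equality comes from induction on $d$ by peeling off the outermost integral and substituting $t=t_1-r_0$. The second comes from Fubini slicing in $y_1$, which reproduces the recursion \eqref{new103}; the paper parametrizes the fibre directly by $t=a_1-y_1$, and both arguments extend to the boundary of $\overline{\Gamma_d^+}$ by continuity.
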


\begin{proof} 
We first prove  
$\upoly_d(\rhat)=P_d(\a)$ for $\a=\Ad(\rhat)$.

For \(d=1\), this is immediate by definitions:
\[
\upoly_1(r_-,r_0)
=
\int_{r_0}^{r_-}dt
=
r_- - r_0
=
a_1
=
P_1(a_1).
\]
Assume the identity holds in degree \(d-1\).  By \eqref{upoly-defn},
\[
\upoly_d(\rhat)
=
\int_{r_0}^{\rminus}
\upoly_{d-1}(t,r_1,\ldots,r_{d-1})\,dt.
\]
Let
\(
s=t-r_0\). 
Then \(s\in[0,a_1]\), and 
\[
A_{d-1}(t,r_1,\ldots,r_{d-1})=(a_2+s,a_3,\ldots,a_d).
\]
By the induction hypothesis,
\[
\upoly_{d-1}(t,r_1,\ldots,r_{d-1})
=
P_{d-1}(a_2+s,a_3,\ldots,a_d).
\]
Therefore, by \eqref{new103},
\[
\upoly_d(\rhat)
=
\int_0^{a_1}
P_{d-1}(a_2+s,a_3,\ldots,a_d)\,ds
=
P_d(a_1,\ldots,a_d).
\]

It remains to show
\[
P_d(\a)=\mathrm{Vol}_d(\PS_d(\a)).
\]
We argue by induction on \(d\).  For \(d=1\),
\(
P_1(a_1)=a_1=\mathrm{Vol}_1([0,a_1]).
\)

Assume the result holds in dimension \(d-1\).  Let
\(\a=(a_1,\ldots,a_d)\in\Gamma_d^+\).  Consider the projection
\[
\pi_1:\PS_d(\a)\longrightarrow[0,a_1],
\qquad
\y\longmapsto a_1-y_1.
\]
Let \(t=a_1-y_1\).  Then \(y_1=a_1-t\).  For
\((y_1,\ldots,y_d)\in\PS_d(\a)\), the inequalities in \eqref{new104}
become, for \(2\leq k\leq d\),
\[
\sum_{i=2}^{k}y_i
\leq
t+\sum_{i=2}^{k}a_i.
\]
Hence the fibre over \(t\) is
\[
\pi_1^{-1}(t)
=
\{a_1-t\}\times
\PS_{d-1}(a_2+t,a_3,\ldots,a_d).
\]
By the induction hypothesis,
\[
\mathrm{Vol}_{d-1}
\bigl(
\PS_{d-1}(a_2+t,a_3,\ldots,a_d)
\bigr)
=
P_{d-1}(a_2+t,a_3,\ldots,a_d).
\]
Therefore, by Fubini's theorem,
\[
\mathrm{Vol}_d(\PS_d(\a))
=
\int_0^{a_1}
P_{d-1}(a_2+t,a_3,\ldots,a_d)\,dt
=
P_d(\a).
\]
This proves the theorem on \(\Gamma_d^+\), and the identity extends to \(\overline{\Gamma_d^+}\),
 by continuity.
\end{proof}

\begin{cor}
\label{cor:universal-root-concavity}
Let \(d\geq1\). Then the function
\(
        \upoly_d(\rhat)^{\frac{1}{d}}
\)
is concave on the universal G{\aa}rding component \(\U_d\).
\end{cor}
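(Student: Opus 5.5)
The plan is to transport the problem through the gap map and use Theorem~\ref{thm:PS-realization} together with the Brunn--Minkowski inequality. The map \(\gapmap_d\) is the restriction to \(\U_d\) of a linear map \(\R^{d+1}\to\R^d\). Precomposing a concave function with an affine map preserves concavity. So it suffices to show that
\[
\a\longmapsto \mathrm{Vol}_d\bigl(\PS_d(\a)\bigr)^{1/d}
\]
is concave on the convex set \(\gapmap_d(\U_d)=\R_{>0}\times\overline{\Gamma^+_{d-1}}\). By Theorem~\ref{thm:PS-realization}, \(\upoly_d(\rhat)^{1/d}=\mathrm{Vol}_d(\PS_d(\gapmap_d(\rhat)))^{1/d}\). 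Note that \(\U_d\) itself is convex, being cut out by linear inequalities, so the concavity statement makes sense.

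The key step is to show that \(\a\mapsto\PS_d(\a)\) is Minkowski-linear on \(\overline{\Gamma_d^+}\). For \(\a,\b\in\overline{\Gamma_d^+}\) and \(t\in[0,1]\), I want
\[
\PS_d\bigl(t\a+(1-t)\b\bigr)=t\,\PS_d(\a)+(1-t)\,\PS_d(\b).
\]
This follows from the Minkowski-sum representation \eqref{minkovskisum}, \(\PS_d(\a)=\sum_i a_i\Delta_i\). For a fixed convex body \(\Delta\) and \(s,u\ge0\), we have \((s+u)\Delta=s\Delta+u\Delta\). Hence
\[
\sum_i\bigl(ta_i+(1-t)b_i\bigr)\Delta_i=t\sum_i a_i\Delta_i+(1-t)\sum_i b_i\Delta_i.
\]
The representation \eqref{minkovskisum} is stated for \(\a\in\Gamma_d^+\). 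It extends to the boundary, either by continuity in the Hausdorff metric of both sides, or directly, since the defining inequalities \eqref{new104} are continuous in \(\a\). I expect this boundary extension to be the only point needing care.

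Next I apply Brunn--Minkowski: for convex bodies \(K,L\subset\R^d\), \(\mathrm{Vol}_d(tK+(1-t)L)^{1/d}\ge t\,\mathrm{Vol}_d(K)^{1/d}+(1-t)\mathrm{Vol}_d(L)^{1/d}\). Combined with the Minkowski linearity, this gives concavity of \(\mathrm{Vol}_d(\PS_d(\a))^{1/d}\) on \(\overline{\Gamma_d^+}\). Degenerate (lower-dimensional) polytopes are allowed, since Brunn--Minkowski holds for all nonempty compact sets.

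Pulling back by the affine map \(\gapmap_d\) then yields concavity of \(\upoly_d^{1/d}\) on \(\U_d\). Nonnegativity of \(\upoly_d\) on \(\U_d\), which is needed for the \(d\)-th root, is clear from the volume interpretation.
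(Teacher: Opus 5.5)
Your proposal is correct and follows the paper's proof: Minkowski linearity of $\a\mapsto\PS_d(\a)$ via \eqref{minkovskisum}, Brunn--Minkowski for $\mathrm{Vol}_d^{1/d}$, and pullback by the affine gap map $\gapmap_d$. The only difference is at the boundary. The paper proves concavity on $\Gamma_d^+$ and extends it by continuity of $P_d$, whereas you extend the Minkowski-sum identity itself to $\overline{\Gamma_d^+}$; the paper's route is slightly simpler because it avoids the Hausdorff-continuity check you flagged.
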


\begin{proof}

First, we prove that \(P_d^{1/d}\) is concave in
the Pitman--Stanley coordinates.

By  \eqref{minkovskisum},
 $\PS_d(\a)$ depends linearly on $\a$ under Minkowski addition, and we apply Brunn--Minkowski theorem to get
\(
        \Vol_d(\PS_d(\a))^{\frac{1}{d}}
        =
        P_d(\a)^{\frac{1}{d}}
\)
is concave with respect to $\a$ on \(\Gamma_d^+\). This concavity extends to
\(\R_{>0}\times\R_{\geq0}^{d-1}\) by continuity.

Next, note that the gap map
\[
        \gapmap_d:\U_d\to \R_{>0}\times\R_{\geq0}^{d-1}
\]
is affine and affine pullbacks preserve concavity, the function
$
        P_d(\gapmap_d(\rhat))^{\frac{1}{d}}
        =
        \upoly_d(\rhat)^{\frac{1}{d}}$
is concave on \(\U_d\).
\end{proof}
 The Pitman--Stanley realization provides the first concavity statement for the
universal model. In the next section, we expand this to quotient
concavity.

\section{Universal Quotient}
We study the universal quotient associated with the
universal univariate \(\gar{}\) polynomial. This quotient will serve as
the model for the fibre quotient \(f/\partial_y f\) that appears later in the
proof of the polarization theorem.

This section forms the technical core of the paper and develops new geometric insight; the level of detail is essential to establish the concavity and monotonicity properties used throughout the sequel.

For $d\geq1$, denote $\rhat=(r_{-1},r_0,\cdots,r_{d-1})$. Let $\upoly_d(\rhat)$ be the universal univariate \gar{} polynomial of degree $d$. 

Denote $\pdvminus:=\frac{\partial}{\partial\rminus}.$ Define
\begin{equation}\label{uquot}
\uquot_d(\rhat)
:=
\frac{\upoly_d(\rhat)}{\pdvminus\upoly_d(\rhat)}.
\end{equation}

For \(d=1\), set
\(
\U_1^{(1)}:=\R^2.
\)
For \(d\ge2\), set
\[
\U_d^{(1)}
:=
\left\{
\rhat=(\rminus,r_0,\ldots,r_{d-1})
:
\rminus>r_1,\;
r_0\ge r_1\ge\cdots\ge r_{d-1}
\right\}.
\]
Then $\uquot_d(\rhat)$ is a rational function on  
$\U_d^{(1)}$. We call $\uquot_d(\rhat)$ the \emph{universal quotient} or \emph {universal quotient function} in degree $d$.

On the region where the denominator is nonzero, define
\[
\Psi_d(\a)
:=
\frac{P_d(\a)}{\partial_1P_d(\a)}
=
\frac{P_d(a_1,\ldots,a_d)}
{P_{d-1}(a_1+a_2,a_3,\ldots,a_d)}.
\]
On \(\Gamma_d^+\), this is the Pitman--Stanley quotient.
Moreover, for \(\rhat\in\U_d\), Theorem~\ref{thm:PS-realization} and the chain rule give
\[
\uquot_d(\rhat)=\Psi_d(\gapmap_d(\rhat)).
\]

The function $\Psi_d$ is homogeneous  degree $1$. Geometrically, it is the ratio of the volume of the Pitman--Stanley polytope to its first variation in the direction of the simplex $\Delta_1$:
\[
\Psi_d(\a)
=
\frac{\Vol_d(\PS_d(\a))}{\partial_1 \Vol_d(\PS_d(\a))}.
\]
Here, $\partial_1 \Vol_d(\PS_d(\a))$ is the mixed volume obtained by replacing one copy of $\PS_d(\a)$ with $\Delta_1$ in the $d$-fold mixed volume expansion of $\Vol_d(\PS_d(\a))$.

We will establish the concavity of $\Psi_d$ in $\a$ and of $\uquot_d(\rhat)$ in $\rhat$ on suitable domains.

\subsection{Preparation lemmas}
We prepare our study with a few technical lemmas.

\begin{lem}
\label{lem:segmentslicing}
Let \(u,v,h\) be continuous positive functions defined on the intervals
$[\alpha,a]$, $[\beta,b]$, $[\alpha+\beta,a+b]$,
respectively. Assume that
\[
h(s+t)\geq u(s)+v(t)
\]
for every \(s\in[\alpha,a]\) and \(t\in[\beta,b]\). Then
\[
\int_{\alpha+\beta}^{a+b}\frac{dz}{h(z)}
\leq
\max\left\{
\int_{\alpha}^{a}\frac{ds}{u(s)},
\int_{\beta}^{b}\frac{dt}{v(t)}
\right\}.
\]
\end{lem}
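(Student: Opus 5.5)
The plan is a one-dimensional transport (change-of-variables) argument of Brunn--Minkowski type. The idea is to parametrize both intervals by a common parameter $\tau\in[0,1]$, chosen so that each interval is traversed at a speed proportional to its own weight. The curve $z(\tau)=s(\tau)+t(\tau)$ then sweeps the interval $[\alpha+\beta,a+b]$. The hypothesis $h(s+t)\ge u(s)+v(t)$ controls how fast $\int dz/h(z)$ can accumulate along this curve.

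Concretely, set
\[
A:=\int_\alpha^a\frac{ds}{u(s)},\qquad B:=\int_\beta^b\frac{dt}{v(t)},\qquad M:=\max\{A,B\}.
\]
Since $u$ and $v$ are continuous and positive, the functions
\[
U(s):=\int_\alpha^s\frac{d\sigma}{u(\sigma)},\qquad V(t):=\int_\beta^t\frac{d\sigma}{v(\sigma)}
\]
are $C^1$ and strictly increasing, with $U'=1/u$ and $V'=1/v$. Define $s(\tau)$ and $t(\tau)$ for $\tau\in[0,1]$ by
\[
U(s(\tau))=\tau A,\qquad V(t(\tau))=\tau B.
\]
By the inverse function theorem these are $C^1$ increasing bijections onto $[\alpha,a]$ and $[\beta,b]$. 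Differentiating the defining relations gives
\[
s'(\tau)=A\,u(s(\tau)),\qquad t'(\tau)=B\,v(t(\tau)).
\]

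Now put $z(\tau):=s(\tau)+t(\tau)$. This is a $C^1$ strictly increasing bijection from $[0,1]$ onto $[\alpha+\beta,a+b]$. Its derivative satisfies
\[
z'(\tau)=A\,u(s(\tau))+B\,v(t(\tau))\le M\bigl(u(s(\tau))+v(t(\tau))\bigr)\le M\,h(z(\tau)),
\]
where the last step is the hypothesis applied at $s=s(\tau)$, $t=t(\tau)$. Changing variables $z=z(\tau)$ then yields
\[
\int_{\alpha+\beta}^{a+b}\frac{dz}{h(z)}=\int_0^1\frac{z'(\tau)}{h(z(\tau))}\,d\tau\le\int_0^1 M\,d\tau=M,
\]
which is the claimed inequality.

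The only step that needs care is the choice of parametrization. A naive choice, such as parametrizing both intervals linearly, does not give the bound. The essential point is to make $s'/u$ and $t'/v$ constant, equal to $A$ and $B$ respectively, which requires inverting $U$ and $V$. Once that choice is made, the remaining issues are regularity, and these are handled by continuity and positivity of $u$, $v$, $h$: $U$ and $V$ are $C^1$ with positive derivative, so the inverses are $C^1$, and the substitution formula applies.
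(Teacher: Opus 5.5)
Your argument is correct and is essentially the paper's proof. There too each interval is parametrized by the solution of $\gamma'=u(\gamma)$, equivalently by inverting $\int ds/u$; the two curves are added, and $\gamma'\le u+v\le h$ is combined with a change of variables. The only difference is cosmetic: the paper runs both curves at unit speed ($s'=u(s)$, $t'=v(t)$) on a time interval of length $\max\{A,B\}$ and freezes the curve that arrives first, so its sum is only piecewise $C^1$, whereas your rescaled speeds $A\,u$ and $B\,v$ make both curves arrive at $\tau=1$ and give a globally $C^1$ substitution.
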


\begin{proof}
By translating the variables, it suffices to prove the case
\(\alpha=\beta=0\). Set
\[
U:=\int_0^a\frac{ds}{u(s)},
\qquad
V:=\int_0^b\frac{dt}{v(t)}.
\]
Without loss of generality, assume \(V\geq U\).

Let \(\gamma_u:[0,U]\to[0,a]\) solve
\[
\gamma_u'(\tau)=u(\gamma_u(\tau)),
\qquad
\gamma_u(0)=0,
\]
so that \(\gamma_u(U)=a\). Extend \(\gamma_u\) to \([0,V]\) by setting
\(
\gamma_u(\tau)=a\) for \( U\leq\tau\leq V.
\)
Similarly, let \(\gamma_v:[0,V]\to[0,b]\) solve
\[
\gamma_v'(\tau)=v(\gamma_v(\tau)),
\qquad
\gamma_v(0)=0,
\]
so that \(\gamma_v(V)=b\).

Define
\[
\gamma(\tau):=\gamma_u(\tau)+\gamma_v(\tau).
\]
Then \(\gamma\) is increasing and runs from \(0\) to \(a+b\) as
\(\tau\) runs from \(0\) to \(V\). Moreover,
\[
\gamma'(\tau)
\leq
u(\gamma_u(\tau))+v(\gamma_v(\tau))
\leq
h(\gamma(\tau)),
\]
for $\tau\in[0,U)\cup(U,V]$.
Therefore
\[
\int_0^{a+b}\frac{dz}{h(z)}
=
\int_0^V
\frac{\gamma'(\tau)}{h(\gamma(\tau))}\,d\tau
\leq
\int_0^V d\tau
=
V.
\]
Since \(V=\max\{U,V\}\), this proves the claim.
\end{proof}

We also use the following one-dimensional Brascamp--Lieb type inequality;
cf.~\cite[Theorem 3.1]{BRASCAMP1976366}.
\begin{lem}
\label{lem:BLinequality}
Let \(u,v,h\) be non-negative increasing functions defined on
\([0,a]\), \([0,b]\), and \([0,a+b]\), respectively. Assume that
\begin{equation}\label{new100}
     h(s+t)\geq \min\{u(s),v(t)\}
\end{equation}
for every \(s\in[0,a]\) and \(t\in[0,b]\). Then
\begin{equation}
        \int_0^{a+b}h(\tau)\,d\tau
        \geq
        \int_0^a u(s)\,ds
        +
        \int_0^b v(t)\,dt .
\label{eq:BLineqresembles}
\end{equation}
\end{lem}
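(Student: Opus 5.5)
The plan is the classical layer-cake argument: compare the superlevel sets of $h$ with those of $u$ and $v$, and integrate. For $\lambda>0$ set
\[
E_u(\lambda):=\{s\in[0,a]:u(s)\ge\lambda\},\quad
E_v(\lambda):=\{t\in[0,b]:v(t)\ge\lambda\},\quad
E_h(\lambda):=\{z\in[0,a+b]:h(z)\ge\lambda\}.
\]
Since $u,v,h$ are nonnegative, Fubini gives
\[
\int_0^a u\,ds=\int_0^\infty |E_u(\lambda)|\,d\lambda ,
\]
and likewise for $v$ and $h$. It therefore suffices to prove, for every $\lambda>0$ such that $E_u(\lambda)$ and $E_v(\lambda)$ are both nonempty,
\[
|E_h(\lambda)|\ge |E_u(\lambda)|+|E_v(\lambda)|.
\]
If one of the two sets is empty, the inequality needs more care; see below.

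Monotonicity makes every superlevel set an interval reaching the right endpoint. Thus $E_u(\lambda)$ is $[s_\lambda,a]$ or $(s_\lambda,a]$, and $E_v(\lambda)$ is $[t_\lambda,b]$ or $(t_\lambda,b]$. Hypothesis \eqref{new100} gives $h(s+t)\ge\lambda$ for all $s\in E_u(\lambda)$ and $t\in E_v(\lambda)$. Hence $E_h(\lambda)$ contains the sumset $E_u(\lambda)+E_v(\lambda)$, which is an interval from $s_\lambda+t_\lambda$ to $a+b$. Its length is
\[
(a-s_\lambda)+(b-t_\lambda)=|E_u(\lambda)|+|E_v(\lambda)|.
\]
This is the one-dimensional Brunn--Minkowski inequality, trivial here because the sets are intervals; no monotonicity of $h$ is even needed.

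The main obstacle is the levels where exactly one of $E_u(\lambda)$, $E_v(\lambda)$ is empty. There the sumset argument gives nothing, yet the right-hand side of the desired inequality is positive. The standard Brascamp--Lieb fix is to normalize so that $\sup u=\sup v$, which makes both sets nonempty for the same range of $\lambda$, but rescaling does not preserve the $\min$-hypothesis. So I would instead check, before integrating in $\lambda$, whether the lemma needs an extra hypothesis.

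A quick test suggests it does. Take $v\equiv0$ and $u\equiv1$ with $a=b$. Then \eqref{new100} holds for $h\equiv0$, while the right-hand side equals $a>0$. So as literally stated the claim fails. I would therefore prove it under the natural additional assumption $\sup u=\sup v$, equivalently $u(a)=v(b)$, which presumably holds in the intended application; alternatively one can replace $\min$ by a hypothesis that guarantees both superlevel sets are nonempty. Under that assumption the sets $E_u(\lambda)$ and $E_v(\lambda)$ are nonempty for exactly the same $\lambda$. Integrating the pointwise bound over that common range then yields \eqref{eq:BLineqresembles}.
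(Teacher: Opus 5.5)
Your proposal is correct and uses the same layer-cake argument as the paper. Your counterexample is valid, so the lemma is false as literally stated. It even survives with strictly increasing, strictly positive data: take $u(s)=1+s$, $v(t)=\delta+\epsilon t$, $h(z)=\delta+\epsilon z$ with $\delta+\epsilon b\le 1$ and $\delta,\epsilon$ small.

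The paper's proof passes from $U(\lambda)+V(\lambda)\subseteq H(\lambda)$ to $|H(\lambda)|\ge|U(\lambda)|+|V(\lambda)|$ for every $\lambda\ge0$. That step fails exactly at the levels where one of $U(\lambda),V(\lambda)$ is empty and the other is not, which is the case you isolated. Your extra hypothesis $u(a)=v(b)$ repairs this, and your proof of the normalized version is complete.

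In the only place the lemma is used, the proof of Theorem~\ref{thm:superadditive}, one has $u(a_1)=\psi(a_1,\mathbf a)=1=\psi(b_1,\mathbf b)=v(b_1)$. So the normalized version is all that is needed there, and that theorem is unaffected.
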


\begin{proof}
For \(\lambda\geq0\), define the superlevel sets
\[
        H(\lambda):=\{\tau\in[0,a+b]:h(\tau)>\lambda\},
\]
\[
        U(\lambda):=\{s\in[0,a]:u(s)>\lambda\},
        \qquad
        V(\lambda):=\{t\in[0,b]:v(t)>\lambda\}.
\]
Since \(u,v,h\) are increasing, the sets
\(U(\lambda)\), \(V(\lambda)\), and \(H(\lambda)\) are intervals, possibly
empty.  \eqref{new100} implies that for \(s\in U(\lambda)\) and \(t\in V(\lambda)\), 
\[
        h(s+t)\geq\min\{u(s),v(t)\}>\lambda.
\]
Therefore, \( U(\lambda)+V(\lambda)\subseteq H(\lambda),\) which implies
\begin{equation}\label{new102}
    |H(\lambda)|
        \geq
        |U(\lambda)|+|V(\lambda)|.
\end{equation}

Integrating \eqref{new102} over \(\lambda\in[0,\infty)\), we obtain
\[
        \int_0^\infty |H(\lambda)|\,d\lambda
        \geq
        \int_0^\infty |U(\lambda)|\,d\lambda
        +
        \int_0^\infty |V(\lambda)|\,d\lambda .
\]By the layer-cake representation, this is exactly \eqref{eq:BLineqresembles}.
\end{proof}
\subsection{Concavity of \(\Psi_d\)}
\begin{thm}
\label{thm:superadditive}
The Pitman-Stanley quotient \(\Psi_d\) is concave on \(\Gamma_d^+\). Since \(\Psi_d\) is
homogeneous of degree \(1\), the concavity is equivalent to the superadditivity
\[
\Psi_d(\a+\mathbf b)
\geq
\Psi_d(\a)+\Psi_d(\mathbf b),
\qquad
\forall\,\a,\mathbf b\in\Gamma_d^+.
\label{eq:superadditive}
\]
\end{thm}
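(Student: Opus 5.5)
We argue by induction on $d$, using the recursive structure of $P_d$ in the first variable. For $d=1$ we have $\Psi_1(a_1)=a_1$, which is linear, so there is nothing to prove. The key identity comes from \eqref{new103}:
\[
\partial_1P_d(\a)=P_{d-1}(a_1+a_2,a_3,\ldots,a_d),\qquad \partial_1^2P_d(\a)=P_{d-2}(a_1+a_2+a_3,a_4,\ldots,a_d).
\]
Writing $\a'=(a_2,\ldots,a_d)$, this gives a first-order ODE for $\Psi_d$ in the variable $a_1$:
\[
\partial_1\Psi_d(\a)=1-\frac{P_d\,\partial_1^2P_d}{(\partial_1P_d)^2}=1-\frac{\Psi_d(\a)}{\Psi_{d-1}(a_1+a_2,a_3,\ldots,a_d)}.
\]
The boundary value is $\Psi_d(0,\a')=0$, since $P_d(0,\a')=0$ while $\partial_1P_d(0,\a')>0$. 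Equivalently, $\partial_1\log P_d=1/\Psi_d$, so
\[
\log P_d(a_1,\a')-\log P_d(\alpha,\a')=\int_\alpha^{a_1}\frac{dz}{\Psi_d(z,\a')}.
\]
This is the form in which Lemma~\ref{lem:segmentslicing} applies.

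We fix the tails $\a',\mathbf b'$ and vary only the first coordinates, setting
\[
u(s)=\Psi_d(s,\a'),\qquad v(t)=\Psi_d(t,\mathbf b'),\qquad h(z)=\Psi_d(z,\a'+\mathbf b').
\]
The goal is $h(s+t)\ge u(s)+v(t)$ for all $s,t\ge0$. We would establish this by a comparison (continuity) argument along a path starting from $s=t=0$, where equality holds trivially since both sides vanish. Suppose the inequality first fails at some point. At a first touching point we have $h=u+v$. The ODE then reduces the required derivative comparison to an inequality between the coefficients $1/\Psi_{d-1}$. By the induction hypothesis, the inner quotient satisfies
\[
\Psi_{d-1}(s+t+a_2+b_2,\a''+\mathbf b'')\ge \Psi_{d-1}(s+a_2,\a'')+\Psi_{d-1}(t+b_2,\mathbf b''),
\]
so the denominator on the $h$ side is at least the sum of the two other denominators.

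The main obstacle is this touching-point comparison. The naive path, scaling $s$ and $t$ proportionally, leads to a weighted mean inequality of the form
\[
\frac{(\dot s+\dot t)(x+y)}{X+Y}\le \frac{\dot s\,x}{X}+\frac{\dot t\,y}{Y},
\]
which fails in general. To fix this, we would choose the path adaptively in the spirit of the proof of Lemma~\ref{lem:segmentslicing}: reparametrize time so that $\dot s/\dot t$ is matched to the ratios $x/X$ and $y/Y$. If that is not enough, we would run the comparison on the integrated level instead. That is, we would use the representation $P_d=\int_0^{a_1}P_{d-1}(a_2+t,\ldots)\,dt$ and apply the Brascamp--Lieb type Lemma~\ref{lem:BLinequality} to the increasing integrands $t\mapsto P_{d-1}^{1/(d-1)}$, feeding in the superadditivity of $\Psi_{d-1}$ together with the Brunn--Minkowski concavity of Corollary~\ref{cor:universal-root-concavity}. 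This would yield a superadditivity statement for $\log P_d$ increments, which we would then convert back into $h\ge u+v$ via Lemma~\ref{lem:segmentslicing}.

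Finally, since $\Psi_d$ is positively homogeneous of degree $1$ on $\Gamma_d^+$, superadditivity is equivalent to concavity. Continuity then extends the statement from $\Gamma_d^+$ to the boundary strata where needed.
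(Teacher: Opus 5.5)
There is a genuine gap. The step you call the main obstacle is never carried out. Also, since the tails $\mathbf a',\mathbf b'$ are arbitrary, your goal $h(s+t)\ge u(s)+v(t)$ is literally the statement of the theorem, so nothing has been reduced yet. Neither proposed repair closes the gap:
\begin{itemize}
\item \emph{The adaptive forward path.} Steering from $(0,0)$ by always moving in the coordinate whose bracket is nonnegative gives no control over where the path ends. It may reach $s=a_1$ while $t<b_1$ and then be forced to move in $t$ exactly when the $t$-bracket is negative. In two variables there is also no well-defined ``first failure point''.
\item \emph{The integrated fallback.} This is logically reversed. Lemma~\ref{lem:segmentslicing} takes $h(s+t)\ge u(s)+v(t)$ as its \emph{hypothesis} and outputs a max-bound on $\int dz/h$, i.e.\ on increments of $\log P_d$. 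It cannot be used to recover the pointwise additive inequality.
\item \emph{Brunn--Minkowski.} Combined with Lemma~\ref{lem:BLinequality} on $P_{d-1}^{1/(d-1)}$, it only yields concavity of $P_d^{1/d}$ (Corollary~\ref{cor:universal-root-concavity}). Neither that nor the unnormalized integrands sees the normalization by $\partial_1P_d=P_{d-1}(\mathbf a'_{a_1})$ that defines $\Psi_d$.
\end{itemize}

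The missing idea is to apply Lemma~\ref{lem:segmentslicing} one level down, where the induction hypothesis supplies its additive hypothesis, and to normalize the integrand by its endpoint value. Set $\mathbf a'_t=(a_2+t,a_3,\ldots,a_d)$ and
\[\psi(t,\mathbf a)=\frac{P_{d-1}(\mathbf a'_t)}{P_{d-1}(\mathbf a'_{a_1})}=\exp\Bigl(-\int_t^{a_1}\frac{d\tau}{\Psi_{d-1}(\mathbf a'_\tau)}\Bigr),\]
so that $\Psi_d(\mathbf a)=\int_0^{a_1}\psi(t,\mathbf a)\,dt$. Because $\mathbf c'_{\sigma+\rho}=\mathbf a'_\sigma+\mathbf b'_\rho$ for $\mathbf c=\mathbf a+\mathbf b$, superadditivity of $\Psi_{d-1}$ together with Lemma~\ref{lem:segmentslicing} gives $\psi(t+s,\mathbf a+\mathbf b)\ge\min\{\psi(t,\mathbf a),\psi(s,\mathbf b)\}$. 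Lemma~\ref{lem:BLinequality}, applied to these increasing functions, then integrates this to the claim. This is the paper's proof.

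Your ODE route can also be completed, but with a minimum principle instead of a path:
\begin{itemize}
\item Let $\Phi(s,t)=h(s+t)-u(s)-v(t)$ on $[0,a_1]\times[0,b_1]$, and let $X,Y,Z$ be your three $\Psi_{d-1}$ coefficients. Then $Z\ge X+Y$, $\partial_s\Phi=u/X-h/Z$ and $\partial_t\Phi=v/Y-h/Z$.
\item Wherever $\Phi<0$, we have $h/Z<(u+v)/(X+Y)$. The right side is a convex combination of $u/X$ and $v/Y$, so one partial derivative is strictly positive. On the edge $s=0$ this must be $\partial_t\Phi$, since $u(0)=0$; the edge $t=0$ is symmetric.
\item A negative minimum cannot occur at $(0,0)$, where $\Phi=0$. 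Anywhere else it could be lowered by stepping backward in the coordinate with positive partial, which is a contradiction.
\end{itemize}
That would be a legitimate, rather elementary alternative to the paper's level-set argument, but it is not what your proposal contains.
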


\begin{proof} We argue by induction in $d$. 
The base case \(d=1\) is immediate, since
\(P_1(a_1)=\Psi_1(a_1)=a_1.
\)

Assume for any
\(\a,\b\in\Gamma_{d-1}^+\), we have
\[
        \Psi_{d-1}(\a+\b)
        \geq
        \Psi_{d-1}(\a)+\Psi_{d-1}(\b).
\]

Now in degree $d$, let
$\a=(a_1,a_2,\ldots,a_d)\in\Gamma_d^+.$
For \(t\in[0,a_1]\), set
\[
        \a'_t:=(a_2+t,a_3,\ldots,a_d)\in\Gamma_{d-1}^+.
\]
Define
\[
        \psi(t,\a)
        :=
        \frac{P_{d-1}(\a'_t)}
        {P_{d-1}(\a'_{a_1})}.
\]
Then, by the recursive definition of \(P_d\),
\begin{equation}
\label{eq:psiandPsi}
        \Psi_d(\a)
        =
        \int_0^{a_1}\psi(t,\a)\,dt.
\end{equation}
Moreover,
\[
\psi(t,\a)
=
\exp\left(
-\int_t^{a_1}
\frac{\partial_1P_{d-1}(\a'_\tau)}
{P_{d-1}(\a'_\tau)}
\,d\tau
\right),
\] which can be rewritten as
\begin{equation}
\label{eq:psi-exponential}
        \psi(t,\a)
        =
        \exp\left(
        -\int_t^{a_1}
        \frac{d\tau}{\Psi_{d-1}(\a'_\tau)}
        \right).
\end{equation}

Let 
        $\b=(b_1,b_2,\ldots,b_d)\in\Gamma_d^+,$ $
        \c=\a+\b.$
For \(s\in[0,b_1]\), and \(\tau\in[0,a_1+b_1]\) define
\[
        \b'_s:=(b_2+s,b_3,\ldots,b_d),\quad  \c'_\tau:=(c_2+\tau,c_3,\ldots,c_d).
\]
By the induction hypothesis,
\[
        \Psi_{d-1}(\c'_{t+s})
        =
        \Psi_{d-1}(\a'_t+\b'_s)
        \geq
        \Psi_{d-1}(\a'_t)+\Psi_{d-1}(\b'_s).
\]
We   apply Lemma \ref{lem:segmentslicing} with
\[
        h(\tau):=\Psi_{d-1}(\c'_\tau),\quad u(\tau):=\Psi_{d-1}(\a'_\tau),
        \quad
        v(\tau):=\Psi_{d-1}(\b'_\tau),
\]
on the intervals $[t+s,a_1+b_1]$, $[t,a_1]$, and $[s,b_1],$ respectively.
Then Lemma
\ref{lem:segmentslicing} gives
\[
\int_{t+s}^{a_1+b_1}
\frac{d\tau}{\Psi_{d-1}(\c'_\tau)}
\leq
\max\left\{
\int_t^{a_1}\frac{d\tau}{\Psi_{d-1}(\a'_\tau)},
\int_s^{b_1}\frac{d\tau}{\Psi_{d-1}(\b'_\tau)}
\right\}.
\]
Note that by \eqref{eq:psi-exponential}, $\psi$ is  increasing in $t$. Hence,
\begin{align}\label{eq:new101}
    \psi(t+s,\a+\b)
        \geq
        \min\{\psi(t,\a),\psi(s,\b)\}.
\end{align}
Apply Lemma \ref{lem:BLinequality} with
\[
        u(t)=\psi(t,\a),
        \qquad
        v(s)=\psi(s,\b),
        \qquad
        h(\tau)=\psi(\tau,\a+\b).
\] Then, we have
\[
\int_0^{a_1+b_1}\psi(\tau,\a+\b)\,d\tau
\geq
\int_0^{a_1}\psi(t,\a)\,dt
+
\int_0^{b_1}\psi(s,\b)\,ds.
\]
By \eqref{eq:psiandPsi}, this is exactly
\[
        \Psi_d(\a+\b)
        \geq
        \Psi_d(\a)+\Psi_d(\b).
\]
Thus \(\Psi_d\) is superadditive on \(\Gamma_d^+\). Since \(\Psi_d\) is
homogeneous of degree \(1\), superadditivity is equivalent to concavity.
This completes the induction.
\end{proof}

\begin{rem}
    For convex bodies $K,L$ in $\R^d$, the mixed volumes $V_k(K,L)$ are defined via the expansion of the volume function: \[\Vol_d(K+tL)=\sum_{k=0}^dV_k(K,L)t^{d-k}.\]Then, we have \[\Psi_d(\a)=\frac{V_d(\PS_d(\a))}{V_{d-1}(\PS_d(\a),\Delta_1)}.\] When $L$ and $B$ are balls in $\R^d$, Giannopoulos-Hartzoulakic-Paouris \cite{GiannopoulosHartzoulakicPaouris} showed \begin{equation}\label{eq:convexsubtle}
        \frac{V_d(K+L)}{V_{d-1}(K+L,B)}\geq\frac{V_d(K)}{V_{d-1}(K,B)}+\frac{V_d(L)}{V_{d-1}(L,B)}.
    \end{equation} This inequality is very similar to Theorem \ref{thm:superadditive}. However, inequality \eqref{eq:convexsubtle} does not hold for general convex bodies. In fact, Fradelizi-Meyer-Giannopoulos \cite{FradeliziMeyerGiannopoulos} argued that  \eqref{eq:convexsubtle}  holds for any triple of convex bodies $(K,L,B)$ only when $d=1,2$. Our proof of Theorem \ref{thm:superadditive} exploits the special structure of Pitman-Stanley polytope which is not available for general convex bodies. 
\end{rem} 

\begin{cor}
\label{cor:convexityconcavity}
Fix \(1\leq j\leq d\).
\begin{enumerate}[label=(\arabic*)]
\item \label{enu:cor1}
For \(\a\in\Gamma_d^+\), define
\[
g(\a):=\frac{P_d(\a)}{\partial_1^jP_d(\a)}.
\]
Then \(g(\a)^{1/j}\) is concave on \(\Gamma_d^+\).

\item \label{enu:cor2}
For \((\a,u)\in\Gamma_{d+1}^+\), define
\[
h(\a,u)
:=
\frac{\partial_1^jP_d(\a)\,u^{j+1}}
     {P_d(\a)}
=
\frac{u^{j+1}}{g(\a)}.
\]
Then \(h\) is convex on $\Gamma_{d+1}^+$.
\end{enumerate}
\end{cor}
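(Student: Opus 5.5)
The plan is to reduce part \ref{enu:cor1} to Theorem~\ref{thm:superadditive} by a telescoping factorization, and then to derive part \ref{enu:cor2} from it by a perspective-function argument.

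\emph{Derivatives of $P_d$ in the first variable.} From the recursion \eqref{new103}, $\partial_1P_d(\a)=P_{d-1}(a_1+a_2,a_3,\ldots,a_d)$. Differentiating again in $a_1$ only hits the first slot of $P_{d-1}$, with inner derivative $1$. Iterating, for $0\le i\le d-1$ one gets
\[
\partial_1^iP_d(\a)=P_{d-i}\bigl(L_i(\a)\bigr),\qquad L_i(\a):=(a_1+\cdots+a_{i+1},\,a_{i+2},\ldots,a_d),
\]
with the convention $P_0=1$ for $i=d$. Each $L_i$ is a linear map sending $\Gamma_d^+$ into $\Gamma_{d-i}^+$.

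\emph{Part \ref{enu:cor1}.} Write $g$ as the telescoping product
\[
g(\a)=\prod_{i=0}^{j-1}\frac{\partial_1^iP_d(\a)}{\partial_1^{i+1}P_d(\a)}
=\prod_{i=0}^{j-1}\frac{P_{d-i}(L_i\a)}{\partial_1P_{d-i}(L_i\a)}
=\prod_{i=0}^{j-1}\Psi_{d-i}(L_i\a).
\]
Since $j\le d$, every index satisfies $d-i\ge1$. By Theorem~\ref{thm:superadditive}, each $\Psi_{d-i}$ is positive and concave on $\Gamma_{d-i}^+$, and concavity is preserved under the linear pullback $L_i$. Hence $g$ is a product of $j$ positive concave functions on $\Gamma_d^+$. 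The geometric mean $(x_1\cdots x_j)^{1/j}$ is concave and nondecreasing in each argument on the positive orthant. Composing it with concave functions therefore yields a concave function, so $g^{1/j}$ is concave. Alternatively, one can apply Lemma~\ref{lem:bootstrapconcavity} repeatedly; as stated that lemma only raises the exponent by one at a time, so the geometric-mean argument is cleaner.

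\emph{Part \ref{enu:cor2}.} Set $G:=g^{1/j}$, which is positive and concave by part \ref{enu:cor1}. Then $h(\a,u)=\phi\bigl(G(\a),u\bigr)$ with $\phi(x,u):=u^{j+1}/x^{j}$ on $\R_{>0}^2$. The function $\phi(x,u)=x\,(u/x)^{j+1}$ is the perspective of the convex function $t\mapsto t^{j+1}$ on $t>0$, so it is jointly convex. It is also nonincreasing in $x$. A jointly convex function that is nonincreasing in the slot where a concave function is inserted gives a convex composition, namely $(\a,u)\mapsto\phi(G(\a),u)$. Hence $h$ is convex on $\Gamma_{d+1}^+$.

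The only genuinely substantive input is Theorem~\ref{thm:superadditive}. Beyond that, the main point to check carefully is the identity $\partial_1^iP_d=P_{d-i}\circ L_i$, which follows from the recursion by induction. The remaining steps are standard convexity facts.
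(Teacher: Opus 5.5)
Your proof is correct. Part (1) is the paper's argument. The paper also factors $g$ as the telescoping product of the pullbacks $\Psi_{d-l}\circ L_l$, $0\le l\le j-1$, and invokes Theorem~\ref{thm:superadditive}. It then applies Lemma~\ref{lem:bootstrapconcavity} $j-1$ times, and that lemma fits exactly: each step adjoins one positive concave factor and moves the exponent from $1/k$ to $1/(k+1)$. So your geometric-mean composition is a repackaging of the same step, not a fix of a limitation.

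Part (2) is where you genuinely diverge. The paper argues at second order. With $p=\partial_1^jP_d/P_d$, it turns concavity of $p^{-1/j}=g^{1/j}$ into the pointwise inequality $p\,\mathrm{Hess}\,p(\mathbf v',\mathbf v')\ge\frac{j+1}{j}\langle\nabla p,\mathbf v'\rangle^2$. It then computes $\mathrm{Hess}\,h$ for $h=p\,u^{j+1}$ and shows it is positive semidefinite, because the resulting quadratic in the $u$-component of the test vector has nonpositive discriminant. You instead write $h=\phi(G,u)$, where $\phi(x,u)=x\,(u/x)^{j+1}$ is the perspective of $t\mapsto t^{j+1}$, and use the monotone composition rule.

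Your route is derivative-free. It also makes plain that the statement is a general fact unrelated to Pitman--Stanley polytopes: for any positive concave $G$ on a convex set, $u^{j+1}/G^j$ is jointly convex for $u>0$. The paper's computation gives a converse in return. The coefficient $j(j+1)p$ of the squared $u$-component is positive, so positive semidefiniteness of $\mathrm{Hess}\,h$ is equivalent to the discriminant condition, which is equivalent to concavity of $p^{-1/j}$. Hence the Hessian calculation shows that convexity of $h$ and concavity of $g^{1/j}$ are equivalent, whereas your argument gives only the direction that is needed.
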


\begin{proof}
We first prove \ref{enu:cor1}. For \(0\le l\le j-1\),
\[
\partial_1^lP_d(a_1,\ldots,a_d)
=
P_{d-l}(a_1+\cdots+a_{l+1},a_{l+2},\ldots,a_d).
\]
Hence
\(
\frac{\partial_1^lP_d}{\partial_1^{l+1}P_d}
\)
is the pullback of \(\Psi_{d-l}\) by the linear map
\[
(a_1,\ldots,a_d)
\longmapsto
(a_1+\cdots+a_{l+1},a_{l+2},\ldots,a_d).
\]
By Theorem~\ref{thm:superadditive}, \(\Psi_{d-l}\) is concave on
\(\Gamma_{d-l}^+\). Therefore each quotient
\(
\frac{\partial_1^lP_d(\a)}
     {\partial_1^{l+1}P_d(\a)}
\)
is concave on \(\Gamma_d^+\).  Note
\[
g(\a)^{1/j}
=
\left(
\prod_{l=0}^{j-1}
\frac{\partial_1^lP_d(\a)}
     {\partial_1^{l+1}P_d(\a)}
\right)^{1/j}.
\]
Repeated application of
Lemma~\ref{lem:bootstrapconcavity}
shows that \(g^{1/j}\) is concave on $\Gamma^+_d$ . This proves
\ref{enu:cor1}.

We   prove \ref{enu:cor2}. Set
\(
p(\a):=
\frac{\partial_1^jP_d(\a)}
     {P_d(\a)}.
\)
Since \(p^{-1/j}=g^{1/j}\) is concave on $\Gamma^+_d$, for every
\(\v'\in\R^d\),
\begin{equation}
p\,\mathrm{Hess}\,p(\v',\v')
-
\frac{j+1}{j}
\langle\nabla p,\v'\rangle^2
\ge0.
\label{eq:convexity}
\end{equation}

Since
\(
h(\a,u)=p(\a)u^{j+1},
\)
for \(\v=(\v',v)\in\R^{d+1}\) we compute
\[
\mathrm{Hess}\,h(\v,\v)
=
u^{j-1}
\Bigl(
u^2\mathrm{Hess}\,p(\v',\v')
+
2(j+1)uv\langle\nabla p,\v'\rangle
+
j(j+1)p\,v^2
\Bigr).
\]
For fixed \(\a,u,\v'\), this is a quadratic polynomial in \(v\).
Its discriminant equals
\[
\Delta
=
4j(j+1)u^{2j}
\left(
\frac{j+1}{j}\langle\nabla p,\v'\rangle^2
-
p\,\mathrm{Hess}\,p(\v',\v')
\right).
\]
By \eqref{eq:convexity}, we have \(\Delta\le0\). Since the
coefficient of \(v^2\) is positive,
\[
\mathrm{Hess}\,h(\v,\v)\ge0.
\]
Hence \(h\) is convex on $\Gamma_{d+1}^+$.
\end{proof}

\subsection{Concavity of $\uquot_d$}
Our next result extends the concavity of $\uquot_d$ from $U_d$ to a bigger set. 
\begin{thm}
\label{thm:universalquotientconcavity}
Let \(d\ge1\).  \(\uquot_d(\rhat)\) is concave on $\U_d^{(1)}$, where \[
\U_d^{(1)}
=
\left\{
\rhat=(\rminus,r_0,\ldots,r_{d-1})
:
\rminus>r_1,\;
r_0\ge r_1\ge\cdots\ge r_{d-1}
\right\}.\]
\end{thm}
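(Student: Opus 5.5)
On the subregion \(\U_d\) the statement is already available: \(\uquot_d=\Psi_d\circ\gapmap_d\), with \(\gapmap_d\) affine and \(\Psi_d\) concave by Theorem~\ref{thm:superadditive} (extended by continuity to \(\R_{>0}\times\overline{\Gamma^+_{d-1}}\)). So the plan is to enlarge the domain of \(\Psi_d\) in gap coordinates and repeat the concavity argument there. The case \(d=1\) is trivial, since \(\uquot_1=\rminus-r_0\) is linear on \(\R^2\); assume \(d\ge2\).

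Keep the affine gap coordinates \(a_1=\rminus-r_0\), \(a_{k+1}=r_{k-1}-r_k\), together with the translation coordinate \(s=r_{d-1}\), on which \(\uquot_d\) does not depend. Then \(\U_d^{(1)}\) corresponds exactly to
\[
K_d:=\{\a\in\R^d:\ a_1+a_2>0,\ a_2,\ldots,a_d\ge0\},
\]
where \(a_1\) may now be negative. Since \(\pdvminus\upoly_d(\rhat)=\upoly_{d-1}(\rminus,r_1,\ldots,r_{d-1})=P_{d-1}(a_1+a_2,a_3,\ldots,a_d)>0\) on \(K_d\), the polynomial identity \(\uquot_d=\Psi_d\circ\gapmap_d\) persists on \(\U_d^{(1)}\). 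The set \(K_d\) is a convex cone and \(\Psi_d\) is homogeneous of degree \(1\), so concavity on \(K_d\) is equivalent to superadditivity \(\Psi_d(\a+\b)\ge\Psi_d(\a)+\Psi_d(\b)\) for \(\a,\b\in K_d\). By continuity it suffices to take the interior, with \(a_i,b_i>0\) for \(i\ge2\).

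Next, extend the integral representation \eqref{eq:psiandPsi}. For \(\a\in K_d\) the vectors \(\a'_t=(a_2+t,a_3,\ldots)\) stay in \(\Gamma^+_{d-1}\) for all \(t\) between \(a_1\) and \(0\), because \(a_2+a_1>0\). Hence, with signed integrals,
\[
\Psi_d(\a)=\int_0^{a_1}\psi(t,\a)\,dt,\qquad \psi(t,\a)=\exp\Bigl(-\int_t^{a_1}\frac{d\tau}{\Psi_{d-1}(\a'_\tau)}\Bigr),
\]
and this remains valid when \(a_1<0\), in which case \(\Psi_d(\a)=-\int_{a_1}^0\psi\,dt\le0\). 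The induction hypothesis is applied only to \(\Psi_{d-1}\) on \(\Gamma^+_{d-1}\) (or on \(K_{d-1}\), if the stronger statement is carried through the induction). As before it gives \(\Psi_{d-1}(\c'_{t+s})\ge\Psi_{d-1}(\a'_t)+\Psi_{d-1}(\b'_s)\), and Lemma~\ref{lem:segmentslicing} then gives the pointwise bound \(\psi(t+s,\a+\b)\ge\min\{\psi(t,\a),\psi(s,\b)\}\) on the relevant parameter ranges.

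The case split is by the signs of \(a_1,b_1\). Both nonnegative is Theorem~\ref{thm:superadditive}. Both negative is the mirror image: one applies Lemma~\ref{lem:BLinequality} to the reflected functions \(t\mapsto\psi(-t,\cdot)\) on \([0,|a_1|]\), \([0,|b_1|]\). These are decreasing, so Lemma~\ref{lem:BLinequality} is applied in its equivalent form for decreasing functions, obtained by reversing the interval. The resulting inequality \(\int_{c_1}^0\psi(\cdot,\c)\le\int_{a_1}^0\psi(\cdot,\a)+\int_{b_1}^0\psi(\cdot,\b)\) is exactly superadditivity after the sign change.

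The mixed case \(a_1>0>b_1\) is where I expect the main difficulty, since there the two integrals have opposite orientation. My first attempt would avoid the comparison altogether by a second-order local argument: concavity of a \(C^2\) function on an open convex set is a pointwise Hessian condition, and the Hessian is already known to be nonpositive on \(\{a_1>0\}\). So it suffices to prove \(\mathrm{Hess}\,\Psi_d\le0\) on \(\{a_1\le0\}\cap K_d\). There I would use the decomposition
\[
\uquot_d(\rhat)=\uquot_d(\rminus,r_1,r_1,\ldots,r_{d-1})-\frac{\upoly_d(r_0,r_1,r_1,\ldots,r_{d-1})}{\upoly_{d-1}(\rminus,r_1,\ldots,r_{d-1})}.
\]
The first term is concave, since its argument lies in the closure of \(\U_d\). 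Concavity of the whole would then follow from convexity of the subtracted ratio. I would aim to derive that convexity from Corollary~\ref{cor:convexityconcavity}\ref{enu:cor2}, or from the \((d-1)\)-homogeneous structure via Lemma~\ref{lem:bootstrapconcavity}. If this does not close, the fallback is to run the mixed-sign superadditivity directly, splitting \(\int_0^{c_1}\) at \(b_1\) and comparing the pieces with Lemma~\ref{lem:segmentslicing}.
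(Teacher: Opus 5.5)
\paragraph{Gap: the decomposition step is false.} You plan to handle $\{a_1\le0\}$ by splitting at $r_1$, but the subtracted ratio $\upoly_d(r_0,r_1,r_1,\ldots,r_{d-1})/\upoly_{d-1}(\rminus,r_1,\ldots,r_{d-1})$ is \emph{not} convex on $\{r_1<\rminus\le r_0\}$. So neither Corollary~\ref{cor:convexityconcavity}\ref{enu:cor2} nor Lemma~\ref{lem:bootstrapconcavity} can deliver it. Already for $d=3$, with $u=r_0-r_1$, $b_1=\rminus-r_1$, $b_2=r_1-r_2$, the ratio is
\[
\frac{u^3/6+u^2b_2/2}{b_1^2/2+b_1b_2}=\frac{u^2}{2b_1}+\frac{u^2(u/6-b_1/4)}{b_1(b_2+b_1/2)}.
\]
This is strictly concave in $b_2$ whenever $0<u<3b_1/2$, a range that meets $\{\rminus\le r_0\}=\{u\ge b_1\}$. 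For example, $b_1=1$, $u=6/5$ gives the values $0.576$, $0.672$, $0.6912$ at $b_2=0,1,2$. The reason is structural. Put $S:=\upoly_{d-1}(\rminus,r_1,\ldots,r_{d-1})^{-1}\int_{\rminus}^{r_0}\upoly_{d-1}(t,r_1,\ldots,r_{d-1})\,dt$. Then your subtracted ratio equals $\uquot_d(\rminus,r_1,r_1,\ldots,r_{d-1})+S$, a concave term plus a convex one, so you are demanding more than is true. The paper splits at $\rminus$ instead. On $\{r_1<\rminus\le r_0\}$ one simply has $\uquot_d=-S$. With $u=r_0-\rminus\ge0$ and $\b=(\rminus-r_1,r_1-r_2,\ldots,r_{d-2}-r_{d-1})$, Taylor expansion in the first slot gives $S=\sum_{l=0}^{d-1}\frac{u^{l+1}\partial_1^lP_{d-1}(\b)}{(l+1)!\,P_{d-1}(\b)}$. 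The $l=0$ term is $u$, and the others are convex by Corollary~\ref{cor:convexityconcavity}\ref{enu:cor2}. The Hessian is then glued across $\rminus=r_0$.

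\paragraph{The both-negative case has a direction error, but fixing it finishes the proof.} For $a_1<0$ and $t\in[a_1,0]$ one has $\psi(t,\a)=\exp\bigl(\int_{a_1}^{t}d\tau/\Psi_{d-1}(\a'_\tau)\bigr)$. Hence Lemma~\ref{lem:segmentslicing} on $[a_1,t]$, $[b_1,s]$, $[c_1,t+s]$ gives $\psi(t+s,\c)\le\max\{\psi(t,\a),\psi(s,\b)\}$, not $\ge\min$. Moreover, Lemma~\ref{lem:BLinequality}, reflected or not, only bounds $\int h$ from below, which is the wrong direction for $\int_{c_1}^0\psi(\cdot,\c)\le\int_{a_1}^0\psi(\cdot,\a)+\int_{b_1}^0\psi(\cdot,\b)$. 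What you need is the dual statement: increasing $u,v,h$ on $[0,\ell_1]$, $[0,\ell_2]$, $[0,\ell_1+\ell_2]$ with $h(s+t)\le\max\{u(s),v(t)\}$ satisfy $\int h\le\int u+\int v$. Apply it to $\sigma\mapsto\psi(a_1+\sigma,\a)$ and its analogues. It follows from Lemma~\ref{lem:BLinequality} applied to $M-u$, $M-v$, $M-h$ with the intervals reversed, or from the same layer-cake argument with sublevel sets. This gives superadditivity, hence concavity, of $\Psi_d$ on the open cone $\{a_1<0\}\cap\mathrm{int}\,K_d$. Since $\Psi_d$ is smooth near $K_d$ (its denominator $P_{d-1}(a_1+a_2,a_3,\ldots,a_d)$ is positive there), $\mathrm{Hess}\,\Psi_d\le0$ on both sides of $\{a_1=0\}$, hence on all of $K_d$. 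The theorem then follows with no mixed-sign comparison at all. That would be a legitimate alternative to the paper's Taylor-expansion argument on $\{\rminus\le r_0\}$, extending the slicing/Brascamp--Lieb method of Theorem~\ref{thm:superadditive} to negative $a_1$. As written, however, your proposal neither notices this nor supplies a correct argument on $\{a_1\le0\}$.
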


\begin{proof}
The case \(d=1\) is trivial, since
\(
\uquot_1(\rminus,r_0)=\rminus - r_0.
\)

Assume \(d\ge2\). Let
\(
\rhat=(\rminus,r_0,\ldots,r_{d-1})
\in
\U_d^{(1)}.
\)

We split into two cases: $\rminus>r_0$ and $r_1<\rminus\leq r_0$.

First suppose \(\rminus>r_0\). Then
\[
\a
=
A_d(\rhat)
=
(\rminus-r_0,r_0-r_1,\ldots,r_{d-2}-r_{d-1})
\in
\R_{>0}\times\R_{\ge0}^{d-1}.
\]
On the strict chamber \(\a\in\Gamma_d^+\), 
by Theorem~\ref{thm:superadditive}, \(\Psi_d\) is concave on
\(\Gamma_d^+\).  
Consequently, by affine pullback and continuity up to the relevant boundary,
the universal quotient
\(
\uquot_d=\gapmap_d^*\Psi_d
\)
is concave on \(\U_d\). 

Now suppose $r_1<\rminus\le r_0.$
Set
\[
u:=r_0-\rminus\ge0,
\qquad
b_1:=\rminus-r_1>0,
\]
and
\[
\b
=
(b_1,\ldots,b_{d-1})
=
(\rminus-r_1,r_1-r_2,\ldots,r_{d-2}-r_{d-1})
\in\R_{>0}\times\overline{\Gamma^+_{d-2}}
\]

By the definition of \(\upoly_d\),
\[
\upoly_d(\rhat)
=
-\int_0^u
P_{d-1}(b_1+t,b_2,\ldots,b_{d-1})\,dt,
\]
while
\(
\pdvminus\upoly_d(\rhat)
=
P_{d-1}(\b).
\)
Therefore
\[
\uquot_d(\rhat)
=
-\frac{1}{P_{d-1}(\b)}
\int_0^u
P_{d-1}(b_1+t,b_2,\ldots,b_{d-1})\,dt.
\]

Expanding in the first variable,
\[
P_{d-1}(b_1+t,b_2,\ldots,b_{d-1})
=
\sum_{l=0}^{d-1}
\frac{\partial_1^lP_{d-1}(\b)}{l!}\,t^l.
\]
Hence
\begin{equation}\label{new105}
\uquot_d(\rhat)
=
-\sum_{l=0}^{d-1}
\frac{1}{(l+1)!}
\frac{
u^{l+1}\partial_1^lP_{d-1}(\b)
}{
P_{d-1}(\b)
}.
\end{equation}

For \(l=0\), the summand is simply \(u\), hence linear. For
\(1\le l\le d-1\), Corollary~\ref{cor:convexityconcavity}
implies that the summand in \eqref{new105},
\(
\frac{
u^{l+1}\partial_1^lP_{d-1}(\b)
}{
P_{d-1}(\b)
}
\)
is convex in \((u,\b)\). Therefore, \(\uquot_d\) as in \eqref{new105}, being the negative
of affine linear combination of convex functions, is concave in
the region \(r_1<\rminus\le r_0\).

Since \(\uquot_d\) is smooth on \(\U_d^{(1)}\), the Hessian inequality
extends across the interface \(\rminus=r_0\) by continuity. Hence the
Hessian of \(\uquot_d\) is negative semidefinite throughout the convex
domain \(\U_d^{(1)}\), and therefore \(\uquot_d\) is concave there.
\end{proof}

\subsection{Monotonicity of the universal quotient}

We show that $\uquot_d$ is nonincreasing in the $r_i$ variables. 

\begin{thm}
\label{thm:universalquotientmonotone}
Let \(d\ge 1\). For each \(i=0,\ldots,d-1\), one has
\[
\partial_{r_i}\uquot_d(\rhat)\le 0
\qquad\text{for all }\rhat\in \U_d^{(1)}.
\]
\end{thm}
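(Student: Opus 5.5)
The plan is to prove the statement by induction on $d$, using the same one-step integral representation of $\uquot_d$ that drives the proof of Theorem~\ref{thm:superadditive}, written in root coordinates and not in gap coordinates. The base case $d=1$ is immediate: $\uquot_1(\rminus,r_0)=\rminus-r_0$, so $\partial_{r_0}\uquot_1=-1$.

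For $d\ge2$, I start from \eqref{upoly-defn}:
\[
\upoly_d(\rhat)=\int_{r_0}^{\rminus}\upoly_{d-1}(t,r_1,\ldots,r_{d-1})\,dt,
\qquad
\pdvminus\upoly_d(\rhat)=\upoly_{d-1}(\rminus,r_1,\ldots,r_{d-1}).
\]
On $\U_d^{(1)}$ we have $\rminus>r_1$, and every $t$ between $r_0$ and $\rminus$ also satisfies $t>r_1$. So all points $(t,r_1,\ldots,r_{d-1})$ that occur lie in $\U_{d-1}$, where $\upoly_{d-1}>0$ (it is positive on the Gårding ray $(r_1,\infty)$ of $f_{(r_1,\ldots,r_{d-1})}$). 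Integrating the logarithmic derivative in the first slot, exactly as in \eqref{eq:psi-exponential}, gives
\[
\uquot_d(\rhat)=\int_{r_0}^{\rminus}\exp\Bigl(-\int_t^{\rminus}\frac{ds}{\uquot_{d-1}(s,r_1,\ldots,r_{d-1})}\Bigr)\,dt ,
\]
and $\uquot_{d-1}>0$ along the whole path of integration. This formula holds on all of $\U_d^{(1)}$, including the region $r_1<\rminus\le r_0$, where the outer integral has reversed orientation.

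The case $i=0$ comes from the fundamental theorem of calculus. The variable $r_0$ appears only as the lower limit, so
\[
\partial_{r_0}\uquot_d=-\exp\Bigl(-\int_{r_0}^{\rminus}\frac{ds}{\uquot_{d-1}}\Bigr)<0 .
\]
For $1\le i\le d-1$, the variable $r_i$ enters only through $\uquot_{d-1}(s,r_1,\ldots,r_{d-1})$. By the induction hypothesis applied on $\U_{d-1}\subset\U_{d-1}^{(1)}$, increasing $r_i$ does not increase $\uquot_{d-1}$, so it does not decrease the positive function $1/\uquot_{d-1}$. I then check the sign in two cases.

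\emph{Case $\rminus>r_0$.} Here $t<\rminus$, so the inner integral $\int_t^{\rminus}ds/\uquot_{d-1}$ is nondecreasing in $r_i$. Hence the integrand is nonincreasing in $r_i$, and so is the positively oriented outer integral.

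\emph{Case $r_1<\rminus\le r_0$.} I rewrite
\[
\uquot_d=-\int_{\rminus}^{r_0}\exp\Bigl(+\int_{\rminus}^{t}\frac{ds}{\uquot_{d-1}}\Bigr)\,dt ,
\]
so that all integrals are positively oriented. The integrand is now nondecreasing in $r_i$, so $\uquot_d$ is again nonincreasing.

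To make this rigorous, I would differentiate under the integral sign, which is legitimate by smoothness and positivity of $\uquot_{d-1}$ on the compact path of integration. This gives $\partial_{r_i}\uquot_d$ as an integral of $\partial_{r_i}\uquot_{d-1}/\uquot_{d-1}^2\le0$ against positive weights, with the sign fixed as in the two cases above.

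The step I expect to need the most care is the region $r_1<\rminus\le r_0$. There I must verify that the representation stays valid, which requires $\rminus>r_1$ so that $\pdvminus\upoly_d\ne0$. I must also track the orientation signs correctly, and ensure the inductive hypothesis is only ever used at points of $\U_{d-1}$, where $\uquot_{d-1}$ is positive and smooth. Boundary points of the chamber with $r_j=r_{j+1}$ are handled by continuity.
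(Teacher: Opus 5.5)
Your proposal is correct and is essentially the paper's proof in different packaging. Since $\partial_s\theta_k=\partial_{r_i}\uquot_k/\uquot_k^2$, the paper's inner claim that $\theta_k(s)=-\partial_{r_i}\log\upoly_k$ decreases in $s$ is exactly your inductive monotonicity of $\uquot_k$ on $\U_k$, and differentiating your exponential representation under the integral reproduces the paper's formula $\partial_{r_i}\uquot_d=\upoly_{d-1}(\rminus)^{-1}\int_{r_0}^{\rminus}\upoly_{d-1}(t)\bigl(\theta_{d-1}(\rminus)-\theta_{d-1}(t)\bigr)\,dt$, with the same orientation split for $r_1<\rminus\le r_0$ (one cosmetic slip: $\partial_{r_0}\uquot_d=0$ when $r_0=r_1$, so your strict inequality there should read $\le 0$).
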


\begin{proof}
Write
\[
\rhat=(s,\rr)=(s,r_0,\ldots,r_{d-1}).
\]
For \(1\le m\le d\), we use the shorthand
\[
\upoly_m(s,\rr):=\upoly_m(s,r_{d-m},\ldots,r_{d-1}).
\]
Thus
\begin{equation}\label{eq:upoly-partial-minus}
\partial_- \upoly_m(s,\rr)=\upoly_{m-1}(s,\rr),
\qquad 1\le m\le d.
\end{equation}

We first consider differentiation with respect to \(r_0\). Since
\(\upoly_{d-1}(s,r_1,\ldots,r_{d-1})\) is independent of \(r_0\),
\begin{equation}\label{eq:dr0-upoly}
\partial_{r_0}\upoly_d(s,r_0,\ldots,r_{d-1})
=
-\upoly_{d-1}(r_0,r_1,\ldots,r_{d-1}).
\end{equation}
If \(s>r_1\) and \(\rr\in\MRS_d\), then
\begin{equation}\label{eq:upoly-positive}
\upoly_{d-1}(s,r_1,\ldots,r_{d-1})>0.
\end{equation}
Since \(r_0\ge r_1\), \eqref{eq:upoly-positive} also gives
\begin{equation}\label{eq:upoly-nonnegative-at-r0}
\upoly_{d-1}(r_0,r_1,\ldots,r_{d-1})\ge 0.
\end{equation}
Hence, by \eqref{eq:dr0-upoly}--\eqref{eq:upoly-nonnegative-at-r0},
\[
\partial_{r_0}\uquot_d(s,\rr)
=
-\frac{\upoly_{d-1}(r_0,r_1,\ldots,r_{d-1})}
        {\upoly_{d-1}(s,r_1,\ldots,r_{d-1})}
\le 0,
\qquad s>r_1.
\]

Now fix \(i\ge 1\). For \(d-i\le k\le d-1\) and \(s>r_{d-k}\), define
\begin{equation}\label{eq:theta-k}
\theta_k(s)
:=
-\partial_{r_i}\log \upoly_k(s,r_{d-k},\ldots,r_{d-1})
=
-\frac{\partial_{r_i}\upoly_k(s,r_{d-k},\ldots,r_{d-1})}
        {\upoly_k(s,r_{d-k},\ldots,r_{d-1})}.
\end{equation}
We claim that \(\theta_k\) is decreasing on \((r_{d-k},\infty)\).

We prove this by induction on \(k\). First consider \(k=d-i\). Then
\(\upoly_{k-1}\) is independent of \(r_i\), so
\[
\partial_{r_i}\upoly_k(s,r_i,\ldots,r_{d-1})
=
-\upoly_{k-1}(r_i,r_{i+1},\ldots,r_{d-1}),
\]
and therefore
\[
\theta_k(s)
=
\frac{\upoly_{k-1}(r_i,r_{i+1},\ldots,r_{d-1})}
     {\upoly_k(s,r_i,\ldots,r_{d-1})}.
\]
Since \(\upoly_k(s,r_i,\ldots,r_{d-1})\) is increasing in \(s>r_i\),
it follows that \(\theta_k\) is decreasing on \((r_i,\infty)\).

Assume now that \(\theta_{k-1}\) is decreasing on
\((r_{d-k+1},\infty)\). Since \(\rr\in\MRS_d\), we have
\(r_{d-k}\ge r_{d-k+1}\). By \eqref{eq:theta-k} and
\eqref{eq:upoly-partial-minus},
\[
\theta_k(s)
=
\frac{
\displaystyle
\int_{r_{d-k}}^{s}
\theta_{k-1}(t)\,
\upoly_{k-1}(t,r_{d-k+1},\ldots,r_{d-1})\,dt}
{\displaystyle
\int_{r_{d-k}}^{s}
\upoly_{k-1}(t,r_{d-k+1},\ldots,r_{d-1})\,dt}.
\]
Differentiating gives
\[
\theta_k'(s)
=
\frac{
\upoly_{k-1}(s,r_{d-k+1},\ldots,r_{d-1})}
{\upoly_k(s,r_{d-k},\ldots,r_{d-1})^2}
\int_{r_{d-k}}^{s}
\upoly_{k-1}(t,r_{d-k+1},\ldots,r_{d-1})
\bigl(\theta_{k-1}(s)-\theta_{k-1}(t)\bigr)\,dt.
\]
For \(r_{d-k}<t\le s\), the induction hypothesis gives
\(\theta_{k-1}(s)\le \theta_{k-1}(t)\). Hence
\[
\theta_k'(s)\le 0,
\qquad s>r_{d-k}.
\]
This proves the claim.

We   compute \(\partial_{r_i}\uquot_d\) for \(i\ge 1\). Since
\[
\uquot_d(s,\rr)
=
\frac{\upoly_d(s,r_0,\ldots,r_{d-1})}
     {\upoly_{d-1}(s,r_1,\ldots,r_{d-1})},
\]
we obtain
\[
\begin{aligned}
\partial_{r_i}\uquot_d(s,\rr)
&=
\frac{1}{\upoly_{d-1}(s,r_1,\ldots,r_{d-1})}
\int_{r_0}^{s}
\partial_{r_i}\upoly_{d-1}(t,r_1,\ldots,r_{d-1})\,dt \\
&\qquad
-
\frac{\upoly_d(s,r_0,\ldots,r_{d-1})}
     {\upoly_{d-1}(s,r_1,\ldots,r_{d-1})^2}
\partial_{r_i}\upoly_{d-1}(s,r_1,\ldots,r_{d-1}) \\
&=
\frac{1}{\upoly_{d-1}(s,r_1,\ldots,r_{d-1})}
\int_{r_0}^{s}
\upoly_{d-1}(t,r_1,\ldots,r_{d-1})
\bigl(\theta_{d-1}(s)-\theta_{d-1}(t)\bigr)\,dt.
\end{aligned}
\]

If \(s>r_0\), then \(r_0\le t\le s\), and since \(\theta_{d-1}\) is
decreasing, we get
\[
\partial_{r_i}\uquot_d(s,\rr)\le 0.
\]

It remains to consider the case \(r_1<s\le r_0\). Reversing the orientation
of the integral gives
\[
\partial_{r_i}\uquot_d(s,\rr)
=
\frac{1}{\upoly_{d-1}(s,r_1,\ldots,r_{d-1})}
\int_{s}^{r_0}
\upoly_{d-1}(t,r_1,\ldots,r_{d-1})
\bigl(\theta_{d-1}(t)-\theta_{d-1}(s)\bigr)\,dt.
\]
For \(s\le t\le r_0\), the monotonicity of \(\theta_{d-1}\) yields
\(\theta_{d-1}(t)-\theta_{d-1}(s)\le 0\). Hence
\[
\partial_{r_i}\uquot_d(s,\rr)\le 0.
\]

Therefore, for every \(0\le i\le d-1\),
\[
\partial_{r_i}\uquot_d(s,\rr)\le 0
\qquad\text{throughout }\U_d^{(1)}.
\]
This completes the proof.
\end{proof}

We have therefore established both concavity and monotonicity for the
universal quotient on its natural domain. These two properties are the key
inputs for the fibrewise quotient concavity result proved in the next section.

\section{Fibre Quotient Concavity}
We transfer the universal quotient theory to general ideal
G{\aa}rding polynomials by applying it fibrewise in one distinguished
variable. The main result is a concavity theorem for the quotient
\(f/\partial_y f\) on the positivity component of \(\partial_y f\).

We   record a technical composition lemma.
\begin{lem}
\label{lem:concave-decreasing-composition}
Let \(Q(s,r_0,\ldots,r_{d-1})\) be concave on a convex set
\(\Omega\subset\R^{d+1}\). 
Assume that \(Q\) is nonincreasing in each \(r_i\)-coordinate, i.e.
\[
\partial_{r_i}Q\le0,
\qquad
0\le i\le d-1.
\]
Let \(D\subset\R^n\) be convex, and let \(
\rho_i:D\to\R,\) \(
0\le i\le d-1,
\)
be convex functions. Suppose that
\(
(s,\rho_0(\x),\ldots,\rho_{d-1}(\x))\in\Omega
\)
for all \((\x,s)\) in a convex set \(E\subset D\times\R\). Then
\[
(\x,s)\longmapsto
Q(s,\rho_0(\x),\ldots,\rho_{d-1}(\x))
\]
is concave on \(E\).
\end{lem}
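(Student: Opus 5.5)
Since concavity is a property along line segments, the plan is to reduce to a one-variable statement and verify it directly from the definition of concavity, with no smoothness assumptions.

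Fix two points $(\x_1,s_1),(\x_2,s_2)\in E$ and $\lambda\in[0,1]$. Set $(\x_\lambda,s_\lambda):=\lambda(\x_1,s_1)+(1-\lambda)(\x_2,s_2)$; this point lies in $E$ because $E$ is convex. Write $\rr(\x):=(\rho_0(\x),\ldots,\rho_{d-1}(\x))$. We need
\[
Q\bigl(s_\lambda,\rr(\x_\lambda)\bigr)\ge \lambda\, Q\bigl(s_1,\rr(\x_1)\bigr)+(1-\lambda)\,Q\bigl(s_2,\rr(\x_2)\bigr).
\]

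Step 1 uses concavity of $Q$ on $\Omega$. The points $(s_j,\rr(\x_j))$ lie in $\Omega$ by hypothesis, and $\Omega$ is convex, so their convex combination $(s_\lambda,\bar\rr)$, with $\bar\rr:=\lambda\rr(\x_1)+(1-\lambda)\rr(\x_2)$, also lies in $\Omega$. Concavity of $Q$ then gives that the right-hand side above is at most $Q(s_\lambda,\bar\rr)$.

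Step 2 uses convexity of each $\rho_i$. It gives $\rho_i(\x_\lambda)\le \bar r_i$ for every $i$, so $\rr(\x_\lambda)\le \bar\rr$ coordinatewise. Since $Q$ is nonincreasing in each $r_i$, we expect
\[
Q\bigl(s_\lambda,\rr(\x_\lambda)\bigr)\ge Q(s_\lambda,\bar\rr).
\]
Chaining this with Step 1 gives the claim.

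The main obstacle is justifying the monotonicity comparison in Step 2. The sign condition $\partial_{r_i}Q\le0$ only controls $Q$ along paths that stay inside $\Omega$. Both endpoints $(s_\lambda,\rr(\x_\lambda))$ and $(s_\lambda,\bar\rr)$ belong to $\Omega$: the first by hypothesis, since $(\x_\lambda,s_\lambda)\in E$, and the second by Step 1. Because $\Omega$ is convex, the segment joining them lies in $\Omega$. Along that segment the $r$-coordinates increase monotonically from $\rr(\x_\lambda)$ to $\bar\rr$, with $s$ held fixed. Integrating the directional derivative $\sum_i(\bar r_i-\rho_i(\x_\lambda))\,\partial_{r_i}Q\le0$ along the segment yields the inequality.

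If $Q$ is merely concave rather than differentiable, I would interpret the monotonicity hypothesis as monotonicity along coordinate directions inside $\Omega$. I would then replace the integration by a mean value argument for the one-variable concave function $\tau\mapsto Q(s_\lambda,\rr(\x_\lambda)+\tau(\bar\rr-\rr(\x_\lambda)))$, which is nonincreasing. In the applications this issue does not arise, because $Q=\uquot_d$ is smooth on $\U_d^{(1)}$ by Theorems~\ref{thm:universalquotientconcavity} and \ref{thm:universalquotientmonotone}.
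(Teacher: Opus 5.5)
Your proof is correct and is the same two-inequality chain as the paper's: concavity of $Q$ at the averaged point $(s_\lambda,\bar{\rr})$, combined with coordinatewise monotonicity of $Q$ and convexity of the $\rho_i$ to compare $(s_\lambda,\rr(\x_\lambda))$ with $(s_\lambda,\bar{\rr})$. You also check that the segment joining these two points lies in $\Omega$, so that the sign condition on $\partial_{r_i}Q$ can be integrated; the paper's one-line argument leaves this implicit, and your nonsmooth aside is unnecessary under the stated differentiability hypothesis (its claim that the restricted function is nonincreasing would itself need a reason, e.g.\ that every supergradient of $Q$ has nonpositive $r$-components).
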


\begin{proof}
Take \((\x_0,s_0),(\x_1,s_1)\in E\), and let \(0\le\lambda\le1\). Set
\[
\x_\lambda=(1-\lambda)\x_0+\lambda\x_1,
\qquad
s_\lambda=(1-\lambda)s_0+\lambda s_1.
\]
Since \(Q\) is concave and decreasing in each \(r_i\)-coordinate, and $\rho_i$ are convex,
\[
\begin{aligned}
&Q(s_\lambda,\rho_0(\x_\lambda),\ldots,\rho_{d-1}(\x_\lambda))
\\
&\ge
Q\Bigl(
s_\lambda,
(1-\lambda)\rho_0(\x_0)+\lambda \rho_0(\x_1),
\ldots,
(1-\lambda)\rho_{d-1}(\x_0)+\lambda \rho_{d-1}(\x_1)
\Bigr)\\
&\geq (1-\lambda)
Q(s_0,\rho_0(\x_0),\ldots,\rho_{d-1}(\x_0))+
\lambda
Q(s_1,\rho_0(\x_1),\ldots,\rho_{d-1}(\x_1)).
\end{aligned}
\]
This proves the claim.
\end{proof}

\begin{lem}
\label{lem:fibre-root-functions-convex}
Let \(f(\x,y)\in\I[\x,y]\), let \(d=\deg_y f\ge1\), and set
\[
D_i:=\pi_\x(\C_{\partial_y^i f}),\qquad 0\le i\le d-1.
\]
For \(\x\in D_i\), let \(r_i(\x)\) be the largest root of
\(\partial_y^i f(\x,y)\). Then \(r_i:D_i\to\R\) is convex, and
\[
\C_{\partial_y^if}
=
\{(\x,y):\x\in D_i,\ y>r_i(\x)\}.
\]
\end{lem}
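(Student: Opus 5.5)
I would prove the two assertions in the reverse order: first describe $\C_{\partial_y^i f}$ as an epigraph-type region over its projection $D_i$, and then read off the convexity of $r_i$ from the convexity of $\C_{\partial_y^i f}$, which holds because $f$ is ideal. Set $g:=\partial_y^i f$. Since $f\in\I$, $g$ is ideal Gårding, so $\C_g$ is a nonempty open convex set passing the positive ray test. The decisive point is that for $\x\in D_i$, the fibre $\{y:(\x,y)\in\C_g\}$ is exactly $(r_i(\x),\infty)$.

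For the fibre identification, fix $\x_0\in D_i$. By Corollary~\ref{cor:basicrestriction}, applied with the $\y$-variable being $y$ and $\alpha=0$, the specialization $\phi(y):=g(\x_0,y)$ is a univariate Gårding polynomial of degree $d-i\ge1$. Such a polynomial has a monotone root sequence, and its Gårding component is $(r(\phi),\infty)$, where $r(\phi)=r_i(\x_0)$ is the largest root; this is the description recalled after Definition~\ref{def:MRS}. The fibre $F_{\x_0}:=\{y:(\x_0,y)\in\C_g\}$ is nonempty, open, and convex, because it is a slice of a convex set. It is also upward closed by the positive ray test. Hence $F_{\x_0}=(c,\infty)$ for some $c\in[-\infty,\infty)$. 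Since $g>0$ on $F_{\x_0}$ and $\phi$ has positive leading coefficient, we get $c\ge r_i(\x_0)$. If $c>r_i(\x_0)$, the point $(\x_0,c)$ would be a boundary point of $\C_g$ with $g(\x_0,c)>0$. But $\C_g$ is a connected component of $\{g>0\}$, and $g>0$ at a point forces a neighbourhood of that point to lie in the same component, which gives a contradiction. Hence $c=r_i(\x_0)$. In particular $D_i=\{\x:\exists y,\,(\x,y)\in\C_g\}$, and
\[
\C_{\partial_y^i f}=\{(\x,y):\x\in D_i,\ y>r_i(\x)\}.
\]

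Convexity of $r_i$ then follows from the standard epigraph argument. The open set $\C_g$ is the strict epigraph of $r_i$ over $D_i$. $D_i$ is convex as the projection of a convex set. For $\x_0,\x_1\in D_i$ and $\varepsilon>0$, the points $(\x_j,r_i(\x_j)+\varepsilon)$ lie in $\C_g$, so by convexity their convex combination lies in $\C_g$. This gives $r_i(\x_\lambda)<(1-\lambda)r_i(\x_0)+\lambda r_i(\x_1)+\varepsilon$, and letting $\varepsilon\to0$ yields convexity.

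I expect the main obstacle to be the fibre identification, specifically excluding that the slice of $\C_g$ over $\x_0$ is a proper subinterval of the positivity set of $\phi$. I plan to handle this by combining two facts: the univariate Gårding component of $\phi$ is the top ray $(r_i(\x_0),\infty)$, and a connected component of an open set is relatively closed in it. I will also check that the slice is nonempty precisely because $\x_0\in D_i$, and that Corollary~\ref{cor:basicrestriction} applies with $\alpha=0$ in the $y$-variable.
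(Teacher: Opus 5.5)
Your proposal is correct and follows the same route as the paper. Corollary~\ref{cor:basicrestriction} makes each specialization $y\mapsto\partial_y^i f(\x_0,y)$ univariate G{\aa}rding, so $\C_{\partial_y^i f}$ is the strict epigraph of $r_i$ over $D_i$, and convexity of this component (from $f\in\I$) then gives convexity of $r_i$. You spell out what the paper leaves implicit, namely the connectedness argument fixing the fibre as $(r_i(\x_0),\infty)$ and the $\varepsilon$-epigraph step. The one assertion you leave unjustified is $\deg_y\partial_y^i f(\x_0,y)=d-i$, which you need for $r_i(\x_0)$ to exist; it follows from $\C_{\partial_y^i f}\subset\C_{\partial_y^d f}$, on which the leading $y$-coefficient of $f$ is positive.
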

\begin{proof}
By Corollary~\ref{cor:basicrestriction}, for every \(\x\in D_i\), the
univariate polynomial \(y\mapsto \partial_y^i f(\x,y)\) is G{\aa}rding.
Hence it has largest root \(r_i(\x)\), and
\[
\C_{\partial_y^if}
=
\{(\x,y):\x\in D_i,\ y>r_i(\x)\}.
\]
Since \(f\in\I\), the component \(\C_{\partial_y^if}\) is convex. Therefore
this epigraph is convex, and hence \(r_i\) is convex on \(D_i\).
\end{proof}

\begin{thm}
\label{prop:fibre-quotient-concavity}
Assume \(f(\x,y)\in\I[\x,y]\) and \(\deg_yf\ge1\). Then
\[
h(\x,y):=\frac{f(\x,y)}{\partial_yf(\x,y)}
\]
is concave on \(\C_{\partial_yf}\).
\end{thm}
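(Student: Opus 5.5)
The idea is to express $h$ fibrewise through the universal quotient and then combine Theorem~\ref{thm:universalquotientconcavity}, Theorem~\ref{thm:universalquotientmonotone} and Lemma~\ref{lem:concave-decreasing-composition}.

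\textbf{Step 1: fibrewise representation.} Write $d=\deg_y f$ and let $c(\x)$ be the leading coefficient of $f$ in $y$. For $\x\in D_1:=\pi_\x(\C_{\partial_yf})$, Corollary~\ref{cor:basicrestriction} says that $y\mapsto\partial_y f(\x,y)$ is a univariate G\aa rding polynomial. I would like the whole fibre $y\mapsto f(\x,y)$ to be G\aa rding as well, so that it has a monotone root sequence $r_0(\x)\ge r_1(\x)\ge\cdots\ge r_{d-1}(\x)$, where $r_i$ is the largest root of $\partial_y^if(\x,\cdot)$.

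\textbf{Step 2: identify $h$ with $\uquot_d$.} By the uniqueness statement after \eqref{eq:f(x,r)}, we get
\[
f(\x,y)=d!\,c(\x)\,\upoly_d(y,r_0(\x),\ldots,r_{d-1}(\x)),
\]
with $c(\x)>0$. Hence
\[
h(\x,y)=\uquot_d\bigl(y,r_0(\x),\ldots,r_{d-1}(\x)\bigr).
\]
On $\C_{\partial_yf}$ we have $y>r_1(\x)$ by Lemma~\ref{lem:fibre-root-functions-convex}, and $r_0\ge r_1\ge\cdots$, so the argument lies in $\U_d^{(1)}$. This explains why the theorem is stated on $\C_{\partial_y f}$ rather than $\C_f$: the point may have $y\le r_0$.

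\textbf{Step 3: the main obstacle.} The delicate point is making $r_0,\ldots,r_{d-1}$ well defined and convex on all of $D_1$, not just on the smaller sets $D_i$. The chain $\C_f\subset\C_{\partial_yf}\subset\cdots$ gives $D_0\subset D_1\subset D_2\subset\cdots$. So $r_i$ for $i\ge1$ is convex on $D_i\supset D_1$ by Lemma~\ref{lem:fibre-root-functions-convex}, but $r_0$ is a priori only defined on $D_0$. I would show that for $\x\in D_1$ the fibre $f(\x,\cdot)$ is still G\aa rding, because $\C_f\subset\C_{\partial_y f}$ together with the positive ray test forces $D_0=D_1$. Concretely, for $\x\in D_1$ the polynomial $f(\x,y)$ has positive leading coefficient and tends to $+\infty$ as $y\to\infty$. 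Moving from a point of $\C_f$ in the positive $\x$-direction and using convexity of $\C_f$ (an open convex set in which $f>0$ and which is closed under adding $\Gamma^+$), together with the structure of $\C_{\partial_yf}$ as an epigraph, one should see that the point $(\x,y)$ for $y$ large lies in $\C_f$. If this equality fails, the fallback is as follows. On $\C_{\partial_yf}$ with $\x\notin D_0$, define $r_0(\x)$ as the largest root of $f(\x,\cdot)$, which exists because $f(\x,r_1)<0$ should hold by Rayleigh/MRS-type reasoning. Then prove convexity of the extended $r_0$ by expressing $\{(\x,y): f(\x,y)>0,\ y>r_1(\x)\}$ as the convex set $\C_f$.

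\textbf{Step 4: conclude.} Apply Lemma~\ref{lem:concave-decreasing-composition} with
\[
Q=\uquot_d,\qquad \Omega=\U_d^{(1)},\qquad D=D_1,\qquad \rho_i=r_i,\qquad E=\C_{\partial_y f}.
\]
Here $E$ is convex since $f$ is ideal, $Q$ is concave by Theorem~\ref{thm:universalquotientconcavity}, and $Q$ is nonincreasing in each $r_i$ by Theorem~\ref{thm:universalquotientmonotone}. This yields concavity of $h$ on $\C_{\partial_yf}$.
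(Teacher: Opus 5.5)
Your architecture is the paper's: represent each fibre through $\upoly_d$, so that $h=\uquot_d(y,r_0(\x),\ldots,r_{d-1}(\x))$ on $\C_{\partial_yf}$, and then combine Theorems~\ref{thm:universalquotientconcavity} and \ref{thm:universalquotientmonotone} with Lemma~\ref{lem:concave-decreasing-composition} on $E=\C_{\partial_yf}$. However, Step~3, which you correctly identify as the crux, is left unproved, and neither route you sketch would close it.

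\textbf{Why your two routes fail.} Positive translations and convex combinations that stay inside $\C_f$ can never produce a point over $D_1\setminus D_0$, because $D_0+\Gamma_n^+\subseteq D_0$ and $D_0$ is convex. So the first route cannot prove $D_1\subseteq D_0$. The fallback asserts $f(\x,r_1(\x))<0$ without proof, and the strict inequality is false whenever $r_0(\x)=r_1(\x)$. It then ``expresses'' $\{(\x,y):y>r_1(\x),\ f>0\}$ as $\C_f$, but that identity is exactly what must be shown. Once it holds, $D_0=D_1$, so there is no separate fallback case.

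\textbf{The missing input.} You applied Corollary~\ref{cor:basicrestriction} only to $\partial_yf$. It is in fact a statement about the specialization of $f$ itself at $\x_0\in\pi(\C_{\partial_y^\alpha f})$. Taking $\alpha=1$, it says directly that $y\mapsto f(\x,y)$ is \gar{} for every $\x\in D_1$, and this is how the paper proceeds. You also need $c(\x)>0$ on $D_1$; the paper derives it from $\C_{\partial_yf}\subset\C_{\partial_y^df}$, whereas you assume it. Together these give a full monotone root sequence $r_0(\x)\ge r_1(\x)\ge\cdots$ on $D_1$. Hence $\rhat(\x,y)\in\U_d^{(1)}$ and your Step~2 representation is valid. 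The paper then notes that $(\x,Y)\in\C_f$ for $Y>r_0(\x)$, so $D_1=D_0$, and $r_0$ is convex on $D_1$ by Lemma~\ref{lem:fibre-root-functions-convex}.

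\textbf{Making the last inclusion airtight.} Use connectedness. Set $S:=\{(\x,y)\in\C_{\partial_yf}: f(\x,y)>0\}$.
Over the open convex set $D_1$, its fibres are nonempty upward rays, because $f(\x,\cdot)$ is strictly increasing on $(r_1(\x),\infty)$ and tends to $+\infty$.
Any two points of $S$ can be joined by moving vertically to a common large height $Y$ and then horizontally. This works because on compact segments of $D_1$ the convex function $r_1$ and the coefficients of $f$ are bounded, and $c$ is bounded below by a positive constant.
Thus $S$ is connected and $\C_f\subseteq S\subseteq\{f>0\}$. Since $\C_f$ is a connected component of $\{f>0\}$, it follows that $S=\C_f$, and hence $D_0=\pi_\x(S)=D_1$.
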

 \begin{proof}
Let
\[
D:=\pi_{\x}(\C_{\partial_yf}),
\qquad
d:=\deg_y f.
\]
For \(\x\in D\), set \(\phi_\x(y):=f(\x,y)\). By
Corollary~\ref{cor:basicrestriction}, \(\phi_\x\in\GS[y]\). 

Moreover, \(\deg\phi_\x=d\) on \(D\). Indeed, write
\(f(\x,y)=a_d(\x)y^d+\cdots+a_0(\x)\). Since
\(\partial_y^d f=d!a_d(\x)\) and
\(\C_{\partial_yf}\subset\C_{\partial_y^d f}\), every
\(\x\in D\) satisfies \(a_d(\x)>0\). Hence \(\deg\phi_\x=d\).

Write the root sequence of \(\phi_\x\) as
\[
R(\phi_\x)=(r_0(\x),\ldots,r_{d-1}(\x)).
\]
Then
\[
\C_{\partial_yf}
=
\{(\x,y):\x\in D,\ y>r_1(\x)\}.
\]
Moreover, by Lemma~\ref{lem:fibre-root-functions-convex}, the functions
\(r_i\) are convex on their natural domains. The recursive inclusions imply
that \(D\subseteq D_i:=\pi_\x(\C_{\partial_y^if})\) for \(i\ge1\). 

Since \(\phi_\x\in\GS[y]\) and \(\deg\phi_\x=d\), the whole root sequence
\(r_0(\x),\ldots,r_{d-1}(\x)\) is defined. In particular, \(r_0(\x)\ge
r_1(\x)\). Choosing \(Y>r_0(\x)\) gives \((\x,Y)\in\C_f\), and hence
\(\x\in D_0\).
Therefore,
each \(r_i\) is convex on \(D\).

For \((\x,y)\in\C_{\partial_yf}\), define
\[
\rhat(\x,y):=(y,r_0(\x),\ldots,r_{d-1}(\x)).
\]
Since \(y>r_1(\x)\), we have \(\rhat(\x,y)\in\U_d^{(1)}\). By the
universal representation of univariate \(\gar{}\) polynomials,
\[
f(\x,y)
=
c(\x)\upoly_d(\rhat(\x,y)),
\qquad
\partial_yf(\x,y)
=
c(\x)\pdvminus\upoly_d(\rhat(\x,y)),
\]
where \(c(\x)>0\) is the leading normalization factor. Hence
\[
h(\x,y)
=
\frac{f(\x,y)}{\partial_yf(\x,y)}
=
\uquot_d(\rhat(\x,y)).
\]

By Theorem~\ref{thm:universalquotientconcavity}, \(\uquot_d\) is concave
on \(\U_d^{(1)}\), and by Theorem~\ref{thm:universalquotientmonotone},
it is nonincreasing in each root coordinate. Since the functions
\(r_i\) are convex on \(D\), Lemma~\ref{lem:concave-decreasing-composition}
applies and gives the concavity of \(h\) on \(\C_{\partial_yf}\).
\end{proof}
Theorem~\ref{prop:fibre-quotient-concavity} is the main analytic input in the
polarization argument. It allows us to pass from convexity of fibres to
convexity of the polarized positivity component.

\section{Polarization of Ideal \gar{} Polynomials}

We prove that ideal G{\aa}rding polynomials are preserved
under \(\kappa\)-polarization by decomposing \(\kappa\)-polarization into
partial polarizations. The main result of this section is the following

\begin{thm}\label{thm:polarizingidealgarding}
Let \(\kappa\in\N^n\), and assume that
\(f(\x)\in\I_\kappa[\x]\). Then its \(\kappa\)-polarization is ideal G\aa rding, i.e. 
\[
\pol_\kappa f
(x_{11},\ldots,x_{1\kappa_1},\ldots,
 x_{n1},\ldots,x_{n\kappa_n})\in \Ia.
\]
\end{thm}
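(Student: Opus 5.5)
We reduce the $\kappa$-polarization to a sequence of \emph{partial polarizations}, each splitting one variable into two. For a polynomial $g(\x,y)$ with $\deg_y g\le k$, the partial polarization replaces $y$ by two variables $y_1,y_2$ with $\deg_{y_1}\le 1$ and $\deg_{y_2}\le k-1$:
\[
\ppol g(\x,y_1,y_2):=\Bigl(1-\tfrac1k\Bigr)g(\x,y_2)+\tfrac{y_1}{k}\,\partial_y g(\x,y_2)
\]
(after recentring appropriately so that $y_1$ and $y_2$ enter symmetrically upon full polarization). Iterating $\ppol$ in each variable $x_i$ a total of $\kappa_i-1$ times yields $\pol_\kappa f$, by the compatibility of partial polarizations with the normalized $\sigma_k/\binom{\kappa}{k}$ formula. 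By induction it then suffices to prove that $\ppol$ maps $\I$ to $\I$.

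Fix $g\in\I[\x,y]$ and set $G:=\ppol g$. Since partial polarization preserves $\GS$ (Theorem~\ref{thm:maps-preserve-garding}, via $\kappa$-polarization in one variable followed by partial projection), $G$ is G\aa rding, and we only need convexity of $\C_{\partial^\alpha G}$ for all $\alpha$.

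\textbf{Derivatives.} Derivatives in $\x$ and $y_2$ commute with $\ppol$ up to lowering $k$: we have $\partial_{\x}^\beta\partial_{y_2}^j G=\ppol(\partial_\x^\beta\partial_y^j g)$ up to positive constants in the $y_1$-linear term. Also $\partial_{y_1}G=\tfrac1k\partial_y g(\x,y_2)$ is ideal by hypothesis. By induction on degree, it therefore suffices to show that $\C_G$ itself is convex.

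\textbf{The main step.} On the region where $\partial_y g(\x,y_2)>0$, we can write
\[
G=\tfrac1k\,\partial_y g(\x,y_2)\bigl(y_1+(k-1)h(\x,y_2)\bigr),\qquad h:=g/\partial_y g .
\]
The Gårding component should be
\[
\C_G=\bigl\{(\x,y_1,y_2):(\x,y_2)\in\C_{\partial_y g},\ y_1>-(k-1)h(\x,y_2)\bigr\}.
\]
By Theorem~\ref{prop:fibre-quotient-concavity}, $h$ is concave on the convex set $\C_{\partial_y g}$, so $-(k-1)h$ is convex. Hence $\C_G$ is the strict epigraph of a convex function over a convex base, and is therefore convex.

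\textbf{Main obstacle.} The hardest part is identifying $\C_G$ exactly with this epigraph. This requires showing that $\C_G\subset\C_{\partial_{y_1}G}=\{(\x,y_2)\in\C_{\partial_y g}\}\times\R$, which follows from the Gårding inclusion $\C_G\subset\C_{\partial_{y_1}G}$. It also requires checking that the epigraph is connected and passes the positive ray test; this uses the facts that $h$ is increasing in $y_2$ (its derivative is $1-g\,\partial_y^2 g/(\partial_y g)^2\ge0$ by the Rayleigh inequality, Theorem~\ref{thm:GardtoRayleigh}, after translation) and nonincreasing in $\x$-directions, the latter via the root monotonicity of Theorem~\ref{thm:universalquotientmonotone}. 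Once this identification is in place, the induction over the sequence of partial polarizations gives $\pol_\kappa f\in\Ia$.
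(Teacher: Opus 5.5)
This is the paper's route. Lemma~\ref{lem:partialppolbasics}\,\ref{enu:partialpolar-usual} factors $\pol_\kappa$ into partial polarizations. Proposition~\ref{prop:ppol} then shows each one preserves $\I$, using your induction on degree for the derivatives and your epigraph argument fed by Theorem~\ref{prop:fibre-quotient-concavity}. Two points need fixing; neither changes the strategy.

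First, your displayed operator is not a partial polarization. On $y^j$ it gives $\frac{k-1}{k}y_2^j+\frac jk\,y_1y_2^{j-1}$. At $y_1=y_2=y$ this equals $\frac{k-1+j}{k}y^j\neq y^j$, which is already wrong for constants, so iterating it does not produce $\pol_\kappa f$. The correct map is $y^j\mapsto\frac{k-j}{k}y_2^j+\frac jk\,y_1y_2^{j-1}$, i.e.
\[
\ppol g=g(\x,y_2)+\frac{y_1-y_2}{k}\,\partial_y g(\x,y_2)=\frac1k\,\partial_y g(\x,y_2)\bigl(y_1-y_2+k\,h(\x,y_2)\bigr),
\]
which is the paper's $\ppol^{y}_k$. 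With it, $\partial_{y_2}G=\frac{k-1}{k}\ppol_{k-1}(\partial_y g)$. Also $\C_G$ is the strict epigraph over $\C_{\partial_y g}$ of $y_2-k\,h(\x,y_2)$ rather than of $-(k-1)h$. This function is still convex because $h$ is concave, so your main step goes through unchanged.

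Second, identifying $\C_G$ is easier than your \emph{main obstacle} paragraph suggests, and no positive ray test is needed. The epigraph $E$ is convex, hence connected, and $G>0$ on $E$. You correctly note that $G\in\GS$ forces $\emptyset\neq\C_G\subset\C_{\partial_{y_1}G}$, a point the paper leaves implicit; this gives $\C_G\subset E$. A connected subset of $\{G>0\}$ that meets the component $\C_G$ lies inside it, so $E=\C_G$.

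This matters because the monotonicity you invoke has the wrong sign. The roots $r_i(\x)$ are nonincreasing, and $\uquot_d$ is nonincreasing in them (Theorem~\ref{thm:universalquotientmonotone}). Hence $h$ is nondecreasing, not nonincreasing, in the $\x$-directions. Moreover, with the correct formula the $y_2$-monotonicity you would need is $g\,\partial_y^2g\le\frac{k-1}{k}(\partial_y g)^2$, which is stronger than the plain Rayleigh inequality.
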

 
A univariate \gar{} polynomial is ideal \gar{}. By Theorem \ref{thm:polarizingidealgarding}, we obtain the following result.

\begin{thm}\label{thm:polarizingMRS}
    Suppose $d\leq n$. Let $\x=(x_1,\cdots,x_n)$. Let \[F(\x):=\sum_{i=0}^d c_i\frac{\sigma_i(\x)}{\tbinom{n}{i}},\qquad f(t):=\sum_{i=0}^dc_it^i.\] Then, the following are equivalent:
    \begin{enumerate}[label=(\arabic*)]
        \item $F\in\I[\x]$; \label{enu:symmetricIa}
        \item $F\in \GS[\x]$;\label{enu:symmetricGa}
        \item $f(t)$ has a MRS.\label{enu:symmetricMRS}
    \end{enumerate}
\end{thm}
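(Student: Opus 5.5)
We prove the cycle \ref{enu:symmetricIa} $\Rightarrow$ \ref{enu:symmetricGa} $\Rightarrow$ \ref{enu:symmetricMRS} $\Rightarrow$ \ref{enu:symmetricIa}. The first implication is immediate, since $\I[\x]\subset\GS[\x]$ by Definition~\ref{def:ideal}.

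For \ref{enu:symmetricGa} $\Rightarrow$ \ref{enu:symmetricMRS}, observe that $\proj F(t)=F(t,\ldots,t)=\sum_i c_i t^i=f(t)$, because $\sigma_i(t,\ldots,t)=\binom{n}{i}t^i$. Diagonal projection preserves G\aa rding polynomials by Theorem~\ref{thm:maps-preserve-garding}, so $f$ is a univariate G\aa rding polynomial. Univariate G\aa rding polynomials are exactly those with an MRS (Section~9.2 of \cite{Fang-MaGard}, as recalled after Definition~\ref{def:MRS}), so $f$ has an MRS. One degenerate case needs a separate remark: if $\deg f<d$ (some top $c_i$ vanish), then $F$ is the polarization of $f$ viewed in the smaller degree, and the same argument applies with $d$ replaced by $\deg f$.

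For \ref{enu:symmetricMRS} $\Rightarrow$ \ref{enu:symmetricIa}, first show that $f\in\I_1^d$. Since $f$ has an MRS, it is G\aa rding, so each $\C_{f^{(k)}}=(r(f^{(k)}),\infty)$ is an open ray. These rays are convex, and the MRS gives the nesting $\C_{f^{(k)}}\subset\C_{f^{(k+1)}}$. Hence $f$ satisfies the recursive definition of an ideal G\aa rding polynomial. We may assume the leading coefficient is positive, which is built into the G\aa rding definition. Now $F=\pol_\kappa f$ with $\kappa=(n)$, a single variable polarized into $n$ copies, and $\deg f=d\le n=\kappa$. Theorem~\ref{thm:polarizingidealgarding} then gives $F\in\Ia\subset\I[\x]$.

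The substantive work is all in Theorem~\ref{thm:polarizingidealgarding}, which we take as given. The only point needing care is matching conventions. The polarization $\pol$ used to define $F$ must coincide with $\pol_\kappa$ for $\kappa=(n)$; both send $t^i$ to $\sigma_i/\binom{n}{i}$, so they agree. We also need the nondegenerate leading-coefficient situation when passing between $f$ and $F$.
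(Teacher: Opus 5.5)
Your proposal is correct and follows the paper's proof step for step. You get (1)$\Rightarrow$(2) by inclusion, (2)$\Rightarrow$(3) from the fact that the diagonal specialization $\proj F=f$ preserves G\aa rding polynomials, and (3)$\Rightarrow$(1) by noting that a univariate polynomial with MRS is ideal G\aa rding (its components are nested rays) and then applying Theorem~\ref{thm:polarizingidealgarding} with $\kappa=(n)$; your caveat about a positive leading coefficient applies equally to the paper, which also tacitly assumes it when identifying MRS with being G\aa rding.
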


\subsection{Partial polarization}

\begin{defn}\label{ppol}
Let \(m\ge1\). The partial polarization operator
\[
\ppol_m^y:\R_m[y]\longrightarrow\R_{(m,1)}[y,z]
\]
is the linear map determined by
\[
y^k
\longmapsto
\frac{k}{m}y^{k-1}z
+
\frac{m-k}{m}y^k,
\qquad
0\le k\le m.
\]
Extending coefficientwise, for every \(\kappa\in\N^n\) we obtain a
linear map
\[
\ppol_m^y:
\R_{\kappa\oplus m}[\x,y]
\longrightarrow
\R_{\kappa\oplus m\oplus1}[\x,y,z].
\]
Equivalently, for every
\(f(\x,y)\in\R_{\kappa\oplus m}[\x,y]\),
\[
\ppol_m^y f(\x,y,z)
=
f(\x,y)
+
\frac{z-y}{m}\,\partial_y f(\x,y).
\]
\end{defn}

We list some basic properties of the partial polarization.

\begin{lem}
\label{lem:partialppolbasics}
Let \(m\geq1\), let \(\kappa\in\N^n\), and let
\(
f(\x,y)\in\R_{\kappa\oplus m}[\x,y].\)
Then the partial polarization operator \(\ppol_m^y\) satisfies the following.

\begin{enumerate}[label=(\arabic*)]

\item \label{enu:partialppol-diagonal}

\(
\ppol_m^y f(\x,y,y)=f(\x,y).
\)

\item \label{enu:partialppol-degree}
If \(\deg_y f<m\), then
\(
\deg_y\ppol_m^y f=\deg_y f.
\)
If \(\deg_y f=m\), then
\(
\deg_y\ppol_m^y f=m-1.
\)
\item \label{enu:partialppolpolarrestrict}
If
\(
f(\x,y)=\sum_{j=0}^{m}g_j(\x)y^j,
\)
then
\[
\ppol_m^y f(\x,y,z)
=
\sum_{j=0}^{m}
\frac{g_j(\x)}{\tbinom{m}{j}}
\sigma_j(\overbrace{y,\ldots,y}^{m-1},z).
\]

\item \label{enu:partialpolar-usual}
Let \(f(\x)=f(x_1,\ldots,x_n)\in\R_\kappa[\x]\). Then the usual
\(\kappa\)-polarization is obtained by iterated partial polarizations:
\begin{equation}
\label{eq:partialpolarizationto_usual}
\pol_\kappa f
=
\prod_{q=1}^{n}
\left(
\ppol^{x_q}_1\circ\cdots\circ\ppol^{x_q}_{\kappa_q}
\right)f.
\end{equation}
\end{enumerate}
\end{lem}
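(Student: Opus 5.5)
The four items are elementary algebraic identities, and the plan is to check each on the monomial basis using linearity and the equivalent form
\[
\ppol_m^y f(\x,y,z)=f(\x,y)+\tfrac{z-y}{m}\,\partial_y f(\x,y).
\]
First we confirm this form on monomials. For $f=y^k$ the right-hand side is
\[
y^k+\tfrac{z-y}{m}ky^{k-1}=\tfrac{m-k}{m}y^k+\tfrac{k}{m}y^{k-1}z,
\]
which agrees with the definition. Since both sides are linear and commute with multiplication by functions of $\x$, the identity extends to all of $\R_{\kappa\oplus m}[\x,y]$.

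With this form in hand, (1) is immediate: setting $z=y$ kills the derivative term. For (2), write $f=\sum_j g_j(\x)y^j$ with $\deg_y f=k$. The image of $y^j$ has $y$-degree $j$ when $j<m$, because the coefficient $\tfrac{m-j}{m}$ of $y^j$ is nonzero. When $j=m$ the image is $y^{m-1}z$, of $y$-degree $m-1$.
\begin{itemize}
\item If $k<m$, the top term contributes $\tfrac{m-k}{m}g_k y^k\neq 0$, and all other terms have lower $y$-degree, so $\deg_y\ppol_m^y f=k$.
\item If $k=m$, every term has $y$-degree at most $m-1$. The $y^{m-1}$ coefficient is $g_m z+\tfrac{1}{m}g_{m-1}$, which is nonzero as a polynomial in $(\x,z)$ because $g_m\neq0$. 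Hence $\deg_y\ppol_m^y f=m-1$.
\end{itemize}

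For (3), by linearity it suffices to check that
\[
\frac{\sigma_j(y,\ldots,y,z)}{\binom{m}{j}}=\frac{j}{m}y^{j-1}z+\frac{m-j}{m}y^j,
\]
where $y$ is repeated $m-1$ times. Splitting according to whether $z$ is among the chosen variables gives
\[
\sigma_j(y^{(m-1)},z)=\binom{m-1}{j-1}y^{j-1}z+\binom{m-1}{j}y^j.
\]
Then $\binom{m-1}{j-1}/\binom{m}{j}=j/m$ and $\binom{m-1}{j}/\binom{m}{j}=(m-j)/m$, which gives the claim.

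Item (4) is the only step requiring care, namely keeping track of variable names. Here $\ppol^{x_q}_{j}$ is understood to polarize the still-unpolarized copy of $x_q$ (degree $\le j$) and to introduce a fresh variable $x_{qj}$. After all steps, the last remaining copy of $x_q$ is renamed $x_{q1}$.

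Because different $q$ act on disjoint variables, the composite factorizes over $q$ on monomials. It therefore suffices to treat one variable $x$ with $\kappa_q=\kappa$ and prove, by induction on the number of steps, that
\[
\bigl(\ppol^{x}_{\ell+1}\circ\cdots\circ\ppol^x_{\kappa}\bigr)x^k=\sum_{i}c_{i}\,x^{i}\,\sigma_{k-i}(x_{\ell+2},\ldots,x_{\kappa}),
\]
with coefficients $c_i=\binom{\ell+1}{i}\binom{\kappa-\ell-1}{k-i}/\binom{\kappa}{k}$. These are exactly the coefficients of
\[
\frac{\sigma_k(\overbrace{x,\ldots,x}^{\ell+1},x_{\ell+2},\ldots,x_\kappa)}{\binom{\kappa}{k}}.
\]
The inductive step applies (3) to each power $x^i$, with $m=\ell+1$ and new variable $z=x_{\ell+1}$. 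It uses the identity
\[
\sigma_i(x^{(\ell)},x_{\ell+1})\,\sigma_{k-i}(\text{rest})\ \text{summed over }i=\sigma_k(x^{(\ell)},x_{\ell+1},\text{rest}),
\]
which holds because the elementary symmetric polynomial of a union of variable sets is the convolution of the elementary symmetric polynomials of the parts. At $\ell=0$ we obtain $\sigma_k(x_{11},\ldots,x_{1\kappa})/\binom{\kappa}{k}$ after the renaming, which is $\pol_\kappa$ on that monomial, and this establishes (4).
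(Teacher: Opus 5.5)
Your proofs of (1)--(3) are correct and follow the paper. The paper asserts (1) and (2) directly from the definition and proves (3) by the same splitting of $\sigma_j(y,\dots,y,z)$ according to whether $z$ is chosen. Your strategy for (4) is also the paper's: factor over the variables $x_q$, reduce to a single monomial $x^k$, and induct on the number of partial polarizations applied.

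However, the inductive invariant you display for (4) is false as written, for two separate reasons.

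\emph{It is off by one.} With your convention that $\ppol^x_j$ introduces $x_j$, the composite $\ppol^x_{\ell+1}\circ\cdots\circ\ppol^x_\kappa$ has introduced the $\kappa-\ell$ variables $x_{\ell+1},\dots,x_\kappa$. It therefore leaves $\ell$ copies of $x$, not $\ell+1$ copies together with $x_{\ell+2},\dots,x_\kappa$. Three symptoms:
\begin{itemize}
\item At $\ell=\kappa-1$ your formula asserts $\ppol^x_\kappa x^k=x^k$, whereas $\ppol^x_\kappa x^k=\tfrac{\kappa-k}{\kappa}x^k+\tfrac{k}{\kappa}x^{k-1}x_\kappa$.
\item Your hypothesis at stage $\ell+1$ has $x$-degree up to $\ell+2$, so the step ``apply (3) with $m=\ell+1$'' is applied outside the domain $\R_{\ell+1}[x]$.
\item There is no ``last remaining copy'' of $x$ to rename at the end: $\ppol^x_1$ sends $a+bx$ to $a+bx_1$, so the renaming you describe is just $\ppol^x_1$ itself.
\end{itemize}

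\emph{The coefficients are wrong.} With $\ell+1$ copies of $x$ one has $\sigma_k(x,\dots,x,\text{rest})=\sum_i\binom{\ell+1}{i}x^i\sigma_{k-i}(\text{rest})$. So the factor $\binom{\kappa-\ell-1}{k-i}$ in your $c_i$ should not be there. For $\kappa=3$, $k=1$, $\ell=0$ your formula gives $\tfrac13x+\tfrac23(x_2+x_3)$ instead of $\tfrac13(x+x_2+x_3)$.

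These are bookkeeping errors rather than a missing idea, but as written the induction for (4) does not go through. The repair is to use, for $\ell=\kappa,\kappa-1,\dots,0$,
\[
(\ppol^x_{\ell+1}\circ\cdots\circ\ppol^x_\kappa)\,x^k
=\frac{\sigma_k(\underbrace{x,\dots,x}_{\ell},x_{\ell+1},\dots,x_\kappa)}{\binom{\kappa}{k}}
=\frac{1}{\binom{\kappa}{k}}\sum_i\binom{\ell}{i}x^i\,\sigma_{k-i}(x_{\ell+1},\dots,x_\kappa).
\]
The case $\ell=\kappa$ is trivial. For the step $\ell\to\ell-1$, the $x$-degree is at most $\ell$, so (3) with $m=\ell$ and $z=x_\ell$ applies and turns $\binom{\ell}{i}x^i$ into $\sigma_i(x,\dots,x,x_\ell)$ with $\ell-1$ copies of $x$. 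Your convolution identity then finishes the step. At $\ell=0$ the formula is exactly $\sigma_k(x_1,\dots,x_\kappa)/\binom{\kappa}{k}$, with no renaming needed.

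This is the paper's invariant $f_l=\sigma_k(x,\dots,x,z_1,\dots,z_l)/\binom{m}{k}$ with $m-l$ copies of $x$. The paper verifies the step by Pascal's rule; you would use (3) plus the convolution of elementary symmetric polynomials, which is a slightly cleaner packaging of the same computation.
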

\begin{proof}
First two claims follow directly from Defintion~\ref{ppol}.

 \ref{enu:partialppolpolarrestrict} follows from the definition and the following
\[
\sigma_j(\overbrace{y,\ldots,y}^{m-1},z)
=
\tbinom{m-1}{j}y^j
+
\tbinom{m-1}{j-1}y^{j-1}z.
\]

It remains to prove \ref{enu:partialpolar-usual}. Consider the right hand side of
\eqref{eq:partialpolarizationto_usual}. If
\(i\neq j\), then partial polarizations in the \(x_i\)- and \(x_j\)-variables
commute on products:
\[
\ppol^{x_i}_k\circ\ppol^{x_j}_\ell
\bigl(f(x_i)g(x_j)\bigr)
=
(\ppol^{x_i}_k f)(\ppol^{x_j}_\ell g).
\]
Thus it suffices to prove the identity for a univariate monomial.

Fix \(m\in\N\), and let \(0\leq k\leq m\). For \(0\leq l\leq m\), define
\[
f_l(x,z_1,\ldots,z_l)
:=
\ppol^{x}_{m-l+1}\circ\cdots\circ \ppol^{x}_{m} x^k,
\]
where each application of \(\ppol\) introduces a new variable \(z_r\).
We claim that
\begin{equation}
\label{eq:f_l}
f_l(x,z_1,\ldots,z_l)
=\frac1{\tbinom{m}{k}}
\sum_{j=0}^{l}
\tbinom{m-l}{k-j}
x^{k-j}\sigma_j(z_1,\ldots,z_l),
\end{equation}
with the convention that \(\tbinom{a}{b}=0\) if \(b<0\) or \(b>a\).

For \(l=0\), \eqref{eq:f_l} is trivially true.

Assume \eqref{eq:f_l} holds for some \(l<m\). Note
\[
\ppol^{x}_{m-l}(x^{k-j})
=
\frac{k-j}{m-l}x^{k-j-1}z_{l+1}
+
\frac{m-l-k+j}{m-l}x^{k-j}.
\] Denote \(
\sigma_j=\sigma_j(z_1,\ldots,z_l)\). Then
\[
\begin{aligned}
f_{l+1}
&=
\frac1{\tbinom{m}{k}}\sum_{j=0}^{l}
\tbinom{m-l}{k-j}
\left(
\frac{k-j}{m-l}x^{k-j-1}z_{l+1}\sigma_j
+
\frac{m-l-k+j}{m-l}x^{k-j}\sigma_j
\right)
\\
&=
\frac1{\tbinom{m}{k}}\sum_{j=0}^{l+1}
\tbinom{m-l-1}{k-j}
x^{k-j}
\left(
z_{l+1}\sigma_{j-1}
+
\sigma_j
\right).
\end{aligned}
\]
Since $\sigma_j+\sigma_{j-1}z_{l+1}=\sigma_j(z_1,\cdots,z_{l+1})$,
this completes the induction and proves the claim.

Taking \(l=m\), the only nonzero term in \eqref{eq:f_l} occurs when
\(j=k\). Hence
\[
f_m(x,z_1,\ldots,z_m)
=
\frac{1}{\tbinom{m}{k}}\sigma_k(z_1,\ldots,z_m).
\]
This is exactly the usual polarization of \(x^k\). Therefore the iterated
partial polarization agrees with the usual polarization on all monomials,
and hence on all polynomials by linearity.
\end{proof}

\begin{lem}\label{lem:ppolpreservegar}
    If $f(\x,y)\in\GS_{\kappa\oplus m}[\x,y]$, then $\ppol^y_mf(\x,y,z)\in\GS_{\kappa\oplus m\oplus1}[\x,y,z]$.
\end{lem}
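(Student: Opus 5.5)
The plan is to reduce to known polarization results for \gar{} polynomials by composing full polarization with a projection. The key identity is Lemma~\ref{lem:partialppolbasics}\ref{enu:partialppolpolarrestrict}: the partial polarization $\ppol^y_m f(\x,y,z)$ is obtained from the full $(\kappa\oplus m)$-type polarization in the $y$-variable by a diagonal specialization. More precisely, let $F:=\pol_{(0,\dots,0,m)}f(\x,y_1,\dots,y_m)$ be the polarization of $f$ in the variable $y$ alone, leaving $\x$ unchanged. By Theorem~\ref{thm:maps-preserve-garding}, $\kappa$-polarization (with $\kappa_i=1$ for the $\x$-coordinates treated trivially, or more cleanly with the $\kappa$-polarization applied only in $y$) preserves $\GS$, so $F\in\GS$. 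Then
\[
\ppol^y_m f(\x,y,z)=F(\x,\underbrace{y,\ldots,y}_{m-1},z),
\]
which follows from Lemma~\ref{lem:partialppolbasics}\ref{enu:partialppolpolarrestrict} since $F=\sum_j g_j(\x)\sigma_j(y_1,\dots,y_m)/\binom{m}{j}$.

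The second step is to observe that the specialization $(\x,y,z)\mapsto(\x,y,\dots,y,z)$ is a linear map with nonnegative $0/1$ entries in which every row has a positive entry. It is therefore a strictly positive affine map $\mu\in\A_{++}$, and $\ppol^y_m f=\mu^*F$. Theorem~\ref{thm:maps-preserve-garding} states that pullback by strictly positive affine maps preserves $\GS$ for arbitrary (not necessarily nonnegative-coefficient) \gar{} polynomials. Hence $\ppol^y_m f\in\GS$. The degree bookkeeping, that it lies in $\R_{\kappa\oplus m\oplus 1}$, is immediate from Lemma~\ref{lem:partialppolbasics}\ref{enu:partialppol-degree}.

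The main point to check is that "polarization in $y$ only" is indeed covered by Theorem~\ref{thm:maps-preserve-garding}. If it is not covered directly, I would take the full $\kappa\oplus m$ polarization $\pol_{\kappa\oplus m}f$, which is in $\GS$ by that theorem, and then apply the $\kappa$-projection $\proj_\kappa$ only on the $\x$-blocks. This projection is again a strictly positive pullback (or a projection covered by the theorem) and recovers $F$, because $\proj_\kappa\circ\pol_\kappa$ is the identity on $\R_\kappa[\x]$ and polarizations in different variable groups commute on products, as in the proof of Lemma~\ref{lem:partialppolbasics}\ref{enu:partialpolar-usual}. Combining the two strictly positive pullbacks into one gives the claim.
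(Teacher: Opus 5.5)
Your proposal is correct and follows the paper's proof. By Lemma~\ref{lem:partialppolbasics}\ref{enu:partialppolpolarrestrict}, $\ppol^y_m f$ is the pullback of a polarization of $f$ in $y$ under the strictly positive linear map $(\x,y,z)\mapsto(\x,y,\ldots,y,z)$, and both operations preserve $\GS$. Your fallback of polarizing fully and then applying $\proj_\kappa$ to the $\x$-blocks correctly makes precise the ``polarization in $y$ only'' step that the paper leaves implicit.
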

\begin{proof}
By Lemma \ref{lem:partialppolbasics} \ref{enu:partialppolpolarrestrict}, the  $\ppol^y_m$ is a composition of a polarization and a pull back of a strictly positive linear map. By \cite[Theorem 8.3]{Fang-MaGard}, both operations preserve \gar{} polynomials. Thus $\ppol^y_mf$ is also \gar{}. 
\end{proof}

\subsection{Polarizing ideal \gar{} polynomials}
We proceed to show that the partial polarization of an ideal \gar{} polynomial remains an ideal \gar{} polynomial.

\begin{prop}
\label{prop:ppol}
Assume \(f(\x,y)\in\I[\x,y]\) and \(0\le \deg_y f\le m\). Then
\[
\ppol_m^y f(\x,y,z)\in\I[\x,y,z].
\]
\end{prop}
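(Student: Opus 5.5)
We will show that $F:=\ppol_m^y f(\x,y,z)=f(\x,y)+\frac{z-y}{m}\partial_y f(\x,y)$ is ideal G\aa rding. By Lemma~\ref{lem:ppolpreservegar}, $F$ together with all of its derivatives is G\aa rding. The recursive inclusions $\C_F\subset\bigcap\C_{\partial F}$ are part of the G\aa rding property. It therefore remains to prove that $\C_{\partial^\beta F}$ is convex for every nontrivial derivative $\partial^\beta F$.

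\emph{Reduction to $\C_F$.} The derivatives of $F$ are again partial polarizations of derivatives of $f$, all of which lie in $\I$:
\begin{itemize}
\item $\partial_{x_i}F=\ppol_m^y(\partial_{x_i}f)$;
\item $\partial_z F=\frac1m\partial_y f$, which lies in $\I$ and does not depend on $z$, so its component is a cylinder over a convex set;
\item since $\partial_y F=\frac{m-1}{m}\partial_yf+\frac{z-y}{m}\partial_y^2f$, we get $\partial_yF=\frac{m-1}{m}\ppol_{m-1}^y(\partial_y f)$ for $m\ge2$, and a constant multiple of $\partial_y f$ for $m=1$.
\end{itemize}
By induction on total degree (the statement being taken for all $m\ge\deg_y$), every nontrivial derivative of $F$ is therefore either in $\I$ or is a positive multiple of a partial polarization of something in $\I$ of lower degree. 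So it suffices to prove that $\C_F$ is convex.

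\emph{Convexity of $\C_F$.} If $\deg_y f=0$ then $F=f$, and there is nothing to prove. Otherwise, on the region $\C_{\partial_y f}\times\R_z$ we have $\partial_yf>0$ and can write
\[
F=\frac{\partial_y f}{m}\bigl(m\,h(\x,y)+z-y\bigr),\qquad h=\frac{f}{\partial_yf}.
\]
By Theorem~\ref{prop:fibre-quotient-concavity}, $h$ is concave on $\C_{\partial_yf}$, so $m\,h+z-y$ is concave in $(\x,y,z)$. Consequently
\[
E:=\{(\x,y,z):(\x,y)\in\C_{\partial_yf},\ z>y-m\,h(\x,y)\}
\]
is convex, being the strict superlevel set of a concave function over a convex base. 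It remains to identify $\C_F$ with $E$. On $E$ we have $F>0$. The set $E$ is connected, and it passes the positive ray test: increasing $z$ clearly preserves membership, and increasing $\x$ or $y$ does too, because $\C_{\partial_yf}$ passes the ray test and $y-mh$ is nonincreasing in those directions.

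That last monotonicity needs justification. For the $\x$-directions it follows from Theorem~\ref{thm:universalquotientmonotone} applied fibrewise, since the root functions $r_i$ are nonincreasing in $\x$. For the $y$-direction it follows from the Rayleigh inequality $f\,\partial_y^2f\le\frac{m-1}{m}(\partial_yf)^2$, which gives $m\,\partial_yh\ge 1$. Hence $E\subset\C_F$ (up to connectivity with the distinguished component). Conversely, $\C_F\subset\C_{\partial_zF}\cap\C_{\partial_yF}$, and $\C_{\partial_zF}=\C_{\partial_yf}\times\R$, so $\C_F\subset E$.

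\emph{Main obstacle.} The main obstacle is this last identification $\C_F=E$. The key points are checking that $\C_F$ lies over $\C_{\partial_yf}$, via the recursive inclusion for the G\aa rding polynomial $F$ and $\C_{\partial_z F}=\C_{\partial_y f}\times\R$, and that $E$ is precisely the distinguished component rather than a union of pieces. We would handle the latter by noting that $E$ is convex, hence connected, open, contained in $\{F>0\}$, and maximal in $\{F>0\}\cap(\C_{\partial_yf}\times\R)$, since $F>0$ there is equivalent to $z>y-mh$. The inductive bookkeeping for $\partial_yF$ is the other delicate point, because there the polarization index drops to $m-1$ while $\deg_y\partial_yf\le m-1$. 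This is compatible with the hypothesis $\deg_y\le m$ of the proposition, so we would run the induction on $\deg f$ uniformly over all admissible $m$.
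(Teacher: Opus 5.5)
Your proposal is correct and follows essentially the same route as the paper. You induct on total degree and use $\partial_{x_i}F=\ppol_m^y(\partial_{x_i}f)$, $\partial_zF=\frac1m\partial_yf$ and $\partial_yF=\frac{m-1}{m}\ppol_{m-1}^y(\partial_yf)$ to reduce to convexity of $\C_F$, then apply Theorem~\ref{prop:fibre-quotient-concavity} to exhibit $\C_F$ as the strict epigraph of the convex function $y-mf/\partial_yf$ over the convex set $\C_{\partial_yf}$.

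The paper only asserts the identity $\C_F=E$, and your connectivity/maximality argument (using $\C_F\subset\C_{\partial_zF}=\C_{\partial_yf}\times\R$) supplies it. That argument alone suffices, so the positive-ray-test monotonicity digression is unnecessary.
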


\begin{proof}
Set
\[
F:=\ppol_m^y f
=
f+\frac{z-y}{m}\partial_y f.
\]
We argue by induction on the total degree \(k:=\deg f\).

If \(k\le 1\), then \(F\in\GS\) by Lemma~\ref{lem:ppolpreservegar}. Since
\(\I^0=\GS^0\) and \(\I^1=\GS^1\), it follows that \(F\in\I\).

Assume now \(k\ge 2\), and suppose the statement is known in all smaller
total degrees. By Lemma~\ref{lem:ppolpreservegar}, we have
\(
F\in\GS[\x,y,z].
\)

We first verify that all nonzero first derivatives of \(F\) are ideal
G{\aa}rding. For every \(i\), since \(\deg(\partial_{x_i}f)<k\), by induction,
\[
\partial_{x_i}F=\ppol_m^y(\partial_{x_i}f)\in\I
\]
If \(\deg_y f\ge 1\), then
\(
\partial_zF=\frac1m\,\partial_y f\in\I.
\)
Moreover,
\[
\partial_yF
=
\frac{m-1}{m}\,\partial_y f+\frac{z-y}{m}\,\partial_y^2 f.
\]
If \(\deg_y f=1\), then \(\partial_y^2 f=0\), so
\(
\partial_yF=\frac{m-1}{m}\,\partial_y f\in\I.
\)\\
If \(\deg_y f\ge 2\),  since \(\deg(\partial_y f)<k\), by induction
\(
\partial_yF
=
\frac{m-1}{m}\,\ppol_{m-1}^y(\partial_y f)\in\I.
\)

Thus, to prove \(F\in\I\), it remains to show that \(\C_F\) is convex.

If \(\deg_y f=0\), then \(F=f\), viewed as a polynomial independent of \(y,z\),
so the claim is immediate.

If \(\deg_y f\ge 1\), then
\[
F
=
\frac{\partial_y f}{m}
\left(
z-y+m\frac{f}{\partial_y f}
\right),
\]
and
\[
\C_F=
\left\{
(\x,y,z):(\x,y)\in\C_{\partial_y f},\ 
z>y-m\frac{f(\x,y)}{\partial_y f(\x,y)}
\right\}.
\]
By Theorem~\ref{prop:fibre-quotient-concavity}, the quotient
\(
\frac{f}{\partial_y f}
\)
is concave on \(\C_{\partial_y f}\). Therefore, the function
\[
(\x,y)\longmapsto y-m\frac{f(\x,y)}{\partial_y f(\x,y)}
\]
is convex on \(\C_{\partial_y f}\). Since \(\C_{\partial_y f}\) is convex,
it follows that \(\C_F\) is the epigraph of a convex function over a convex
base and therefore is convex.

We have proved that \(F\in\GS\), that every nonzero first derivative of \(F\)
belongs to \(\I\), and that \(\C_F\) is convex. Therefore \(F\in\I\).
\end{proof}

\begin{proof}[Proof of Theorem \ref{thm:polarizingidealgarding}]
By Proposition \ref{prop:ppol}, the partial polarization preserves ideal \gar{} polynomials. By Lemma \ref{lem:partialppolbasics}, $\kappa$-polarization is generated by the partial polarization.  Therefore, $\kappa$-polarization also preserves ideal \gar{} polynomials.
\end{proof}

\begin{proof}[Proof of Theorem \ref{thm:polarizingMRS}] \ref{enu:symmetricIa} $\Rightarrow$ \ref{enu:symmetricGa} is obvious. Since the diagonal specialization preserves \gar{} polynomial (Theorem \ref{thm:maps-preserve-garding}), we have \ref{enu:symmetricGa}  $\Rightarrow$ \ref{enu:symmetricMRS}.   

Suppose that $f(t)$ has MRS. Then $f$ is univariate \gar{} and  $\C_{f^{(k)}}=(r(f^{(k)}),+\infty)$ is convex. Therefore, Theorem \ref{thm:polarizingidealgarding} implies \[F(\x)=\pol_nf(\x)\in\I[\x].\] Hence,  \ref{enu:symmetricMRS} $\Rightarrow$ \ref{enu:symmetricIa}.
\end{proof}

\section{Binary Relations}
We introduce the binary relations of ideal domination and
ideal position. These relations refine the corresponding notions for
\(\gar{}\) polynomials by incorporating the convexity of quotient functions on
the distinguished positivity component.

\subsection{Definitions and a dichotomy theorem}

We begin by introducing two binary relations that encode the two alternatives
in the restriction dichotomy.
\begin{defn}
\label{def:dominatesIGA}
Let \(g\in\I_n\), and let \(f\in\R[\x]\) be a nontrivial polynomial.
We say that \(g\) \emph{ideally dominates} \(f\), and write
\[
f\bvtl g,
\]
if for every multi-index \(\alpha\) such that
\(\partial^\alpha f\not\equiv0\) and \(\partial^\alpha g\not\equiv0\), one has
\[
\C_{\partial^\alpha g}\subset\{\partial^\alpha f>0\},
\]
and the quotient
\[
\phi_\alpha(\x)
:=
\frac{\partial^\alpha f(\x)}{\partial^\alpha g(\x)}
\]
is convex on \(\C_{\partial^\alpha g}\). By convention, \(0\bvtl g\) for
every \(g\in\I\).
\end{defn}
\begin{rem}
The ideal domination condition refines the G{\aa}rding domination relation $f \vtl g$ defined in~\cite{Fang-MaGard} by incorporating convexity features. In particular, $f$ need not be G{\aa}rding, which is crucial for later applications.
\end{rem}

\begin{example}

\label{exmp:bvtleg}
For $f(\x)\in\R_{+}^{\mathfrak{a}}[\x]$, and $\sigma_{n}(\x)=\prod_{i=1}^{n}x_{i},$
then $f(\x)\bvtl\sigma_{n}(\x)$. This construction has been used essentially
in our previous work \cite{FANG2024109867}. In general, if $f(\x)\in\R_{\kappa,+}[\x]$, then $f(\x)\bvtl \x^\kappa$. To see that, we write \[f(\x)=\sum_{\alpha\leq\kappa}c_\alpha\x^\alpha,\quad c_\alpha\geq0.\] Notice that for each multi-index $\alpha\leq \kappa$, \(\x\mapsto \x^{\alpha-\kappa} \)    is a convex function in $\Gamma_n^+$. This implies that $f(\x)/\x^\kappa$ is convex in $\Gamma_n^+$. Since $\pdv^{\beta}f(\x)\in \R_{\kappa-\beta,+}[\x]$,  $\pdv^\beta f/\pdv^\beta x^{\kappa}$ is also convex. By definition,  $f \bvtl \x^\kappa$.
\end{example}
  
\begin{defn}
\label{def:properposition}For nontrivial $f(\textbf{x}),g(\textbf{x})\in\I[\x]$,
a pair $(f,g)$ is called in \emph{ideal position}, written as $f\ip g$,
if 
\[
h(\mathbf{x},y)=f(\textbf{x})y+g(\textbf{x})\in\I[\x,y].
\]
 We adopt the convention that $0\ip g\ip0$ for any $g\in\I$. 
\end{defn}

Recall, a pair of \gar{} polynomial $(f(\x),g(\x))$ is called in proper
position if $f(\x)y+g(\x)\in\GS[\x,y]$~\cite{Fang-MaGard}.
 
\begin{rem}\label{rem:iptoidealdominate}
For non-trivial $f,g\in\I[\x]$, $f\ip g$ if and only if $f\prec g$
and 
\(
\varphi_{\alpha}(\x):=\frac{\pdv^{\alpha}g(\x)}{\pdv^{\alpha}f(\x)}
\)
is a concave function in $\C_{\pdv^{\alpha}f}$ whenever $\pdv^{\alpha}f\not\equiv0$. Moreover, $\varphi_\alpha$ is positive in $\C_{\pdv^\alpha g}$. Then since $\varphi_\alpha$ is log-concave in $\C_{\pdv^\alpha g}$, $\frac{\pdv^\alpha f}{\pdv^\alpha g}$ is log-convex and \emph{a fortiori} convex in  $\C_{\pdv^\alpha g}$. 
Therefore, $f\ip g$
implies $f\bvtl g$. 
\end{rem}

The proof of the following Theorem is identical to similar results in~\cite{Fang-MaGard}. We
skip the proof.

\begin{thm}
\label{thm:Suppose-that-is}
Let $h(\x,y)=f(\x)y+g(\x)$ with $f,g$ be nontrivial. Then $h(\x,y)\in \I_{n+1}[\x,y]$ if and only if exactly one of the following holds:
\begin{enumerate}
\item \label{enu:-is-Garding} $f\in \I$ and $-g \bvtl f$;
\item \label{enu:or--are} $f,g\in \I$ and $f \ip g$.
\end{enumerate}
\end{thm}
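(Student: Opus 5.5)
The plan is to reduce both directions to the corresponding dichotomy for \gar{} polynomials in \cite{Fang-MaGard}, and then track convexity of the relevant components separately. Write $h=f y+g$. Its derivatives are $\partial_y h=f$, $\partial_y\partial^\alpha_\x h=\partial^\alpha f$, and $\partial^\alpha_\x h=\partial^\alpha f\,y+\partial^\alpha g$. So every derivative of $h$ is either a derivative of $f$ or again of the same affine-in-$y$ form $\partial^\alpha f\,y+\partial^\alpha g$. We therefore argue by induction on total degree, applying the statement to each $\partial^\alpha_\x h$. By the \gar{} dichotomy of \cite{Fang-MaGard}, $h\in\GS$ if and only if either $f\in\GS$ with $-g\vtl f$, or $f,g\in\GS$ in proper position. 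In both cases $\C_h\subset\C_{\partial_y h}=\C_f\times\R$ (possibly after the usual identification), so $\C_h=\{(\x,y):\x\in\C_f,\ y>-g(\x)/f(\x)\}$ whenever $\C_h$ projects onto $\C_f$.

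\emph{Forward direction.} Suppose $h\in\I$. Then $f=\partial_y h\in\I$. The component $\C_h$ is the epigraph over (a subset of) $\C_f$ of $\x\mapsto -g/f$. Convexity of $\C_h$ together with convexity of $\C_f$ gives convexity of $-g/f$ on $\C_f$, and the same holds for every $\partial^\alpha_\x h$ with $\partial^\alpha f\not\equiv 0$. The two cases are then separated by whether $g$ itself is \gar{} with $\C_g\supset\C_h$-projections, exactly as in \cite{Fang-MaGard}:
\begin{itemize}
\item If $-g$ is positive on $\C_f$, the positivity requirement in Definition~\ref{def:dominatesIGA} holds, and we are in case (1), $-g\bvtl f$.
\item Otherwise $g\in\GS$, and $g=h|_{y=0}$ after a translation in $y$ (Lemma~\ref{lem:restrictionI_+}, Lemma~\ref{lem:translation-invariance}), so $g\in\I$. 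Then $h\in\I$ is literally the definition of $f\ip g$, which is case (2).
\end{itemize}
Exclusivity comes from the \gar{} dichotomy: in case (1), $g<0$ on $\C_f$, which is incompatible with proper position.

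\emph{Reverse direction.} Case (2) is tautological, since $f\ip g$ means $h\in\I$ by Definition~\ref{def:properposition}. For case (1), first note that $-g\bvtl f$ implies $-g\vtl f$, so $h\in\GS$ by the \gar{} dichotomy. It remains to show that every $\C_{\partial^\alpha h}$ is convex. For derivatives involving $\partial_y$ this follows from $f\in\I$. For $\partial^\alpha_\x h$ with $\partial^\alpha f\not\equiv0$, the component is
\[
\{\x\in\C_{\partial^\alpha f},\ y>\phi_\alpha(\x)\},\qquad \phi_\alpha=\frac{-\partial^\alpha g}{\partial^\alpha f},
\]
where $\phi_\alpha$ is convex by hypothesis, so the component is an epigraph of a convex function over a convex set. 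When $\partial^\alpha f\equiv0$, the derivative is $\partial^\alpha g$, independent of $y$; here we need $\partial^\alpha g\in\I$. We handle this by descending: the first $\beta\le\alpha$ at which $\partial^\beta f$ vanishes gives $\partial^\beta h$ affine in $y$ with constant leading coefficient. Applying the \gar{} structure, that polynomial is in $\GS$ with component $\C_{\partial^{\beta}h}$ a convex epigraph, and further derivatives reduce to the lower-degree case by induction.

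\emph{Main obstacle.} The delicate step is matching components: verifying that $\C_h$ projects exactly onto $\C_f$ (rather than onto some other component of $\{f>0\}$) and that the recursive inclusions $\C_{\partial^\alpha h}\subset\C_{\partial_i\partial^\alpha h}$ are inherited. For this I would invoke the component description from the proper-position/domination theory of \cite{Fang-MaGard}, which is why only the convexity bookkeeping is new.
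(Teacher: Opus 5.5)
Your route (pass through the \gar{} dichotomy of \cite{Fang-MaGard} and read off convexity from the epigraph description $\C_{\partial^\alpha_\x h}=\{(\x,y):\x\in\C_{\partial^\alpha f},\ y>-\partial^\alpha g/\partial^\alpha f\}$) is what the paper intends; it omits the proof as identical to \cite{Fang-MaGard}. The genuine gap is your handling of multi-indices with $\partial^\alpha f\equiv0\not\equiv\partial^\alpha g$ in the reverse direction of case (1).

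The descent paragraph does not work. If $\partial^\beta f\equiv0$, then $\partial^\beta h=\partial^\beta g$ does not depend on $y$, so it is not ``affine in $y$ with constant leading coefficient.'' Its component is the cylinder $\C_{\partial^\beta g}\times\R$, whose convexity is exactly what is at stake, and neither $f\in\I$ nor any quotient hypothesis controls it.

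In fact no argument can close this case if $\bvtl$ is read literally, with conditions imposed only when both derivatives are nonzero. Take $f=x_1$, $g=-x_2^2-1$, so $h=x_1y-x_2^2-1$. Then $f\in\I$ and $g\notin\GS$. Also $-g\bvtl f$, because the only relevant index is $\alpha=0$ and $(x_2^2+1)/x_1$ is positive and convex on $\{x_1>0\}$. Yet $\partial_{x_2}h=-2x_2\notin\GS$, so $h\notin\I$. By the dichotomy you quote, $-g\vtl f$ must then fail, so your step ``$-g\bvtl f$ implies $-g\vtl f$'' is exactly where the reduction breaks.

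The missing ingredient is the convention that $-g\bvtl f$ forces $\partial^\alpha f\not\equiv0$ whenever $\partial^\alpha g\not\equiv0$. Part (3) of Lemma~\ref{lem:basic-domination} also needs this convention, and it fails for the same example under the literal reading. With the convention, the degenerate case never occurs and every nonzero $\partial^\alpha_\x h$ has $\partial^\alpha f\not\equiv0$. The descent paragraph should then simply be deleted.

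Three smaller repairs:

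\textbf{Restriction step.} In the forward direction, translating in $y$ by $b$ and then setting $y=0$ yields $g+bf$, not $g$. Instead pick $\a\in\C_f\cap\C_g$; this set is nonempty because both components are nonempty and stable under adding $\Gamma_n^+$. Then $(\a,0)\in\C_h$, so Lemma~\ref{lem:translation-invariance} gives $h(\x+\a,y)\in\I_+$. Lemma~\ref{lem:restrictionI_+} at $y=0$ then gives $g(\x+\a)\in\I_+$, whence $g\in\I$.

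\textbf{Positivity for $\alpha\neq0$.} In case (1) of the forward direction, positivity of $-\partial^\alpha g$ on $\C_{\partial^\alpha f}$ for $\alpha\neq0$ does not follow from $g<0$ on $\C_f$ alone. It comes from the conclusion $-g\vtl f$ of the dichotomy, which you should invoke explicitly.

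\textbf{The ``main obstacle''.} This step is easy. The set $E=\{(\x,y):\x\in\C_f,\ y>-g/f\}$ is connected and lies in $\{h>0\}$. Also $\C_h\subset\{h>0\}\cap\C_{\partial_y h}=E$, so $\C_h=E$. The same argument applies to each $\partial^\alpha_\x h$.
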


\subsection{Properties of binary relations}
We record basic properties of ideal domination and ideal position. 
These properties will be used in the sequel as structural tools,
especially in the argument for linear preservers.

The proofs of the remaining results are essentially the same as in~\cite{Fang-MaGard} and omitted. See \cite[section 6, section 10.2]{Fang-MaGard} 

\begin{lem}
\label{lem:basic-domination}Let $g\in\I$. 
\begin{enumerate}
\item \label{enu:If--anddominate}If $f\bvtl g$ and $g\bvtl f$, then $g$
and $f$ are proportional.
\item \label{enu:If--anddominate-1}If $f\bvtl g$, then for any multi-index
$\alpha$, $\pdv^{\alpha}f\bvtl\pdv^{\alpha}g$. 
\item \label{enu:If--anddominate-1-1}If $f\bvtl g$, then $\deg f\leq\deg g$.
\item \label{enu:convexcombinationdomiation}Suppose that $f_{1},f_{2}\in\R[\x]$
and $f_{i}\bvtl g$ for each $i$. For $c_{1},c_{2}\geq0$, $\sum_{i=1}^{2}c_{i}f_{i}\bvtl g$. 
\end{enumerate}
\end{lem}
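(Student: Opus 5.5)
The plan is to verify the four items directly from Definition~\ref{def:dominatesIGA}, treating them in the order \ref{enu:If--anddominate-1}, \ref{enu:convexcombinationdomiation}, \ref{enu:If--anddominate-1-1}, \ref{enu:If--anddominate}, since the last one uses the degree statement.

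Item \ref{enu:If--anddominate-1} is formal. Derivatives of $g\in\I$ are again ideal G{\aa}rding by the recursive Definition~\ref{def:ideal}, and $\partial^\beta\partial^\alpha=\partial^{\alpha+\beta}$. So the conditions required for $\partial^\alpha f\bvtl\partial^\alpha g$ at a multi-index $\beta$ are exactly the conditions for $f\bvtl g$ at $\alpha+\beta$.

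For item \ref{enu:convexcombinationdomiation}, fix $\alpha$ with $\partial^\alpha(c_1f_1+c_2f_2)\not\equiv0$ and $\partial^\alpha g\not\equiv0$. On $\C_{\partial^\alpha g}$ each $\partial^\alpha f_i$ is either $\equiv0$ or positive, and not both vanish identically. Hence the combination is positive there. The quotient is $c_1\phi^{(1)}_\alpha+c_2\phi^{(2)}_\alpha$, a nonnegative combination of convex functions (a vanishing term contributes $0$), so it is convex. The case where the whole combination is $0$ is covered by the convention $0\bvtl g$.

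For item \ref{enu:If--anddominate-1-1}, suppose $D:=\deg f>d:=\deg g$. I would restrict to a line $\x_0+t\mathbf 1$ with $\x_0\in\C_g$. By the positive ray test, $\x_0+t\mathbf 1\in\C_{\partial^\alpha g}$ for all $t\ge0$ and every $\alpha$ with $\partial^\alpha g\not\equiv0$. Choose a direction $\e$ (a positive combination of coordinate directions, or a suitable $\e_i$) and $k=d$ such that $D_\e^d g$ is a positive constant, using that the top form of $g$ is in $\I_+^{\mathfrak h}$ (Theorem~\ref{thm:I-closure-properties}\textup{(b)}, after translating into $\C_g$ via Lemma~\ref{lem:translation-invariance}). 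Then $\C_{\partial^\alpha g}=\R^n$ for a corresponding $|\alpha|=d$. Domination then forces $\partial^\alpha f>0$ on all of $\R^n$, and since $\partial^\alpha f$ is a polynomial it must have even degree. I would then differentiate once more in $f$ while keeping a lower-order derivative of $g$, and use convexity of $\partial^\beta f/\partial^\beta g$ together with the positivity constraints to get a contradiction with the odd-degree behaviour of some derivative of $f$. This step is the main obstacle: choosing the right multi-index so that a derivative of $g$ is nonzero while the matching derivative of $f$ has odd positive degree. My first attempt would be to mimic the corresponding Gårding-domination argument in \cite[Section~6]{Fang-MaGard}, which should transfer verbatim because ideal domination is stronger than Gårding domination.

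For item \ref{enu:If--anddominate}, both $f,g\in\I$ (each dominates the other) and $\deg f=\deg g$ by item \ref{enu:If--anddominate-1-1}. From $\C_g\subset\{f>0\}$, connectedness, and the positive ray test we get $\C_g\subset\C_f$, and symmetrically $\C_f\subset\C_g$, so $\C_f=\C_g=:\C$. Then $\phi=f/g$ and $1/\phi$ are both convex and positive on $\C$. Along any ray $\x+t\v$ with $\x\in\C$ and $\v\in\Gamma_n^+$, $\phi$ is a degree-zero rational function of $t$, hence bounded as $t\to\infty$. A convex function bounded on $[0,\infty)$ is nonincreasing, so $\phi$ and $1/\phi$ are both nonincreasing along the ray, and therefore $\phi$ is constant on it. Since the directions $\v\in\Gamma_n^+$ form an open cone, $\phi$ is locally constant on the open connected set $\C$. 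Thus $f=cg$ there, and hence identically as polynomials.
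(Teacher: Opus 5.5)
Items (2) and (4) are fine. Your ray argument for item (1) is also correct once $\deg f=\deg g$ is known. You should add that $g(\x+t\v)$ has full degree $\deg g$ in $t$ for $\v\in\Gamma_n^+$; this holds because, after translating into $\C_g$, the top form of $g$ is a nonzero polynomial with nonnegative coefficients by Theorem~\ref{thm:I-closure-properties}(b).

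The genuine gap is item (3), and it cannot be closed the way you sketch. With $\bvtl$ defined exactly as in Definition~\ref{def:dominatesIGA}, $f$ is an arbitrary polynomial and conditions are imposed only where both $\partial^\alpha f$ and $\partial^\alpha g$ are nonzero. Under that definition item (3) is false. Take $g=x_1\in\I_2$ and $f=1+x_2^2$. The only $\alpha$ with $\partial^\alpha f\not\equiv0$ and $\partial^\alpha g\not\equiv0$ is $\alpha=0$. There $\C_g=\{x_1>0\}\subset\{f>0\}$, and $(1+x_2^2)/x_1$ is convex on $\{x_1>0\}$. So $f\bvtl g$ with $\deg f=2>1=\deg g$; even more simply, $1+x_1^2\bvtl 1$. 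This shows both failure points of your plan. First, the multi-index where $\partial^\alpha g$ is a positive constant may annihilate $f$ (here $\partial_1 f=0$). Second, when it does not, a positive convex $\partial^\alpha f$ of even degree gives no contradiction unless $\partial^\alpha f$ is known to be G{\aa}rding. Appealing to the G{\aa}rding-domination argument of the earlier paper cannot rescue a statement that is false as written.

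The missing hypothesis is available in item (1), where $f,g\in\I$, and there the degree bound does hold. Suppose $f\in\GS$, $f\bvtl g$ and $\deg f>\deg g$.
\begin{enumerate}
\item Pick $\gamma$ with $|\gamma|=\deg f$ and $\partial^\gamma f\not\equiv0$, and let $\alpha\le\gamma$ be maximal with $\partial^\alpha g\not\equiv0$.
\item Then $\alpha\neq\gamma$, so some $i$ has $\alpha_i<\gamma_i$, and maximality gives $\partial_i\partial^\alpha g\equiv0$.
\item Thus $q:=\partial^\alpha g$ is independent of $x_i$, so $\C_q$ is invariant under translation in the $\e_i$-direction. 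Meanwhile $p:=\partial^\alpha f\in\GS$ has $m:=\deg_{x_i}p\ge1$.
\item Domination gives $\C_q\subset\{p>0\}$. Since $\C_q$ is connected and passes the positive ray test, $\C_q\subset\C_p$.
\item For $\x\in\C_q$, Corollary~\ref{cor:basicrestriction} makes $t\mapsto p(\x+t\e_i)$ a univariate G{\aa}rding polynomial. Its leading coefficient $\partial_i^m p(\x)/m!$ is positive because $\C_p\subset\C_{\partial_i^m p}$. So it has degree $m\ge1$ and hence a real root.
\item This contradicts $p>0$ on the whole line $\x+\R\e_i\subset\C_q$.
\end{enumerate}
Applying this in both directions gives $\deg f=\deg g$ in item (1), after which your argument goes through. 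Item (3) itself needs an additional hypothesis, such as $f\in\GS$ or a support condition built into the definition.
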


\begin{prop}
\label{thm:dominategardnew} Assume that $f\bvtl g$. There exists
a constant $\rho_{g}(f)>0$ such that $g-cf\in\I$ for every $c\in[0,\rho_{g}(f))$.
The constant $\rho_{g}(f)$ has an explicit formula:
\[
\rho_{g}(f):=\sup\{c>0:\{g-cf>0\}\cap\C_{g}\neq\emptyset\}.
\]
\end{prop}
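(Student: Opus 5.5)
The approach is to realize $g-cf$ as a specialization of the linear-in-$y$ polynomial $h(\x,y):=g(\x)+y\,(-f(\x))$. Its $y$-coefficient $-f$ satisfies $-(-f)=f\bvtl g$, so Theorem~\ref{thm:Suppose-that-is} (alternative (1) with the roles $f\leftrightarrow -f$, and $g$ as the constant term) gives $H(\x,y):=y\,g(\x)-f(\x)\in\I[\x,y]$. The direct check is as follows. For $y>0$, $H=y\,(g-f/y)$, so
\[
\C_H\supseteq\{(\x,y): y>0,\ \x\in\C_g,\ y>\phi_0(\x)\},
\]
where $\phi_0=f/g$ is convex on $\C_g$. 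This set is the epigraph of a convex function over a convex base, so it is convex. The same argument applies to every $\partial^\alpha$, using $\partial^\alpha f\bvtl\partial^\alpha g$ from Lemma~\ref{lem:basic-domination}\ref{enu:If--anddominate-1}. The derivative $\partial_y H=g$ is ideal, and the inclusion $\C_H\subset\C_{\partial_yH}\cap\bigcap_i\C_{\partial_{x_i}H}$ follows from the recursive structure of $g$ together with $\C_{\partial^\alpha g}\subset\{\partial^\alpha f>0\}$.

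Next I specialize at $y=1/c$. For $c>0$ we have $g-cf=c\,H(\x,1/c)$, so it suffices to show that the slice $\x\mapsto H(\x,y_0)$ is ideal Gårding whenever $y_0>1/\rho_g(f)$. I would invoke Corollary~\ref{cor:basicrestriction} with the roles of the variables swapped. We fix the $y$-coordinate $y_0$ and need $y_0\in\pi_y(\C_{\partial^\beta_\x H})$ for some nonvanishing $\partial_\x^\beta H$. Taking $\beta=0$, this requires $y_0\in\pi_y(\C_H)$, which amounts to $\{g-f/y_0>0\}\cap\C_g\neq\emptyset$. By the definition of $\rho_g(f)$, and because the set of admissible $c$ is an interval (if $g-cf>0$ at a point of $\C_g$, the same holds for smaller $c$ since $f>0$ on $\C_g$), this holds exactly for $c=1/y_0<\rho_g(f)$. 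Hence $H(\cdot,y_0)\in\GS$.

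For convexity, the Gårding component of the slice $g-cf$ should be the slice $\{\x:(\x,1/c)\in\C_H\}$, and likewise for each $\partial^\alpha_\x$. Being slices of convex sets, these are convex, as in the argument of Lemma~\ref{lem:restrictionI_+}. The recursive inclusions likewise pass to slices. The case $c=0$ is trivial, and $\rho_g(f)>0$ because $f$ is continuous on the open set $\C_g$, so $g-cf>0$ at a fixed point of $\C_g$ for small $c$.

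I expect the main obstacle to be identifying the Gårding component of the slice with the slice of $\C_H$, and handling derivatives $\partial^\alpha$ where $\partial^\alpha f\equiv 0$ or where the admissible range of $c$ for $\partial^\alpha$ might differ from that for $\alpha=0$. I would control this using $\C_H\subset\C_{\partial_{x_i}H}$: since $(\x,1/c)\in\C_H$ forces $(\x,1/c)\in\C_{\partial^\alpha H}$, the slices of the derivatives are automatically nonempty whenever the slice of $H$ is.
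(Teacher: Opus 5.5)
Since the paper omits this proof and defers to the G\aa{}rding analogue in \cite{Fang-MaGard}, there is no written argument to compare against. Your main line is the natural way to supply one, and it is sound once a few points are tightened.

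\textbf{Three points to tighten in the main line.}

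First, alternative (1) of Theorem~\ref{thm:Suppose-that-is} applies to $H=g\,y+(-f)$. Its $y$-coefficient is $g\in\I$, and its condition reads $-(-f)=f\bvtl g$. The polynomial $g+y(-f)$ you wrote first has $y$-coefficient $-f\notin\I$, so the theorem does not apply to it.

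Second, your identification of $\pi_y(\C_H)$ needs $\C_H=E:=\{(\x,y):\x\in\C_g,\ y>f/g\}$. This is not automatic from $E\subset\{H>0\}$, but it follows once $H\in\GS$ is known:
\begin{itemize}
\item $\C_H\subset\C_{\partial_yH}=\C_g\times\R$, so $\C_H\subset(\C_g\times\R)\cap\{H>0\}=E$;
\item $E$ is convex, lies in $\{H>0\}$, and meets $\C_H$, so $E\subset\C_H$.
\end{itemize}
With this, the set of $c>0$ with $\{g-cf>0\}\cap\C_g\neq\emptyset$ is $(0,\sup_{\C_g}g/f)$, and this is exactly the set of $c$ with $1/c\in\pi_y(\C_H)$, as you claim.

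Third, the step you flagged closes in a few lines. Let $F\in\I[\x,y]$ and $y_0\in\pi_y(\C_F)$, and set $S=\{\x:(\x,y_0)\in\C_F\}$.
\begin{itemize}
\item $S$ is nonempty, open and convex.
\item $S+\Gamma_n^+\subset S$: pick $\epsilon>0$ with $(\x,y_0-\epsilon)\in\C_F$ and apply the positive ray test to $(\v,\epsilon)$.
\item $S$ is relatively closed in $\{F(\cdot,y_0)>0\}$, because a point of $\{F>0\}$ lying in $\overline{\C_F}$ belongs to $\C_F$ (components of open sets are open).
\end{itemize}
So $S$ is a component of $\{F(\cdot,y_0)>0\}$ passing the positive ray test. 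Two such components meet at any point componentwise larger than a point of each, hence $S=\C_{F(\cdot,y_0)}$. Apply this to $F=\partial_\x^\alpha H$, with nonemptiness from $\C_H\subset\C_{\partial_\x^\alpha H}$ as you note. This gives convexity of the components and the recursive inclusions for $g-cf$.

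\textbf{The ``direct check'' is not a valid substitute for the theorem.} Both the positive ray test for $E$ and the inclusion $\C_H\subset\C_{\partial_{x_i}H}$ require $\partial_i(f/g)\le 0$ on $\C_g$, i.e.\ $f/g\ge\partial_i f/\partial_i g$. This does not follow from $\C_{\partial^\alpha g}\subset\{\partial^\alpha f>0\}$.

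It does follow from convexity of $f/g$ along rays $\x+t\e_i$, $t\ge 0$, provided $\deg_{x_i}f\le\deg_{x_i}g=:K$. Since $\partial_i^K g>0$ on $\C_g$, the ratio stays bounded on each such ray, and a convex function bounded above on a half-line is nonincreasing.

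That degree bound is not implied by Definition~\ref{def:dominatesIGA} as literally written, which only tests $\alpha$ with $\partial^\alpha f\not\equiv 0$ and $\partial^\alpha g\not\equiv 0$. For example, $g=x_2$ and $f=x_1^2+x_2$ satisfy the literal definition. Yet $\partial_{x_1}(yg-f)=-2x_1\notin\GS$, and $\partial_1(g-cf)=-2cx_1\notin\GS$ for $0<c<1=\rho_g(f)$.

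So the proposition, and Theorem~\ref{thm:Suppose-that-is} on which your proof rests, must be read with the convention that $\partial^\alpha f\not\equiv 0$ forces $\partial^\alpha g\not\equiv 0$. This is the convention behind the bound $\deg f\le\deg g$ in Lemma~\ref{lem:basic-domination}. Under that reading your argument goes through.
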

 
\begin{lem}
\label{lem:proper-position-basic}The following properties of ideal
position relation holds:
\begin{enumerate}
\item \label{enu:111}If $f\ip g$, then $f\bvtl g$.
\item \label{enu:333}If $f\ip g$, then $\partial^{\alpha}f\ip\partial^{\alpha}g$ for every multi-index \(\alpha\).
\item \label{enu:444}If $f\ip g$, then $\deg f\leq\deg g\leq\deg f+1$.
\item \label{enu:555}If $f\ip g$, and $c\geq0,$ then $cf+g\in\I$.
\end{enumerate}
\end{lem}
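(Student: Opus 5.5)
We work throughout with $h(\x,y):=f(\x)y+g(\x)\in\I[\x,y]$ and read each of the four properties off this membership.

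For (1), the route is Remark~\ref{rem:iptoidealdominate}. First, $f\ip g$ implies $f\prec g$. This holds because $\I\subset\GS$, so $fy+g\in\GS$ and $(f,g)$ is in proper position. Next, for each $\alpha$ with $\pdv^\alpha f\not\equiv0$ we have
\[
\pdv^\alpha h=\pdv^\alpha f\,y+\pdv^\alpha g\in\I.
\]
Its component is
\[
\C_{\pdv^\alpha h}=\{(\x,y):\x\in\C_{\pdv^\alpha f},\ y>-\pdv^\alpha g/\pdv^\alpha f\}.
\]
This set is convex, so $\varphi_\alpha=\pdv^\alpha g/\pdv^\alpha f$ is concave on $\C_{\pdv^\alpha f}$. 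The proper-position inclusions give $\C_{\pdv^\alpha g}\subset\C_{\pdv^\alpha f}$ and $\varphi_\alpha>0$ there, so $\log\varphi_\alpha$ is concave on $\C_{\pdv^\alpha g}$. Hence $1/\varphi_\alpha=\exp(-\log\varphi_\alpha)$ is log-convex, and in particular convex. The required containment $\C_{\pdv^\alpha g}\subset\{\pdv^\alpha f>0\}$ is exactly the inclusion just cited, so $f\bvtl g$.

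For (2), differentiation in $\x$ commutes with $h\mapsto$ its coefficients in $y$, so $\pdv^\alpha_\x h=\pdv^\alpha f\,y+\pdv^\alpha g$. This polynomial lies in $\I$ by the recursive Definition~\ref{def:ideal}, since every nonzero iterated partial derivative of an ideal polynomial is ideal. By definition it then expresses $\pdv^\alpha f\ip\pdv^\alpha g$. When one of $\pdv^\alpha f$ or $\pdv^\alpha g$ vanishes identically, the convention $0\ip g\ip0$ covers the case. We also need $\pdv^\alpha f,\pdv^\alpha g\in\I$ when they are nonzero. These follow by specializing $\pdv_y\pdv^\alpha h$ and restricting to $y=0$, using Corollary~\ref{cor:basicrestriction} together with convexity of slices, as in Lemma~\ref{lem:restrictionI_+}.

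For (3), apply (1) and Lemma~\ref{lem:basic-domination}\ref{enu:If--anddominate-1-1} to get $\deg f\le\deg g$. For the other bound, $h$ is Gårding, so $\C_h\subset\C_{\pdv_{x_i}\cdots h}$, and the top homogeneous part is controlled as in Theorem~\ref{thm:I-closure-properties}(b). Concretely, if $\deg g>\deg f+1$, then the positive ray test forces the top part of $g$ to dominate $fy$ along rays $\x_0+t\mathbf 1$. This contradicts $\C_h\subset\C_{\pdv_y h}=\C_f\times\R$ via degree counting along the ray. Equivalently, the same bound is the proper-position result of \cite{Fang-MaGard}, which we can quote directly.

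For (4), consider the pullback of $h$ by the strictly positive affine map $(\x,y)\mapsto(\x,y+c)$:
\[
h(\x,y+c)=f\,y+(cf+g).
\]
Translation preserves $\I$ by Lemma~\ref{lem:translation-invariance}. Then restrict to $y=0$. We must check that this restriction stays ideal and nonzero when $f$ has no positive coefficients. We handle it by noting that $\{y=0\}\cap\C_{h(\cdot,\cdot+c)}$ is a convex slice whose projection is $\{\x\in\C_f:cf+g>0\}$. Connectedness of this projection comes from convexity, which identifies $\C_{cf+g}$ with the slice. The same slicing applies to all derivatives, giving convexity and the recursive inclusions. This slicing step is the main obstacle; we expect to handle it by the dichotomy Theorem~\ref{thm:Suppose-that-is} applied to $fy+(cf+g)$.
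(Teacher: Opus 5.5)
\textbf{Overall.} Parts (1), (2) and the lower bound in (3) are fine, but the upper bound in (3) rests on an argument that does not work, and (4) is left unfinished. Your argument for (1) is exactly Remark~\ref{rem:iptoidealdominate}. In (2), $\pdv^\alpha f,\pdv^\alpha g\in\I$ needs no restriction argument: $f,g\in\I$ is part of the definition of $f\ip g$, and Definition~\ref{def:ideal} passes to all derivatives. The lower bound $\deg f\le\deg g$ via (1) and Lemma~\ref{lem:basic-domination} is legitimate. The paper gives no proof of (2)--(4) and defers to \cite{Fang-MaGard}, so the problems below are in your own arguments.

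\textbf{The upper bound in (3).} Your direct argument for $\deg g\le\deg f+1$ cannot succeed. The inclusion $\C_h\subset\C_{\pdv_y h}=\C_f\times\R$ places no restriction on $y$, and the obstruction does not appear at first order. Take one variable $x$, $f=x$, $g=x^3$, $h=xy+x^3$. Then $\C_h=\{x>0,\ y>-x^2\}$ lies in $\C_f\times\R$ and also in $\C_{\pdv_x h}=\{y>-3x^2\}$. The polynomial $h$ fails to be G\aa{}rding only because $\pdv_x h=y+3x^2$ is not: $\{y>-3x^2\}\not\subset\C_{\pdv_x^2 h}=\{x>0\}$. So either simply cite the proper-position bound from \cite{Fang-MaGard}, as the paper does, or descend in degree as follows.
\begin{itemize}
\item The top homogeneous parts of $f$ and $g$ have nonnegative coefficients, since their top-order derivatives lie in $\GS^0$. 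Hence they are positive at $\mathbf 1$.
\item Apply $D_{(\mathbf 1,0)}$ to $h$ a total of $\deg f$ times; this preserves G\aa{}rding polynomials by Theorem~\ref{thm:maps-preserve-garding}. By the previous point this reaches $ay+G\in\GS$ with $a>0$ and $\deg G=\deg g-\deg f\ge2$.
\item The component $\C_{ay+G}=\{y>-G/a\}$ projects onto all of $\R^n$. The recursive inclusions then force $\C_{\pdv^\beta G}=\R^n$ for every $\beta\neq0$ with $\pdv^\beta G\not\equiv0$.
\item This is impossible for a nonconstant linear $\pdv^\beta G$ with $|\beta|=\deg G-1\ge1$, whose component is a half-space.
\end{itemize}

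\textbf{Part (4) is not finished.} Your slicing argument shows only that $S=\{\x:(\x,c)\in\C_{h(\cdot,\cdot+c)}\}$ is a convex subset of $\{cf+g>0\}$ passing the positive ray test. To identify $S$ with $\C_{cf+g}$ you need maximality: the component of $\{cf+g>0\}$ containing $S$ must not leave $\C_f$. You do not argue this, and the same gap recurs for every derivative. The dichotomy route you mention does close the argument, but you must actually exclude its first alternative. Proceed as follows.
\begin{itemize}
\item By Lemma~\ref{lem:translation-invariance}, $fy+(cf+g)=h(\x,y+c)\in\I$.
\item Alternative (1) of Theorem~\ref{thm:Suppose-that-is} would give $-(cf+g)\bvtl f$, which at $\alpha=0$ requires $\C_f\subset\{cf+g<0\}$.
\item But $\C_f\cap\C_g\neq\emptyset$: two sets passing the positive ray test always meet, for instance at $\max(\mathbf a,\mathbf b)+\mathbf 1$ with $\mathbf a\in\C_f$, $\mathbf b\in\C_g$.
\item At any common point $f,g>0$, so $cf+g>0$ because $c\ge0$. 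This rules out alternative (1).
\item Hence alternative (2) holds, which gives $cf+g\in\I$ and in fact $f\ip cf+g$.
\end{itemize}
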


\begin{prop}
\label{lem:subordinatetoboth} Suppose that $f,g_{1},g_{2},\cdots,g_{k}\in\I\backslash\{0\}$
and $f(\textbf{x})\ip g_{i}(\textbf{x})$ for all $i\in[k]$. Let
$g=\sum_{i=1}^{k}\theta_{i}g_{i}$ for any $\theta_{i}\geq0$. Then
$g\in\I$. Moreover, 
\begin{equation}
f(\textbf{x})\ip g(\mathbf{x}),\label{eq:dominateconvexcombination}
\end{equation}
if $g$ is non-trivial. 
\end{prop}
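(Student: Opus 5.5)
The plan is to reduce everything to the dichotomy of Theorem~\ref{thm:Suppose-that-is} and the characterization in Remark~\ref{rem:iptoidealdominate}. By Remark~\ref{rem:iptoidealdominate}, $f\ip g_i$ means three things: $f\prec g_i$ (proper position in the G{\aa}rding sense); for every $\alpha$ with $\pdv^\alpha f\not\equiv0$ the quotient $\varphi^{(i)}_\alpha=\pdv^\alpha g_i/\pdv^\alpha f$ is concave on $\C_{\pdv^\alpha f}$; and these quotients are positive on $\C_{\pdv^\alpha g_i}$. For the G{\aa}rding part I will invoke the corresponding statement for proper position from \cite{Fang-MaGard}. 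That result says that if $f\prec g_i$ for all $i$, then $g=\sum\theta_i g_i\in\GS$ and $f\prec g$ whenever $g\neq0$. Hence $h:=fy+g\in\GS[\x,y]$, and it remains to upgrade this to $h\in\I[\x,y]$.

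The key identity is that on $\C_{\pdv^\alpha f}$ we have
\[
\frac{\pdv^\alpha g}{\pdv^\alpha f}=\sum_i\theta_i\varphi^{(i)}_\alpha .
\]
A nonnegative combination of concave functions is concave, so $\varphi_\alpha:=\pdv^\alpha g/\pdv^\alpha f$ is concave on the convex set $\C_{\pdv^\alpha f}$.

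The main step is to check convexity of the G{\aa}rding components of $h$ and its derivatives. The derivatives of $h$ come in two kinds. First, $\pdv_y\pdv^\alpha h=\pdv^\alpha f$, which is ideal because $f\in\I$. Second, $\pdv^\alpha h=\pdv^\alpha f\,y+\pdv^\alpha g$. If $\pdv^\alpha f\not\equiv0$, I expect
\[
\C_{\pdv^\alpha h}=\{(\x,y):\x\in\C_{\pdv^\alpha f},\ y>-\varphi_\alpha(\x)\}.
\]
This follows as in the proof of Proposition~\ref{prop:ppol}: since $h\in\GS$, we have $\C_{\pdv^\alpha h}\subset\C_{\pdv_y\pdv^\alpha h}=\C_{\pdv^\alpha f}\times\R$, and on this set positivity of $\pdv^\alpha h$ is equivalent to $y>-\varphi_\alpha$. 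The right-hand side is the epigraph of the convex function $-\varphi_\alpha$ over a convex base, so it is convex. If $\pdv^\alpha f\equiv0$, then $\pdv^\alpha h=\pdv^\alpha g$, and its component $\C_{\pdv^\alpha g}\times\R$ must be shown convex. This requires $\pdv^\alpha g\in\I$, and it is the step I expect to be the main obstacle.

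For this step I plan the following argument. By Lemma~\ref{lem:proper-position-basic}\ref{enu:444} and the degree bound, when $\pdv^\alpha f\equiv0$ but $\pdv^{\alpha}g\neq0$, the term $\pdv^\alpha g$ is a constant or has degree one, so it lies in $\I^0\cup\I^1=\GS^0\cup\GS^1$ and its component is a half-space or everything. More carefully, choose $\beta\le\alpha$ maximal with $\pdv^\beta f\not\equiv0$. Then $\pdv^\beta f$ is a positive constant and each $\pdv^\beta g_i$ has degree at most one. The degree-one pieces of the $g_i$ all have nonnegative-direction gradients, so their combination stays G{\aa}rding of degree at most one, and further derivatives are nonnegative constants.

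With all components convex and the inclusions inherited from $h\in\GS$, we get $h\in\I[\x,y]$, which is exactly $f\ip g$; in particular $g=h|_{y=0}$-type specialization (or Lemma~\ref{lem:proper-position-basic}\ref{enu:555} with $c=0$) gives $g\in\I$.
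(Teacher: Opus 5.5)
Your treatment of the derivatives with $\pdv^\alpha f\not\equiv0$ is fine. The gap is the case $\pdv^\alpha f\equiv0\not\equiv\pdv^\alpha g$, which you rightly flag as the obstacle but resolve with claims that are false. Lemma~\ref{lem:proper-position-basic}\ref{enu:444} only bounds total degrees, while $\pdv^\alpha f$ typically vanishes because $f$ does not depend on some variable. So it is not true that $\pdv^\alpha g$ is constant or affine. Nor is it true that a maximal $\beta\le\alpha$ with $\pdv^\beta f\not\equiv0$ gives a constant $\pdv^\beta f$. For example, take $f=x_1x_3$ and $g=x_1x_2x_3$. Then $f\ip g$, since $fy+g=x_1x_3(y+x_2)$ is real stable. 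Yet for $\alpha=\mathbf e_2$ one gets $\pdv^\alpha f=0$, the maximal $\beta$ is $\mathbf 0$ with $\pdv^\beta f=x_1x_3$, and $\pdv^\alpha g=x_1x_3$ has degree two. You also cannot fall back on the fact that each $\pdv^\alpha g_i\in\I$, because a nonnegative combination of ideal G{\aa}rding polynomials need not even be G{\aa}rding.

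What is missing is a rigidity statement for this case. Walk from $\mathbf 0$ to $\alpha$ by unit steps to find $\beta$ and $j$ with $\beta+\mathbf e_j\le\alpha$, $\pdv^\beta f\not\equiv0$ and $\pdv^{\beta+\mathbf e_j}f\equiv0$. Put $F:=\pdv^\beta f$ and $G_i:=\pdv^\beta g_i$, so that $F\ip G_i$ by Lemma~\ref{lem:proper-position-basic}\ref{enu:333}.

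Since $F$ does not depend on $x_j$, writing $\x=(\x',x_j)$ you have $\C_F=\C'\times\R$ with $\C'$ connected. By Remark~\ref{rem:iptoidealdominate}, $\psi_i:=G_i/F$ is concave on $\C_F$. The positive ray test for $\C_{Fy+G_i}=\{(\x,y):\x\in\C_F,\ y>-\psi_i(\x)\}$ (shift by $t\mathbf e_j+\epsilon\mathbf 1$ and let $\epsilon\to0$) shows that $\psi_i$ is nondecreasing in $x_j$.

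For fixed $\x'$, the map $x_j\mapsto\psi_i$ is a polynomial that is concave and nondecreasing on all of $\R$, hence affine. So $\psi_i=a_i(\x')x_j+b_i(\x')$ with $a_i\ge0$. Since $\pdv_{x_j}^2\psi_i=0$, negative semidefiniteness of $\operatorname{Hess}\psi_i$ forces the mixed entries $\nabla a_i$ to vanish, so $a_i$ is a constant. Hence $\pdv_jG_i=a_iF$. It follows that $\pdv^\alpha g=\bigl(\sum_i\theta_ia_i\bigr)\pdv^{\alpha-\mathbf e_j}f$, which is either zero or a positive multiple of a derivative of $f$, and therefore lies in $\I$.

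With this lemma in place of your degree argument, the rest of your proof goes through. Alternatively, induct on the degree so that only the one-step case $\pdv_jf\equiv0$ ever arises.
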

\section{Multi-affine Linear Preservers}

We prove the multi-affine linear preserver theorem for
ideal G{\aa}rding polynomials, in the spirit of the corresponding results
in~\cite{BB09I,BrandenHuh20,Fang-MaGard}.

We record the following elementary elimination lemma.

\begin{lem}
\label{lem:linear_preserver_lem}
Suppose \(H(u,w,\z)\in \I_+[u,w,\z]\) is multi-affine in \(u,w\). Then
\[
\partial_u H(0,0,\z)+\partial_w H(0,0,\z)\in \I_+[\z].
\]
\end{lem}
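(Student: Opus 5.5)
The plan is to realize $\partial_u H(0,0,\z)+\partial_w H(0,0,\z)$ as the output of operations already known to preserve $\I_+$: a positive linear pullback, a derivative, and the restriction of Lemma~\ref{lem:restrictionI_+}.

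Introduce a new variable $t$ and set
\[
K(t,\z):=H(t,t,\z).
\]
This is the pullback of $H$ by the positive linear map $(t,\z)\mapsto(t,t,\z)$. Since $H$ is multi-affine in $(u,w)$, write $H=a(\z)+ub(\z)+wc(\z)+uw\,e(\z)$. Then
\[
K(t,\z)=a+t(b+c)+t^2e ,\qquad \partial_tK(0,\z)=b(\z)+c(\z),
\]
and $b+c$ is exactly the target polynomial. The proof therefore reduces to three claims: $K\in\I_+$, $\partial_tK\in\I_+$, and then the restriction $t=0$ stays in $\I_+$ by Lemma~\ref{lem:restrictionI_+}.

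\textbf{Step 1: $\partial_t K\in\I_+$ given $K\in\I_+$.} The definition of $\I$ gives $\partial_t K\in\I$ directly. Its coefficients are nonnegative because those of $H$ are. So $\partial_tK\in\I_+$.

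\textbf{Step 2: the diagonal pullback $K$ lies in $\I_+$.} This is the main obstacle, because pullback preservation for $\I$ is only promised later in the paper. I would argue directly from the definitions.

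1. By Theorem~\ref{thm:maps-preserve-garding}, $K\in\GS_+$, and every derivative $\partial^\beta K$ is again such a pullback of a derivative of $H$. Concretely, $\partial_t$ corresponds to $D_{(1,1)}$ acting on $H$.
2. For convexity, the G\aa rding component of a pullback by the positive map $\mu(t,\z)=(t,t,\z)$ should be $\mu^{-1}(\C_H)$, the preimage of a convex set under a linear map, hence convex.
3. The identity $\C_{\mu^*H}=\mu^{-1}(\C_H)$ needs justification. The preimage is connected, since it is convex, and it passes the positive ray test, since $\mu$ maps $\Gamma^+$ into $\Gamma^+$ and $\C_H+\Gamma^+\subset\C_H$. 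It is therefore contained in $\C_K$. Equality should follow from the uniqueness of the component passing the ray test, as in \cite{Fang-MaGard}.
4. The derivatives $\partial_t^j\partial_\z^\gamma K$ are pullbacks of directional derivatives $D_{(1,1)}^j\partial^\gamma_\z H$. These are positive combinations of $\partial_u,\partial_w,\partial_u\partial_w$ applied to $\partial^\gamma_\z H$.

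\textbf{Handling the directional derivative.} I would not prove in general that $D_\a$ preserves $\I$, since that too is only promised later. Instead, I would exploit multi-affineness in $(u,w)$. Since
\[
(\partial_u+\partial_w)H|_{u=w=t}=\partial_t K,
\]
the needed convexity of $\C_{\partial_tK}$ can be read off by a second pullback. Apply the same pullback argument to
\[
G(s,u,w,\z):=H(u+s,w+s,\z),
\]
which is again a positive linear pullback, so its components are convex preimages of $\C_H$. Then observe that
\[
\partial_sG|_{u=w=0}=(\partial_u+\partial_w)H(s,s,\z).
\]
Restricting $u=w=0$ is a slice, which preserves convexity as in Lemma~\ref{lem:restrictionI_+}.

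Alternatively, the recursive inclusion $\C_K\subset\C_{\partial_tK}$ may be combined with the fact that $\partial_t K/K$ is controlled on $\C_K$. I expect this bookkeeping, namely that every derivative component of the pullback is a convex preimage or slice, to be the only genuinely delicate point. Once it is settled, Lemma~\ref{lem:restrictionI_+} finishes the proof.
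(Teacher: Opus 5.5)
Your route differs from the paper's: you pass through the diagonal pullback $K(t,\z)=H(t,t,\z)$, whereas the paper uses only $\partial_uH$ and $\partial_wH$. As written, though, it has a genuine gap exactly at the step you call delicate.

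To get $K\in\I_+$ you must show that $\C_{\partial_\z^\gamma\partial_tK}$ is convex for every $\gamma$. We have $\partial_tK=\mu^*\bigl((\partial_u+\partial_w)H\bigr)$ and $(\partial_u+\partial_w)H=(b+c)+(u+w)e$. So already for $\gamma=0$ you need convexity of the component of $(b+c)+2te$, i.e.\ the assertion $e\ip(b+c)$. That is the real content of the lemma; the final restriction $t=0$ is the easy part.

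The second pullback does not supply this. For $G=\nu^*H$ with $\nu(s,u,w,\z)=(u+s,w+s,\z)$, only $\C_G=\nu^{-1}(\C_H)$ is a preimage of $\C_H$. The component $\C_{\partial_sG}=\nu^{-1}\bigl(\C_{(\partial_u+\partial_w)H}\bigr)$ is the same unknown set again. In general, derivatives of a pullback are pullbacks of directional derivatives, i.e.\ of nonnegative combinations of the $\partial^\alpha H$. Such combinations of members of $\I$ need not lie in $\I$: for instance $(x-1)^3+(x+1)^3=2x^3+6x$ is not even G\aa rding. So an argument is genuinely needed, and this is also why the paper proves pullback invariance of $\I$ only after this lemma. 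Your alternative remark about $\partial_tK/K$ on $\C_K$ says nothing about the larger set $\C_{\partial_tK}$.

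The missing idea is to use the two partial derivatives separately, as the paper does. By multi-affineness, $\partial_uH=b+we$ and $\partial_wH=c+ue$, and both lie in $\I_+$ by the definition of $\I$. By Definition~\ref{def:properposition} this says $e\ip b$ and $e\ip c$.

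Concretely, when $e\not\equiv0$ one has $\C_{b+we}=\{(w,\z):\z\in\C_e,\ w>-b/e\}$. Convexity of this set means $b/e$ is concave on $\C_e$. The same holds for $c/e$, and for $\partial^\gamma b/\partial^\gamma e$ and $\partial^\gamma c/\partial^\gamma e$ after differentiating in $\z$. Adding the concave quotients gives $e\ip(b+c)$. This is Proposition~\ref{lem:subordinatetoboth}, which yields $b+c\in\I_+$ directly, and that is the paper's entire proof. With this input your diagonal pullback becomes unnecessary.

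A smaller point: uniqueness of the ray-test component only gives $\mu^{-1}(\C_H)\subset\C_K$. The reverse inclusion needs, e.g., the characterization $\C_f=\{\x: f(\x+s\mathbf 1)>0 \text{ for all } s\ge0\}$ (valid when $\C_f\neq\emptyset$), together with the fact that $\mu$ sends the all-ones vector to the all-ones vector.
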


\begin{proof}
Write
\[
H(u,w,\z)=h_{11}(\z)uw+h_{10}(\z)u+h_{01}(\z)w+h_{00}(\z).
\]
Since \(H\in\I_+\), its derivatives
\(
\partial_u H=h_{11}w+h_{10}\) and \(
\partial_w H=h_{11}u+h_{01}
\)
belong to \(\I_+\). Hence
\(
h_{11}\ip h_{10}\) and \(
h_{11}\ip h_{01}.
\)
The zero cases are covered by our standing conventions. By
Proposition~\ref{lem:subordinatetoboth},
\[
\partial_u H(0,0,\z)+\partial_w H(0,0,\z)=h_{10}+h_{01}\in \I_+.
\] The proof is finished.
\end{proof}

\begin{thm}
\label{thm:ma-linear-preserver}
Let
\[
S:\R^{\mathfrak a}[\w]\longrightarrow \R[\z]
\]
be a linear operator, where \(\w=(w_1,\ldots,w_n)\). Define its symbol by
\[
\Sym_S(\z,\v):=
S\left(\prod_{i=1}^n (w_i+v_i)\right),
\qquad
\v=(v_1,\ldots,v_n).
\]
If \(\Sym_S(\z,\v)\in \I_+[\z,\v]\), then
\[
S\bigl(\I_+^{\mathfrak a}[\w]\bigr)\subseteq \I_+[\z].
\]
\end{thm}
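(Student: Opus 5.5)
We follow the Borcea--Brändén / Brändén--Huh scheme, in the form used for Gårding polynomials in \cite{Fang-MaGard}. The plan is to write $S(f)$ as the image of a product in an enlarged ring, and then remove the auxiliary variables using only operations that preserve $\I_+$.

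Fix $f\in\I_+^{\mathfrak a}[\w]$. Expanding in monomials $f(\w)=\sum_{A\subseteq[n]}c_A\w^A$, we recover $S(f)$ from the symbol by a bilinear pairing. For each $i$, let $\Phi_i$ act on polynomials multi-affine in the pair $(w_i,v_i)$. It kills everything except the coefficient of $w_iv_i$, and it sends $w_i\mapsto 1$, $v_i\mapsto 1$. Concretely, set
\[
\Phi_i(G)=\bigl(\partial_{u}G+\partial_{w}G\bigr)\big|_{u=w=0}
\]
after the change of variables $u=v_i$, $w=w_i$, applied to $G(\w,\v,\z)=\Sym_S(\z,\v)\cdot f^{\mathrm{rev}}(\w)$. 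Here $f^{\mathrm{rev}}$ denotes a suitable reversal or duality, chosen so that the pairing $\w^{A}\cdot\prod(w_i+v_i)\mapsto S(\w^A)$ reproduces $S(f)$. I would first verify this algebraic identity on monomials. The cleanest choice is to pair $f(\w)$ with $\Sym_S(\z,\v)$ via $\prod_i(\partial_{w_i}+\partial_{v_i})|_{\w=\v=0}$ applied to $f(\w)\Sym_S(\z,\v)$. Since $\partial_{v_i}|_{v_i=0}$ of $(w_i+v_i)$ gives $1$ and the $w_i$-coefficient is absorbed, a monomial check gives exactly $S(f)$ up to a harmless relabeling. This is the same identity used in \cite{Fang-MaGard} for Gårding linear preservers, so I will take its algebra as given.

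The analytic step uses the following closure properties. By Lemma~\ref{lem:external-products}, the product $f(\w)\Sym_S(\z,\v)$ lies in $\I_+[\w,\z,\v]$, since $f$ and the symbol live in disjoint variables and both are in $\I_+$. This product is multi-affine in each pair $(w_i,v_i)$. We then apply Lemma~\ref{lem:linear_preserver_lem} one index at a time, with $u=v_i$ and $w=w_i$, and with the remaining variables as $\z$. Each application yields $\partial_{v_i}H|_{0}+\partial_{w_i}H|_{0}\in\I_+$ in the remaining variables. After $n$ iterations we obtain $S(f)\in\I_+[\z]$. The hypotheses of the lemma are preserved at each stage, because its output is again in $\I_+$ and is still multi-affine in the untouched pairs.

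I expect the main obstacle to be the degenerate cases. The output could be the zero polynomial, or one of the coefficients $h_{11},h_{10},h_{01}$ could vanish. Lemma~\ref{lem:linear_preserver_lem} handles these through the convention $0\ip g\ip 0$. Still, I need $S(f)$ to be either $0$ (which we accept as trivially in the class, or exclude) or genuinely in $\I_+$. I would also double-check that Lemma~\ref{lem:restrictionI_+}, which concerns setting variables to $0$, is compatible with the order of operations. If the pairing sign conventions do not match directly, I would instead replace $f$ by $\w^{[n]}f(1/\w)$. Multi-affine $\I_+$ polynomials are not obviously closed under this inversion, so I would try first to avoid it by choosing the pairing described above, which needs no inversion.
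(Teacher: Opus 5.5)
Your argument is essentially the paper's: $\Sym_S(\z,\v)f$ lies in $\I_+$ by Lemma~\ref{lem:external-products} and is multi-affine in each pair $(v_i,w_i)$, and $n$ successive applications of Lemma~\ref{lem:linear_preserver_lem} return $\sum_{A\subseteq[n]}c_A\,S(\w^A)=S(f)$ directly, with no reversal of $f$ needed. Because that lemma sets both variables of each pair to zero, you also do not need the paper's final restriction step via Lemma~\ref{lem:restrictionI_+}. One correction: $(\partial_u+\partial_w)|_{u=w=0}$ discards the constant and $uw$ coefficients and keeps the two linear ones, so your verbal description of $\Phi_i$ is off, but the concrete formula you actually use is the right one.
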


\begin{proof}We follow the proof of Theorem 7.2 of \cite{Fang-MaGard}.
Let \(f(\mathbf t)\in \I_+^{\mathfrak a}[\mathbf t]\), where
\(\mathbf t=(t_1,\ldots,t_n)\) is a second set of variables. Set
\[
G(\z,\v,\mathbf t):=\Sym_S(\z,\v)\,f(\mathbf t).
\]
Since variables \((\z,\v)\) and \(\mathbf t\) are disjoint,
Lemma~\ref{lem:external-products} gives
\[
G\in \I_+[\z,\v,\mathbf t].
\]
Moreover, \(G\) is multi-affine in each pair \((v_i,t_i)\), since
\(\Sym_S(\z,\v)\) is multi-affine in \(\v\) and \(f\) is multi-affine in
\(\mathbf t\). Apply Lemma~\ref{lem:linear_preserver_lem} successively to the pairs
\((v_i,t_i)\), \(i=1,\ldots,n\). Then, we have
\[\sum_{\alpha\in \{0,1\}^n} S(\mathbf w^\alpha)\pdv^\alpha f(\mathbf t)\in \I_+[\z,\mathbf t].\] Restrict to $\mathbf{t}=\mathbf{0}$ and use Lemma \ref{lem:restrictionI_+}. We have 
\(S(f)\in \I_+[\z]\).
\end{proof}
 
Theorem~\ref{thm:ma-linear-preserver} gives the multi-affine symbol criterion.
In the next section, we use it to derive the closure properties of the class
\(\I\) and then pass to the general linear preserver theorem. 
\section{Closure Properties  }
We collect the main closure properties of ideal
G{\aa}rding polynomials.
\begin{thm}
\label{thm:positive-affine-pullback-and-directional-derivatives}
The class \(\I\) satisfies the following closure properties.

\begin{enumerate}[label=\textup{(\arabic*)}]
\item \label{enu:closure-polarization}
Let \(\kappa\in\N^n\). If \(f\in\I_\kappa[\x]\), then
\[
\pol_\kappa f\in \I^{\mathfrak a}[\x_\kappa].
\]
The same implication holds with \(\I\) replaced by \(\I_+\).
\item \label{enu:closure-pullback}
Assume that \(\mu:\R^n\to\R^m, \mu(\x)=A\x+\b\) is a positive affine map with $\b\geq0$.
Then for  $f\in \I_{+,m}$, $\mu^*f\in\I_{+,n}$.
If \(\mu\) is strictly positive, then for  
\(
f\in\I_m\), \(\mu^*f\in\I_n.\)

\item \label{enu:closure-specialization}
Let \(\kappa\in\N^n\). If \(F\in\I[\x_\kappa]\), then
\(
\proj_\kappa F\in\I[\x].
\)
The same implication holds with \(\I\) replaced by \(\I_+\).

\item \label{enu:closure-directional}
Let \(\v\in\overline{\Gamma_n^+}\). If  \(f\in\I_{+,n}\),
\(
D_\v f\in\I_{+,n}
\);
if \(f\in\I_n\), then
\(
D_\v f\in\I_n.
\)

\item \label{enu:closure-product}
If \(f,g\in\I_+[\x]\), then \(fg\in\I_+[\x]\); if \(f,g\in\I[\x]\), then
\(fg\in\I[\x]\).
\end{enumerate}
\end{thm}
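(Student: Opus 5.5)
Items \ref{enu:closure-polarization}--\ref{enu:closure-product} can be taken in the order (1), (3), (2), (4), (5), since each later item reduces to earlier ones.

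Item \ref{enu:closure-polarization} is Theorem~\ref{thm:polarizingidealgarding}. For the $\I_+$ version, note that $\pol_\kappa$ maps nonnegative coefficients to nonnegative coefficients.

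For \ref{enu:closure-specialization}, take $F\in\I[\x_\kappa]$. By Theorem~\ref{thm:maps-preserve-garding}, $\proj_\kappa F$ and all its derivatives are \gar{}. Since $\proj_\kappa$ is a diagonal restriction, I expect
\[
\C_{\proj_\kappa F}=\{\x:\ (x_1,\ldots,x_1,\ldots,x_n,\ldots,x_n)\in\C_F\}.
\]
This is the preimage of a convex set under a linear map, hence convex. The inclusion of this identification should follow from the identification of G\aa rding components under projection in \cite[Section 8]{Fang-MaGard}. For derivatives, the chain rule gives $\partial_i\proj_\kappa F=\proj_\kappa\bigl(\sum_j \partial_{x_{ij}}F\bigr)$. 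So I need the sum $\sum_j\partial_{x_{ij}}F$ to be ideal, which is the directional-derivative case of \ref{enu:closure-directional} in the multi-affine setting. For that I would use Theorem~\ref{thm:ma-linear-preserver} with $S=D_\v$. Its symbol is
\[
\Sym_S(\z,\v)=\sum_i v_i\prod_{j\neq i}(z_j+v_j),
\]
a positive combination of elementary symmetric functions in the variables $z_j+v_j$, hence a strictly positive pullback of a stable polynomial. This places the symbol in $\I_+$ by Proposition~\ref{prop:stable-polynomials-are-ideal}, once I know that stable polynomials with nonnegative coefficients are closed under such pullbacks. The real obstacle is that Theorem~\ref{thm:ma-linear-preserver} only handles $\I_+$, not $\I$. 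For $F\in\I$, I would translate by $\a\in\C_F$ using Lemma~\ref{lem:translation-invariance}, apply the $\I_+$ result, and translate back. Translation commutes with $D_\v$, and the components match because $\C_{D_\v F}\supset\C_F$ by the \gar{} theory.

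For \ref{enu:closure-pullback}, I would polarize first: write $\mu^*f=\proj_\kappa\bigl(\tilde\mu^*\pol_\kappa f\bigr)$, where $\tilde\mu$ sends each copy $x_{ij}$ to the corresponding coordinate $\mu_i$. This reduces to pulling back a multi-affine ideal polynomial. For multi-affine $g$, convexity of $\C_{\mu^*g}$ needs care, because $\{\x:\mu(\x)\in\C_g\}$ is convex but might not be the distinguished component. I would identify the two as in the \gar{} theory, using strict positivity for the positive ray test. Derivatives are handled by $\partial_k\mu^*g=\mu^*\bigl(D_{A\e_k}g\bigr)$ together with \ref{enu:closure-directional}, and induction on degree. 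The $\I_+$ case with $\b\ge0$ uses the closure theorem, Theorem~\ref{thm:I-closure-properties}, to pass to limits of strictly positive maps.

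Item \ref{enu:closure-directional} for general $f$ then follows by polarizing, applying the multi-affine case, and projecting via \ref{enu:closure-specialization}, using the identity $\kappa_i\,\partial_{x_{i1}}\pol_\kappa f=\pol_{\kappa-\e_i}\partial_i f$.

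For \ref{enu:closure-product}, Lemma~\ref{lem:external-products} gives $f(\x)g(\y)\in\I[\x,\y]$. Restricting to the diagonal $\y=\x$ is a strictly positive linear pullback, so \ref{enu:closure-pullback} yields $fg\in\I$.

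The main obstacle is the circularity between \ref{enu:closure-pullback}, \ref{enu:closure-specialization} and \ref{enu:closure-directional}. I would break it by proving the multi-affine directional derivative case first, via the symbol, and running everything else by induction on total degree.
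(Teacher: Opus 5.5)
Your proposal is correct, but it distributes the work differently from the paper. The paper proves (2) and (4) uniformly by applying Theorem~\ref{thm:ma-linear-preserver} to the polarization $\pol_\kappa f$, using the operators $H\mapsto H\bigl((\mu_i(\x))_{i,s}\bigr)$ and $H\mapsto D_\v(\proj_\kappa H)$. Their symbols $\prod_{i,s}(\mu_i(\x)+u_{is})$ and $D_\v\prod_{i,s}(x_i+u_{is})$ are real stable with nonnegative coefficients. Item (3) is then the special case of (2) for the strictly positive diagonal map, and the $\I$ cases follow by translating into $\C_f$. You instead use the symbol criterion only for directional derivatives of multi-affine polynomials. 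Everything else comes from Theorem~\ref{thm:maps-preserve-garding}, the chain rule $\partial_k\mu^*g=\mu^*(D_{A\e_k}g)$, and the identity $\C_{\mu^*g}=\mu^{-1}(\C_g)$. That identity, which you only ``expect'', is elementary. For strictly positive $\mu$ and convex $\C_g$, the preimage is open and convex, hence connected. It is nonempty, since $\mu(t\mathbf 1)=tA\mathbf 1+\b$ eventually dominates a point of $\C_g$. It is relatively closed in $\{\mu^*g>0\}$ and passes the positive ray test, so it is the unique such component. Your route shows that differentiation is the only operation needing the convexity machinery, since pullbacks carry convex components along for free, and it describes the pulled-back components explicitly. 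The paper's route is shorter and avoids the (2)--(3)--(4) bookkeeping. It also handles weakly positive $\mu$ with $\b\ge0$ directly, whereas you need approximation plus Theorem~\ref{thm:I-closure-properties}(a). Two small repairs are needed. First, in (3) reduce to multi-affine $F$ via (1), since $\proj_\kappa F$ is a projection of a polarization of $F$: your chain-rule step needs $\sum_j\partial_{x_{ij}}F\in\I$, and you only have that in the multi-affine case. Second, your symbol formula conflates the direction with the symbol variables. For direction $\c$ the symbol is $\sum_i c_i\prod_{j\ne i}(z_j+v_j)=D_{\c}\prod_j(z_j+v_j)$. It is stable as a nonnegative directional derivative of a stable product, but it is a combination of elementary symmetric functions only when $\c$ is constant.
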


\begin{proof} \ref{enu:closure-polarization} follows from
Theorem~\ref{thm:polarizingidealgarding}.

%For the rest, we first prove the \(\I_+\)-statements.

For \ref{enu:closure-pullback}, let \(f\in\I_{+,m}\cap\R_\kappa[\y]\) and set
\begin{equation}\label{functionF}
    F:=\pol_\kappa f\in\I_+^{\mathfrak a}[\y_\kappa]
\end{equation}
by \ref{enu:closure-polarization}. Define
\[
\widetilde T_\mu:\R^{\mathfrak a}[\y_\kappa]\longrightarrow \R[\x],
\qquad
\widetilde T_\mu(H):=
H\bigl((\mu_i(\x))_{i\in[m],\,s\in[\kappa_i]}\bigr).
\]
Then
\[
\widetilde T_\mu(F)=f(\mu(\x))=\mu^*f.
\]
Its symbol is
\begin{equation}
\label{eq:affine-pullback-symbol}
\Sym_{\widetilde T_\mu}(\x,\u_\kappa)
=
\prod_{i=1}^m\prod_{s=1}^{\kappa_i}\bigl(\mu_i(\x)+u_{is}\bigr)\in \S_+.
\end{equation} Hence
\(
\Sym_{\widetilde T_\mu}\in \I_+.
\)
By Theorem~\ref{thm:ma-linear-preserver},
\[
\mu^*f=\widetilde T_\mu(F)\in \I_{+,n}.
\]

If \(f\in\I_m\), and \(\mu\) is  strictly
positive, we may choose some large \(\a\in\Gamma_n^+\) with  \(\mu(\a)\in\C_f\). 
Then \(g(\y):=f(\y+\mu(\a))\in\I_{+,m}[\y]\) by Lemma~\ref{lem:translation-invariance}, and
\[
(\mu^*f)(\x+\a)=f(\mu(\x+\a))=g(A\x).
\]
By the \(\I_+\)-case of
\ref{enu:closure-pullback}, \(g(A\x)\in\I_+\). Hence \(\mu^*f\in\I_n\) by
translation invariance.

\ref{enu:closure-specialization} follows from
\ref{enu:closure-pullback} since the $\kappa$ specialization is strictly positive.

  For \ref{enu:closure-directional}, let \(f\in\I_{+,n}\cap\R_\kappa[\x]\) and use \eqref{functionF} again. Define
\[
\widetilde D_\v:\R^{\mathfrak a}[\x_\kappa]\longrightarrow \R[\x],
\qquad
\widetilde D_\v(H):=
D_\v\bigl(\proj_\kappa H\bigr).
\]
Then
\(
\widetilde D_\v(F)=D_\v f,
\)
with
\begin{equation}
\label{eq:directional-symbol}
\Sym_{\widetilde D_\v}(\x,\u_\kappa)
=
D_\v\left(
\prod_{i=1}^n\prod_{s=1}^{\kappa_i}(x_i+u_{is})
\right)\in \S_+.
\end{equation} Therefore
 Theorem~\ref{thm:ma-linear-preserver} gives
\(
D_\v f=\widetilde D_\v(F)\in\I_{+,n}.
\)

For $f\in \I$, choose \(\a\in\C_f\) and set
\[
g(\x):=f(\x+\a)\in\I_{+,n}.
\]
Since \(D_\v g(\x)=D_\v f(\x+\a)\), the \(\I_+\)-case gives
\(D_\v g\in\I_{+,n}\). Hence \(D_\v f\in\I_n\) by translation invariance.

For \ref{enu:closure-product}, we use Lemma~\ref{lem:external-products} to get 
\(
f(\x)g(\y)\in\I[\x,\y].
\)
Let
\(
\Delta:\R^n\to\R^{2n},\) \(
\Delta(\x)=(\x,\x).
\)
Since \(\Delta\) is strictly positive, \ref{enu:closure-pullback} and
\(
fg=\Delta^*H
\) give the corresponding results in \ref{enu:closure-product}.

\end{proof}

We   combine the multi-affine linear preserver theorem with the
polarization--specialization bridge to deduce the general linear preserver
theorem.
\begin{thm}
\label{thm:linear-preserver}
Let \(\kappa\in\N^n\), \(\gamma\in\N^m\), and let
\[
T:\R_\kappa[\x]\longrightarrow \R_\gamma[\y]
\]
be linear. Define its symbol by
\[
\Sym_T(\y,\u):=
T\left(\prod_{i=1}^n (x_i+u_i)^{\kappa_i}\right).
\]
If \(\Sym_T(\y,\u)\in \I_+[\y,\u]\), then
\[
T\bigl(\I_+\cap \R_\kappa[\x]\bigr)\subseteq \I_+\cap \R_\gamma[\y].
\]
\end{thm}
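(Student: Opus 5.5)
The plan is to reduce the statement to the multi-affine symbol criterion, Theorem~\ref{thm:ma-linear-preserver}, through the polarization--specialization bridge, as in the proofs of Theorem~\ref{thm:positive-affine-pullback-and-directional-derivatives}. Let $f\in\I_+\cap\R_\kappa[\x]$ and set $F:=\pol_\kappa f$. By Theorem~\ref{thm:positive-affine-pullback-and-directional-derivatives}\ref{enu:closure-polarization}, $F\in\I_+^{\mathfrak a}[\x_\kappa]$. Define the linear operator
\[
\widetilde T:\R^{\mathfrak a}[\x_\kappa]\longrightarrow\R_\gamma[\y],\qquad \widetilde T(H):=T\bigl(\proj_\kappa H\bigr).
\]
Since $\proj_\kappa\circ\pol_\kappa=\mathrm{id}$ on $\R_\kappa[\x]$ (immediate on monomials, because $\sigma_{\alpha_i}(x_i,\ldots,x_i)=\binom{\kappa_i}{\alpha_i}x_i^{\alpha_i}$), we have $\widetilde T(F)=T(f)$. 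It therefore suffices to show $\Sym_{\widetilde T}\in\I_+[\y,\u_\kappa]$.

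To compute the symbol, note that $\proj_\kappa$ sends $\prod_{i}\prod_{s}(x_{is}+u_{is})$ to $\prod_i\prod_s(x_i+u_{is})$. Hence
\[
\Sym_{\widetilde T}(\y,\u_\kappa)=T\Bigl(\prod_{i=1}^n\prod_{s=1}^{\kappa_i}(x_i+u_{is})\Bigr).
\]
The right-hand side is multi-affine and symmetric in each block $(u_{i1},\ldots,u_{i\kappa_i})$, with degree at most $\kappa_i$ in $u_i$ overall. I claim it equals $\pol_\kappa^{\u}\Sym_T(\y,\u)$, the $\kappa$-polarization of $\Sym_T$ in the $\u$-variables only (with $\y$ left untouched). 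To verify this, expand $\prod_s(x_i+u_{is})=\sum_k \sigma_k(u_{i\cdot})x_i^{\kappa_i-k}$ and compare with $(x_i+u_i)^{\kappa_i}=\sum_k\binom{\kappa_i}{k}u_i^kx_i^{\kappa_i-k}$. Under $u_i^k\mapsto\sigma_k/\binom{\kappa_i}{k}$ the two expansions match, and linearity of $T$ commutes with this coefficientwise substitution.

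It remains to see that partial polarization in the $\u$ variables preserves $\I_+$. Regard $\Sym_T\in\I_+\cap\R_{\gamma\oplus\kappa}[\y,\u]$ and apply the full $(\gamma\oplus\kappa)$-polarization, which stays in $\I_+$ by Theorem~\ref{thm:positive-affine-pullback-and-directional-derivatives}\ref{enu:closure-polarization}. Then apply the $\gamma$-projection in the $\y$-block only. This is a strictly positive linear specialization, so it preserves $\I_+$ by part \ref{enu:closure-pullback}. Alternatively, one can use Lemma~\ref{lem:partialppolbasics}\ref{enu:partialpolar-usual} together with Proposition~\ref{prop:ppol}, polarizing only the $u$-variables. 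Either route gives $\Sym_{\widetilde T}\in\I_+$, and Theorem~\ref{thm:ma-linear-preserver} then yields $T(f)=\widetilde T(F)\in\I_+[\y]$. Since $T$ maps into $\R_\gamma[\y]$ by hypothesis, this finishes the argument.

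The main obstacle is the bookkeeping in identifying $\Sym_{\widetilde T}$ with the partial polarization of $\Sym_T$, together with checking that the $\I_+$ (nonnegative coefficient) version of each closure step applies. In particular, the partial polarization route via Proposition~\ref{prop:ppol} gives only $\I$. Nonnegativity of coefficients is preserved because the polarization coefficients are positive, so membership in $\I_+$ follows.
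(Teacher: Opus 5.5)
Your proof is correct and is essentially the paper's. Both reduce to Theorem~\ref{thm:ma-linear-preserver} through $\pol_\kappa$ and $\proj_\kappa$, using only that polarization and strictly positive specializations preserve $\I_+$. The one difference is where the projection happens. The paper takes $\widetilde T=\pol_\gamma\circ T\circ\proj_\kappa$, whose symbol is directly $\pol_{\gamma\oplus\kappa}\Sym_T$, and applies $\proj_\gamma$ to the output at the end. You omit $\pol_\gamma$ and instead apply the $\y$-block projection to $\pol_{\gamma\oplus\kappa}\Sym_T$, which gives the $\u$-only polarization as the symbol of $T\circ\proj_\kappa$.
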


\begin{proof}
Let \(f\in \I_+\cap \R_\kappa[\x]\), and set
\(
F:=\pol_\kappa f\in \I_+^{\mathfrak a}[\x_\kappa]
\)
by Theorem~\ref{thm:positive-affine-pullback-and-directional-derivatives} \ref{enu:closure-polarization}.
Define 
\(\widetilde{T}:\R^{\mathfrak a}[\x_\kappa]\longrightarrow \R[\y_\gamma]\),
\(\widetilde T:=\pol_\gamma\circ T\circ \proj_\kappa.\)
Then
\(\widetilde T(F)=\pol_\gamma(Tf).\) A direct computation shows that
\[
\Sym_{\widetilde T}(\y_\gamma,\u_\kappa)
=
\pol_{\gamma\oplus\kappa}\bigl(\Sym_T(\y,\u)\bigr)
\]
which is in $\I_+[\y_\gamma,\u_\kappa]$ by Theorem~\ref{thm:positive-affine-pullback-and-directional-derivatives} \ref{enu:closure-polarization}. 
Hence Theorem~\ref{thm:ma-linear-preserver} yields
\[
\pol_\gamma(Tf)=\widetilde T(F)\in \I_+[\y_\gamma].
\]
Finally, by Theorem~\ref{thm:positive-affine-pullback-and-directional-derivatives} \ref{enu:closure-specialization},
\(
Tf=\proj_\gamma\bigl(\pol_\gamma(Tf)\bigr)\in \I_+\cap\R_\gamma[\y].
\)
\end{proof}
We next record a basic consequence of the closure properties and the binary
relations introduced earlier.
  \begin{lem}
\label{lem:derivative-position-domination}
Let \(f\in\I[\x]\). Then, for every \(i\in[n]\),
\[
\partial_i f\ip f.
\]
More generally, for every multi-index \(\alpha\), one has
\[
\partial^\alpha f\bvtl f.
\]

\end{lem}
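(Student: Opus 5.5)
To show $\partial_i f\ip f$, by Definition~\ref{def:properposition} we must check that $h(\x,y):=f(\x)+y\,\partial_i f(\x)\in\I[\x,y]$. Observe that $h(\x,y)=\mu^*f$ for the affine map
\[
\mu:\R^{n+1}\to\R^n,\qquad \mu(\x,y)=\x+y\e_i .
\]
More precisely, $f(\x+y\e_i)=\sum_{k\ge0}\frac{y^k}{k!}\partial_i^k f(\x)$, so $h$ is only the first-order truncation in $y$, not the pullback itself. The plan is therefore to realize $h$ through operations that Theorem~\ref{thm:positive-affine-pullback-and-directional-derivatives} preserves.

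\emph{First-order truncation.} The map $\mu$ is strictly positive, since each row of its matrix has sum $1$ or $2$. Hence $G(\x,y):=f(\x+y\e_i)\in\I[\x,y]$ by part~\ref{enu:closure-pullback}. Let $k=\deg_y G$. Apply the partial polarizations $\ppol^y_1\circ\cdots\circ\ppol^y_k$ in the variable $y$; this preserves $\I$ by Proposition~\ref{prop:ppol}, and the result is multi-affine in new variables $y_1,\dots,y_k$. Then take derivatives in $y_2,\dots,y_k$ and evaluate them at $0$.

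\emph{Evaluating at $0$ is the delicate point.} Restriction to $0$ is only available for $\I_+$ (Lemma~\ref{lem:restrictionI_+}). So we first translate by a point $\a\in\C_f$ and use Lemma~\ref{lem:translation-invariance} to reduce to $f\in\I_+$; translation does not affect the claimed relation. In the $\I_+$ setting, $G(\x,y)$ is a polynomial in $\x,y$ with nonnegative coefficients. The coefficient of $y_1$ in the fully polarized polynomial, with the other $y_j$ set to $0$, equals $\frac1k\partial_i f$, and its constant term equals $f$. Hence
\[
H(\x,y_1):=\pol(G)\big|_{y_2=\cdots=y_k=0}=f(\x)+\tfrac{y_1}{k}\,\partial_i f(\x)
\]
lies in $\I_+[\x,y_1]$ by Lemma~\ref{lem:restrictionI_+}. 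Rescaling $y_1\mapsto k y_1$ is a positive diagonal map and is allowed by Theorem~\ref{thm:positive-affine-pullback-and-directional-derivatives}\ref{enu:closure-pullback}. This gives $h\in\I$, that is, $\partial_i f\ip f$.

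\emph{Alternative for the truncation step.} One can instead apply Theorem~\ref{thm:linear-preserver} to the linear operator $T:f\mapsto f+y\partial_i f$. Its symbol is
\[
\Bigl(1+\kappa_i\frac{y}{x_i+u_i}\Bigr)\prod_j(x_j+u_j)^{\kappa_j},
\]
which is a product of linear forms with positive coefficients and hence real stable. Therefore it lies in $\I_+$ by Proposition~\ref{prop:stable-polynomials-are-ideal}. This route may be cleaner, and I would choose it if the bookkeeping in the polarization route becomes heavy. The translation reduction above supplies the $\I_+$ hypothesis that Theorem~\ref{thm:linear-preserver} requires.

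\emph{The general statement $\partial^\alpha f\bvtl f$.} By Lemma~\ref{lem:proper-position-basic}\ref{enu:111}, we get $\partial_i f\bvtl f$ for every $i$. Induct on $|\alpha|$. Write $\partial^\alpha f=\partial_i(\partial^\beta f)$ with $\partial^\beta f\bvtl f$. We need $\C_{\partial^\gamma f}\subset\{\partial^{\alpha+\gamma}f>0\}$, which follows from the nesting of Gårding components in the definition of $\I$. We also need convexity of the ratio
\[
\frac{\partial^{\alpha+\gamma} f}{\partial^\gamma f}
=\frac{\partial^{\alpha+\gamma} f}{\partial^{\beta+\gamma} f}\cdot\frac{\partial^{\beta+\gamma} f}{\partial^{\gamma} f}.
\]
Here ideal position is stronger than domination: by Remark~\ref{rem:iptoidealdominate}, each one-step ratio $\partial_j g/g$ is log-convex on $\C_g$, and $\C_{\partial^\gamma f}$ lies inside the relevant smaller components. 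A product of log-convex functions is log-convex, hence convex. So I would carry log-convexity through the induction, applying $\partial_j\partial^{\delta}f\ip\partial^\delta f$ to each successive factor.

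\emph{Main obstacle.} The subtle step is the domain issue: each factor is log-convex on $\C_{\partial^\delta f}$, and one has to verify that these domains all contain $\C_{\partial^\gamma f}$, using the inclusions $\C_g\subset\C_{\partial_j g}$.
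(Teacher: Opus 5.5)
Your proof is correct. The second half matches the paper: you write $\partial^{\alpha+\gamma}f/\partial^\gamma f$ as a telescoping product of one-step quotients $\partial_j g/g$, each log-convex on a component containing $\C_{\partial^\gamma f}$, and you carry log-convexity rather than mere convexity through the induction.

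For the first half you take a longer route. The paper applies a single partial polarization directly in the variable $x_i$, namely $\ppol_m^{x_i}f(\x,z)=f+\frac{z-x_i}{m}\partial_i f$ with $m\ge\max\{\deg_{x_i}f,1\}$. It then pulls back along the strictly positive affine map $(\x,z)\mapsto(\x,x_i+mz)$. The substitution cancels the $-x_i$ and gives $f+z\,\partial_i f$ at once, entirely inside $\I$, with no translation and no restriction to $0$.

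Your main route instead creates a fresh variable via $\x+y\e_i$, polarizes it completely, and discards the surplus copies by setting them to $0$. Because Lemma~\ref{lem:restrictionI_+} is only available in $\I_+$, this forces the translation detour. The route is valid: each $\ppol$ step preserves nonnegative coefficients, and translation invariance of $\I$ carries the relation back. However, the phrase ``take derivatives in $y_2,\dots,y_k$'' has to be read as plain evaluation at $0$. Differentiating would give $\frac{1}{k!}\bigl(\partial_i^{k-1}f+y_1\partial_i^k f\bigr)$ rather than $f+\frac{y_1}{k}\partial_i f$.

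Your linear-preserver alternative is also sound. The symbol equals $(x_i+u_i+\kappa_i y)(x_i+u_i)^{\kappa_i-1}\prod_{j\ne i}(x_j+u_j)^{\kappa_j}$, a product of linear forms with nonnegative coefficients. Theorem~\ref{thm:linear-preserver} is proved before, and independently of, this lemma, so there is no circularity. This route replaces the explicit substitution by a one-line symbol check.

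All three routes draw their analytic content from the same place: Proposition~\ref{prop:ppol}, and behind it the fibre quotient concavity of $f/\partial_i f$. The paper's version simply exposes that dependence most directly.
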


\begin{proof}
We first prove \(\partial_i f\ip f\). Choose
\(m\ge \max\{\deg_{x_i}f,1\}\). By Proposition~\ref{prop:ppol},
\(\ppol_m^{x_i}f\in\I\). Moreover,
\[
f(\x)+z\partial_i f(\x)
=
\ppol_m^{x_i}f(\x,x_i+mz).
\]
The map \((\x,z)\mapsto(\x,x_i+mz)\) is strictly positive affine. Hence,
by Theorem~\ref{thm:positive-affine-pullback-and-directional-derivatives},
\(f(\x)+z\partial_i f(\x)\in\I[\x,z]\). Therefore
\(\partial_i f\ip f\).

Now suppose \(\partial^\alpha f\not\equiv0\). Choose a chain
\(f_0=f,f_1,\ldots,f_N=\partial^\alpha f\), where
\(f_{q+1}=\partial_{j_q}f_q\). The first part gives
\(f_{q+1}\ip f_q\) for each \(q\). By
Remark~\ref{rem:iptoidealdominate}, this implies \(f_{q+1}\bvtl f_q\).

Let \(\beta\) be a multi-index such that
\(\partial^\beta f_N\not\equiv0\). Then
\[
\frac{\partial^\beta f_N}{\partial^\beta f_0}
=
\prod_{q=0}^{N-1}
\frac{\partial^\beta f_{q+1}}{\partial^\beta f_q}.
\]
 By Remark~\ref{rem:iptoidealdominate}, each quotient
\(
\frac{\partial^\beta f_{q+1}}{\partial^\beta f_q}
\)
is log-convex on \(\C_{\partial^\beta f_q}\), hence also on
\(\C_{\partial^\beta f_0}\).
Therefore, their product is log-convex, and hence convex, on
\(\C_{\partial^\beta f_0}\). This is precisely
\(\partial^\alpha f\bvtl f\).
\end{proof}

\begin{example}
    Notice that $\sigma_{k-1}(\x)y+\sigma_k(\x)=\sigma_k(\x,y)$ is real stable. Thus, $\sigma_{k-1}\ip \sigma_k(\x)$ and $\sigma_{k-1}\ip\sigma_k(\x,y)$. For any $l\leq k-2$, we have \[\sigma_l(\x)\ip \sigma_{l+1}(\x)\ip\cdots\ip\sigma_{k-1}(\x)\ip\sigma_k(\x,y).\] A similar argument as in Lemma \ref{lem:derivative-position-domination} shows $\sigma_l(\x)\bvtl \sigma_k(\x,y)$. By Lemma \ref{lem:basic-domination}, for any $c_l\geq0$, \( \sum_{l=0}^{k-2} c_l\sigma_l(\x) \bvtl \sigma_k(\x,y).\) Hence  \[f(\x,y)=\sigma_k(\x)+y\sigma_{k-1}(\x)-\sum_{0\leq l\leq {k-2}} c_l\sigma_l(\x)\in\I[\x,y]\] This example appears in the work of Guan--Zhang on geometric PDEs \cite{MR4278951}.
\end{example}

\section{Concavity, Homogenization and  Lorentzian Polynomials}\label{sec:G=0000E5rding-and-Lorentzian}
We connect ideal G{\aa}rding polynomials with concavity
properties and with Lorentzian polynomials via homogenization. This yields
the analytic and Lorentzian characterizations appearing in the main
structure theorem.
In particular, we prove that \ref{enu:ideal}, \ref{enu:maththmIdealtoconcave}, \ref{enu:maththmIdeallogconcave}, and \ref{enu:maththmIdealtLorentzian} in Theorem \ref{thm:mainstructure} are equivalent.   

The main result of this section is the following.
\begin{thm}
\label{thm:ideal to concavity}Let \(f\in\GS\) have degree \(d\). Then the following are equivalent:  
\begin{enumerate}[label=(\arabic*)]
\item $f\in\I^{d}$;\label{enu:ideal} 
\item \label{enu:Idealtogard}For any non-trivial $\partial^{\alpha}f,$
$(\partial^{\alpha}f){}^{\frac{1}{\deg\partial^{\alpha}f}}$ is concave
in $\C_{\partial^{\alpha}f}$;
\item \label{enu:idealtogardlogconcave}For any non-trivial $\partial^{\alpha}f,$
$\log\partial^{\alpha}f$ is concave in $\C_{\partial^{\alpha}f}$.
\item For any non-trivial $\partial^{\alpha}f$, $\x_0\in \C_{\pdv^\alpha f}$, $\HH_{\x_0} \pdv^\alpha f$ is Lorentzian.\label{enu:idealtoLoren}
\end{enumerate}
\end{thm}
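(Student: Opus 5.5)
I will prove the cycle \ref{enu:ideal}$\Rightarrow$\ref{enu:Idealtogard}$\Rightarrow$\ref{enu:idealtogardlogconcave}$\Rightarrow$\ref{enu:ideal}, and then show \ref{enu:Idealtogard}$\Leftrightarrow$\ref{enu:idealtoLoren}. Throughout I use that every nontrivial $\partial^\alpha f$ of an ideal (resp.\ \gar{}) polynomial is again ideal (resp.\ \gar{}). This reduces each condition to a statement about $f$ alone, applied to all its derivatives.

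\emph{\ref{enu:ideal}$\Rightarrow$\ref{enu:Idealtogard}.} It suffices to show that $f^{1/d}$ is concave on $\C_f$ for every $f\in\I^d$. I induct on $d$; the case $d=1$ is trivial since $f$ is affine. For $d\ge2$, fix two points of the convex set $\C_f$. Restricting $f$ to the segment between them does not help directly, because the segment need not be a positive direction. Instead I use the fibre quotient. Pick a coordinate $x_i$ with $\deg_{x_i}f\ge1$ and apply Theorem~\ref{prop:fibre-quotient-concavity}: the quotient $f/\partial_i f$ is concave on $\C_{\partial_i f}\supset\C_f$. By induction, $(\partial_i f)^{1/(d-1)}$ is concave on $\C_{\partial_i f}$ (note $\deg\partial_i f=d-1$ if $x_i$ appears in the top-degree part). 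Lemma~\ref{lem:bootstrapconcavity}, with $g=\partial_i f$ and $h=f/\partial_i f$, then gives concavity of $f^{1/d}$ on $\C_f$. The care needed is to choose $i$ so that $\deg\partial_i f=d-1$. If instead $\deg\partial_i f<d-1$ for the chosen $i$, I would use the directional derivative $D_{\mathbf 1}$: by Theorem~\ref{thm:positive-affine-pullback-and-directional-derivatives}, $D_{\mathbf 1}f\in\I$, and by a linear change of coordinates $D_{\mathbf 1}$ becomes a coordinate derivative. I expect this degree bookkeeping to be the main nuisance.

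\emph{\ref{enu:Idealtogard}$\Rightarrow$\ref{enu:idealtogardlogconcave}.} This is immediate, since $\log$ of a positive concave function is concave.

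\emph{\ref{enu:idealtogardlogconcave}$\Rightarrow$\ref{enu:ideal}.} Since $\log\partial^\alpha f$ is concave on the open set $\C_{\partial^\alpha f}$, its superlevel sets are convex. I then need $\C_{\partial^\alpha f}$ itself to be convex; the subtlety is that concavity is only asserted on $\C_{\partial^\alpha f}$, which is not yet known to be convex. I would interpret the hypothesis as including convexity of the domain, or argue as follows. Take $\a,\b\in\C_{\partial^\alpha f}$ and connect them through $\a+t\mathbf 1,\b+t\mathbf 1$ for large $t$. For large $t$ the segment between these shifted points lies in $\C$, since the G\aa rding component eventually contains every large positive shift of a compact set. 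Now slide $t$ down. Along each segment, concavity of $\log\partial^\alpha f$, wherever defined, prevents $\partial^\alpha f$ from reaching $0$ in the interior before the endpoints do, by a continuity/first-exit argument. Hence the segment stays in $\C$. This gives convexity of every $\C_{\partial^\alpha f}$, and together with $f\in\GS$ this is exactly $f\in\I$.

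\emph{\ref{enu:Idealtogard}$\Leftrightarrow$\ref{enu:idealtoLoren}.} By translation invariance (Lemma~\ref{lem:translation-invariance}), I may take $\x_0=0\in\C_{\partial^\alpha f}$, so that $g=\partial^\alpha f\in\I_+$. Then $\HH g$ has nonnegative coefficients and its partial derivatives are homogenizations of $\partial^\beta g$ and of the Euler-type combinations. By the characterization of Lorentzian polynomials in \cite{BrandenHuh20}, via nonnegativity and M-convex support plus log-concavity on the positive orthant, $\HH g$ is Lorentzian iff $\HH g$ and all its derivatives $\partial^\gamma\HH g$ are log-concave on the positive orthant. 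On $\{y>0\}$, $\HH g(\x,y)=y^{k}g(\x/y)$ is the perspective of $g$, and it is $1/k$-concave iff $g^{1/k}$ is concave on $\Gamma^+$, which lies in $\C_g$. The derivatives of $\HH g$ are again homogenizations of derivatives of $g$ (the $y$-derivative via Euler's identity is the homogenization of $(k-D_{\x})g$). The M-convex support follows from Theorem~\ref{thm:GardtoRayleigh}, the Rayleigh property. The converse direction reverses the same perspective argument.
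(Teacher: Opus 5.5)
Your (1)$\Rightarrow$(2) is a valid alternative to the paper's argument. You apply Theorem~\ref{prop:fibre-quotient-concavity} directly with $y=x_i$. The paper instead first reduces to the multi-affine case by polarization, where concavity of $h/g$ in $f=yg+h$ can be read off from convexity of $\C_f$. Both routes ultimately rest on the fibre quotient theorem. Your degree worry is harmless: if $\deg\partial_i f=d'<d-1$, Lemma~\ref{lem:bootstrapconcavity} gives concavity of $f^{1/(d'+1)}$, and hence of $f^{1/d}$. Your sliding argument for (3)$\Rightarrow$(1) correctly supplies the convexity of $\C_{\partial^\alpha f}$, which the paper only asserts. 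Your (4)$\Rightarrow$(2) is also fine.

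The gap is in (2)$\Rightarrow$(4). To use the log-concavity characterization, you need every $\partial^\gamma\HH g$ to be log-concave on $\Gamma^+_{n+1}$. Your claim that these are "again homogenizations of derivatives of $g$" fails for the $y$-derivatives. One computes $\partial_y\HH g(\x,y)=y^{k-1}\bigl[(k-\mathbf u\cdot\nabla_{\mathbf u})g\bigr](\x/y)$. The function $(k-\mathbf u\cdot\nabla_{\mathbf u})g$ is neither a partial derivative $\partial^\beta g$ nor a constant-coefficient directional derivative, so hypothesis (2) says nothing about it. The perspective argument gives log-concavity only of $\HH g$ and its pure $\x$-derivatives. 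Log-concavity on the orthant does not pass to derivatives in general; that is exactly what separates log-concave from Lorentzian polynomials. So the derivatives $\partial_y^r\partial_\x^\beta\HH g$ with $r\ge1$ remain uncontrolled.

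The paper avoids this by using the quadratic-form definition of Lorentzian and inducting on degree. After translation, $g\in\I_+$ with $g(\mathbf 0)>0$, and M-convex support follows from the Rayleigh property. Every $\partial^\gamma\HH g$ with $|\gamma|=k-2$ that involves an $\x$-derivative is a derivative of $\partial_{x_i}\HH g=\HH(\partial_i g)$. That polynomial is Lorentzian by induction, since $\partial_i g\in\I_+^{k-1}$. The only remaining one, $\partial_y^{k-2}\HH g$, is a quadratic form with constant Hessian
\[
(k-2)!\begin{pmatrix}\operatorname{Hess}g & (k-1)\nabla g\\ (k-1)\nabla g^{T} & k(k-1)g\end{pmatrix}\Big|_{\x=\mathbf 0}.
\]
Its Schur complement at $\mathbf 0$ is $\operatorname{Hess}g-\frac{k-1}{k}\frac{\nabla g\,\nabla g^{T}}{g}$. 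This is negative semidefinite exactly by $1/k$-concavity of $g$ at the single point $\mathbf 0\in\C_g$, so the Hessian has at most one positive eigenvalue. Thus the $y$-direction is handled only at the quadratic level, pointwise at $\x_0$, and never through a global concavity statement about $(k-\mathbf u\cdot\nabla_{\mathbf u})g$.
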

 
\subsection{${\frac 1 d}$-Concavity} We prove the $\frac{1}{d}$-concavity of an ideal \gar{} polynomial in its \gar{} component. 
\begin{prop}\label{prop:ideal to concavity}
    If $f\in\I^d$, then $f^{\frac{1}{d}}$ is concave in $\C_f$.
\end{prop}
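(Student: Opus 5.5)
We argue by induction on \(d\), using Lemma~\ref{lem:bootstrapconcavity} to climb one degree at a time. For \(d=1\), \(f\) is affine and positive on \(\C_f\), hence concave there, and there is nothing to prove. For \(d=0\) the statement is vacuous.

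For the inductive step, assume \(d\ge 2\) and that the claim holds for ideal G{\aa}rding polynomials of degree \(d-1\). Since \(f\in\I^d\), there is some \(i\in[n]\) with \(\partial_i f\not\equiv 0\). We choose \(i\) so that \(\deg \partial_i f=d-1\), which is possible because some monomial of top degree involves some variable. Then \(\partial_i f\in\I^{d-1}\), and by induction \((\partial_i f)^{1/(d-1)}\) is concave on \(\C_{\partial_i f}\). By the definition of \(\I^d\) we have \(\C_f\subset\C_{\partial_i f}\), so this concavity holds on the convex open set \(\C_f\), where \(\partial_i f>0\).

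Next we write
\[
f=\partial_i f\cdot\frac{f}{\partial_i f},
\]
and we need \(h:=f/\partial_i f\) to be concave and positive on \(\C_f\). Positivity is immediate on \(\C_f\). For concavity, regard \(f\) as a polynomial in \((\x',y)\) with \(y=x_i\), so that \(f\in\I[\x',y]\) with \(\deg_y f\ge 1\). Theorem~\ref{prop:fibre-quotient-concavity} then gives that \(h\) is concave on \(\C_{\partial_y f}\supset\C_f\). Applying Lemma~\ref{lem:bootstrapconcavity} on \(U=\C_f\) with \(g=\partial_i f\), exponent \(d-1\), and \(h=f/\partial_i f\), we conclude that \(f^{1/d}=(gh)^{1/d}\) is concave on \(\C_f\).

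The only delicate point is ensuring that \(g^{1/(d-1)}\) uses the correct degree. If every \(\partial_i f\) had degree smaller than \(d-1\), the exponents would not match. This cannot happen, since \(\deg f=d\) forces some \(\partial_i f\) to have degree exactly \(d-1\).

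The main analytic content, the concavity of the fibre quotient, is already supplied by Theorem~\ref{prop:fibre-quotient-concavity}. So the expected obstacle is only this bookkeeping, together with checking that \(\C_f\) is convex and open, which holds by the definition of \(\I^d\).
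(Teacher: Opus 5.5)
Your proof is correct, but it takes a different route from the paper's.

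\textbf{The paper's route.} It first reduces to a multi-affine $f\in\G_+$. This uses polarization and $\kappa$-specialization (Theorem~\ref{thm:positive-affine-pullback-and-directional-derivatives}) together with a translation. For such $f$ it writes $f=y\,g(\x)+h(\x)$ with $g=\partial_y f$ independent of $y$. Then $\C_f=\{(y,\x):\x\in\C_g,\ y>-h(\x)/g(\x)\}$, so the concavity of $h/g$ on $\C_g$ is just a restatement of the convexity of $\C_f$. Lemma~\ref{lem:bootstrapconcavity} closes the induction, and the base case $d\le 2$ comes from stable and G{\aa}rding polynomials coinciding in degree two.

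\textbf{Your route.} You skip the reduction and get the concavity of $f/\partial_i f$ for a general $f$ directly from Theorem~\ref{prop:fibre-quotient-concavity}. That theorem is proved earlier without using this proposition, so there is no circularity.

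\textbf{Comparison.} Both routes rest on the same analytic input. The polarization theorem behind the paper's reduction is itself proved (Proposition~\ref{prop:ppol}) via Theorem~\ref{prop:fibre-quotient-concavity}. So the paper makes the inductive step elementary and puts the real work into the reduction, while you use that work directly. Your version is shorter and needs only the trivial base case $d=1$. It also avoids what the reduction leaves implicit: identifying $\C_f$ with the diagonal slice of $\C_{\pol_\kappa f}$ so that concavity descends, and the translation into $\G_+$.

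Your explicit choice of $i$ with $\deg\partial_i f=d-1$ settles a point the paper's proof needs but does not state. The paper takes $g=\partial_y f(0,\x)$ to have degree one less than $f$, which requires $y$ to occur in a top-degree monomial.

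In effect, your argument is the proof of Theorem~\ref{thm:newton-maclaurin-quotient}\ref{enu:higher-directional} run with $m=d$ coordinate directions, where the final iterated derivative is a positive constant.
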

\begin{proof}We start with a few reductions. By Theorem~\ref{thm:positive-affine-pullback-and-directional-derivatives}
 \ref{enu:closure-polarization},
\ref{enu:closure-specialization},
it is sufficient to prove the claim for multi-affine ideal Gårding
polynomials. Also, by a proper translation, we may assume that
$f\in\G_{+}$ without loss of generality. Therefore, it is sufficient
to prove the claim for $\Ia_{+},$ multi-affine ideal Gårding polynomials
of non-negative coefficients.

We argue by induction on $d,$ the degree of $f\in\Ia_{+}$. If \(d\le2\), then the degree-two G{\aa}rding and stable classes coincide
\cite[Theorem~4.16]{Fang-MaGard};  $f^{\frac{1}{d}}$ is concave in $\C_{f}$.

For a fixed $d\geq2,$ assume that for any $g\in\I^{m}\cap\G_{+}$ with
$m\leq d$, $g^{\frac{1}{m}}$ is concave in $\C_{g}$. 

Suppose that $f(y,\x)\in\I^{d+1}[y,\x]\cap\G_{+}$. Define $g(\x)=\pdv_{y}f(0,\x)\in\Ia\cap\P^{d}$
and $h(\x)=f(0,\x)$. Then by the multi-affine property of $f,$we
have
\[
f(y,\x)=yg(\x)+h(\x),\ \C_{f}=\{(y,\x)|f(y,\x)>0,\x\in\C_{g}\}.
\]
Since $f\in\I[y,\x]$, the function $\phi(\x):=\frac{h(\x)}{g(\x)}$
is concave for $\x\in\C_{g}$. Hence, $y+\phi(\x)$ is concave and
positive in $\C_{f}$. By the induction hypothesis, $g^{\frac{1}{d}}$
is concave in $\C_{g}$ for $g\in\text{I}^{d}[\x]$. We then use Lemma
\ref{lem:bootstrapconcavity} to conclude that, for $f(y,\x)=g(\x)\cdot (y+\phi(\x))$,  $f^{\frac{1}{d+1}}$
is concave in $\C_{f}$. We  have finished the induction argument.
\end{proof}

\subsection{Lorentzian polynomials}

We recall the definition of Lorentzian polynomials in \cite{BrandenHuh20}. 

\begin{defn}[Lorentzian polynomials]\label{def:Lorentzian_poly} 
    A homogeneous degree $d\geq2$ polynomial $f(\x)=\sum_\alpha c_\alpha\x^\alpha$ with non-negative coefficients is \emph{Lorentzian} if it satisfies 2 conditions:
\begin{enumerate}[label=(\arabic*)]
    \item For all multi-index $\alpha$ with $|\alpha|=d-2$, $\pdv^\alpha f$ is real stable.\label{enu:Lorentzian1}
    \item The support \[\mathrm{supp}(f):=\{\alpha\in\N^n:\,c_\alpha\not=0\}\] is M-convex. \label{enu:LorentzianMconvex} 
\end{enumerate} 
We denote $\L^d$ the class of Lorentzian polynomials of degree $d$.
\end{defn}
A homogeneous polynomial $f$ is said to have M-convex \cite[Chapter 4]{Murota03} support if for any $i\in[n]$ and any $\alpha,\beta\in\text{supp}(f)$
whose $i$-th coordinates satisfy $\alpha_{i}>\beta_{i}$, there is
an index $j$ satisfying $\alpha_{j}<\beta_{j}$, and 
\begin{equation}
\alpha-e_{i}+e_{j}\in\text{supp}(f),\ \beta-e_{j}+e_{i}\in\text{supp}(f).\label{eq:-17}
\end{equation}

Special sub-classes of Lorentzian polynomials include bivariate
polynomials whose coefficients are ultra log-concave  \cite[Example 2.26]{BrandenHuh20}, elementary symmetric
functions, and homogeneous positive real stable polynomials \cite[Proposition 2.2]{BrandenHuh20}.

A remarkable result proved by \cite{BrandenHuh20,ALOVi,ALOVii,ALOViii}
relates Lorentzian and log-concave polynomials.
\begin{thm}
\label{thm:(B-H,AOV,ALOV).-If-} The following
statements hold. 
\begin{enumerate}[label=(\arabic*)]
\item If $f\in\L^d_n$, $f^{\frac{1}{d}}$ is concave in $\Gamma_n^+$. 
\item A homogeneous polynomial $f$ with non-negative coefficients is Lorentzian if and only if for any $\alpha\in\N^{n}$
either 
\[
\pdv^{\alpha}f\equiv0,\ \mathrm{or}\ \log(\pdv^{\alpha}f)\ \mathrm{is\ concave\ in\ }\Gamma_n^+.
\]\label{enu:stronglogconcave}
\end{enumerate}
\end{thm}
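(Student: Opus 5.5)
The statement collects results of Brändén--Huh and Anari--Liu--Oveis Gharan--Vinzant, so the plan is to reconstruct their argument in the order below, using only Definition~\ref{def:Lorentzian_poly}.

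\emph{Step 1: Hessian signature.} Let $f\in\L^d_n$. The target is that $\mathrm{Hess}\,f(\x)$ has at most one positive eigenvalue for every $\x\in\Gamma_n^+$. I would argue by induction on $d$.
\begin{itemize}
\item For $d=2$, $f$ is a real stable quadratic form with nonnegative coefficients. Its matrix is then hyperbolic with respect to a positive vector, so it has Lorentzian signature.
\item For $d\ge3$, each $\partial_i f$ is again Lorentzian: condition \ref{enu:Lorentzian1} is inherited directly, and an M-convex support stays M-convex after shifting by $-e_i$. Euler's identity gives
\[
\mathrm{Hess}\,f(\x)=\tfrac{1}{d-2}\sum_i x_i\,\mathrm{Hess}\,\partial_i f(\x).
\]
\end{itemize}
A positive combination of matrices that each have at most one positive eigenvalue need not share this property. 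This is where M-convexity of the support enters. I would use it to show that the nonzero $\partial_i f$ are ``connected'': for $i\neq j$ in the support, $\partial_i\partial_j f$ is nonzero, so the relevant quadratic forms share positive directions. Then I would apply the Brändén--Huh interlacing lemma: if symmetric matrices $Q_i$ each have at most one positive eigenvalue and pairwise have a common positive vector $\w$ with $\w^TQ_i\w>0$, then $\sum x_iQ_i$ has at most one positive eigenvalue for $\x>0$. I would prove this lemma by a continuity argument along the segment from $\x$ to a coordinate vector, using Cauchy interlacing. \emph{I expect this step to be the main obstacle.}

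\emph{Step 2: part (1).} With $H=\mathrm{Hess}\,f(\x)$ of Lorentzian signature, Euler's identity gives $\x^TH\x=d(d-1)f(\x)>0$. The reverse Cauchy--Schwarz inequality for a form with one positive eigenvalue then gives
\[
(\v^TH\x)^2\ge(\x^TH\x)(\v^TH\v).
\]
Since $H\x=(d-1)\nabla f$, this becomes
\[
f\,\mathrm{Hess}\,f\preceq \tfrac{d-1}{d}\,\nabla f\,\nabla f^T,
\]
which is exactly the Hessian criterion for concavity of $f^{1/d}$ on $\Gamma_n^+$.

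\emph{Step 3: part (2).}
\begin{itemize}
\item \emph{Lorentzian implies log-concavity.} If $f$ is Lorentzian, every nonzero $\partial^\alpha f$ is Lorentzian, hence $1/\deg$-concave by part (1), hence log-concave. Degrees $0$ and $1$ are trivial.
\item \emph{Log-concavity implies condition \ref{enu:Lorentzian1}.} Suppose every nonzero $\partial^\alpha f$ is log-concave on $\Gamma_n^+$. Take $|\alpha|=d-2$. The quadratic $q=\partial^\alpha f$ has nonnegative coefficients and $\log q$ is concave, so $q\,\mathrm{Hess}\,q-\nabla q\nabla q^T\preceq0$. Since $\mathrm{Hess}\,q$ is constant, this forces at most one positive eigenvalue. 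A quadratic form with nonnegative coefficients and this signature is real stable.
\item \emph{Log-concavity implies M-convex support.} I would take $\alpha,\beta$ in the support and differentiate down to a bivariate or trivariate situation. There log-concavity of all derivatives forces the coefficients to be ultra log-concave with no internal zeros, which gives the exchange property \eqref{eq:-17}. Alternatively one can cite the Brändén--Huh result that the support of a polynomial with Lorentzian Hessians at all derivative levels is M-convex, via their characterization of M-convex sets through the ``jump system'' of quadratic derivatives.
\end{itemize}
This M-convexity argument is the second delicate point of the proof.
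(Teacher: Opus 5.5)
The paper gives no proof of this statement; it quotes Br\"and\'en--Huh and Anari--Liu--Oveis Gharan--Vinzant. Your outline therefore has to stand on its own, and Step 1 does not.

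\textbf{Step 1.} The ``interlacing lemma'' you plan to prove is false as stated. Take $Q_1=\mathrm{diag}(1,0)$ and $Q_2=\mathrm{diag}(0,1)$. Each has one positive eigenvalue, and $\w=(1,1)^T$ gives $\w^TQ_i\w=1>0$, yet $x_1Q_1+x_2Q_2$ has two positive eigenvalues. Up to a factor $6$, these are $\mathrm{Hess}\,\partial_if$ for $f=x_1^3+x_2^3$, so no continuity or interlacing argument can establish the lemma.

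Adding connectivity does not rescue a statement about arbitrary matrices either. Take $Q_1=\bigl(\begin{smallmatrix}1&\delta\\ \delta&\epsilon\end{smallmatrix}\bigr)$ and $Q_2=\bigl(\begin{smallmatrix}\epsilon&\delta\\ \delta&1\end{smallmatrix}\bigr)$ with $0<\epsilon<\delta^2<1/4$. Both have positive entries and one positive eigenvalue each, but $Q_1+Q_2$ has two. Also, M-convexity does not give $\partial_i\partial_jf\not\equiv0$ for all pairs: $x_1x_2+x_2x_3$ is Lorentzian with $\partial_1\partial_3f\equiv0$. It only gives connectivity of the graph $i\sim j\iff\partial_i\partial_jf\not\equiv0$ on the variables that occur.

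The missing ingredient is the symmetry of the third-derivative tensor. With $H=\mathrm{Hess}f(\x)$ and $Q_i=\mathrm{Hess}\,\partial_if(\x)$, one has $Q_i\x=(d-2)He_i$.
\begin{itemize}
\item Reverse Cauchy--Schwarz for each $Q_i$ then gives $\v^TQ_i\v\le(d-2)(H\v)_i^2/(H\x)_i$.
\item Summing with weights $x_i$ yields $H\preceq HDH$ with $D=\mathrm{diag}(x_i/(H\x)_i)$.
\item So $M=D^{1/2}HD^{1/2}$ satisfies $M\preceq M^2$, and its spectrum lies in $(-\infty,0]\cup[1,\infty)$.
\item $M$ has the positive eigenvector $D^{-1/2}\x$ with eigenvalue $1$.
\item Irreducibility of $H$ (the connectivity above) and Perron--Frobenius make $1$ the simple top eigenvalue, so $H$ has exactly one positive eigenvalue.
\end{itemize}
With this in place, your Step 2 is correct as written.

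\textbf{Step 3, M-convexity.} Suppose $\alpha,\beta$ differ in many coordinates. After applying $\partial^{\min(\alpha,\beta)}$, the two surviving monomials involve disjoint sets of variables, and any further derivative kills one of them. So you never reach a low-degree situation containing both. A bivariate or trivariate reduction with ultra log-concavity only covers $\alpha-\beta$ supported on two or three coordinates.

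What is needed is a genuine local-to-global argument. The exchange property at $\ell^1$-distance four does follow from stability of the quadratic derivatives. But you also need a global connectivity input, which only log-concavity of $f$ and its higher derivatives can supply. For example, $x_1^3+x_2^3$ has log-concave quadratic derivatives and no pair of support points at distance four, yet its support is not M-convex. Citing Br\"and\'en--Huh for this step is legitimate, but then that step is a citation rather than a proof. The remaining two bullets of Step 3 are correct.
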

 
Homogeneous \gar{} polynomials are closely related to  Lorentzian polynomials. In fact,  for $d\geq2$, the class of degree $d$ homogeneous \gar{} polynomials is sandwiched between real-stable and Lorentzian ones: \[\S_{n}^{d,\mathfrak{h}}\subset\GS_{n}^{d,\mathfrak{h}}\subset\L_{n}^{d}.\] 
When $d=2$, all three classes above coincide. When $d=3$, $\S_{n}^{3,\mathfrak{h}}=\GS_{n}^{d,\mathfrak{h}}$. In other cases, the inclusions are strict. See \cite[section 11]{Fang-MaGard}. 

In general,  homogenization of a \gar{} polynomial with non-negative coefficients
is not Lorentzian. The missing key property   is  log-concavity. On the other hand, for ideal \gar{} polynomials, log-concavity in
\gar{} components is ensured by Proposition \ref{prop:ideal to concavity}.
\begin{thm}
\label{thm:Suppose--is} Suppose that $f\in\I_{n,+}^{d}$. Then $\HH f\in\L_{n+1}^{d}$.
\end{thm}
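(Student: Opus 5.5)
We will verify the characterization in Theorem~\ref{thm:(B-H,AOV,ALOV).-If-}\ref{enu:stronglogconcave} for $F:=\HH f(\x,y)=f(\x/y)y^d$, which is homogeneous of degree $d$. First, $F$ has nonnegative coefficients: if $f=\sum_\beta c_\beta\x^\beta$ with $c_\beta\ge0$, then $F=\sum_\beta c_\beta\x^\beta y^{d-|\beta|}$. So it remains to show that for every $(\alpha,k)\in\N^{n}\times\N$, either $\pdv^\alpha\pdv_y^kF\equiv0$ or $\log\pdv^\alpha\pdv_y^kF$ is concave on $\Gamma_{n+1}^+$.

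\textbf{Step 1: reduce $y$-derivatives to $\x$-derivatives.} Derivatives in $\x$ commute with homogenization: $\pdv^\alpha F=\HH^{(d-|\alpha|)}(\pdv^\alpha f)$, where the superscript indicates homogenization in degree $d-|\alpha|$. The $y$-derivative is handled by Euler's identity, $\pdv_yF=\HH^{(d-1)}(g)$ with
\[
g:=d f-\sum_i x_i\pdv_i f .
\]
To see that $g$ is again ideal with nonnegative coefficients, polarize: by Theorem~\ref{thm:positive-affine-pullback-and-directional-derivatives}\ref{enu:closure-polarization}, $P=\pol_\kappa f$ is multi-affine and lies in $\I_+$. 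Evaluate each pair of polarized variables via $x_{ij}\mapsto x_{ij}$, $y\mapsto$ a new variable $w$, and pass through the homogenization $\HH P$. One expects $\pdv_yF$ to be a specialization of a sum of terms of the form $\pdv_{x_{ij}}$ of $P$ with the variable $x_{ij}$ set to $0$. To avoid tracking this directly, I will instead use the linear preserver Theorem~\ref{thm:linear-preserver}. The map $T:f\mapsto \HH(f)|_{y=1}$ is trivial, so the relevant object is the operator
\[
S:f\mapsto \pdv_yF\big|_{y=1}.
\]
Its symbol is $\pdv_y\HH\bigl(\prod(x_i+u_i)^{\kappa_i}\bigr)$ evaluated at $y=1$, and $\HH\bigl(\prod(x_i+u_i)^{\kappa_i}\bigr)=\prod(x_i+u_iy)^{\kappa_i}$ (after also homogenizing in the $u_i$). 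This is real stable with nonnegative coefficients, and $\pdv_y$ preserves real stability, so Proposition~\ref{prop:stable-polynomials-are-ideal} puts the symbol in $\I_+$. Hence $S(\I_+)\subset\I_+$. Iterating, every $\pdv^\alpha\pdv_y^kF$ equals $\HH^{(m)}(g_{\alpha,k})$, where $g_{\alpha,k}:=\pdv^\alpha\pdv_y^kF|_{y=1}\in\I_+$ has degree $m=d-|\alpha|-k$, or else vanishes.

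\textbf{Step 2: log-concavity on the orthant.} Fix $g\in\I_{+}$ of degree $m$. Then $G=\HH g$ satisfies $G(\x,y)=y^m g(\x/y)$. By Proposition~\ref{prop:ideal to concavity}, $g^{1/m}$ is concave on $\C_g$. Its perspective $(\x,y)\mapsto y\,g(\x/y)^{1/m}=G^{1/m}$ is therefore concave on the cone $\{(\x,y):y>0,\ \x/y\in\C_g\}$. Since $g$ has nonnegative coefficients and is Gårding, we have $\Gamma_n^+\subset\C_g$; this follows from the positive ray test once $\mathbf 0\in\overline{\C_g}$, which holds because $g\in\I_+$ is obtained by translating to a point of the component, as in Lemma~\ref{lem:translation-invariance}. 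Hence $\Gamma_{n+1}^+$ lies in that cone, so $G^{1/m}$ is concave there, and therefore $\log G$ is concave on $\Gamma_{n+1}^+$. Combining this with Step 1 and Theorem~\ref{thm:(B-H,AOV,ALOV).-If-}\ref{enu:stronglogconcave} gives $F\in\L^d_{n+1}$.

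\textbf{Main obstacle.} The delicate point is the claim $\Gamma_n^+\subset\C_{g}$ for each derived $g\in\I_+$, that is, that membership in $\I_+$ places the component on the orthant. If the convention for $\I_+$ requires only nonnegative coefficients, I would first reduce to the case $f(\x+\epsilon\mathbf 1)$ with a suitably translated base point, and then pass to the limit using the closedness in Theorem~\ref{thm:I-closure-properties}(a) together with the closedness of the Lorentzian class. Verifying the symbol computation in Step 1, including the requirement that the target degree be bounded by $\gamma$, is routine.
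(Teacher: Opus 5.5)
Your Step 2 (taking the perspective of the concave function $g^{1/m}$) is correct. The gap is in Step 1, and Step 2 depends on it for every derivative involving $y$.

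The operator $S h=d\,h-\sum_i x_i\partial_i h$ contains the fixed degree $d$. Its symbol on $\R_\kappa[\x]$ is therefore
\[
S\Bigl(\prod_j (x_j+u_j)^{\kappa_j}\Bigr)=(d-|\kappa|)\prod_j (x_j+u_j)^{\kappa_j}+\sum_i u_i\,\partial_{u_i}\prod_j (x_j+u_j)^{\kappa_j}.
\]
This agrees with your $\partial_y\prod_j(x_j+u_jy)^{\kappa_j}\big|_{y=1}$ only when $|\kappa|=d$.

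If $|\kappa|>d$, which is the typical case (e.g.\ $f=\sigma_2(x_1,x_2,x_3)$, or any polarized $f$), the coefficient of $\x^\kappa$ in the symbol is negative.

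Even when $|\kappa|=d$, the symbol is not ideal G{\aa}rding:
\begin{itemize}
\item The product $\prod_j(x_j+u_jy)^{\kappa_j}$ is not real stable, since $x+uy$ vanishes at $u=y=e^{3\pi i/4}$, $x=i$.
\item For $d=2$ and $\kappa=(1,1)$ (e.g.\ $f=(1+x_1)(1+x_2)$), the symbol is $x_1u_2+x_2u_1+2u_1u_2$. This is a nondegenerate quadratic form vanishing on the plane $u_1=u_2=0$, so it has signature $(2,2)$. It is therefore not Lorentzian, hence not G{\aa}rding.
\end{itemize}

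So Theorem~\ref{thm:linear-preserver} cannot be invoked. You have no proof that $d f-\sum_i x_i\partial_i f$, let alone its iterates, lies in $\I_+$, and the paper neither proves nor uses such a statement. Without it, Step 2 only gives log-concavity of $\partial^\alpha\HH f$ for pure $\x$-derivatives. That is not enough for the strong log-concavity criterion.

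Your stated ``main obstacle'' is in fact harmless. A nonzero G{\aa}rding $g$ with nonnegative coefficients is positive on the connected set $\Gamma_n^+$, which meets $\a+\Gamma_n^+\subset\C_g$ for any $\a\in\C_g$. Hence $\Gamma_n^+\subset\C_g$.

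The paper never dehomogenizes a $y$-derivative. It inducts on $d$ and uses the characterization of Lorentzian polynomials by two properties:
\begin{itemize}
\item M-convex support, which comes from the Rayleigh property (Theorem~\ref{thm:GardtoRayleigh});
\item stability of all $(d-2)$-nd derivatives.
\end{itemize}
Any $(d-2)$-nd derivative containing some $\partial_{x_i}$ is a derivative of $\partial_{x_i}\HH f$. That polynomial is the degree-$(d-1)$ homogenization of $\partial_i f\in\I_+$, so it is Lorentzian by induction.

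The only remaining derivative is the quadratic $\partial_y^{d-2}\HH f$. Its constant Hessian equals $(d-2)!\,\mathrm{Hess}\,\HH f(\mathbf 0,1)$, the bordered matrix built from $f$, $\nabla f$ and $\mathrm{Hess}f$ at $\mathbf 0$. After arranging $\mathbf 0\in\C_f$, its Schur complement $\mathrm{Hess}f-\frac{d-1}{d}\frac{\nabla f\,\nabla f^T}{f}$ is negative semidefinite by Proposition~\ref{prop:ideal to concavity}. Hence at most one eigenvalue is positive.

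Your perspective argument supplies exactly this last step: log-concavity of $\HH f$ near $(\mathbf 0,1)$ bounds $\mathrm{Hess}\,\HH f(\mathbf 0,1)$ by a rank-one matrix. Replacing Step 1 with this induction plus the M-convexity input would repair your proof.
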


We first verify the $\M$-convexity condition for the support.

\begin{lem}
Let $f(\mathbf{x})$ be a Rayleigh polynomial in $\mathbb{R}^{n}$ with $f(\mathbf{0})\not=0$.
Then $\HH f(\mathbf{x},y)$ has $\M$-convex support. 
\end{lem}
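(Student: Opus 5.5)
The plan is to reduce the statement to a property of $\mathrm{supp}(f)$ itself and then read off M-convexity of $\HH f$. Write $f=\sum_\alpha c_\alpha\x^\alpha$ with $c_\alpha\ge 0$ and $d=\deg f$. Then
\[
\HH f(\x,y)=\sum_\alpha c_\alpha\x^\alpha y^{d-|\alpha|},
\qquad
\mathrm{supp}(\HH f)=\{(\alpha,d-|\alpha|):\alpha\in\mathrm{supp}(f)\}.
\]
By the standard correspondence (\cite[Chapter 4]{Murota03}), this set is M-convex exactly when $S:=\mathrm{supp}(f)\subset\N^n$ is M$^\natural$-convex. Concretely, for $\alpha,\beta\in S$ with $\alpha_i>\beta_i$, one of the following must hold:
\begin{itemize}
\item $\alpha-\e_i\in S$ and $\beta+\e_i\in S$ (this is the exchange through the $y$-coordinate);
\item there is $j$ with $\alpha_j<\beta_j$ and $\alpha-\e_i+\e_j,\ \beta+\e_i-\e_j\in S$.
\end{itemize}
The exchange with the $y$-index as $i$ is the symmetric case. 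So the whole task is to show that $S$ is M$^\natural$-convex.

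\emph{Step 1: $S$ is down-closed.} Since $f(\mathbf 0)\neq0$, we have $\mathbf 0\in S$. I would show that $\alpha\in S$ and $\alpha_i>0$ imply $\alpha-\e_i\in S$. To do this, evaluate the Rayleigh inequality \eqref{eq:-18} at $\x=\mathbf 0$, where $\partial^\gamma f(\mathbf 0)=\gamma!\,c_\gamma$. This gives, for all $\gamma$ and $i,j$,
\[
\gamma!\,(\gamma+\e_i+\e_j)!\,c_\gamma c_{\gamma+\e_i+\e_j}\le (\gamma+\e_i)!\,(\gamma+\e_j)!\,c_{\gamma+\e_i}c_{\gamma+\e_j}.
\]
So whenever $\gamma$ and $\gamma+\e_i+\e_j$ both lie in $S$, so do $\gamma+\e_i$ and $\gamma+\e_j$. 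Inducting on $|\alpha|$ and starting from $\mathbf 0$, this "filling" property should propagate down-closedness. If the induction does not close directly, I would apply the same inequality to the restrictions $f|_{x_k=0}$ and to the derivatives $\partial^\gamma f$. Both are again Rayleigh with nonnegative coefficients, by Theorem~\ref{thm:GardtoRayleigh}-type stability of the class, or by direct specialization of \eqref{eq:-18}. Their constant terms are the coefficients being tracked, and in each case we are back to a smaller instance with nonzero constant term.

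\emph{Step 2: local exchange.} Down-closedness makes $S$ connected under unit steps. By Murota's local characterization of M$^\natural$-convex sets, it then suffices to verify the exchange axiom for pairs with $\|\alpha-\beta\|_1\le 4$ and $|\,|\alpha|-|\beta|\,|\le 2$. These local configurations are exactly where the inequality from Step 1 applies. Centred at $\gamma=\alpha\wedge\beta$, which lies in $S$ by Step 1, it forces the needed intermediate points $\gamma+\e_i$, $\gamma+\e_j$, and so on, to lie in $S$. The case $|\alpha|=|\beta|$ with $\alpha-\beta=\e_i+\e_k-\e_j-\e_l$ needs the inequality at the base point $\gamma$ together with down-closedness of both $\alpha$ and $\beta$.

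\emph{Expected obstacle.} The main difficulty is the local-to-global step. In particular, the equal-degree local exchanges are not controlled by a single Rayleigh inequality at $\x=\mathbf 0$, and one must combine several of them. If this becomes messy, my fallback is different. Evaluate \eqref{eq:-18} at points $\x=t\,\mathbf 1_A$ supported on a coordinate set $A$ and let $t\to0^+$ or $t\to\infty$. In these limits the leading terms isolate the coefficients of a chosen face of the Newton polytope, so faces of $S$ inherit the same filling property. I would then use the known theorem that Rayleigh (hence, after polarization, multi-affine Rayleigh) polynomials have supports whose homogenizations satisfy the matroid basis exchange. The polarization step goes through Theorem~\ref{thm:GardtoRayleigh}'s identity $\kappa_i\partial_{x_{i1}}\pol_\kappa f=\pol_{\kappa-\e_i}\partial_i f$, and projecting back preserves M$^\natural$-convexity.
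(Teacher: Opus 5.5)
\paragraph{Verdict.} The proposal has a genuine gap. Your opening reduction is fine and matches the paper: $\mathrm{supp}(\HH f)$ is M-convex exactly when $S=\mathrm{supp}(f)$ is M$^\natural$-convex. The paper does not prove that fact; it imports it from \cite{BrandenHuh20} (Lemma~2.22: $S=\pi_{n+1}(J)$ for an M-convex $J$ on a hyperplane $|\hat\alpha|=r$) and then translates $J$ by $(d-r)\e_{n+1}$. The problem is your attempt to prove M$^\natural$-convexity from scratch.

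\paragraph{The main route cannot work.} Steps~1 and~2 use only the Rayleigh inequality at $\x=\mathbf 0$. Passing to $f|_{x_k=0}$ or to $\partial^\gamma f$ and evaluating at $\mathbf 0$ adds nothing: you recover the same coefficient inequalities. That family cannot imply M$^\natural$-convexity, however you combine it.

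For $f=1+x_1x_2x_3$ it is vacuous, yet $\mathrm{supp}(f)$ is not down-closed.

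For $f=1+x_1+x_2+x_3+x_4+x_1x_2+x_3x_4$ every such inequality holds, with equality at $\gamma=\mathbf 0$, $(i,j)=(1,2),(3,4)$. The support is even down-closed, but it is not M$^\natural$-convex. Exchanging $\e_1+\e_2$ against $\e_3+\e_4$ at $i=1$ would need $\e_1+\e_3+\e_4$, $\e_2+\e_3$ or $\e_2+\e_4$ in the support.

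Both polynomials fail the Rayleigh property only away from the origin: $f\,\partial_1\partial_2f-\partial_1f\,\partial_2f=t$ at $(0,0,t)$ in the first case, and $f\,\partial_3\partial_4f-\partial_3f\,\partial_4f=t$ at $t\e_1$ in the second. So a proof must use the inequality at points with positive coordinates, and your plan contains no such argument.

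The face-limit idea in your fallback does not supply it either. Every face of the Newton polytope of $1+x_1x_2x_3$ satisfies your filling property vacuously. The violation appears only after projecting along $x_3$, i.e.\ specializing $x_3=t>0$ to get $1+tx_1x_2$.

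\paragraph{The fallback's reduction is false for the lemma as stated.} The fallback hands the work to an unnamed theorem via polarization, but $\kappa$-polarization does not preserve the Rayleigh property. Take $f=1+2x+2x^2$. It is Rayleigh, since $ff''-f'^2=-8x-8x^2\le0$ on $[0,\infty)$. Yet $F=\pol_2 f=1+x_1+x_2+2x_1x_2$ gives $F\,\partial_1\partial_2F-\partial_1F\,\partial_2F=1>0$.

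The identity in the proof of Theorem~\ref{thm:GardtoRayleigh} moves the Rayleigh inequality from $\pol_\kappa f$ down to $f$, not up. It works there only because $\pol_\kappa f$ is G{\aa}rding, hence Rayleigh, and that is not available for an arbitrary Rayleigh $f$.

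\paragraph{Repair.} Do what the paper does: cite directly that the support of a Rayleigh polynomial is M$^\natural$-convex, i.e.\ a coordinate projection of an M-convex set. Your opening reduction, equivalently the translation of $J$ by $(d-r)\e_{n+1}$, then finishes the proof. No down-closure, local-exchange or polarization argument is needed.
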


\begin{proof}
By Lemma 2.22 in \cite{BrandenHuh20}, there is an M-convex set  $J\subset\N^{n+1}$ such that $\mathrm{supp}(f)=\pi_{n+1}(J)$.  $J$ is contained in some
hyperplane \[H=\{\hat{\alpha}:\sum_{i=1}^{n+1}\alpha_{i}=r\in\N\}.\] Note $d=\deg f\leq r$ and  $\alpha_{n+1}\geq r-d$ for all $\hat\alpha\in J$. Then \[J':=J+(d-r)\mathbf{e}_{n+1}\subset\mathbb{N}^{n+1}\] is also $\M$-convex.
Notice that $J'$ is exactly the support of $\HH f.$ We have finished
the proof. 
\end{proof}

\begin{proof}[Proof of Theorem~\ref{thm:Suppose--is}]
We argue by induction on \(d=\deg f\). For \(d=2\), the degree-two
G{\aa}rding and stable classes coincide by
\cite[Theorem~4.16]{Fang-MaGard}. Hence \(f\in\S_n\), and \(\HH f\) is
Lorentzian by \cite[Proposition~2.2]{BrandenHuh20}.

Assume \(d=k\ge3\), and assume the statement has been proved in all
degrees \(<k\). By Theorem~\ref{thm:polarizingidealgarding}, it suffices
to prove the claim after replacing \(f\) by a \(\kappa\)-polarization. The
result then descends to \(f\) by diagonal specialization, since Lorentzian
polynomials are preserved under diagonal specialization. Thus we may assume
\(f\in\G_{n,+}\).

We may also assume \(f(\mathbf 0)>0\). Indeed, for \(\epsilon>0\), the
translate \(f_\epsilon(\x):=f(\x+\epsilon\mathbf 1)\) belongs to \(\I_+\)
and satisfies \(f_\epsilon(\mathbf 0)>0\). If the theorem is proved for
\(f_\epsilon\), then \(\HH f_\epsilon\to\HH f\) coefficientwise as
\(\epsilon\to0^+\), and the closedness of the Lorentzian class gives the
claim for \(f\).

Since \(f\in\GS_+\), Theorem~\ref{thm:GardtoRayleigh} implies that \(f\)
is Rayleigh. Hence, by the preceding support lemma, \(\HH f\) has
\(M\)-convex support. It remains to verify the quadratic Hessian condition:
for every \(\alpha\in\mathbb N^n\) and \(r\in\mathbb N\) with
\(|\alpha|+r=d-2\), the Hessian of
\(\partial_y^r\partial^\alpha \HH f\) has at most one positive eigenvalue
on \(\Gamma_{n+1}^+\).

For each \(i\), the induction hypothesis applied to
\(\partial_i f\in\I_+^{d-1}\) gives
\[
\HH(\partial_i f)(\x,y)
=
(\partial_i f)(\x/y)y^{d-1}
=
\partial_{x_i}\HH f(\x,y)
\in\L_{n+1}^{d-1}.
\]
Therefore all cases with \(|\alpha|\ge1\) follow from the Lorentzian
property of \(\partial_{x_i}\HH f\). Hence it remains only to treat
% \(\alpha=0\) and \(r=d-2\), i.e. test the Hessian condition of $\pdv^{d-2}_y\HH f$  at origin. For such a point, consider the invertible
% transform
% \[
% (\u,y)\longmapsto(\x_0y+\u,y).
% \]
% Under this change, \(\HH f\) becomes the localized homogenization
% \[
% \HH_{\x_0}f(\u,y)
% =
% f(\x_0+\u/y)y^d.
% \]
% Hessian signatures are preserved under this change by congruence. Thus it
% is enough to compute the Hessian of
% \(\partial_y^{d-2}\HH_{\x_0}f\) at \((\mathbf 0,1)\).
A direct Taylor expansion at the origin gives
\[
\operatorname{Hess}
\bigl(\partial_y^{d-2}\HH f\bigr)(\mathbf 0,0)
=
(d-2)!
\left[
\begin{array}{cc}
\operatorname{Hess}f & (d-1)\nabla f\\
(d-1)(\nabla f)^T & d(d-1)f
\end{array}
\right]_{\x=\mathbf 0}.
\]
Since \(\mathbf 0\in \C_f\), one has \(f(\mathbf 0)>0\). Hence this
matrix is congruent to
\[
(d-2)!
\left[
\begin{array}{cc}
\operatorname{Hess}f-\frac{d-1}{d}\frac{(\nabla f)(\nabla f)^T}{f} & 0\\
0 & d(d-1)f
\end{array}
\right]_{\x=\mathbf 0}.
\]
By Proposition~\ref{prop:ideal to concavity}, \(f^{1/d}\) is concave on
\(\C_f\). Therefore
\[
\left[
\mathrm{Hess} f
-
\left(1-\frac1d\right)
\frac{(\nabla f) (\nabla f)^T}{f}
\right]_{\x=\mathbf 0}
\]
as a matrix is negative semidefinite. Thus the above congruent matrix has at most one
positive eigenvalue, coming from the \(d(d-1)f(\mathbf 0)\) entry. Hence the
same is true for the original Hessian of
\(\partial_y^{d-2}\HH f\) at \((\mathbf 0,0)\).

Therefore \(\HH f\) satisfies the defining Hessian condition for
Lorentzian polynomials, and since its support is \(M\)-convex, we conclude
that \(\HH f\in\L_{n+1}^d\).
\end{proof}
Finally, we conclude the proof of Theorem \ref{thm:ideal to concavity}.
\begin{proof}[Proof of Theorem \ref{thm:ideal to concavity}]
We first prove the equivalence of \ref{enu:ideal}, \ref{enu:Idealtogard}, and \ref{enu:idealtogardlogconcave}.
    \ref{enu:ideal} $\Rightarrow$\ref{enu:Idealtogard} is due to Proposition \ref{prop:ideal to concavity}.

   \ref{enu:Idealtogard} $\Rightarrow$ \ref{enu:idealtogardlogconcave} is trivial since any $\frac{1}{d}$-concave positive function is automatically log-concave.
    
    \ref{enu:idealtogardlogconcave} implies the convexity of $\C_{\pdv^\alpha f}$ and hence implies \ref{enu:ideal}. 

    \ref{enu:ideal} $\Rightarrow$ \ref{enu:idealtoLoren} is due to Theorem \ref{thm:Suppose--is}.

    \ref{enu:idealtoLoren}  $\Rightarrow$ \ref{enu:idealtogardlogconcave} is due to Theorem \ref{thm:(B-H,AOV,ALOV).-If-} \ref{enu:stronglogconcave}. Since \ref{enu:idealtogardlogconcave} implies \ref{enu:ideal}, we have finished the proof.
\end{proof}
We have thus completed the proof of the equivalence between ideality,
quotient-type concavity, logarithmic concavity, and localized Lorentzian
homogenization. Combined with
Theorem~\ref{thm:polarizingidealgarding} and
Theorem~\ref{thm:positive-affine-pullback-and-directional-derivatives},
this yields Theorem~\ref{thm:mainstructure}. We have also completed the proof of Corollary~\ref{cor:InversionofBH}.

\section{Newton-Maclaurin   Inequalities}
We record several Newton--Maclaurin-type inequalities and quotient
concavity consequences of the preceding theory. 

\subsection{Directional specializations and root sequences}

We assume \(\v\ge \mathbf 0\) and \(\v\neq \mathbf 0\).

For \(d\ge1\), let \(\MRS_d\) be defined as in
Definition~\ref{MSD_d}. We embed \(\MRS_d\) into
\((\R\cup\{-\infty\})^{\mathbb N}\) by
\[
(r_0,\ldots,r_{d-1})
\longmapsto
(r_0,\ldots,r_{d-1},-\infty,-\infty,\ldots).
\]
Set
\[
\MRS:=\bigcup_{d\ge1}\MRS_d
\subset (\R\cup\{-\infty\})^{\mathbb N},
\]
with the induced product topology. Thus, lower-degree root sequences are
realized as boundary limits of higher-degree ones by sending the missing
roots to \(-\infty\).

Let \(f\in\GS[\x]\), \(\x\in\C_f\), and
\(\v\ge\mathbf 0\), \(\v\neq\mathbf 0\). Define
\begin{equation}
  \label{directional specialization}  
F_{\x,\v}(t):=f(\x+t\v).
\end{equation}

If \(F_{\x,\v}\not\equiv0\), then \(F_{\x,\v}\) is a univariate
G{\aa}rding polynomial. Let
\(
d(\x,\v):=\deg F_{\x,\v}.
\)
For \(0\le k\le d(\x,\v)-1\), set
\(
r_k(\x,\v):=r\bigl(F_{\x,\v}^{(k)}\bigr).
\)
The directional specialization defines a natural map
\[\begin{aligned}
    R_f&:\C_f\times
\bigl(\overline{\Gamma_n^+}\setminus\{\mathbf 0\}\bigr)
\longrightarrow \MRS,\\
(\x,\v)
&\mapsto
(r_0(\x,\v),\ldots,r_{d(\x,\v)-1}(\x,\v),-\infty,\ldots).
\end{aligned}
\]

The two Newton--Maclaurin type results below are obtained from this same
specialization. The first uses the univariate ultra log-concavity property of
\(F_{\x_0,\v}\), while the second uses the stronger hypothesis
\(f\in\I[\x]\), together with derivative closure and quotient concavity.

 \begin{thm}
\label{garding-NM}
Let \(f\in\GS[\x]\), let
\(\v\in \overline{\Gamma_n^+}\setminus\{\mathbf 0\}\), and let
\(\x_0\in\C_f\). Suppose that \(\ell\ge1\) satisfies
\[
(D_\v^k f)(\x_0)>0 \quad (0\le k\le \ell),
\qquad
(D_\v^{\ell+1}f)(\x_0)=0.
\]
Then, for \(1\le j\le \ell-1\),
\[
\bigl((D_\v^j f)(\x_0)\bigr)^2
\ge
\frac{\ell-j+1}{\ell-j}\,
(D_\v^{j-1}f)(\x_0)\,(D_\v^{j+1}f)(\x_0).
\]
\end{thm}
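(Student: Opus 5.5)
The plan is to reduce the statement to a univariate inequality for the directional specialization $F(t):=F_{\x_0,\v}(t)=f(\x_0+t\v)$ from \eqref{directional specialization}. First note that $F^{(k)}(0)=(D_\v^k f)(\x_0)$. The hypothesis $(D_\v^{\ell+1}f)(\x_0)=0$ alone does not force $\deg F=\ell$, so I will not try to deduce the degree from it. Instead I will use the Gårding structure.

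By Theorem~\ref{thm:maps-preserve-garding}, $D_\v$ and strictly positive affine pullbacks preserve $\GS$. Applied along the line, this shows that $F$ is a univariate Gårding polynomial, i.e.\ it has MRS. I expect a further consequence: for each $k$, the point $0$ lies in the Gårding component of $F^{(k)}$, since $\x_0\in\C_f\subset\C_{D^k_\v f}$ by the recursive inclusions, which pass to directional derivatives. Hence $F^{(k)}(t)>0$ for all $t\ge 0$ and every $k<\deg F$.

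Now set $m=\deg F$. If $m\ge \ell+1$, then $F^{(\ell+1)}$ is a nonzero Gårding polynomial with $0$ in its component. That gives $F^{(\ell+1)}(0)>0$, contradicting the hypothesis. Therefore $m=\ell$, and $F$ has degree exactly $\ell$ with all derivatives positive at $0$.

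It then suffices to prove, for a univariate Gårding polynomial $F$ of degree $\ell$ with $0\in\C_F$, the Newton-type inequality $(F^{(j)}(0))^2\ge\frac{\ell-j+1}{\ell-j}F^{(j-1)}(0)F^{(j+1)}(0)$ for $1\le j\le \ell-1$. Replace $F$ by $G=F^{(j-1)}$. This is again Gårding, of degree $p=\ell-j+1\ge2$, with $0\in\C_G$. The claim becomes $G'(0)^2\ge\frac{p}{p-1}G(0)G''(0)$. This is exactly the statement that $G^{1/p}$ is concave at $0$: one computes $(G^{1/p})''=\frac1pG^{1/p-2}\bigl(GG''-\frac{p-1}{p}G'^2\bigr)$.

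Univariate Gårding polynomials are ideal, since their components are rays and hence convex. So Proposition~\ref{prop:ideal to concavity} applied to $G\in\I^p$ gives concavity of $G^{1/p}$ on $\C_G\ni 0$. This yields $G(0)G''(0)\le\frac{p-1}{p}G'(0)^2$, which rearranges to the desired inequality.

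There is an alternative route that avoids Proposition~\ref{prop:ideal to concavity}. One translates so that $G(t+\cdot)$ has nonnegative coefficients, and uses Theorem~\ref{thm:GardtoRayleigh} via polarization to $p$ variables (Theorem~\ref{thm:polarizingMRS}). The Rayleigh inequality for $\pol_p G$ with $i\ne j$, projected diagonally, gives $GG''\le\frac{p-1}{p}G'^2$ at points of $\C_G$. I would mention this as a check.

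The main obstacle is the degree step. It must be verified carefully that $0$ lies in $\C_{F^{(k)}}$ for each $k$, and not merely that $F^{(k)}(0)>0$. This requires knowing that directional derivatives respect the component inclusion $\C_f\subset\C_{D_\v f}$. I would obtain it by writing $F=\mu^*f$ for the strictly positive affine map $t\mapsto\x_0+t\v$ (strict since $\v\ne\mathbf 0$, after a harmless reparametrization). Then $\C_F\supseteq\mu^{-1}(\C_f)\ni 0$, and the univariate recursive inclusions of $F$ handle the rest.
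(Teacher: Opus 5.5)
Your proof is correct, but it takes a different route from the paper. The paper treats the inequality as a coefficient statement. It notes that \(F_{\x_0,\mathbf v}\) is univariate Gårding, cites the ultra log-concavity of its Taylor coefficients at \(t=0\) (Theorem~15.2 of the earlier paper), and unwinds the binomial normalization. The identification \(\deg F_{\x_0,\mathbf v}=\ell\) is left implicit in that citation, through the ``no internal zeros'' clause.

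You make the degree step explicit via \(0\in\mathcal C_F\) and the monotone root sequence. You then observe that the \(j\)-th inequality is exactly the pointwise \(\tfrac1p\)-concavity of \(G=F^{(j-1)}\), with \(p=\ell-j+1\), at \(t=0\). You obtain this from Proposition~\ref{prop:ideal to concavity}, since univariate Gårding polynomials are ideal. This keeps the argument inside the present paper and explains the constant \(\frac{\ell-j+1}{\ell-j}\) conceptually.

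It is, however, heavy. That proposition rests on the polarization theorem and hence on the Pitman--Stanley quotient concavity, while the statement needs only Gårding-level information. Your Rayleigh alternative is the lighter argument: polarize \(G\) to \(p\) variables and use the \(i\neq j\) inequality at \(\mathbf 0\). No translation is needed there, since \(0\in\mathcal C_G\) already makes all Taylor coefficients positive. It is essentially the mechanism behind the ULC result the paper cites.

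One small correction. The map \(t\mapsto\x_0+t\mathbf v\) is strictly positive only when every \(v_i>0\). So Theorem~\ref{thm:maps-preserve-garding} does not directly give that \(F\) is Gårding when \(\mathbf v\) has zero entries, and ``after a harmless reparametrization'' does not fix this. It is easily repaired:
\begin{itemize}
\item Translating by \(\x_0\) gives a polynomial in \(\GS_+\), since its coefficients are \(\partial^\alpha f(\x_0)/\alpha!\ge0\).
\item The first part of that theorem then allows the merely positive pullback \(t\mapsto t\mathbf v\), so \(F\) is Gårding.
\item Since \(\mathcal C_f+\overline{\Gamma_n^+}\subset\mathcal C_f\) and \(\mathbf v\ge 0\), the set \(\{t:\x_0+t\mathbf v\in\mathcal C_f\}\) is an open up-set of \(\R\), i.e.\ a right-unbounded ray on which \(F>0\).
\item This ray therefore lies in \(\mathcal C_F\), giving \(0\in\mathcal C_F\) as you need.
\end{itemize}
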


\begin{proof}
By discussion above, $F_{\x_0,\v}(t)$ is univariate G\aa rding and its Taylor expansion at $t=0$ is
\[
 \sum_{k\ge0}\frac{(D_\v^k f)(\x_0)}{k!}t^k.
\]
By Theorem 15.2 in \cite{Fang-MaGard}, its coefficient sequence
\(
\left\{
(D_\v^k f)(\x_0)/k!
\right\}_{k=0}^{\ell}
\)
is ultra log-concave (ULC), which means that this sequence has no internal zero and for \(1\le j\le \ell-1\),
\[
\left(
\frac{(D_\v^j f)(\x_0)/j!}{\tbinom{\ell}{j}}
\right)^2
\ge
\left(
\frac{(D_\v^{j-1} f)(\x_0)/(j-1)!}{\tbinom{\ell}{j-1}}
\right)
\left(
\frac{(D_\v^{j+1} f)(\x_0)/(j+1)!}{\tbinom{\ell}{j+1}}
\right).
\]
Simplifying the factorial and binomial factors gives the claimed inequality.
\end{proof}

\begin{rem}
\label{rem:sigma-k-NM}
When \(f=\sigma_k(\x)\) and \(\v=\mathbf 1=(1,\ldots,1)\), Theorem
\ref{garding-NM} gives for $r\leq k-1$,
\[
\left(\frac{\sigma_r(\x)}{\tbinom{n}{r}}\right)^2
\ge
\frac{\sigma_{r-1}(\x)}{\tbinom{n}{r-1}}
\cdot
\frac{\sigma_{r+1}(\x)}{\tbinom{n}{r+1}}.
\]
which is the usual Newton--Maclaurin inequality.  Thus, the theorem extends
the classical coefficient-level Newton--Maclaurin inequality from
\(\sigma_k\) to arbitrary \(\gar{}\) polynomials and arbitrary nonnegative
directions.
\end{rem}
\begin{thm}
\label{thm:newton-maclaurin-quotient}
Let \(f\in\I[\x]\).
\begin{enumerate}[label=\textup{(\arabic*)}]
\item\label{enu:one-step-directional}
Let \(\v\ge \mathbf 0\), \(\v\neq \mathbf 0\), and assume
\(D_\v f\not\equiv0\). Then \(D_\v f\in\I[\x]\), and \(f/D_\v f\) is
concave on \(\C_{D_\v f}\).

\item\label{enu:higher-directional}
Let \(\v_1,\ldots,\v_m\ge \mathbf 0\), with \(\v_i\neq\mathbf 0\), and
assume \(D_{\v_m}\cdots D_{\v_1}f\not\equiv0\). Then
\[
\left(
\frac{f}{D_{\v_m}\cdots D_{\v_1}f}
\right)^{1/m}
\]
is concave on \(\C_f\).
\end{enumerate}
\end{thm}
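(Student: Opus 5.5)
For part \ref{enu:one-step-directional}, the membership $D_\v f\in\I[\x]$ is exactly Theorem~\ref{thm:positive-affine-pullback-and-directional-derivatives}\ref{enu:closure-directional}. For the concavity, I reduce to the fibre quotient theorem by introducing an auxiliary variable. Set
\[
G(\x,y):=f(\x+y\v).
\]
This is the pullback of $f$ by the affine map $(\x,y)\mapsto \x+y\v$. The map has nonnegative matrix entries, and each row sum equals $1+v_i>0$, so it is strictly positive. Hence $G\in\I[\x,y]$ by Theorem~\ref{thm:positive-affine-pullback-and-directional-derivatives}\ref{enu:closure-pullback}. Moreover $\partial_yG(\x,y)=(D_\v f)(\x+y\v)$, and $\deg_yG\ge1$ because $D_\v f\not\equiv0$. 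Theorem~\ref{prop:fibre-quotient-concavity} then gives that $G/\partial_yG$ is concave on $\C_{\partial_yG}$.

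The next step is to identify $\C_{\partial_yG}$. I expect
\[
\C_{\partial_y G}=\{(\x,y):\x+y\v\in\C_{D_\v f}\},
\]
since the right side is a connected component of $\{\partial_yG>0\}$ that passes the positive ray test. Restricting to the convex slice $y=0$ shows that $f/D_\v f=(G/\partial_yG)(\cdot,0)$ is concave on $\C_{D_\v f}$. The only care needed is the component identification; I would check it through the positive ray test, using that the pullback preserves positive translations.

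For part \ref{enu:higher-directional}, write $g_0=f$ and $g_k=D_{\v_k}g_{k-1}$. Each $g_k$ is nontrivial, since derivatives of zero vanish and $g_m\not\equiv0$. By part \ref{enu:one-step-directional}, each $g_k$ lies in $\I$, and $q_k:=g_{k-1}/g_k$ is concave on $\C_{g_k}$. The recursive inclusions of ideal components should give $\C_f\subset\C_{g_1}\subset\cdots\subset\C_{g_m}$. This follows from $\C_g\subset\bigcap_i\C_{\partial_ig}$ together with the positive ray test, because $D_\v g$ is a nonnegative combination of the $\partial_i g$, each of which is positive on $\C_g$. Hence every $q_k$ is positive and concave on the open convex set $\C_f$, and
\[
\frac{f}{g_m}=\prod_{k=1}^m q_k .
\]

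I then apply Lemma~\ref{lem:bootstrapconcavity} inductively. Suppose $\bigl(\prod_{k\le j}q_k\bigr)^{1/j}$ is concave on $\C_f$. Since $q_{j+1}$ is concave and positive there, the lemma yields concavity of $\bigl(\prod_{k\le j+1}q_k\bigr)^{1/(j+1)}$. Starting from $j=1$ gives the claim.

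The main obstacle is the inclusion $\C_{g_{k-1}}\subset\C_{g_k}$ for directional rather than coordinate derivatives. Positivity of $D_\v g$ on $\C_g$ is clear. To see that $\C_g$ lands in the distinguished component of $D_\v g$, I would argue as follows: $\C_g$ is connected, and on it $D_\v g>0$. Moreover, $\C_g$ contains points $\x+t\mathbf 1$ for large $t$, which lie in $\C_{D_\v g}$ because that component contains a translate of the positive octant.
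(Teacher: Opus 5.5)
Your proposal is correct and takes the same route as the paper. You pull back along the strictly positive map $(\x,y)\mapsto \x+y\v$, apply Theorem~\ref{prop:fibre-quotient-concavity} to $f(\x+y\v)$, restrict to the slice $y=0$, and then iterate with Lemma~\ref{lem:bootstrapconcavity}; your explicit checks of $\C_{\partial_y G}=\{(\x,y):\x+y\v\in\C_{D_\v f}\}$ and of the inclusions $\C_{g_{k-1}}\subset\C_{g_k}$ supply details the paper leaves implicit.
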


\begin{proof}
We prove \ref{enu:one-step-directional}. Set
\(F(\u,y):=f(\u+y\v)\). By Theorem~\ref{thm:positive-affine-pullback-and-directional-derivatives} \ref{enu:closure-pullback} and Definition~\ref{def:ideal}, \(F\in\I[\u,y]\) and
\(D_y F\in\I[\x,y]\). By Theorem~\ref{prop:fibre-quotient-concavity},
\(F/\partial_yF\) is concave on \(\C_{\partial_yF}\). Restricting to the
slice \(y=0\) gives
\[
\frac{F(\u,0)}{\partial_yF(\u,0)}
=
\frac{f(\u)}{D_\v f(\u)},
\]
so \(f/D_\v f\) is concave on \(\C_{D_\v f}\).

Part \ref{enu:higher-directional} follows by iterating
\ref{enu:one-step-directional} and applying
Lemma~\ref{lem:bootstrapconcavity}.
\end{proof}

\begin{rem}
The one-step quotient naturally lives on the larger denominator
component $\C_{D_{\v}f}$. For higher quotients, positivity of all
intermediate factors is required, so the natural common domain is
$\C_f$, or more generally any convex domain on which all successive
quotients remain positive.
\end{rem}

\begin{rem}
For $f=\sigma_k$ and $\v=\mathbf 1$, part
\ref{enu:one-step-directional} recovers the classical concavity of $
\frac{\sigma_k}{\sigma_{k-1}}$
on $\Gamma^+_{k-1}$, while part \ref{enu:higher-directional} yields
the quotient concavity
\(
\left(
\frac{\sigma_k}{\sigma_{k-m}}
\right)^{1/m}
\)
on $\Gamma^+_k$.
\end{rem}

\printbibliography

\end{document}